\documentclass[11pt, reqno]{amsart}
\usepackage{geometry}                
\usepackage{graphicx}
\usepackage{amssymb}  
\usepackage{amsmath}
\usepackage{mathrsfs}
\usepackage{xcolor}
\usepackage{tikz-cd}
\usepackage[all]{xy}
\usepackage{tikz}
\usepackage{mathtools}
  \usepackage{enumitem}
  \usepackage{array}
  \usepackage{amsthm,amscd}
\newcommand{\sslash}{\mathbin{/\mkern-5mu/}}
\usetikzlibrary{arrows.meta,positioning}

\usepackage[colorlinks=true, pdfstartview=FitV,linkcolor=blue,citecolor=blue,urlcolor=blue]{hyperref}

    \advance \topmargin by -\headheight
    \advance \topmargin by -\headsep

    \evensidemargin \oddsidemargin
\numberwithin{equation}{section}

\def\cB{{\mathcal B}}

\def\cD{{\mathcal D}}
\def\cE{{\mathcal E}}

\def\cO{{\mathcal O}}

\def\ge{{\mathfrak e}}

\newcommand{\cdo}[1]{\mc{D}^{ch}_{#1}}

\newcommand{\coker}{\on{coker}}

\newcommand{\ssslash}{\mathbin{/\mkern-5mu/\mkern-5mu/}}

\def\cE{{\mathcal E}}

\newfont{\german}{eufm10}

\begin{document}

\theoremstyle{plain}
\newtheorem{maintheorem}{Main Theorem}
\newtheorem{theorem}{Theorem}[section]
\newtheorem{thm}[theorem]{Theorem}
  \newtheorem{prop}[theorem]{Proposition}
    \newtheorem{proposition}[theorem]{Proposition}
  \newtheorem{cor}[theorem]{Corollary}
    \newtheorem{corollary}[theorem]{Corollary}
  \newtheorem{lemma}[theorem]{Lemma}

    \newtheorem{conj}[theorem]{Conjecture}
    \theoremstyle{definition}
           \newtheorem{defn}[theorem]{Definition}
            \newtheorem{definition}[theorem]{Definition}

\theoremstyle{remark}
     \newtheorem{remark}[theorem]{Remark}
          \newtheorem{example}[theorem]{Example}
                   \newtheorem{ex}[theorem]{Example}

 \newcommand{\cx}{{\bf C}}
\newcommand{\la}{\langle}
\newcommand{\ra}{\rangle}
\newcommand{\res}{{\rm Res}}
\newcommand{\expp}{{\rm exp}}
\newcommand{\lgp}{\widehat{\rm G}( { F} ((t)) )}
\newcommand{\svee}{\scriptsize \vee}
\newcommand{\deff}{\stackrel{\rm def}{=}}
\newcommand{\cal}{\mathcal}
\renewcommand{\ge}{\geq}
\newcommand{\Coker}{\on{Coker}}

\newcommand{\on}{\operatorname}
\newcommand{\mc}{\mathcal}
\newcommand{\mf}{\mathfrak}
 \newcommand{\+}{\oplus}
  \newcommand{\g}{\mf{g}}
    \newcommand{\affg}{\widehat{\mf{g}}}
  \newcommand{\C}{\mathbb{C}}
  \newcommand{\isomap}{{\;\stackrel{_\sim}{\to}\;}}
  \newcommand{\bX}{\tilde{X}} 
    \newcommand{\bXd}{V^*/\!/G}
    \newcommand{\vac}{\mathbf{1}}
    \newcommand{\Z}{\mathbb{Z}}
      \newcommand{\Loc}[2]{\tilde{#2}_{#1}}
         \newcommand{\bigLoc}[2]{\widetilde{#2}_{#1}}
             \newcommand{\Floop}{F^{\on{loop}}}
    \newcommand{\Zhu}{\on{Zhu}}
      \newcommand{\Res}{\on{Res}}

\title{Faithful Localization of Vertex Algebra Modules}

\author{Tomoyuki Arakawa}

\address[Tomoyuki Arakawa]
{Okinawa Institute of Science and Technology,\newline 1919-1 Tancha, Onna-son, Kunigami-gun, Okinawa 904-0495, Japan}
\email{tomoyuki.arakawa@oist.jp}

\author{Xuanzhong Dai}
\address[Xuanzhong Dai]{School of Mathematics, Nanjing University,
Nanjing 210093, China}
\email{xzdai@nju.edu.cn}

\author{Bailin Song}
\address[Bailin Song]
{School of Mathematical Sciences, University of Science and Technology of China, Hefei, Anhui 230026, P. R. CHINA}
\email{bailinso@ustc.edu.cn}

\begin{abstract}
We establish a Faithfulness Theorem for vertex algebra modules, showing that two complementary geometric localizations can jointly detect global algebraic structure. We apply this principle to vertex algebras obtained by BRST reduction from classical invariant theory and identify them, under natural geometric hypotheses, with global sections of sheaves of chiral differential operators. As consequences, we prove vanishing of BRST cohomology in nonzero degrees and simplicity, and show that every finitely strongly generated module is a finite direct sum of copies of the vacuum module. In particular, the corresponding module category is equivalent to the category of finite-dimensional vector spaces. 
For families associated with classical groups, we identify the associated varieties with Hamiltonian reductions, proving quasi-lisse-ness and the predicted identification with Higgs branches of the corresponding supersymmetric gauge theories. This provides geometric realizations of a broad class of vertex algebras arising from three- and four-dimensional quantum field theories.
\end{abstract}
\maketitle

\section{Introduction}
Localization is a fundamental tool for relating representation theory to geometry: it replaces a global algebraic object by a geometric one that can often be studied locally. A basic difficulty, however, is that localization may lose information. In this paper, we establish a \emph{Faithfulness Theorem for vertex algebra modules}, showing that, in a natural class of vertex algebras, two complementary localizations together detect the original module. This provides a general mechanism for deducing global algebraic properties from local geometric realizations.

The underlying principle is simple. Suppose that a vertex algebra $V$ carries two compatible nonnegative conformal structures and admits two localizations related by Fourier transform. Under a nondegeneracy condition expressing a full contraction between the corresponding localization functions, we prove that a $V$-module vanishes whenever both of its localizations vanish. More generally, a morphism is injective or surjective whenever this is true after both localizations. This is the content of the \emph{Faithfulness Theorem} (Theorems~\ref{theorem:iso} and \ref{th:faithfulness-2}).

The strength of this principle is that many global questions about vertex algebras can then be reduced to geometric statements on suitable open subsets. In particular, when the localized vertex algebras are simple, the original vertex algebra is simple. Under Noetherian hypotheses, the same argument detects stabilization of submodules. In the geometric situation considered below, the localizations
become sheaves of chiral differential operators (\cite{MSV,GMS03}), whose local structure is explicit.

We apply this framework to vertex algebras obtained by BRST reduction from classical invariant theory. Let $G$ be a connected reductive algebraic group and $W_0$ a finite-dimensional $G$-module. Set \[ \widetilde X := W_0 /\!/ G = \operatorname{Spec}\mathcal O(W_0)^G. \] Suppose that $U\subset W_0$ is a $G$-stable open subset on which $G$ acts freely with closed orbits, and let \[ X := U/G \subset \widetilde X. \] Under the codimension hypothesis  \ref{A3}  below, \begin{equation} \mathcal O(X)=\mathcal O(W_0)^G. \label{eq:intro-coordinate-ring} \end{equation} Thus $X$ plays the role of a smooth geometric model for the possibly singular affine quotient $\widetilde X$. A direct chiralization of this picture encounters an obstruction: a sheaf of chiral differential operators on $X$ need not exist (\cite{GMS04}). 
We remove this obstruction by introducing a second $G$-module $W_1$ and the associated vector bundle \[ E=(U\times W_1)/G\longrightarrow X. \] The corresponding sheaf $\mathcal D_{\wedge E}^{\mathrm{ch}}$ (\cite{GMS03})
of chiral differential operators exists provided \begin{equation} \operatorname{tr}_{W_0}(xy)-\operatorname{tr}_{W_1}(xy) =\kappa_{\mathfrak g}(x,y), \qquad x,y\in\mathfrak g, \label{eq:intro-anomaly} \end{equation} together with the trace condition 
in  \eqref{eq:level-condition}. The former is precisely the anomaly-cancellation condition appearing in the gauge-theoretic interpretation.

Let \( \mathcal D_{\wedge F}^{\mathrm{ch}}(W_0) \) be the $\beta\gamma$--$bc$ system associated with 
the trivial bundle
\( F=W_0\times W_1\longrightarrow W_0\). We consider the relative BRST reduction \begin{equation} \mathbf{V}= H^{\frac{\infty}{2}+0} \left( \widehat{\mathfrak g}_{-\kappa_{\mathfrak g}}, \mathfrak g; \mathcal D_{\wedge F}^{\mathrm{ch}}(W_0) \right). \label{eq:intro-V} \end{equation} It carries two conformal structures exchanged by Fourier transform. Their interaction with the two natural geometric localizations is exactly what allows the Faithfulness Theorem to be applied.

Our first main application is that the BRST construction is globally controlled by its restriction to the free locus. We prove that the  relative BRST cohomology vanishes 
in nonzero degrees
and that $\mathbf{V}$ is simple (Theorem~\ref{Th:vanishing-and-simplicity}). Moreover, restriction gives an isomorphism 
\begin{equation} \mathbf V \xrightarrow{\sim} \Gamma\left(X,\mathcal D_{\wedge E}^{\mathrm{ch}}\right) \label{eq:intro-global-sections} \end{equation}
(Theorem~\ref{th:no-support}).
Thus the BRST reduction is precisely the algebra of global sections of a sheaf of chiral differential operators on the smooth quotient $X$. Equivalently, the BRST sheaf on $\widetilde X$ provides an extension of $\mathcal D_{\wedge E}^{\mathrm{ch}}$ across the singular complement, while the Faithfulness Theorem shows that no global information is lost by passing to the appropriate local geometric models.

This should be viewed as a chiral counterpart of the philosophy underlying localization results of Teleman \cite{Tele} 
and McGerty--Nevins \cite{MN}: global algebraic information is recovered from geometric localization, but here the relevant objects are vertex algebras and their modules rather than ordinary algebras of differential operators.

The same mechanism has particularly strong consequences for representation theory. 
The isomorphism~\eqref{eq:intro-global-sections} allows suitable $\mathbf{V}$-modules to be localized to modules over $\mathcal D_{\wedge E}^{\mathrm{ch}}$. The Faithfulness Theorem then gives an exact faithful functor \begin{equation} \mathbf V\text{-mod}_{L,L',\geq 0} \longrightarrow \left(\mathcal D_{\wedge E}^{\mathrm{ch}}\text{-mod}\right)^{\oplus 2}, \qquad M\longmapsto \left( \operatorname{Loc}(M), \operatorname{Loc}(\mf{F}^*M) \right). \label{eq:intro-faithful-functor} \end{equation} For finitely strongly generated modules, the top
 weight components of these localizations become coherent integrable connections. Pulling them back to $U$, extending across the codimension-two complement, and using the Euler action shows that these connections are algebraically trivial. Faithfulness then upgrades this local geometric statement to a global classification: 
\begin{equation} \mathbf V\on{-mod}^{\mathrm{fin}} \simeq \mathbf{Vect}_{\mathrm{fd}}
\label{eq:intro-module-category} \end{equation} 
 (Theorem~\ref{th:equivalence}).
 In particular, every finitely strongly generated module is a finite 
direct sum of copies of $\mathbf V$, and \[ \operatorname{Ext}^1_{\mathbf V}(\mathbf V,\mathbf V)=0. \] Thus, in this setting, localization does more than realize modules geometrically: it completely determines their finite representation theory.

The BRST vanishing theorem also yields
integral formula for the supercharacter of $\mathbf V$ (Theorem~\ref{th:character}).

These constructions have a direct interpretation in supersymmetric gauge theory. The vertex algebra $\mathbf{V}$ is the 
$A$-twisted\footnote{Costello and Gaiotto originally called this the $H$-twisted boundary VOA.}
boundary VOA associated with the three-dimensional $\mathcal N=4$ gauge theory determined by $G$ and $W_0$, with boundary matter specified by $W_1$ (\cite{CG}; see also \cite{Gar,Y}).
 The anomaly-cancellation condition is exactly~\eqref{eq:intro-anomaly}. Consequently, \eqref{eq:intro-global-sections} gives geometric realizations of a large class of boundary VOAs by global sections of CDOs, see also \cite{AKM,AFM,CSYM,CSYM2}.
 For suitable $W_0$ and $W_1$, the same vertex algebra occurs as the chiral algebra of a four-dimensional $\mathcal N=2$ superconformal field theory under the 4D/2D correspondence \cite{BLLPRR}. 

A central prediction of  these connections is that the associated variety of the vertex algebra should recover the Higgs branch
 (\cite[Conjecture 2]{BR}, \cite[Conjecture 1]{BF}). In the present setting this predicts \begin{equation} X_{\mathbf{V}} \simeq T^*W_0 /\!/\!/_{0} G. \label{eq:intro-higgs} \end{equation} Since the reduced Hamiltonian reduction has finitely many symplectic leaves \cite{Lo}, this identification also implies that $\mathbf{V}$ is quasi-lisse \cite{AK}.

Our second main application is to prove~\eqref{eq:intro-higgs} for the classical groups \[ G=SL_n,\qquad Sp_{2n},\qquad SO_n, \] with \[ W_0=\operatorname{Hom}(\mathbb C^M,\mathbb C^n), \qquad W_1=\operatorname{Hom}(\mathbb C^N,\mathbb C^n), \] 
replacing $\mathbb C^n$ by $\mathbb C^{2n}$ in the symplectic case.
The parameters satisfy 
the anomaly-cancellation condition. The precise relations are \[ M-N=2n \quad (SL_n),\qquad M-N=2n+2 \quad (Sp_{2n}),\qquad M-N=n-2 \quad (SO_n). \] For these families we identify the reduced $C_2$-algebra with the 
coordinate ring
of the 
corresponding Hamiltonian reduction and hence establish the predicted equality between associated varieties and Higgs branches.

The classical families also produce concrete affine vertex superalgebras. For $G=Sp_{2n}$, we obtain \[ \mathbf{V}\simeq L_{-2n}(\mathfrak{osp}_{2M|2N}), \] and determine its associated variety: \[ X_{L_{-2n}(\mathfrak{osp}_{2M|2N})} \simeq \overline{\mathbb O}_{[2^{2n},1^{\,2M-4n}]} \subset \mathfrak{so}_{2M}. \] 
The equivalence \eqref{eq:intro-module-category} for this family provides
a geometric proof of the semisimplicity of 
the Kazhdan--Lusztig  category \cite{AKFP,AFP}.
The supercharacter formula  shows that its homogeneous supercharacter depends only on $n$; in particular, it coincides with the homogeneous character of \[ L_{-2n}(\mathfrak{so}_{4n+4}). \] For $n=1$, this proves a conjecture of Adamovi\'c and Milas
(\cite[Conjecture 8.1]{AdMi}).

For $G=SO_n$, the group \[ \mathbb Z_2=O_n/SO_n \] acts naturally on $\mathbf{V}$, and we prove \[ \mathbf{V}^{\mathbb Z_2} \simeq L_n(\mathfrak{osp}_{2N|2M}). \] Its associated variety is \[ X_{L_n(\mathfrak{osp}_{2N|2M})} \simeq \overline{\mathbb O}_{[2^n,1^{\,2M-2n}]} \subset \mathfrak{sp}_{2M}, \] while $X_{\mathbf{V}}$ is the corresponding degree-two cover.

For $G=SL_n$, $\mathbf{V}$ is a conformal extension of an affine vertex superalgebra associated with $\mathfrak{gl}_{M|N}$ at level $-n$. When $N=0$, it specializes to the chiral algebra associated with four-dimensional $\mathcal N=2$ superconformal QCD with gauge group $SU(n)$ 
and $2n$ flavors. 
This is equivalently the class \(\mc S\) theory associated with \(SL_n\) and \(\mathbb P^1\) with two maximal and two minimal punctures; see \cite{G,GW} and \cite[Example 4.4]{T}. 
We obtain a complete presentation in this case (Theorem~\ref{thm:relation}),
which proves \cite[Conjecture 2]{BLLPRR}.
For $n=2$, \( \mathbf{V}\simeq L_{-2}(\mathfrak{so}_8), \) recovering the corresponding class $S$ chiral algebra
constructed in \cite{A18}.

\medskip

In forthcoming work \cite{ADS}, we apply the Faithfulness Theorem to chiral differential operators on the basic affine space.

\medskip
This paper is organized as follows. Section~\ref{sec:preliminaries} reviews vertex algebras, loop modules, and free field systems. Section~\ref{sec:localization} develops localization and sheaves of BRST reductions. Section~\ref{sec:faithfulness} proves the general Faithfulness Theorem. Section~\ref{sec:reduction} applies it to chiral Hamiltonian reduction, establishing BRST vanishing, simplicity, the global-sections isomorphism, the equivalence of module categories and its consequence for self-extensions, and the supercharacter formula. Section~\ref{sec:classical} treats \(SL_n\), \(Sp_{2n}\), and \(SO_n\), including their associated varieties and affine realizations.

\medskip
Unless explicitly stated otherwise, all algebraic groups are connected, reductive, and defined over \(\C\).
Throughout, we forget the parity of modules:  morphisms are ordinary linear maps commuting with all modes. The parity of the vertex algebra is retained in the supercommutator and Borcherds identities. We also retain the natural parity gradings on free field systems and BRST complexes when taking supercharacters.

\medskip
\noindent
\textbf{Acknowledgements}.
We thank Christopher Beem, Tudor Dimofte,
Andrea Ferrari,
Niklas Garner,
Ian Grojnowski,  Toshiro Kuwabara, Sven M\"{o}ller, Leonardo
Rastelli and
Wenbin Yan
for valuable discussions.
Tomoyuki Arakawa is partially supported by JSPS KAKENHI Grant Numbers
21H04993, 25K21659 and 26H019. 
Part of paper was written during his stay at SLMath 
for the program
``Representation Theory Under the Influence of Quantum Field Theory".
He thanks the institute for its excellent working environment.
Xuanzhong Dai was partially supported by JSPS KAKENHI Grant Numbers 23K19008 and 24KF0155. Part of paper was completed during his stays at the Research Institute for Mathematical Sciences, Kyoto University, and the Okinawa Institute of Science and Technology. He thanks both institutes for their hospitality and excellent working environments.
He presented some of the results of this paper at Representation Theory XIX, the RIMS Satellite Seminars “VOAs, Number Theory, and Combinatorics", and the 2025 International Congress of Chinese Mathematicians. He thanks the organizers for their invitations and the participants for helpful discussions.
Bailin Song is supported by NSFC No.12671042.

\noindent \textbf{AI Acknowledgements}.
AI was used to assist with proofreading and typesetting improvements in a draft of the manuscript. The authors are solely responsible for all mathematical results and any remaining errors.
\section{Vertex algebras, loop modules, and free fields}\label{sec:preliminaries}
\subsection{Vertex algebras}
A \emph{field} on a vector superspace \(M\) is a formal series \(a(z)=\sum_{n\in\Z}a_{(n)}z^{-n-1}\) with coefficients in \(\on{End}M\) such that \(a_{(n)}m=0\) for every \(m\in M\) and all sufficiently large \(n\). Throughout, brackets denote supercommutators, and parity is understood for homogeneous elements. Two fields are \emph{mutually local} if \((z-w)^N[a(z),b(w)]=0\) for some \(N\geq0\). In this case,
\begin{equation}\label{eq:ope}
[a(z),b(w)]=\sum_{j=0}^{N-1}c_j(w)\frac{\partial_w^j}{j!}\delta(z-w),
\qquad \delta(z-w)=\sum_{n\in\Z}z^{-n-1}w^n,
\end{equation}
for suitable fields \(c_j(w)\). This is also written as the operator product expansion (OPE) \(a(z)b(w)\sim\sum_{j\geq0}c_j(w)(z-w)^{-j-1}\). The \emph{\(n\)-th product} is
\[
(a_{(n)}b)(w)=\on{Res}_z\bigl(
i_{z,w}(z-w)^n a(z)b(w)
-(-1)^{p(a)p(b)}i_{w,z}(z-w)^n b(w)a(z)\bigr),
\]
where \(i_{z,w}\) and \(i_{w,z}\) denote expansion in \(|z|>|w|\) and \(|w|>|z|\), respectively. These products are 
mutually local
with \(a\) and \(b\). The \((-1)\)-st product is the \emph{normally ordered product}, denoted by \(:a(w)b(w):\).

We use the following definition \cite{DeSole-Kac}.
\begin{definition}\label{Def:vertex_algebra-v2}
A \emph{vertex algebra} is a vector superspace \(V\) equipped with a nonzero even vector \(\vac\), an even endomorphism \(T\), and a set \(\mc S\) of mutually local homogeneous fields such that:
\begin{enumerate}
\item\label{Def:vertex_algebra-v2-i}
\(T\vac=0\), \([T,a(z)]=\partial_z a(z)\);
\item\label{Def:vertex_algebra-v2-ii}
\(V\) is spanned by the vectors
\begin{equation}\label{eq:spanning}
a^{i_1}_{(n_1)}\cdots a^{i_m}_{(n_m)}\vac,
\end{equation}
\end{enumerate}
The vector \(\vac\), operator \(T\), and fields in \(\mc S\) are called the \emph{vacuum}, \emph{translation operator}, and \emph{generating fields}, respectively.
We have \(a(z)\vac\in V[[z]]\) for all \(a(z)\in\mc S\) (\cite[Lemma 1.2]{DeSole-Kac}).
\end{definition}

Given mutually local fields \(\mc S\) on \(M\), let \(\langle\mc S\rangle_M\) be their closure, together with \(\on{id}_M\), under linear combinations and all \(n\)-th products. By \cite{Li96}, this is a vertex algebra with vacuum \(\on{id}_M\), translation \(\partial_z\), and generating fields \(Y(a(z),\xi)=\sum_{n\in\Z}a(z)_{(n)}\xi^{-n-1}\), acting by \(n\)-th products. For a vertex algebra \(V\) generated by \(\mc S\), the assignment \(a_{(-1)}\vac\mapsto a(z)\) extends to the \emph{state--field correspondence}
\begin{equation}\label{eq:state-field}
V\isomap\langle\mc S\rangle_V,\qquad a\longmapsto a(z),
\end{equation}
whose inverse sends \(a(z)\) to \(a(z)\vac|_{z=0}\). We freely identify states with their fields.

A \emph{\(V\)-module} is a vector space \(M\) with a linear assignment \(a\mapsto a^M(z)\) of truncated fields satisfying the vacuum identity and the module Borcherds identity. We write \(a^M(z)=\sum_n a^M_{(n)}z^{-n-1}\), omitting the superscript when unambiguous. For homogeneous \(a,b\in V\), the iterate and commutator identities are
\begin{align}
(a_{(n)}b)^M_{(m)}
&=\sum_{i\geq0}(-1)^i\binom ni
\bigl(a^M_{(n-i)}b^M_{(m+i)}
-(-1)^{p(a)p(b)+n}b^M_{(m+n-i)}a^M_{(i)}\bigr),
\label{identity:Bor}\\
[a^M_{(n)},b^M_{(m)}]
&=\sum_{i\geq0}\binom ni(a_{(i)}b)^M_{(m+n-i)}.
\label{identity:Com}
\end{align}

A vertex algebra is \emph{graded} if \(V=\bigoplus_{\Delta\in\C}V_\Delta\) and
\begin{equation}\label{eq:grading}
a_{(n)}b\in V_{\Delta_a+\Delta_b-n-1}
\qquad(a\in V_{\Delta_a},\ b\in V_{\Delta_b}).
\end{equation}
A \emph{graded module} has a decomposition \(M=\bigoplus_{d\in\C}M_d\) satisfying
\begin{equation}\label{eq:grading-module}
a_{(n)}M_d\subset M_{d+\Delta_a-n-1}
\qquad(a\in V_{\Delta_a}).
\end{equation}
It is \emph{positively graded} if the real parts of its nonzero degrees are bounded below. We denote the category of such modules by \(V\on{-mod}_{\geq0}\).

An even vector \(L\in V\) is a \emph{conformal vector} if \(L(z)=\sum_n L_nz^{-n-2}\) satisfies \(L_{-1}=T\) and
\begin{equation}\label{Virasoro condition}
[L_m,L_n]=(m-n)L_{m+n}
+\frac{c}{12}(m^3-m)\delta_{m+n,0}\on{id}_V,
\end{equation}
where \(c\in\C\) is the \emph{central charge}. We call \((V,L)\) a \emph{conformal vertex algebra} if \(L_0\) is semisimple; its eigenspaces give the conformal grading. For conformal modules we require only local finiteness of \(L_0\) and use generalized eigenspaces in \eqref{eq:grading-module}. It is \emph{conical} if \(V=\bigoplus_{\Delta\in\mathbb Q_{\geq0}}V_\Delta\) and \(V_0=\C\vac\).

A vertex algebra is \emph{commutative} if \(a_{(n)}b=0\) for all \(a,b\in V\) and \(n\geq0\), equivalently if all fields supercommute. Such vertex algebras are precisely unital supercommutative algebras with an even derivation: multiplication is the \((-1)\)-st product and the derivation is \(T\).

For an affine scheme \(X=\on{Spec}A\), the coordinate ring \(J_\infty A:=\mc O(J_\infty X)\) of its arc space is naturally a differential algebra, hence a commutative vertex algebra. Following Borisov \cite{B}, we call a vertex \(J_\infty A\)-module an \emph{\(A\)-loop module}. This differs from an ordinary \(J_\infty A\)-module: in general, \((ab)^M_{(-1)}\ne a^M_{(-1)}b^M_{(-1)}\). An \(A\)-loop module is an ordinary \(J_\infty A\)-module under the \((-1)\)-st products if
\begin{equation}\label{eq:usual-module}
a^M_{(n)}=0\qquad(a\in A,\ n\geq0).
\end{equation}
We grade \(J_\infty A\) by assigning weight zero to \(A\).

An \emph{\(A\)-loop module filtration} is an exhaustive increasing filtration \(0=F_{-1}M\subset F_0M\subset\cdots\) by \(A\)-loop submodules such that \(a^M_{(n)}F_pM\subset F_{p-1}M\) for \(a\in A\) and \(n\geq0\). Its associated graded is an ordinary \(J_\infty A\)-module. Every positively graded \(A\)-loop module admits such a filtration: take the loop submodules generated by successively larger bounded-weight subspaces and use \eqref{eq:grading-module}.

Let \(V\) be a \(\mathbb Q_{\geq0}\)-graded vertex algebra. By \eqref{eq:grading}, \(a_{(n)}b=0\) for \(a,b\in V_0\) and \(n\geq0\). Thus \(V_0\) is a unital supercommutative algebra under the \((-1)\)-st product, and the vertex subalgebra it generates is commutative. For a unital commutative \(\C\)-algebra \(A\), a \emph{vertex \(A\)-algebra} is such a \(V\) equipped with a ring homomorphism \(A\to V_0\). This extends uniquely to a vertex algebra homomorphism
\begin{equation}\label{eq:vertex-A-algebra}
J_\infty A\longrightarrow V,
\end{equation}
so every \(V\)-module is naturally an \(A\)-loop module.

For a vertex \(A\)-algebra \(V\), Li's increasing filtration \cite{Li04} is
\[
F_pV=\on{span}_{\C}\bigl\{
a^1_{(-n_1-1)}\cdots a^r_{(-n_r-1)}\vac
\mid a^i\in V_{\Delta_i},\ n_i\geq0,\ \textstyle\sum_i\Delta_i\leq p
\bigr\},
\]
with \(F_pV=0\) for \(p<0\). For rational conformal weights, filtration indices are rational and associated graded spaces are formed degreewise. This filtration is exhaustive and satisfies
\begin{equation}\label{eq:condition-filtration}
(F_pV)_{(n)}F_qV\subset
\begin{cases}
F_{p+q}V,&n<0,\\
F_{p+q-1}V,&n\geq0.
\end{cases}
\end{equation}
The subspace \(F_0V\) is the commutative vertex subalgebra generated by \(V_0\), and each \(F_pV\) is an \(A\)-loop submodule. The filtration is finite on each conformal weight space: writing \(F_pV_\Delta=F_pV\cap V_\Delta\), we have \(V_\Delta=F_\Delta V_\Delta\), and the filtration jumps only at \(\Delta-\mathbb Z_{\geq0}\). It is equivalent, by degreewise reindexing, to the decreasing Li filtration \cite{Li}; see \cite[Proposition 2.6.1]{A12}. Unless otherwise stated, \(F_\bullet V\) denotes this increasing filtration
for a vertex algebra $V$.

The associated graded
\[
\on{gr}_FV=\bigoplus_{\Delta}\bigoplus_{j\geq0}
F_{\Delta-j}V_\Delta/F_{\Delta-j-1}V_\Delta
\]
is a vertex Poisson algebra, and its diagonal part
\[
R_V=\bigoplus_\Delta V_\Delta/F_{\Delta-1}V_\Delta
\subset\on{gr}_FV
\]
is a Poisson superalgebra with operations
\(\bar a\bar b=\overline{a_{(-1)}b}\) and
\(\{\bar a,\bar b\}=\overline{a_{(0)}b}\).
It is naturally isomorphic to Zhu's \(C_2\)-algebra \(V/C_2(V)\) \cite{Li,Z}, where \(C_2(V)=\on{span}_{\C}\{a_{(-2)}b\mid a,b\in V\}\). Moreover, \(\on{gr}_FV\) is generated by \(R_V\) as a differential algebra \cite[Lemma 4.1]{Li}, giving a canonical surjection of vertex Poisson algebras
\begin{equation}\label{eq:Li-surjection}
J_\infty R_V\twoheadrightarrow\on{gr}_FV.
\end{equation}
We call \(V\) \emph{finitely strongly generated} if \(R_V\) is finitely generated. The map \(A\to V_0\) induces \(A\to R_V\) with Poisson-commutative image. The \emph{associated scheme} and \emph{associated variety} are
\[
\tilde X_V:=\on{Spec}R_V,\qquad
X_V:=(\tilde X_V)_{\mathrm{red}};
\]
in particular, \(\tilde X_V\) is a scheme over \(\on{Spec}A\).

For a \(\mathbb Z_{\geq0}\)-graded vertex algebra, Zhu's algebra is \(\Zhu(V)=V/O(V)\), where \(O(V)\) is spanned by the elements \(a\circ b\) below, and its multiplication is induced by \(a*b\):
\[
a\circ b=\sum_{i\geq0}\binom{\Delta_a}{i}a_{(i-2)}b,
\qquad
a*b=\sum_{i\geq0}\binom{\Delta_a}{i}a_{(i-1)}b
\qquad(a\in V_{\Delta_a}).
\]
For homogeneous \(a,b\), we have
\[
a*b-(-1)^{p(a)p(b)}b*a
\equiv\sum_{i\geq0}\binom{\Delta_a-1}{i}a_{(i)}b
\pmod{O(V)}.
\]
Thus the image of \(A\) in \(\Zhu(V)\) is commutative. The images \(F_p\Zhu(V)\) of \(F_pV\) form an exhaustive filtration satisfying
\begin{equation}\label{eq:Zhu-commutator}
F_p\Zhu(V)F_q\Zhu(V)\subset F_{p+q}\Zhu(V),
\qquad
[F_p\Zhu(V),F_q\Zhu(V)]\subset F_{p+q-1}\Zhu(V).
\end{equation}
Consequently, \(\on{gr}_F\Zhu(V)\) is a Poisson superalgebra, and there is a canonical surjection \(R_V\twoheadrightarrow\on{gr}_F\Zhu(V)\).

\subsection{\texorpdfstring{Example: the \(\beta\gamma bc\) system}{Example: the beta-gamma bc system}}
\label{subsection:betagamma bc-system}
Let \(W_0,W_1\) be finite-dimensional complex vector spaces and \(F=W_0\times W_1\to W_0\) the trivial vector bundle from the Introduction. The \(\beta\gamma\) system \(\mc S(W_0)\) is generated by even fields \(\beta^v,\gamma^{v^*}\), linear in \(v\in W_0\) and \(v^*\in W_0^*\); the \(bc\) system \(\mc E(W_1)\) is generated by odd fields \(b^w,c^{w^*}\), linear in \(w\in W_1\) and \(w^*\in W_1^*\). Their defining OPEs are
\[
\beta^v(z)\gamma^{v^*}(w)\sim\frac{\langle v,v^*\rangle}{z-w},
\qquad
b^w(z)c^{w^*}(w)\sim\frac{\langle w,w^*\rangle}{z-w},
\]
with all remaining OPEs determined by locality or regularity. Their tensor product is
\begin{equation}\label{eq:cdo-wedgeF}
\mc D^{\mathrm{ch}}_{\wedge F}(W_0)
:=\mc S(W_0)\otimes\mc E(W_1).
\end{equation}
For bases \(\{v_i\}\) of \(W_0\) and \(\{w_j\}\) of \(W_1\), with dual bases \(\{v_i^*\}\) and \(\{w_j^*\}\), the conformal vector
\begin{equation}\label{eq:conformal-vector-of-beta-gamma}
L=\sum_i:\beta^{v_i}\partial\gamma^{v_i^*}:
-\sum_j:b^{w_j}\partial c^{w_j^*}:
\end{equation}
gives a \(\mathbb Z_{\geq0}\)-grading in which \(\beta,b\) have weight one and \(\gamma,c\) have weight zero. Hence
\[
\mc D^{\mathrm{ch}}_{\wedge F}(W_0)_0
\cong\on{Sym}(W_0^*)\otimes\bigwedge W_1^*
=\mc O(W_0)\otimes\bigwedge W_1^*,
\]
so \(\cdo{\wedge F}(W_0)\) is a vertex \(\mc O(W_0)\)-algebra. We have
\begin{align}
R_{\cdo{\wedge F}(W_0)}
&\cong\mc O(\wedge T^*F)
\cong\mc O(T^*W_0)\otimes\bigwedge W_1\otimes\bigwedge W_1^*,
\label{eq:R-of-cdo}\\
\Zhu(\cdo{\wedge F}(W_0))
&\cong\on{Weyl}(W_0)\otimes\on{Cl}(W_1).
\end{align}
Here \(\mc O(\wedge T^*F)\) denotes the coordinate superalgebra of \(T^*W_0\times\Pi T^*W_1\), while \(\on{Weyl}(W_0)\) and \(\on{Cl}(W_1)\) are the Weyl and Clifford algebras associated with \(T^*W_0\) and \(\Pi T^*W_1\), respectively. In particular, \(\tilde X_{\cdo{\wedge F}(W_0)}\cong T^*W_0\times\Pi T^*W_1\).

\section{Localization and BRST reduction}\label{sec:localization}
Localization of vertex algebras and their modules was introduced in \cite{MSV} and developed in \cite{B}. We review the construction in a slightly more general setting. Throughout, \(A\) is a unital commutative \(\C\)-algebra, \(S\subset A\) is multiplicative, and  module homomorphisms preserve the grading.

\subsection{\texorpdfstring{Localization of \(A\)-loop modules}{Localization of A-loop modules}}
\begin{lemma}\label{lem:inverse}
If \(M\in A\on{-loop-mod}_{\geq0}\) and \(s\in A^\times\), then \(s^M_{(-1)}\) is invertible.
\end{lemma}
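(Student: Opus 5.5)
The plan is to construct an explicit inverse of \(s^M_{(-1)}\) from the loop-module structure. Set \(t=s^{-1}\in A\). In the commutative vertex algebra \(J_\infty A\) we have \(s_{(-1)}t=1=\vac\). The obstacle is that \(M\) is only a loop module, so \((st)^M_{(-1)}\neq s^M_{(-1)}t^M_{(-1)}\) in general. We therefore first show that \(s^M_{(-1)}t^M_{(-1)}\) and \(t^M_{(-1)}s^M_{(-1)}\) are unipotent, i.e.\ of the form \(\on{id}+N\) with \(N\) locally nilpotent. Their invertibility then gives invertibility of \(s^M_{(-1)}\).

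\textbf{Step 1: an error formula.} Apply the iterate identity \eqref{identity:Bor} with \(a=s\), \(b=t\), \(n=m=-1\). Since \(J_\infty A\) is commutative and all fields in \(M\) supercommute, this gives
\[
\on{id}_M=(s_{(-1)}t)^M_{(-1)}
=\sum_{i\ge0}\bigl(s^M_{(-1-i)}t^M_{(-1+i)}+t^M_{(-2-i)}s^M_{(i)}\bigr).
\]
Hence \(s^M_{(-1)}t^M_{(-1)}=\on{id}_M-N\), where \(N\) is a finite sum on each vector. Each term of \(N\) contains a factor \(a^M_{(k)}\) with \(a\in A\) and \(k\ge0\). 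The same formula with \(s,t\) swapped gives a left-sided version \(t^M_{(-1)}s^M_{(-1)}=\on{id}_M-N'\).

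\textbf{Step 2: local nilpotence.} Use the \(A\)-loop module filtration \(F_\bullet M\) from the preliminaries, which exists because \(M\) is positively graded. By definition \(a^M_{(k)}F_pM\subset F_{p-1}M\) for \(a\in A\) and \(k\ge0\). Each \(F_pM\) is a loop submodule, so all modes \(a^M_{(j)}\) preserve it. Therefore \(N\) and \(N'\) map \(F_pM\) into \(F_{p-1}M\). Since \(F_{-1}M=0\) and the filtration is exhaustive, \(N\) and \(N'\) are locally nilpotent. Thus \(\on{id}-N\) and \(\on{id}-N'\) are invertible, with inverses \(\sum_{k\ge0}N^k\) and \(\sum_{k\ge0}N'^k\); these sums are finite on every vector.

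\textbf{Step 3: conclusion.} From \(s^M_{(-1)}\bigl(t^M_{(-1)}(\on{id}-N)^{-1}\bigr)=\on{id}\), the operator \(s^M_{(-1)}\) has a right inverse. From \(\bigl((\on{id}-N')^{-1}t^M_{(-1)}\bigr)s^M_{(-1)}=\on{id}\), it has a left inverse. Hence \(s^M_{(-1)}\) is invertible.

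The main point needing care is Step 1: getting the signs and indices of the Borcherds identity right, and checking that every correction term carries a nonnegative mode of an element of \(A\), so that it lowers the filtration. If the correction terms turn out to have the form \(s^M_{(-1-i)}t^M_{(-1+i)}\) with \(i\ge1\), note that \(t^M_{(-1+i)}\) is then a nonnegative mode and \(s^M_{(-1-i)}\) preserves \(F_pM\). Either way the product lowers \(p\) by at least one.
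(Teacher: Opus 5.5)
Your proof is correct and is essentially the paper's argument. The paper observes that on $\on{gr}_F M$, an ordinary $J_\infty A$-module, $s^M_{(-1)}$ acts invertibly with inverse $(s^{-1})^M_{(-1)}$, and then concludes by induction on the exhaustive, bounded-below filtration degree. Your Borcherds computation makes explicit why $\on{id}_M - s^M_{(-1)}(s^{-1})^M_{(-1)}$ lowers the filtration, and your Neumann series replaces that induction (since all $A$-modes commute on $M$, a one-sided inverse would already suffice).
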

\begin{proof}
Choose an \(A\)-loop module filtration of \(M\). On \(\on{gr}_FM\), multiplication by \(s\) has inverse multiplication by \(s^{-1}\). The assertion follows by induction on the filtration degree, since the filtration is exhaustive and bounded below.
\end{proof}

Write \(A_S\) for the ordinary localization; then \(J_\infty(A_S)\cong(J_\infty A)_S\).
\begin{proposition}\label{pro:localization-of-loop-modules}
Every \(M\in A\on{-loop-mod}_{\geq0}\) admits a localization \(\varphi\colon M\to M_S\), with \(M_S\in A_S\on{-loop-mod}_{\geq0}\), characterized by the following property: every \(A\)-loop module homomorphism \(\phi\colon M\to N\), with \(N\in A_S\on{-loop-mod}_{\geq0}\), factors uniquely through an \(A_S\)-loop module homomorphism \(M_S\to N\).
\end{proposition}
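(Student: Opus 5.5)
Uniqueness of \(M_S\) (up to unique isomorphism) follows formally from the universal property, so the work is existence. The plan is to construct \(M_S\) as a vertex-algebra-theoretic tensor product: \(M_S := J_\infty(A_S)\otimes_{J_\infty A} M\), realized as a quotient of a module induced from the commutative vertex algebra \(J_\infty(A_S)\cong (J_\infty A)_S\). The resulting module is positively graded, since \(A_S\) has weight zero. Concretely, I will follow the construction of \cite{MSV,B}.

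\textbf{Step 1.} Form the space spanned by formal expressions \(s^{-k}\cdot m\). On \(\on{gr}_F M\), an ordinary \(J_\infty A\)-module, take the ordinary localization \((\on{gr}_FM)_S=(J_\infty A)_S\otimes_{J_\infty A}\on{gr}_FM\). This gives the expected associated graded.

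\textbf{Step 2.} Build \(M_S\) directly, without using the filtration. For \(s\in S\), the field \(s^{-1}(z)\) should be the unique inverse of \(s(z)\) in the sense of normally ordered products. On any \(A_S\)-loop module \(N\), Lemma~\ref{lem:inverse} shows that \(s^N_{(-1)}\) is invertible, and the full field \(s^{-1}(z)\) is then determined by \(s(z)\) via the identity \(:s(z)s^{-1}(z):=\on{id}\) and its derivatives. I will write down an explicit formula for \(s^{-1}(z)\) as a series in the modes of \(s\) and \((s^N_{(-1)})^{-1}\). This formula is finite on each weight space by positivity of the grading.

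\textbf{Step 3.} Define \(M_S\) as the direct limit of copies of \(M\) indexed by \(S\), with transition maps given by \(s^M_{(-1)}\). Equivalently, set \(M_S:=\varinjlim_{s\in S} M\), with the maps \(M\to M\), \(m\mapsto t_{(-1)}m\), for \(s\mid st\). I would first check that this is well defined because the operators \(s_{(-1)}\) for different \(s\) commute, by \eqref{identity:Com}. Here the brackets \([s_{(-1)},t_{(-1)}]=\sum_i\binom{-1}{i}(s_{(i)}t)_{(-2-i)}\) vanish because \(J_\infty A\) is commutative.

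\textbf{Step 4.} Extend the action of \(J_\infty A\) to \(J_\infty(A_S)\) on the colimit, using the formula from Step 2. Then verify the Borcherds identity by a filtration argument: it holds on \(\on{gr}\), and locality follows from the corresponding statements for \(M\) together with invertibility.

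\textbf{Step 5.} Check the universal property. Given \(\phi\colon M\to N\), define the factorization by sending the element \(m\) in the \(s\)-th copy of the colimit to \((s^N_{(-1)})^{-1}\phi(m)\). This is forced by Lemma~\ref{lem:inverse}, which gives uniqueness. Compatibility with all modes comes from the uniqueness of inverse fields in Step 2.

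The main obstacle is the subtlety flagged in the paper: \((ab)_{(-1)}\neq a_{(-1)}b_{(-1)}\). Because of this, \(\varinjlim\) along \(s_{(-1)}\) is not obviously an \(A_S\)-loop module. In particular, \((st)_{(-1)}\) and \(s_{(-1)}t_{(-1)}\) differ by terms involving nonnegative modes. To handle this, I would index the transition maps by \((s^k)_{(-1)}\) and show they are compatible up to operators that lower the filtration degree. That makes the colimit well behaved at each weight, using bounded-below weights and finiteness of the filtration per weight. If this proves too messy, my fallback is to define \(M_S\) as the induced module \(J_\infty(A_S)\otimes M\), taken modulo the Borcherds relations in the sense of Li's tensor product. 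Its universal property is then tautological, and the remaining work is to prove positivity and that \(\varphi\) has the expected kernel, both via \(\on{gr}_F\).
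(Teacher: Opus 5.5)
\textbf{There is a genuine gap in the construction of \(M_S\).} Your ingredients match the paper's. All modes of elements of \(A\) commute. One inverts the operators \(D_s:=s_{(-1)}\). The inverse field is the geometric series \((s^{-1})(z)=\sum_{l\geq0}(-1)^lD_s^{-l-1}\bigl(\sum_{k\neq-1}s_{(k)}z^{-k-1}\bigr)^l\), which is finite on each vector by positivity.

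The gap is in Step~3, where \(M_S\) is actually built. Consider a direct system indexed by \(S\) under divisibility, with transition map \(t_{(-1)}\) from the \(s\)-copy to the \(st\)-copy. It requires \(u_{(-1)}t_{(-1)}=(tu)_{(-1)}\), and this fails exactly because of the nonnegative-mode terms you point out. (Also, \(t\) need not be determined by \(s\) and \(st\).) So the colimit is not defined. Neither is your map in Step~5: sending \(m\) in the \(s\)-copy to \((s^N_{(-1)})^{-1}\phi(m)\) is compatible with the transitions only if \((st)^N_{(-1)}=s^N_{(-1)}t^N_{(-1)}\) on \(\phi(M)\), where superscripts denote the action on \(N\).

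Your remedy does not repair this. Operators \((s^k)_{(-1)}\) that agree only up to filtration-lowering terms still do not form a directed system. The fallback via an induced tensor product only relocates the difficulty. Showing that such an object exists and lies in \(A_S\on{-loop-mod}_{\geq0}\), with truncation and a lower-bounded grading, is essentially the content of the proposition, and you give no argument for it.

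\textbf{The missing idea is to localize at operators rather than at elements of \(S\).} Let \(S_M\subset\on{End}M\) be the multiplicative set generated by the commuting operators \(D_s\), \(s\in S\). Put \(M_S:=S_M^{-1}M\), the ordinary localization over the commutative algebra generated by the \((-1)\)-modes. Equivalently, index your colimit by finite products \(D_{s_1}\cdots D_{s_r}\). For a single \(f\), this is the system \(M\to M\to\cdots\) in which every map is \(f_{(-1)}\), not \((f^k)_{(-1)}\). Then \(D_{st}\) is inverted simply because \(st\in S\), and its discrepancy from \(D_sD_t\) never has to be controlled.

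With this definition the rest of your plan goes through:
\begin{enumerate}
\item Each mode \(a_{(n)}\) commutes with every \(D_s\), so it extends coefficientwise to \(S_M^{-1}M\).
\item The geometric-series fields commute with all \(A\)-fields and satisfy \(s(z)s^{-1}(z)=\on{id}\) exactly. So no filtration argument is needed in Step~4: these mutually commuting fields generate a commutative vertex algebra of fields, and the map from \(J_\infty A\) extends to \(J_\infty(A_S)\cong(J_\infty A)_S\).
\item The grading extends because each \(D_s\) has degree zero.
\item Step~5 works as follows. Send \(u^{-1}m\), with \(u\in S_M\), to \(u_N^{-1}\phi(m)\), where \(u_N\) is the same product of the operators \(s^N_{(-1)}\). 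These are invertible on \(N\) by Lemma~\ref{lem:inverse}, and the explicit formula for \(s^{-1}(z)\) shows that the map intertwines the inverse fields.
\end{enumerate}
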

\begin{proof}
All modes of elements of \(A\) commute. Let \(A_M\subset\on{End}M\) be generated by \(a^M_{(-1)}\), \(a\in A\), and let \(S_M\) be the multiplicative subset generated by \(s^M_{(-1)}\), \(s\in S\). Define \(M_S:=S_M^{-1}M\). Distinguishing \(s\in S\) from its operator \(D_s:=s^M_{(-1)}\), we extend \(a^M(z)\) coefficientwise to \(M_S\) and define
\begin{equation}\label{eq:def-of-field-of-of-inverse}
(s^{-1})^{M_S}(z)
=\sum_{l\geq0}(-1)^lD_s^{-l-1}
\left(\sum_{k\ne-1}s^M_{(k)}z^{-k-1}\right)^l.
\end{equation}
Positive grading and commutativity make this expansion coefficientwise finite on each vector and ensure the field truncation property. These inverse fields commute with all \(A\)-fields and satisfy \(s(z)(s^{-1})(z)=\on{id}\). Hence the map from \(J_\infty A\) to the resulting commutative vertex algebra extends to \(J_\infty(A_S)\), giving the required loop module structure. The grading extends because every \(D_s\) has degree zero.

For \(\phi\colon M\to N\) in the statement, Lemma~\ref{lem:inverse} allows each \(D_s\) to be inverted on \(N\). Thus \(\phi\) extends uniquely to \(S_M^{-1}M\); formula \eqref{eq:def-of-field-of-of-inverse} shows that this extension intertwines the inverse fields and is an \(A_S\)-loop module homomorphism.
\end{proof}

In general, \((s^{-1})^{M_S}_{(-1)}m\ne D_s^{-1}m\); equality holds if \(s^M_{(n)}m=0\) for all \(n\geq0\). For an ordinary \(J_\infty A\)-module, this construction is ordinary localization. Moreover, \(\ker\varphi\) consists of the vectors killed by some product \(D_{s_1}\cdots D_{s_r}\).

\begin{lemma}\label{lem:exact}
Localization defines an exact functor \(A\on{-loop-mod}_{\geq0}\to A_S\on{-loop-mod}_{\geq0}\).
\end{lemma}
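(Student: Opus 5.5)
The plan is to reduce exactness to the exactness of ordinary localization of modules over a commutative ring. The key point is that, as a vector space, $M_S=S_M^{-1}M$ is the Ore localization of $M$ with respect to the commuting operators $D_s=s^M_{(-1)}$. The fields in \eqref{eq:def-of-field-of-of-inverse} are then extra structure put on this space; they do not change it. So I will prove that $M\mapsto M_S$ is exact on the underlying spaces. The loop module structure and the grading then come along automatically.

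\textbf{Functoriality.} Let $f\colon M\to N$ be a morphism in $A\on{-loop-mod}_{\geq0}$. Composing with $\varphi_N$ gives a map $M\to N_S$. By the universal property in Proposition~\ref{pro:localization-of-loop-modules}, this map factors uniquely through an $A_S$-loop homomorphism $f_S\colon M_S\to N_S$. Uniqueness gives compatibility with composition and identities. Concretely, $f_S(D_{s}^{-1}m)=D_s^{-1}f(m)$, because $f$ intertwines the modes $s_{(-1)}$.

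\textbf{Exactness.} Let $0\to M'\to M\to M''\to0$ be exact. Form the polynomial ring $P=\C[x_s\mid s\in S]$ (or $\C[x_a\mid a\in A]$) acting on each module through $x_s\mapsto D_s$. Every morphism in the category is then $P$-linear, since it commutes with all modes. The sequence is therefore an exact sequence of $P$-modules. Let $T\subset P$ be the multiplicative set generated by the $x_s$. Then $M_S=T^{-1}M$ as $P$-modules; the same holds for $M'$ and $M''$, since each $S_M$ is just the image of $T$. Ordinary localization $T^{-1}(-)$ is exact, because $T^{-1}P$ is flat over $P$. So $0\to M'_S\to M_S\to M''_S\to 0$ is exact as vector spaces, and the maps are the $f_S$ described above.

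\textbf{Main obstacle.} The only subtle point is checking that the Ore-localization description is compatible across modules. The issue is that $S_M$ is defined inside $\on{End}M$, so it depends on $M$. Passing to the abstract ring $P$ avoids this dependence. I also need that the loop structure on $M'_S\subset M_S$ agrees with its own intrinsic localization structure. This follows from formula \eqref{eq:def-of-field-of-of-inverse}: it is given by the same expression in the modes of $M$, and those modes restrict to $M'$. Finally, grading is preserved because each $D_s$ has degree zero, so all maps are degree-preserving, as the statement requires.
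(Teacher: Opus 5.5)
Your proposal is correct and follows the paper's own argument: the paper likewise derives exactness from the exactness of ordinary localization over the commutative algebra generated by the $(-1)$-st modes of $A$. Replacing the module-dependent algebra $A_M\subset\on{End}M$ by the abstract polynomial ring $P$, acting through $x_s\mapsto D_s$, is a harmless refinement that makes the comparison across $M'$, $M$, $M''$ explicit; your identification of $f_S$ with $T^{-1}f$ is also correct.
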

\begin{proof}
This follows from exactness of ordinary localization over the commutative algebra generated by the \((-1)\)-st modes of \(A\).
\end{proof}

An \(A\)-loop module filtration localizes to an \(A_S\)-loop module filtration by
\begin{equation}\label{eq:localization-of-filtration}
F_pM_S:=(F_pM)_S.
\end{equation}
For \(m\in F_pM_S\), formula \eqref{eq:def-of-field-of-of-inverse} gives \((s^{-1})_{(-1)}m\equiv D_s^{-1}m\pmod{F_{p-1}M_S}\). Exactness now yields the following.
\begin{lemma}\label{lemma:filtration-commutes-with-localization}
There is a natural isomorphism \(\on{gr}_F(M_S)\cong(\on{gr}_FM)_S\), where the right-hand side is ordinary localization.
\end{lemma}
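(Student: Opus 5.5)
The idea is to compare the two sides through a natural map and then check, degree by degree, that this map is an isomorphism. Write $F_\bullet M$ for an $A$-loop module filtration of $M$, and $F_pM_S=(F_pM)_S$ as in \eqref{eq:localization-of-filtration}. By Lemma~\ref{lem:exact}, localization is exact, so $F_{p-1}M_S\to F_pM_S$ is injective, and we have
\[
F_pM_S/F_{p-1}M_S\cong (F_pM/F_{p-1}M)_S ,
\]
where the right-hand side is the loop-module localization of $\on{gr}_pM$.

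The first step is to check that $\on{gr}_F(M_S)$ is an ordinary $J_\infty(A_S)$-module, i.e. that $F_\bullet M_S$ is again an $A_S$-loop module filtration. For $a\in A$ and $n\ge 0$, the operator $a_{(n)}$ lowers the filtration degree on $M$. Since it commutes with every $D_s$, it lowers the degree on $S_M^{-1}F_pM$ as well. For inverse fields we use formula \eqref{eq:def-of-field-of-of-inverse}. Every term with $l\ge1$ contains a factor $s_{(k)}$ with $k\neq -1$. When $k\ge0$ this factor lowers the filtration degree. The modes with $k\le -2$ are derivatives of $s$, and they preserve $F_p$.

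The point needing care is therefore the claim already recorded before the lemma: for $m\in F_pM_S$,
\[
(s^{-1})_{(-1)}m\equiv D_s^{-1}m \pmod{F_{p-1}M_S},
\]
together with the fact that the positive modes $(s^{-1})_{(n)}$, $n\ge 0$, lower the degree. To verify this, note that the coefficient of $z^{-n-1}$ with $n\ge 0$, and the $l\ge 1$ contribution to the coefficient of $z^0$, each pick up a mode $s_{(k)}$ with $k\ge0$. Such terms arise from matching powers of $z$ against the negative modes, and I expect this matching to be the main bookkeeping obstacle. The concern is that products of the form $s_{(-2)}s_{(0)}$ could still contribute to the $z^0$ coefficient. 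Even then, each such term contains at least one factor $s_{(k)}$ with $k\ge0$, so it lands in $F_{p-1}$.

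For the second step we use that, on a module satisfying \eqref{eq:usual-module}, loop-module localization coincides with ordinary localization, as remarked after Proposition~\ref{pro:localization-of-loop-modules}. Hence $(\on{gr}_pM)_S$ is the ordinary localization of the ordinary $J_\infty A$-module $\on{gr}_pM$. It remains to compare the actions. By the congruence in the first step, the induced action of $(s^{-1})_{(-1)}$ on $\on{gr}_F(M_S)$ is $D_s^{-1}$. The derivative modes $(s^{-1})_{(-k-1)}$, $k\ge1$, are determined through the relation $s(z)s^{-1}(z)=\on{id}$ modulo lower filtration. So both sides carry the same $J_\infty(A_S)=(J_\infty A)_S$-action. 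Summing over $p$ gives the isomorphism $\on{gr}_F(M_S)\cong(\on{gr}_FM)_S$. Naturality in $M$ follows from the universal property in Proposition~\ref{pro:localization-of-loop-modules}.
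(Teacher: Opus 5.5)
Your proposal is correct and follows the paper's argument: localize $0\to F_{p-1}M\to F_pM\to F_pM/F_{p-1}M\to 0$ using exactness, identify the loop localization of the ordinary $J_\infty A$-module $F_pM/F_{p-1}M$ with its ordinary localization, and sum over $p$. Your extra mode count in the inverse-field formula, showing that $(F_\bullet M)_S$ is an $A_S$-loop module filtration, fills in what the paper states without proof just before the lemma; your separate comparison of actions is harmless but already implied, since exactness gives an isomorphism of $A_S$-loop modules.
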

\begin{proof}
Localize \(0\to F_{p-1}M\to F_pM\to F_pM/F_{p-1}M\to0\) and take the direct sum over \(p\).
\end{proof}

\begin{lemma}\label{lem:spanning-set-of-localization}
The space \(M_S\) is spanned by \((s^{-1})_{(-1)}m\), with \(s\in S\) and \(m\in M\).
\end{lemma}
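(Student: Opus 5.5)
By construction \(M_S=S_M^{-1}M\), so every element has the form \(D_{s_1}^{-1}\cdots D_{s_r}^{-1}m\). Since \(D_{s_1}\cdots D_{s_r}=D_{s_1\cdots s_r}\) modulo corrections, we would like to reduce to elements \(D_s^{-1}m\) with a single \(s\). Products of the \(D_{s_i}\) are not literally \(D_{s_1\cdots s_r}\), so we will not rely on that identity. Instead, let \(N\subset M_S\) be the span of all \((s^{-1})_{(-1)}m\) with \(s\in S\), \(m\in M\); we must show \(N=M_S\).

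The plan is to induct along a localized \(A\)-loop module filtration. Fix an \(A\)-loop module filtration \(F_\bullet M\), which exists since \(M\) is positively graded, and localize it via \eqref{eq:localization-of-filtration}. This gives an exhaustive filtration \(F_pM_S=(F_pM)_S\), bounded below by \(F_{-1}M_S=0\). We prove \(F_pM_S\subset N\) by induction on \(p\). The filtration might be indexed by a well-ordered set; if necessary, work degreewise, where only finitely many steps occur.

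For the inductive step, take \(x\in F_pM_S\). By Lemma~\ref{lemma:filtration-commutes-with-localization}, its image in \(\on{gr}_pM_S\cong(\on{gr}_pM)_S\) is an ordinary fraction. Here ordinary localization over the commutative algebra \(A\) acting via the operators \(D_a\) on \(\on{gr}M\) is in force, and on \(\on{gr}M\) these operators are multiplicative. Hence, using a common denominator, this image can be written as \(s^{-1}\bar m\) with a single \(s\in S\) and \(m\in F_pM\). By the congruence \((s^{-1})_{(-1)}m\equiv D_s^{-1}m\pmod{F_{p-1}M_S}\) noted before Lemma~\ref{lemma:filtration-commutes-with-localization}, we get
\[
x-(s^{-1})_{(-1)}m\in F_{p-1}M_S .
\]
The induction hypothesis places this difference in \(N\), so \(x\in N\).

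The main point to check is the claim that \((\on{gr}_pM)_S\) consists of single fractions \(s^{-1}\bar m\). This holds because on the associated graded, which is an ordinary \(J_\infty A\)-module by the filtration property, \(D_{s}D_{t}=D_{st}\). Therefore \(D_{s_1}^{-1}\cdots D_{s_r}^{-1}\bar m=D_{s_1\cdots s_r}^{-1}\bar m\), and \(S\) is multiplicative.

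A second point is that the degree-zero condition makes the filtration compatible with the grading, so the induction terminates on each graded piece. Since every \(x\) lies in some \(F_pM_S\) and the filtration is bounded below, the induction covers all of \(M_S\), proving \(N=M_S\).
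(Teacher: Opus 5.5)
Your proposal is correct and follows the paper's proof: the paper just says to apply Lemma~\ref{lemma:filtration-commutes-with-localization} and induct on the filtration degree, and you have filled in the two details that make this work, namely that every element of the ordinary localization \((\on{gr}_pM)_S\) is a single fraction \(s^{-1}\bar m\), and that \((s^{-1})_{(-1)}m\equiv D_s^{-1}m\) modulo \(F_{p-1}M_S\). Your hedges about well-ordered indexing and degreewise termination are unnecessary, since \(A\)-loop module filtrations are indexed by integers \(p\geq-1\) with \(F_{-1}M=0\).
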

\begin{proof}
Apply Lemma~\ref{lemma:filtration-commutes-with-localization} and induction on the filtration degree.
\end{proof}

\begin{lemma}\label{lemm:elements-of-localization}
For every \(m\in M_S\), some product \(D_{s_1}\cdots D_{s_r}\), with \(s_i\in S\), sends \(m\) into the image of \(M\).
\end{lemma}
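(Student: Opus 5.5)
This follows directly from the construction \(M_S=S_M^{-1}M\) in Proposition~\ref{pro:localization-of-loop-modules}, provided we use the right notion of ``image''. The approach is to write a general element of \(M_S\) as a fraction \(D_{s_1}^{-1}\cdots D_{s_r}^{-1}\varphi(m_0)\) and clear denominators.

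First, recall that \(S_M\subset A_M\) is the multiplicative subset generated by the operators \(D_s=s^M_{(-1)}\), \(s\in S\). Since all modes of elements of \(A\) commute, \(A_M\) is a commutative ring. Thus \(M_S=S_M^{-1}M\) is the ordinary localization of the \(A_M\)-module \(M\) at \(S_M\). Every element of this localization has the form \(m=\varphi(m_0)/t\) with \(m_0\in M\) and \(t\in S_M\). By definition of \(S_M\), we may write \(t=D_{s_1}\cdots D_{s_r}\) with \(s_i\in S\).

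Second, I would make precise what ``\(D_s\) acting on \(M_S\)'' means. The operators \(D_{s}\) extend to \(M_S\) coefficientwise; this is how \(a^M(z)\) was extended in the proof of Proposition~\ref{pro:localization-of-loop-modules}. Concretely, \(D_s\) acts on \(M_S\) as the \((-1)\)-st mode \(s^{M_S}_{(-1)}\) of \(s\in A\subset A_S\). With this convention, ordinary localization gives
\[
D_{s_1}\cdots D_{s_r}\cdot\frac{\varphi(m_0)}{t}=\frac{t\,m_0}{t}=\varphi(m_0),
\]
which lies in the image of \(M\).

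The only real point to check is the compatibility of modes: the \((-1)\)-st mode of \(s\) on \(M_S\), defined via the loop module structure, must agree with the localized operator \(D_s\). It must not be confused with \((s^{-1})_{(-1)}\), which in general differs from \(D_s^{-1}\), as remarked after Proposition~\ref{pro:localization-of-loop-modules}. For \(s\in A\) itself, the field \(s^{M_S}(z)\) is by construction the coefficientwise extension of \(s^M(z)\). Hence its \((-1)\)-st mode is exactly the localized \(D_s\), and \(\varphi\) intertwines the two.

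As an alternative that avoids the explicit fraction description, one could argue by induction on the filtration using Lemma~\ref{lem:spanning-set-of-localization} and Lemma~\ref{lemma:filtration-commutes-with-localization}. That route needs more bookkeeping because of the correction terms in \eqref{eq:def-of-field-of-of-inverse}, so I would use the direct argument.
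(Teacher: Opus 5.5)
Your argument is correct and matches the paper's proof, which consists of the single observation that the lemma is immediate from \(M_S=S_M^{-1}M\): you write an element as a fraction over the commutative algebra \(A_M\) and clear the denominator \(t=D_{s_1}\cdots D_{s_r}\). Your extra check that \(s^{M_S}_{(-1)}\) is the localized operator \(D_s\), and is not to be confused with \((s^{-1})_{(-1)}\), makes explicit a point the paper leaves implicit.
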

\begin{proof}
This is immediate from \(M_S=S_M^{-1}M\).
\end{proof}

For \(f\in A\), write \(M_f\) for localization at \(\{f^r\mid r\geq0\}\).
\begin{lemma}\label{lem:in-localization}
Let \(M\in A\on{-loop-mod}_{\geq0}\).
\begin{enumerate}
\item For \(m\in M\), the following are equivalent: \(m\mapsto0\) in \(M_f\); \((f_{(-1)})^Nm=0\) for some \(N\); and \((f^N)_{(-1)}m=0\) for some \(N\).
\item If \(m\in F_pM_f\), then each of \((f_{(-1)})^Nm\) and \((f^N)_{(-1)}m\) lies in the image of \(F_pM\) for all sufficiently large \(N\).
\end{enumerate}
\end{lemma}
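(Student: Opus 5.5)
The plan is to prove part (1) by comparing the two operators $(f_{(-1)})^N$ and $(f^N)_{(-1)}$ through the loop module filtration, and then to derive (2) from (1) together with Lemma~\ref{lemm:elements-of-localization}. Throughout, $D_f=f^M_{(-1)}$, and $F_\bullet M$ is an $A$-loop module filtration as in Section~\ref{sec:preliminaries}, with $m\in F_pM$.

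For (1), the description of $\ker\varphi$ after Proposition~\ref{pro:localization-of-loop-modules}, specialized to $S=\{f^r\}$, says the following. We have $m\mapsto 0$ if and only if $m$ is killed by a product of operators $D_{f^{r_1}}\cdots D_{f^{r_k}}$. So the work is to compare $D_{f^r}=(f^r)_{(-1)}$ with $D_f^r$.

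The key computation uses the normally ordered product formula
\[
(ab)_{(-1)}=\sum_{j<0}a_{(j)}b_{(-2-j)}+\sum_{j\ge0}b_{(-2-j)}a_{(j)}.
\]
Its terms involving a nonnegative mode of an element of $A$ lower the filtration degree. It follows that
\[
(f^r)_{(-1)}\equiv D_f^r \pmod{F_{p-1}M}\quad\text{on } F_pM,
\]
and similarly for products. All these operators commute and preserve each $F_pM$. Moreover, $\operatorname{gr}_FM$ is an ordinary $J_\infty A$-module on which both $(f^r)_{(-1)}$ and $D_f^r$ act as multiplication by $f^r$. I will use two facts:
\begin{itemize}
\item Both families generate the same localization, since $S_M$-inverting $D_f$ inverts every $D_{f^r}$ by Lemma~\ref{lem:inverse} applied to $M_f$, and conversely.
\item The kernel of localization is the set of vectors killed by some element of $S_M$.
\end{itemize}

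The precise argument runs as follows. The operator $D_f$ acts invertibly on $M_f$ by Lemma~\ref{lem:inverse}, and so do the $D_{f^r}$. Hence if $D_f^Nm=0$ then $m\mapsto 0$, and likewise if $(f^N)_{(-1)}m=0$. Conversely, suppose $m\mapsto0$, so a product $P$ of the generators $D_{f^{r_i}}$ kills $m$. I want to show $D_f^Nm=0$ for some $N$, and I do this by induction on $p$. Write $P=D_f^{R}+Q$ with $R=\sum r_i$, where $Q$ maps $F_pM$ into $F_{p-1}M$ and commutes with $D_f$. Then $D_f^Rm=-Qm\in F_{p-1}M$, and $Qm$ also maps to zero. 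By induction $D_f^{N'}Qm=0$, so $D_f^{R+N'}m=0$.

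The symmetric argument, expanding $D_f^R$ in terms of $(f^R)_{(-1)}$ plus lower terms, gives $(f^N)_{(-1)}m=0$. One subtlety is that I need the kernel description in terms of $S_M$ being generated by $D_{f^r}$ for all $r$. Since the products of the $D_{f^r}$ are exactly the elements of $S_M$, this is fine.

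For (2), take $m\in F_pM_f=(F_pM)_f$. By Lemma~\ref{lemm:elements-of-localization} applied to the loop module $F_pM$, some product $P$ sends $m$ into the image of $F_pM$. Using the same congruence $P\equiv D_f^R$ modulo lower filtration, I induct on $p$. Note that $D_f^Rm-Pm$ lies in $F_{p-1}M_f$. By induction, $D_f^{N'}$ sends it into the image of $F_{p-1}M$, so $D_f^{R+N'}m$ lies in the image of $F_pM$. Applying further powers of $D_f$ preserves the image of $F_pM$, which gives the claim for all large $N$. The same argument works for $(f^N)_{(-1)}$: I bound $N$ from below uniformly and use $(f^{N})_{(-1)}\equiv D_f^{N}$ modulo lower terms, again by induction.

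The main obstacle is the last point: controlling $(f^N)_{(-1)}$ for every large $N$, not just for one $N$, since $(f^{N+1})_{(-1)}\neq D_f(f^N)_{(-1)}$. I would handle it by induction on $p$ with the bound $N\ge N_p$, where $N_p$ grows with $p$. At each stage the error term $(f^N)_{(-1)}-D_f^{N}$ is a sum of operators involving nonnegative modes, and so lands in $F_{p-1}$.
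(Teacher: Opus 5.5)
Your handling of the operator powers $D_f^N=(f_{(-1)})^N$ is correct. The congruence $(f^r)_{(-1)}\equiv D_f^r$ modulo $F_{p-1}M$ on $F_pM$, together with induction on $p$, proves that $m\mapsto0$ if and only if $D_f^Nm=0$ for some $N$. The same induction proves part (2) for $(f_{(-1)})^N$. The implication $(f^N)_{(-1)}m=0\Rightarrow m\mapsto0$, via Lemma~\ref{lem:inverse} on $M_f$, is also fine. For these pieces your filtration induction is a valid substitute for the paper's nilpotent-ideal argument.

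\textbf{The gap.} It lies in the two statements about a \emph{single} operator $(f^N)_{(-1)}$: the implication $m\mapsto0\Rightarrow(f^N)_{(-1)}m=0$, and part (2) for $(f^N)_{(-1)}$.

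Your ``symmetric argument'' does not prove the first. From $D_f^Rm=0$ and $D_f^R=(f^R)_{(-1)}+Q'$ you get $(f^R)_{(-1)}m=-Q'm\in F_{p-1}M$. The inductive hypothesis then gives $(f^{N'})_{(-1)}(f^R)_{(-1)}m=0$. That is again just some element of $S_M$ killing $m$, which is the hypothesis you started from, not $(f^N)_{(-1)}m=0$.

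The fix in your last paragraph does not close the induction either. Write $(f^N)_{(-1)}m=D_f^Nm+Q_Nm$. The error vector $Q_Nm\in F_{p-1}$ varies with $N$. The inductive hypothesis only controls $D_f^{N'}Q_Nm$ or $(f^{N'})_{(-1)}Q_Nm$, for $N'$ large depending on $Q_Nm$, and never $Q_Nm$ itself. A congruence modulo $F_{p-1}$ carries no information that is uniform in $N$.

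\textbf{The missing ingredient.} You need such a uniform bound, which the paper obtains by expanding $(f^N)_{(-1)}m$ as the $z^0$-coefficient of $f(z)^N$. Every term has the form $\prod_{s=1}^l f_{(-i_s-1)}\,(f_{(-1)})^{N-k-l}\prod_{t=1}^k f_{(j_t-1)}\,m$, with $i_s,j_t\geq1$ and $\sum_s i_s=\sum_t j_t$.

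All modes commute, and $f_{(j-1)}$ lowers weight by $j$. So the lower bound on weights kills every term with $\sum_t j_t>D$, where $D$ depends on the weight of $m$ but not on $N$. Hence $k+l\leq 2D$ in every surviving term.

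Let $(f_{(-1)})^{N-k-l}$ act first. If $D_f^{N_0}m=0$, then $(f^N)_{(-1)}m=0$ for all $N\geq N_0+2D$. Likewise, if $D_f^{N_0}m$ lies in the image of $F_pM$, which is stable under all modes of $A$, then so does $(f^N)_{(-1)}m$ for all $N\geq N_0+2D$. Adding this step transfers your $D_f$-results to $(f^N)_{(-1)}$ and completes both (1) and (2).
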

\begin{proof}
For homogeneous \(m\), the identity \(f^N(z)=f(z)^N\) gives
\begin{equation}\label{eq:identify-f^N}
\begin{aligned}
(f^N)_{(-1)}m={}&(f_{(-1)})^Nm\\
&+\sum_{\substack{k,l\geq1\\k+l\leq N}}
\ \sum_{\substack{i_1,\ldots,i_l,j_1,\ldots,j_k\geq1\\\sum i_s=\sum j_t}}
C_{N;\boldsymbol i,\boldsymbol j}
\left(\prod_{s=1}^l f_{(-i_s-1)}\right)
(f_{(-1)})^{N-k-l}
\left(\prod_{t=1}^k f_{(j_t-1)}\right)m .
\end{aligned}
\end{equation}
The lower bound on weights bounds \(\sum j_t\), and hence \(k,l\), by an integer \(D\) depending on the degree of \(m\). All modes commute, and every correction term contains a positive-degree-lowering mode. Such correction terms therefore generate a nilpotent ideal on the cyclic module generated by \(m\). It follows that inverting \(f_{(-1)}\) also inverts every \((f^r)_{(-1)}\); thus \(M_f\) can be computed by inverting \(f_{(-1)}\) alone, as in \cite[Remark 2.4]{B}. This proves the equivalence of the first two conditions in (1), and 
establishes (2) for operator powers.

If \((f_{(-1)})^{N_0}m=0\), then \eqref{eq:identify-f^N} vanishes for \(N\geq N_0+2D\), proving the third condition in (1). Its converse follows from Lemma~\ref{lem:inverse} applied after localization. The same expansion proves (2) for \((f^N)_{(-1)}\): once \((f_{(-1)})^{N_0}m\) lies in the image of \(F_pM\), every potentially nonzero summand does so for \(N\geq N_0+2D\).
\end{proof}

\subsection{Quasi-loop-coherent sheaves}
Set \(X=\on{Spec}A\). If \(X_f\subset X_g\), then \(g\) is invertible in \(A_f\), so the universal property gives compatible restriction maps \(M_g\to M_f\). These define a presheaf \(\Loc{X}{M}\) on the basis of principal opens, with \(\Loc{X}{M}(X_f)=M_f\). Likewise, \(F_p\Loc{X}{M}(X_f)=F_pM_f\).
\begin{lemma}\label{lem:Loc-is-a-sheaf}
The presheaves \(\Loc{X}{M}\) and \(F_p\Loc{X}{M}\) are sheaves.
\end{lemma}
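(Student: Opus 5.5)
The plan is to reduce to the classical statement that $\widetilde{N}$ is a sheaf on the basis of principal opens for an ordinary module $N$ over the commutative algebra $A_M\subset\on{End}M$. Concretely, we use the same \v{C}ech argument with Lemma~\ref{lemm:elements-of-localization} and Lemma~\ref{lem:in-localization} supplying the needed algebra. Fix a principal open $X_f$ and a cover $X_f=\bigcup_i X_{f_i}$. Since $X_f$ is quasi-compact, we may take the cover finite. After replacing $A$ by $A_f$ and $M$ by $M_f$ (using that $(M_f)_{f_i}=M_{f_i}$, which follows from the universal property in Proposition~\ref{pro:localization-of-loop-modules}), we may assume $X_f=X$, so that $(f_1,\dots,f_r)=A$. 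We must show exactness of
\[
0\to M\to\textstyle\prod_i M_{f_i}\rightrightarrows\prod_{i,j}M_{f_if_j}.
\]

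\emph{Injectivity.} Suppose $m\in M$ maps to $0$ in every $M_{f_i}$. By Lemma~\ref{lem:in-localization}(1), $(f_{i(-1)})^Nm=0$ for all $i$ and some common $N$. The operators $f_{i(-1)}$ commute. Moreover, the identity $1=\sum g_if_i^{N}$ in $A$ gives $\on{id}=\vac_{(-1)}=\sum_i (g_if_i^N)_{(-1)}$. This is not immediately a combination of $(f_{i(-1)})^N$, because $(ab)_{(-1)}\neq a_{(-1)}b_{(-1)}$.

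To handle this, we pass to an $A$-loop module filtration $F_\bullet M$. On $\on{gr}_FM$, an ordinary $J_\infty A$-module, the classical argument applies: the ideal generated by the $f_i^N$ is the unit ideal, so $\bar m=0$ in top degree. Induction on filtration degree then finishes, provided the filtration is compatible with the vanishing. Alternatively, and more cleanly, we use the third condition of Lemma~\ref{lem:in-localization}(1), $(f_i^N)_{(-1)}m=0$. Then we write $m=\sum_i(g_i)_{(-1)}\ldots$ using the Borcherds identity \eqref{identity:Bor} for $(g_if_i^N)_{(-1)}m$. The correction terms involve positive modes $(f_i^N)_{(k)}$, $k\ge 0$, which lower degree, and we induct on the degree of $m$ (bounded below).

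\emph{Gluing.} Given compatible $m_i\in M_{f_i}$, we first use Lemma~\ref{lemma:filtration-commutes-with-localization} to reduce to the associated graded. There, $\on{gr}_F\Loc{X}{M}\cong\widetilde{\on{gr}_FM}$, which is a sheaf classically; the same holds for each $F_p/F_{p-1}$. We then argue by induction on $p$ via the five lemma on the short exact sequences $0\to F_{p-1}\to F_p\to \on{gr}_p\to 0$ of presheaves. Exactness of localization (Lemma~\ref{lem:exact}) makes these presheaf sequences exact on each principal open. Left exactness of the \v{C}ech equalizer diagram together with the sheaf property of the outer terms gives the sheaf property of $F_p\Loc{X}{M}$. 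Since the filtration is exhaustive and a compatible family over a finite cover lives in some common $F_p$ (by Lemma~\ref{lem:in-localization}(2)), $\Loc{X}{M}$ is a sheaf.

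The \textbf{main obstacle} is the five-lemma step: the equalizer functor is only left exact. Gluing in $F_p$ from gluing in $F_{p-1}$ and $\on{gr}_p$ therefore requires $\check H^1$ of the cover to vanish for the graded pieces. That holds because they are quasi-coherent on an affine scheme: the classical \v{C}ech complex of a quasi-coherent sheaf on a finite principal cover is exact in positive degrees. I would therefore run the induction on the full \v{C}ech complexes, which are exact for $\on{gr}_p$, rather than only on the equalizers. The long exact sequence then propagates acyclicity to each $F_p$ and hence, by the direct limit, to $M$.
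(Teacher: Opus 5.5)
Your final plan is correct and is the paper's argument: induct on $p$ using the short exact sequences of \v{C}ech complexes attached to $0\to F_{p-1}\to F_p\to\on{gr}_p\to0$ on a finite principal-open cover, with Lemma~\ref{lemma:filtration-commutes-with-localization} making the graded pieces quasi-coherent, and then pass to the exhaustive union. Your separate injectivity discussion is subsumed by this, and the ``obstacle'' you flag is milder than you suggest: the global-sections sequence $0\to F_{p-1}M\to F_pM\to\on{gr}_pM\to0$ is exact by construction, so the long exact sequence already transfers injectivity and gluing from $F_{p-1}$ and $\on{gr}_p$ to $F_p$ without any $\check H^1$-vanishing.
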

\begin{proof}
By Lemma~\ref{lemma:filtration-commutes-with-localization}, their successive filtration quotients are quasi-coherent sheaves. Exactness of the corresponding \v Cech complexes on finite principal-open covers proves the assertion by induction on \(p\) and passage to the exhaustive union.
\end{proof}

A sheaf \(\mc M\) of positively graded complex vector spaces on a variety \(Y\) is \emph{quasi-loop-coherent}, or \emph{quasi-loco} \cite{B}, if on each affine open \(U\subset Y\) it is the localization sheaf of an \(\mc O(U)\)-loop module, compatibly with restriction. Write \(\on{QLCoh}(Y)\) for their category. The grading filtration generated locally by bounded-weight subspaces gives an exhaustive filtration by quasi-loco subsheaves \(F_p\mc M\), whose sections on affine opens form loop module filtrations. The quotients \(F_p\mc M_d/F_{p-1}\mc M_d\) are quasi-coherent \(\mc O_Y\)-modules. For \(X=\on{Spec}A\), localization and global sections give an equivalence
\begin{align}\label{eq:quasi-loco-equivalence}
A\on{-loop-mod}_{\geq0}\simeq\on{QLCoh}(X),
\qquad M\longmapsto\Loc{X}{M}.
\end{align}
\subsection{\texorpdfstring{Localization of vertex \(A\)-algebras and their modules}{Localization of vertex A-algebras and their modules}}
\label{subsection:localization-of-vertex-algebra}
Let \(V\) be a vertex \(A\)-algebra and \(M\in V\on{-mod}_{\geq0}\). Define \(V_S\) and \(M_S\) initially as localizations of \(A\)-loop modules.
\begin{lemma}\label{lem:kernel-is-a-submodule}
The kernel of \(M\to M_S\) is a \(V\)-submodule.
\end{lemma}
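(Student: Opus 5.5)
By the remark after Proposition~\ref{pro:localization-of-loop-modules}, the kernel $K$ of $M\to M_S$ consists of the vectors $m$ with $D_{s_1}\cdots D_{s_r}m=0$ for some $s_i\in S$. The plan is to show directly that $K$ is stable under every mode $v_{(n)}$, $v\in V$. Homogeneity is automatic, since each $D_s$ has degree zero, so $K$ is a graded subspace. It is convenient to first reduce to a single element: put $s=s_1\cdots s_r\in S$. By Lemma~\ref{lem:in-localization}(1), applied to $f=s$, the vectors of $K$ are exactly those killed by $(s_{(-1)})^N$ for some $N$. This holds because inverting $D_s$ also inverts each $D_{s_i}$ on the localization, by Lemma~\ref{lem:inverse} applied to $M_S$ as an $A_S$-loop module. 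So it suffices to prove: if $(s_{(-1)})^N m=0$, then $(s_{(-1)})^{N'}v_{(n)}m=0$ for some $N'$.

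The key step is a commutator estimate. By the commutator identity \eqref{identity:Com}, applied with $s$ in the role of the first argument,
\[
[s_{(-1)},v_{(n)}]=\sum_{i\geq0}\binom{-1}{i}(s_{(i)}v)_{(n-1-i)} .
\]
The element $s$ lies in $V_0$, so by \eqref{eq:grading} we have $s_{(i)}v\in V_{\Delta_v-i-1}$. The grading is bounded below, so only finitely many $i$ contribute, and each $s_{(i)}v$ with $i\ge0$ has strictly smaller conformal weight than $v$. I will therefore argue by induction on $\Delta_v$, more precisely on the real part, or on Li-filtration degree using \eqref{eq:condition-filtration}: $(F_pV)_{(i)}$ applied to $F_0V$ lands in $F_{p-1}V$ for $i\ge0$. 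This is a well-founded induction on each weight space.

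The inductive claim is that for every $v$ of weight at most $\Delta$, every $n$, and every $m\in K$, we have $v_{(n)}m\in K$. Expand
\[
(s_{(-1)})^{N'}v_{(n)}m=\sum_{j}\binom{N'}{j}\,(\mathrm{ad}\,s_{(-1)})^{j}(v_{(n)})\,(s_{(-1)})^{N'-j}m .
\]
Since $\mathrm{ad}\,s_{(-1)}$ strictly lowers the weight of $v$, and the weights that occur are bounded below, $(\mathrm{ad}\,s_{(-1)})^{j}(v_{(n)})=0$ once $j$ exceeds some $J$ depending on $\Delta_v$ and the lower bound of the grading. Taking $N'\ge N+J$ kills every remaining term, because $(s_{(-1)})^{N'-j}m=0$ whenever $N'-j\ge N$. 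This argument does not even need the induction hypothesis, only the nilpotency of $\mathrm{ad}\,s_{(-1)}$ on modes of a fixed $v$.

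The main obstacle is justifying that nilpotency. Iterated commutators produce modes $(s_{(i_1)}\cdots s_{(i_j)}v)_{(k)}$ with weight dropping by at least $j$. Since $V$ is positively graded with rational weights in the relevant setting, $V_{\Delta}=0$ for $\Delta$ below a fixed bound, so these terms vanish for large $j$. If the grading of $V$ is only bounded below in real part, the same argument works, because the weight drops by integers. Finally, stability under $v_{(n)}$ for all $v$ and $n$ gives that $K$ is a $V$-submodule. The vacuum and Borcherds identities are inherited from $M$.
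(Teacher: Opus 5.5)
Your proof is correct, but it is organized differently from the paper's.

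\textbf{The paper's route.} It inducts on the Li filtration of $a$. Modes of elements of $F_0V$ commute with every $D_s$. In general, $[D_s,a_{(n)}]$ is a combination of modes of elements of $F_{p-1}V$, and these preserve $K$ by the induction hypothesis. Hence
$D_{s_1}\cdots D_{s_r}a_{(n)}m\equiv a_{(n)}D_{s_1}\cdots D_{s_r}m=0 \pmod K$,
and saturation of $K$ with respect to $S_M$ gives $a_{(n)}m\in K$.

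\textbf{Your route.} You use that $V$ is $\mathbb Q_{\geq0}$-graded and $s$ has weight zero. Then $(\on{ad}D_s)^j(v_{(n)})$ is a combination of modes of vectors of weight at most $\Delta_v-j$, so it vanishes for $j>\Delta_v$. The binomial expansion then gives the explicit exponent $N'=N+\lfloor\Delta_v\rfloor$. No induction is needed (the induction you announce is never used), and there is no appeal to saturation. The paper's argument needs no quantitative bound; yours trades the induction for an explicit one.

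\textbf{The reduction to a single $s$.} This is the one extra step your route requires, and its justification is slightly misstated. Put $s=s_1\cdots s_r$. You should apply Lemma~\ref{lem:inverse} to $M_s$, the localization at powers of $s$, in which each $s_i$ becomes a unit. Applying it to $M_S$ is the wrong module. This shows that $m$ dies in $M_s$, and only then does Lemma~\ref{lem:in-localization}(1) give $(s_{(-1)})^Nm=0$.

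\textbf{You can skip the reduction.} Set $X=D_{s_1}\cdots D_{s_r}$. The $D_{s_i}$ commute, so each application of $\on{ad}X$ must act on the mode and lower its weight by at least one. Hence $(\on{ad}X)^j(v_{(n)})=0$ for $j>\Delta_v$. Since $Xm=0$, the binomial expansion gives $X^{\lfloor\Delta_v\rfloor+1}v_{(n)}m=0$ directly.
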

\begin{proof}
Put \(K=\ker(M\to M_S)\) and argue by induction on the Li filtration of \(a\in V\). The assertion is clear for \(a\in F_0V\). For \(s\in S\), the commutator formula gives
\[
[D_s,a_{(n)}]=\sum_{j\geq0}(-1)^j(s_{(j)}a)_{(n-j-1)},
\qquad s_{(j)}F_pV\subset F_{p-1}V\quad(j\geq0).
\]
Thus \(D_sa_{(n)}m\equiv a_{(n)}D_sm\pmod K\) for \(m\in K\). Choose a product \(D_{s_1}\cdots D_{s_r}\) killing \(m\). Commuting this product past \(a_{(n)}\) shows that it sends \(a_{(n)}m\) into \(K\). Since \(K\) is saturated with respect to \(S_M\), we obtain \(a_{(n)}m\in K\).
\end{proof}

\begin{lemma}\label{lem:the-action-on-the-localization}
The \(V\)-action on \(M\) extends to \(M_S\). For \(u\in S_M\), it is given by
\begin{equation}\label{eq:the-action-on-the-localization}
a^{M_S}(z)(u^{-1}m)
=\sum_{k\geq0}u^{-k-1}\on{ad}_u^k(a^M(z))m,
\qquad \on{ad}_u(B):=[u,B].
\end{equation}
\end{lemma}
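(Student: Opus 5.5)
We will show that formula \eqref{eq:the-action-on-the-localization} gives well-defined fields on \(M_S=S_M^{-1}M\) that satisfy the module axioms and restrict to \(a^M(z)\) on the image of \(M\).

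\emph{Step 1: the sum is finite, coefficientwise.} Write \(u=D_{s_1}\cdots D_{s_r}\). By the commutator formula used in the proof of Lemma~\ref{lem:kernel-is-a-submodule}, \(\on{ad}_{D_s}\) sends the modes of \(a\in F_pV\) to combinations of modes of elements of \(F_{p-1}V\). The Leibniz rule for the commutator with a product of commuting operators then gives \(\on{ad}_u^k(a_{(n)})=0\) for \(k>p\). The Li filtration is exhaustive and finite on each weight space, and \(F_{-1}V=0\). So for each \(a\), the right-hand side of \eqref{eq:the-action-on-the-localization} is a finite sum. It is also truncated in \(z\), because each \(\on{ad}_u^k(a(z))\) is a finite combination of fields of \(M\).

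\emph{Step 2: well-definedness.} We must show that the right-hand side depends only on the element \(u^{-1}m\in M_S\). First, replacing \((u,m)\) by \((uv,vm)\) leaves it unchanged. Second, if \(u^{-1}m=0\), it gives zero. By Lemma~\ref{lem:kernel-is-a-submodule} the kernel \(K\) of \(M\to M_S\) is a \(V\)-submodule, so it suffices to work on \(M/K\), where \(S_M\) acts injectively. There the formula is the unique candidate: on the \(S_M\)-torsion-free module \(M/K\), we have the formal identity
\[
a(z)u^{-1}=\sum_{k\ge0}u^{-k-1}\on{ad}_u^k(a(z)),
\]
obtained by iterating \(a(z)u^{-1}=u^{-1}a(z)+u^{-1}[u,a(z)]u^{-1}\). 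It terminates by Step 1. Verifying it reduces to the identity \(u\cdot(\text{RHS})\cdot u = a(z)u\) expanded after multiplying by a high power of \(u\). This is a purely algebraic identity in \(\on{End}(M/K)\), where \(u\) is invertible on the localization. Independence of the choice of representative follows from the same uniqueness: any two expressions agree after multiplication by a common denominator, and such multiplication is injective on \(M_S\).

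\emph{Step 3: axioms.} The most conceptual route is to observe that \(\on{End}\) of the torsion-free localization contains the algebra generated by the modes \(a^M_{(n)}\) together with \(u^{-1}\). In that algebra, \(\on{ad}_u\) acts locally nilpotently on the modes, so conjugation by \(u^{-1}\) is well behaved. The extended operators are then the unique extensions of \(a^M_{(n)}\) commuting appropriately with \(S_M^{-1}\). The Borcherds identity for \(M_S\) reduces to the one on \(M\): given any vector \(u^{-1}m\) and any \(a,b\), choose \(N\) so large that both sides, multiplied by \(u^{N}\), are expressed via finitely many modes applied to \(m\) on \(M\). Equivalently, both sides are finite expressions \(\sum u^{-j}(\cdots)m\) in which the \((\cdots)\) are identical iterated commutator expressions in \(M\). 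Truncation and the vacuum identity are clear.

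\emph{Main obstacle.} We expect the hardest step to be making the Borcherds identity rigorous, since it is an infinite identity in \(z,w\). Our first approach is to check it on each fixed mode triple. In that setting, all sums are finite by Step 1 together with the truncation on \(m\), and the identity follows by transporting the identity in \(M\) through the algebra generated by the modes and \(u^{\pm1}\).
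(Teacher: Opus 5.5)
Your proposal is correct and follows essentially the same route as the paper. It passes to the image of $M$ in $M_S$ via Lemma~\ref{lem:kernel-is-a-submodule}, gets finiteness because commuting with $D_s$ lowers the Li filtration, obtains the formula and its independence of the denominator by iterating $a(z)u^{-1}=u^{-1}a(z)+u^{-1}[u,a(z)]u^{-1}$, and transfers the vacuum and Borcherds identities from $M$; your ``multiply by $u^N$'' device plays the role of the paper's induction on filtration degree. One small slip: the telescoping identity to check in Step~2 is $\bigl(\sum_{k\ge0}u^{-k-1}\on{ad}_u^k(a(z))\bigr)u=a(z)$ on $M/K$, so $u\cdot(\text{RHS})\cdot u$ equals $u\,a(z)$, not $a(z)u$.
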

\begin{proof}
By Lemma~\ref{lem:kernel-is-a-submodule}, we may replace \(M\) by its image in \(M_S\). Commuting with \(D_s\) lowers the Li filtration, so the sum in \eqref{eq:the-action-on-the-localization} is finite for each \(a\). The identity
\begin{equation}\label{eq:action}
a(z)u^{-1}m=u^{-1}a(z)m+u^{-1}[u,a(z)]u^{-1}m
\end{equation}
shows compatibility with changes of denominator. It also proves, by induction on the filtration degrees, that products and supercommutators of modes extend compatibly with localization. In particular, the vacuum and Borcherds identities on \(M\) remain valid on \(M_S\). Thus the extended fields define a \(V\)-module structure.
\end{proof}

\begin{prop}\label{pro:locality-of-inverse}
For \(s\in S\) and 
a homogeneous element
\(a\in V\)
of weight \(\Delta_a\), 
we have
\begin{align*}
(z-w)^j[a^{M_S}(z), (s^{-1})^{M_S}(w)]=0\quad \text{for }j\geq \Delta_a.
\end{align*}
In particular,
the fields  \(a^{M_S}(z)\) and \((s^{-1})^{M_S}(z)\)  are mutually local.
\end{prop}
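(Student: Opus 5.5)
The plan is to reduce the statement to the corresponding locality of \(a^{M_S}(z)\) with \(s^{M_S}(w)\). That locality comes from the grading. We then transfer it to \(s^{-1}\) using the identity \(s(w)(s^{-1})(w)=\on{id}\) from the proof of Proposition~\ref{pro:localization-of-loop-modules}.

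\emph{Step 1: locality with \(s\).} Since \(s\in A\) maps into \(V_0\), \eqref{eq:grading} gives \(s_{(j)}a\in V_{\Delta_a-j-1}\). This space is zero for \(j\geq\Delta_a\), because \(V\) is \(\mathbb Q_{\geq0}\)-graded. By the commutator identity \eqref{identity:Com}, applied on the \(V\)-module \(M_S\) of Lemma~\ref{lem:the-action-on-the-localization}, we get
\[
[s^{M_S}(w),a^{M_S}(z)]=\sum_{0\le j<\Delta_a}(s_{(j)}a)^{M_S}(z)\,\frac{\partial_z^j}{j!}\delta(w-z).
\]
Hence \((z-w)^{j}[a^{M_S}(z),s^{M_S}(w)]=0\) for all \(j\geq\Delta_a\).

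\emph{Step 2: transfer to \(s^{-1}\).} Fix \(j\geq\Delta_a\). We expand
\[
0=[a(z),\,(s^{-1})(w)s(w)]=[a(z),(s^{-1})(w)]\,s(w)+(s^{-1})(w)\,[a(z),s(w)].
\]
Multiplying by \((z-w)^j\) and using Step 1 gives \((z-w)^j[a(z),(s^{-1})(w)]\,s(w)=0\). Multiplying on the right by \((s^{-1})(w)\) and using \(s(w)(s^{-1})(w)=\on{id}\) then yields
\[
(z-w)^j[a(z),(s^{-1})(w)]=0 .
\]
Here \(s\) and \(s^{-1}\) commute, as established in Proposition~\ref{pro:localization-of-loop-modules}.

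\emph{Step 3: justifying the formal manipulations (main obstacle).} The delicate point is that these are products of fields at the same point \(w\). We must check that they are well defined and associative, and that multiplying by \((z-w)^j\) commutes with them.

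I would handle this vector by vector, for a homogeneous \(m\in M_S\) of degree \(d\). The fields \(s\) and \(s^{-1}\) have weight \(0\), so the coefficient of \(w^{k}\) in \(s^{\pm}(w)\) shifts degree by exactly \(k\). Since \(M_S\) is positively graded, only exponents \(k\) bounded below by a constant depending on \(d\) occur. Each coefficient of a fixed total \(w\)-power in a product such as \(B(z,w)s(w)(s^{-1})(w)m\) is therefore a finite sum, with finitely many terms per target degree. Likewise, \(a(z)\) contributes \(z\)-powers tied to degree shifts of size \(\Delta_a-n-1\).

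With this bound, all the series involved live in a space of the form \(\prod_{\deg}M_S[z^{\pm1},w^{\pm1}]\) with degree-controlled exponents. In that space the triple products are associative and \(\C[z,w]\)-linear, which legitimizes the cancellation in Step 2.

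Alternatively, one can avoid infinite products by working on the image of the truncated expansion \eqref{eq:def-of-field-of-of-inverse}, which is finite on each vector. One then checks the identity coefficientwise.

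Finally, local fields in the sense \((z-w)^N[\,\cdot,\cdot\,]=0\) are mutually local by definition, which gives the last sentence of the statement.
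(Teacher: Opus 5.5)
\paragraph{Where the proposal fails.} Step~1 is correct. Step~2, however, needs products of fields at the same point $w$ whose factors do not commute, and these are undefined in general. Step~3's justification is exactly where this breaks.

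In the coefficient $\sum_k X_{K-k}\,Y_km$ of $w^K$ in $X(w)Y(w)m$, every summand has the same target weight. But $Y_km$ has weight growing with $k$, so truncation of $X$ kills none of these summands for large $k$. You get infinitely many terms in one weight space, which is typically infinite-dimensional (already $A_S$ in weight zero), not finitely many.

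Such a product is well defined when $X$ and $Y$ commute. That covers $s(w)(s^{-1})(w)=\on{id}$, but not the two terms you need:
\begin{itemize}
\item by \eqref{identity:Com}, $(s^{-1})(w)[a(z),s(w)]$ contains $(s^{-1})(w)\,(a_{(i)}s)(w)$;
\item $[a(z),(s^{-1})(w)]\,s(w)$ multiplies the OPE coefficients of $a$ and $s^{-1}$ by $s(w)$.
\end{itemize}

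A concrete counterexample lies in the paper's own setting. Take the $\beta\gamma$ system with $A=\C[\gamma]$, $s=\gamma$, and $a=:\beta\beta:$. Then $[a(z),\gamma(w)]=2\beta(w)\delta(z-w)$. The $w^{-1}$-coefficient of $\gamma^{-1}(w)\beta(w)\vac$ is $\sum_{l\leq-1}\gamma^{-1}_{(-1-l)}\beta_{(l)}\vac$. In the CDO on $\C^\times$ every summand equals $[\gamma^{-1}_{(-1-l)},\beta_{(l)}]\vac=(\gamma^{-2})_{(-1)}\vac=\gamma^{-2}\neq0$, so the sum diverges.

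Consequently three steps are unjustified:
\begin{itemize}
\item the Leibniz expansion of $[a(z),(s^{-1})(w)s(w)]$;
\item moving $(z-w)^j$ inside these products;
\item the associativity $\bigl([a(z),(s^{-1})(w)]\,s(w)\bigr)(s^{-1})(w)=[a(z),(s^{-1})(w)]$.
\end{itemize}
Your fallback via \eqref{eq:def-of-field-of-of-inverse} does not help as stated. Commuting $a(z)$ past $D_s^{-l-1}$ produces $[a(z),s_{(-1)}]$, a series in $z$ alone that $(z-w)^j$ does not annihilate. Locality would then have to come from cancellations across $l$, which you have not exhibited.

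\paragraph{The missing idea.} You need a route to $s^{-1}$ that never multiplies non-commuting fields at a point. The paper uses a \emph{uniform} locality bound: since $(1-s)^k\in V_0$ for every $k$, the grading gives $a_{(j)}(1-s)^k=0$ for $j\geq\Delta_a$, independently of $k$.

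With $f_\hbar=1-\hbar+\hbar s$, the inverse of $f_\hbar(z)$ modulo $\hbar^N$ is the honest field $\sum_{k<N}\hbar^k\bigl((1-s)^k\bigr)(z)$. So locality with bound $\Delta_a$ holds on each $M_S[\hbar]/(\hbar^N)$. It transfers to $M_S[\hbar]_{f_\hbar}$ through the injective map into $M_S[[\hbar]]$, and then to $(s^{-1})(w)$ by specializing at $\hbar=1$.

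Your argument never uses this $k$-independence. It relies only on locality with $s$ itself, and the formal Leibniz rule cannot supply the rest.
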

\begin{proof}
Since \((1-s)^k\in V_0\), the grading gives
\(a_{(j)}(1-s)^k=0\) for \(j\geq \Delta_a\).
Thus,
\begin{align}\label{eq:uniform-bound}
(z-w)^r
[a(z), ((1-s)^k)(w)]=0
\text{ for all }r\geq \Delta_a\text{ and }k\geq0.
\end{align}

Introduce a variable \(\hbar\) of weight zero and set
\begin{align*}
f_\hbar=1-\hbar+\hbar s\in A[\hbar].
\end{align*}
For each \(N\geq1\), the field \(f_\hbar(z)\) is invertible
on the \(A[\hbar]\)-loop module
\(M_S[\hbar]/(\hbar^N)\), with inverse
\begin{equation}\label{eq:inverse-in-hadic}
\sum_{k=0}^{N-1}\hbar^k((1-s)^k)(z).
\end{equation}
Hence \(M_S[\hbar]/(\hbar^N)\) is naturally an
\(A[\hbar]_{f_\hbar}\)-loop module.
The universal property gives a unique homomorphism
\[
\psi_N\colon  M_S[\hbar]_{f_{\hbar}}\longrightarrow M_S[\hbar]/(\hbar^N),
\qquad
\iota_N=\psi_N\circ\varphi,
\]
where \(\varphi\colon M_S[\hbar]\to M_S[\hbar]_{f_{\hbar}}\) is the
localization map and \(\iota_N\colon M_S[\hbar]\to M_S[\hbar]/(\hbar^N)\)
is the natural projection.

The maps \(\psi_N\) are compatible with the projections
\(M_S[\hbar]/(\hbar^{N+1})\to M_S[\hbar]/(\hbar^N)\), and therefore induce a linear map
\[
\psi\colon M_S[\hbar]_{f_{\hbar}}\longrightarrow
\varprojlim_N M_S[\hbar]/(\hbar^N)=M_S[[\hbar]]
\]
such that \(\psi\circ\varphi=\iota\), where
\(\iota\colon M_S[\hbar]\hookrightarrow M_S[[\hbar]]\)
is the natural embedding.
By Lemma~\ref{lem:in-localization},
the injectivity of
\(\iota=\psi\circ\varphi\)
implies that  \(\psi\) is injective.

The spaces \(M_S[\hbar]\) and \(M_S[\hbar]/(\hbar^N)\) are 
\(V[\hbar]\)-modules, and
Lemma~\ref{lem:the-action-on-the-localization}
extends this structure to \(M_S[\hbar]_{f_\hbar}\).
Formula~\eqref{eq:the-action-on-the-localization}
shows that \(\varphi\), \(\iota_N\), and \(\psi_N\)
are \(V[\hbar]\)-module homomorphisms.

By \eqref{eq:uniform-bound} and
\eqref{eq:inverse-in-hadic}, we have
\[
(z-w)^r[a(z),(f_\hbar^{-1})(w)]=0\quad \text{for }r\geq \Delta_a,
\]
on every \(M_S[\hbar]/(\hbar^N)\).
Since \(\psi\) is injective, the same identity holds
on \(M_S[\hbar]_{f_\hbar}\).

Finally, the exactness of loop module localization gives
\begin{equation}\label{eq:evaluation}
M_S[\hbar]_{f_\hbar}/(\hbar-1)M_S[\hbar]_{f_\hbar}
\cong
\bigl(M_S[\hbar]/(\hbar-1)M_S[\hbar]\bigr)_s
\cong (M_S)_s\cong M_S.
\end{equation}
This is an isomorphism of loop modules over
\(A[\hbar]_{f_\hbar}/(\hbar-1)\cong A_s\).
Moreover, the specialization map intertwines the
\(V\)-fields by
\eqref{eq:the-action-on-the-localization}
and sends the inverse field \(f_\hbar^{-1}(w)\)
to \((s^{-1})(w)\).
Specializing the preceding locality identity at
\(\hbar=1\) therefore yields
\[
(z-w)^r[a^{M_S}(z),(s^{-1})^{M_S}(w)]=0
\quad \text{for }r\geq \Delta_a,
\]
as required.
\end{proof}

\begin{theorem}\label{Th:localization}
Let \(V\) be a vertex \(A\)-algebra and \(S\subset A\) multiplicative.
\begin{enumerate}
\item The loop module localization \(V_S\) is naturally a vertex \(A_S\)-algebra, and \(\iota\colon V\to V_S\) is a graded vertex algebra homomorphism. Every graded map \(\phi\colon V\to W\) of vertex algebras with \(W\) nonnegatively graded and \(\phi(S)\subset W_0^\times\) factors uniquely through a graded vertex algebra homomorphism \(V_S\to W\).
\item For \(M\in V\on{-mod}_{\geq0}\), \(M_S\) is a \(V_S\)-module with a \(V\)-linear map \(\eta_M\colon M\to M_S\). Every \(V\)-module homomorphism \(M\to N\), with \(N\in V_S\on{-mod}_{\geq0}\), factors uniquely through a \(V_S\)-module homomorphism \(M_S\to N\).
\end{enumerate}
\end{theorem}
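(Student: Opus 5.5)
We prove part (2) first and then obtain part (1) by applying it to \(M=V\). By Lemma~\ref{lem:the-action-on-the-localization}, \(M_S\) already carries a \(V\)-module structure extending that of \(M\), and the localization map \(\eta_M\colon M\to M_S\) is \(V\)-linear by construction. By Proposition~\ref{pro:localization-of-loop-modules}, \(M_S\) is also an \(A_S\)-loop module, so the fields \((s^{-1})^{M_S}(z)\) are defined. Proposition~\ref{pro:locality-of-inverse} says these inverse fields are mutually local with every \(a^{M_S}(z)\), \(a\in V\). They are also mutually local with one another, since they commute.

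Let \(\mc S\) be the set consisting of the fields \(a^{M_S}(z)\), \(a\in V\), together with the \((s^{-1})^{M_S}(z)\), \(s\in S\). By Dong's lemma and Li's theorem \cite{Li96}, the closure \(\langle \mc S\rangle_{M_S}\) is a vertex algebra, and \(M_S\) is a module over it. Evaluating at the vacuum gives a map \(V_S\to \langle\mc S\rangle_{M_S}\), which we use to pull the module structure back to \(V_S\). The cleanest way to define the vertex algebra structure on \(V_S\) itself is to take \(M=V\). Then \(\langle\mc S\rangle_{V_S}\) acts on \(V_S\), and the cyclic vector \(\vac\) lets us apply the reconstruction argument of \cite{DeSole-Kac}.

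For this we check the following:
\begin{itemize}
\item[(a)] \(V_S\) is spanned by monomials in modes of \(\mc S\) applied to \(\vac\). This follows from Lemma~\ref{lem:spanning-set-of-localization}, since every element is \((s^{-1})_{(-1)}m\).
\item[(b)] The translation \(T\) extends to \(V_S\) and satisfies \([T,s^{-1}(z)]=\partial_z s^{-1}(z)\). Indeed, \(T\) is a derivation commuting with the \(J_\infty A\)-structure, and formula \eqref{eq:def-of-field-of-of-inverse} is compatible with derivations.
\end{itemize}
Definition~\ref{Def:vertex_algebra-v2} then makes \(V_S\) a vertex algebra. Since \(J_\infty(A_S)\) maps into it, it is a vertex \(A_S\)-algebra. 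The map \(\iota\) is a vertex algebra homomorphism because the fields of \(V\) on \(V_S\) extend their action on \(V\). The grading is preserved because every \(D_s\) has degree zero.

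To see that \(M_S\) is a \(V_S\)-module, we need the state-field assignment \(V_S\to\langle \mc S\rangle_{M_S}\) to be well defined and to satisfy the Borcherds identity. We send \(u^{-1}a\), for \(u\in S_V\), to the corresponding combination of \(n\)-th products of inverse fields with \(a^{M_S}\), reading this off from \eqref{eq:the-action-on-the-localization}. Both \(V_S\) and \(\langle\mc S\rangle_{M_S}\) are built by the same operations from \(V\) and the inverse fields. So the cleanest argument is to show that the vertex algebra \(\langle\mc S\rangle_{M_S}\) receives a map from \(V_S\) via the universal property in (1). That property is what we prove next.

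For the universal property in (1), let \(\phi\colon V\to W\) be as in the statement. Then \(W\) is a nonnegatively graded \(A_S\)-loop module, so Proposition~\ref{pro:localization-of-loop-modules} gives a unique loop-module extension \(\tilde\phi\colon V_S\to W\). We check that \(\tilde\phi\) respects \(n\)-th products by the same filtration induction on denominators, using \eqref{eq:action}. The universal property in (2) works the same way. The unique \(A_S\)-loop map \(M_S\to N\) intertwines the \(V\)-fields by \eqref{eq:the-action-on-the-localization}, because \(D_s\) is invertible on \(N\) by Lemma~\ref{lem:inverse}. It intertwines the inverse fields by construction. Since \(V_S\) is generated by \(V\) and these inverse fields, the map is \(V_S\)-linear.

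The main obstacle is the well-definedness in the module structure of (2): showing that the assignment \(V_S\to\langle\mc S\rangle_{M_S}\) factors through the relations in \(V_S\), i.e., that an element \(u^{-1}a=0\) in \(V_S\) gives the zero field. The plan is to apply the universal property of (1) to the vertex algebra \(W=\langle\mc S\rangle_{M_S}\). It is nonnegatively graded, and \(s\) is invertible in \(W_0\) because \(s(z)s^{-1}(z)=\on{id}\). Applying (1) to the homomorphism \(V\to W\) then produces the required map \(V_S\to W\), so \(M_S\) becomes a \(V_S\)-module.
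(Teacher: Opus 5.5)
Your proof of (1) is the paper's argument. The $V$-fields and inverse fields on $V_S$ are mutually local by Proposition~\ref{pro:locality-of-inverse}, Lemma~\ref{lem:spanning-set-of-localization} gives spanning, $T$ extends, and reconstruction applies. The universal property then comes from the loop-module extension of Proposition~\ref{pro:localization-of-loop-modules} together with \eqref{eq:the-action-on-the-localization} and \eqref{eq:def-of-field-of-of-inverse}. Two small corrections:
\begin{itemize}
\item Your opening sentence states the order backwards. Your logic actually runs (1)$\Rightarrow$(2), which is not circular, because the reconstruction only uses Lemma~\ref{lem:the-action-on-the-localization} with $M=V$.
\item $T$ does not commute with $D_s$, since $[T,s_{(-1)}]=s_{(-2)}$. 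Its extension has to be defined as $T(D_s^{-1}a)=D_s^{-1}Ta+D_s^{-1}[D_s,T]D_s^{-1}a$, as in the paper.
\end{itemize}

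For the module structure in (2) you depart from the paper, which simply asserts that the mutually local fields on $M_S$ ``give the corresponding representation of $V_S$.'' You get the map $V_S\to W:=\langle\mc S\rangle_{M_S}$ from the universal property of (1). That is a good way to make well-definedness explicit, but it rests on the unproved claim that $W$ is nonnegatively graded.

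That claim is exactly what the universal property consumes. Proposition~\ref{pro:localization-of-loop-modules} inverts $D_s$ on the target via Lemma~\ref{lem:inverse}, and that lemma needs a lower-bounded grading. The claim does not follow from the generators having nonnegative weight, because $n$-th products with $n\geq0$ lower weight. Proposition~\ref{pro:locality-of-inverse} only gives $a(z)_{(n)}s^{-1}(z)=0$ for $n\geq\Delta_a$. It says nothing about iterated products such as $(a_{(0)}s^{-1})_{(0)}(b_{(0)}s^{-1})$ with $\Delta_a=\Delta_b=1$, which has weight $-1$. In general graded vertex algebras, nonnegativity of the pairwise products of generators does not propagate to iterated products. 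So this step needs an argument.

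One argument that works is to rerun the $\hbar$-trick of Proposition~\ref{pro:locality-of-inverse} at the level of fields:
\begin{itemize}
\item First reduce to a single $s$, replacing $s_1,\dots,s_r$ by their product.
\item The injective map $\psi$ carries any iterated product of $V$-fields and $f_\hbar^{-1}$ on $M_S[\hbar]_{f_\hbar}$ to a field on $M_S[[\hbar]]$.
\item Modulo each $\hbar^N$, that field lies in the image of $V[\hbar]/(\hbar^N)$, which has no negative weights.
\item Hence negative-weight iterated products vanish, and so do their specializations at $\hbar=1$.
\end{itemize}
Alternatively, show by the kind of filtration induction used for Lemma~\ref{lem:the-action-on-the-localization} that $W$ is spanned by normally ordered products of the fields $a^{M_S}(z)$ with fields in the commutative algebra generated by the $A_S$-fields. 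With either supplement, your route gives a more explicit proof of (2) than the paper's one-line assertion.
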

\begin{proof}
(1) By Proposition~\ref{pro:locality-of-inverse}, the fields \(a(z)\), \(a\in V\), and \((s^{-1})(z)\), \(s\in S\), generate a vertex algebra of fields on \(V_S\). The translation operator extends by
\[
T(D_s^{-1}a)=D_s^{-1}Ta+D_s^{-1}[D_s,T]D_s^{-1}a,
\]
and satisfies translation covariance for both types of generators. 
Lemma~\ref{lem:spanning-set-of-localization} gives the spanning property. Reconstruction therefore equips \(V_S\) with a vertex \(A_S\)-algebra structure. Given \(\phi\colon V\to W\), Proposition~\ref{pro:localization-of-loop-modules} provides its unique loop module extension. Formula \eqref{eq:the-action-on-the-localization} shows that it intertwines the \(V\)-fields, and \eqref{eq:def-of-field-of-of-inverse} shows the same for inverse fields. Hence it is a vertex algebra homomorphism.

(2) The extended \(V\)-fields and inverse fields on \(M_S\) are mutually local and give the corresponding representation of \(V_S\). The universal property follows by extending \(M\to N\) as a loop module map and using the same two formulas to check \(V_S\)-linearity.
\end{proof}

Lemma~\ref{lem:spanning-set-of-localization} shows that Li's increasing filtration commutes with localization:
\begin{equation}\label{eq:Li-filtration}
F_p(V_S)=(F_pV)_S.
\end{equation}
Combining this with Lemma~\ref{lemma:filtration-commutes-with-localization} gives the following.
\begin{lemma}\label{eq:filtration-commutes-with-localization2}
There are natural isomorphisms \(\on{gr}_FV_S\cong(\on{gr}_FV)_S\) and \(R_{V_S}\cong(R_V)_S\).
\end{lemma}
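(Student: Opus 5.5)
The plan is to deduce both isomorphisms from the compatibility \eqref{eq:Li-filtration} of Li's filtration with localization, together with Lemma~\ref{lemma:filtration-commutes-with-localization}. Li's filtration \(F_\bullet V\) consists of \(A\)-loop submodules. Moreover, \(a_{(n)}F_pV\subset F_{p-1}V\) for \(a\in A\subset V_0\) and \(n\ge0\) by \eqref{eq:condition-filtration}, since \(A\subset F_0V\). Hence \(F_\bullet V\) is an \(A\)-loop module filtration in the sense of Section~\ref{sec:preliminaries}. Lemma~\ref{lemma:filtration-commutes-with-localization} therefore gives
\[
\on{gr}\bigl((F_\bullet V)_S\bigr)\cong(\on{gr}_FV)_S .
\]
By \eqref{eq:Li-filtration}, the left-hand side is \(\on{gr}_F(V_S)\) for Li's filtration of the vertex algebra \(V_S\), which gives the first isomorphism as graded vector spaces, and indeed as \(J_\infty A_S\)-modules.

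Next I will check that this is an isomorphism of vertex Poisson algebras. The map is induced by \(\iota\colon V\to V_S\) from Theorem~\ref{Th:localization}(1), which is a vertex algebra homomorphism preserving Li filtrations. So \(\on{gr}\iota\colon\on{gr}_FV\to\on{gr}_FV_S\) is a morphism of vertex Poisson algebras. On \(\on{gr}_FV_S\) the images of \(S\) are invertible, since \(s\cdot\overline{s^{-1}}=1\) in the commutative algebra \(\on{gr}_FV_S\). The universal property of ordinary localization then factors \(\on{gr}\iota\) through \((\on{gr}_FV)_S\) as a map of commutative algebras, and this factorization agrees with the identification above. The Poisson (\(\lambda\)-)brackets on \((\on{gr}_FV)_S\) are the unique extension by the Leibniz rule, because a derivation extends uniquely to a localization. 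So the algebra isomorphism is automatically Poisson.

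For \(R\), note that the conformal grading and the index \(p\) are both compatible with localization: \(D_s\) has degree zero and \(F_p(V_S)_\Delta=((F_pV)_S)_\Delta\). Taking the diagonal parts \(\Delta=p\) on both sides then gives \(R_{V_S}\cong(R_V)_S\), where \((R_V)_S\) is the ordinary localization over \(A\to R_V\). The surjection \eqref{eq:Li-surjection} is compatible, since \(J_\infty(R_V)_S=J_\infty((R_V)_S)\).

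The main obstacle is the compatibility claim \eqref{eq:Li-filtration} itself. We need \(F_p(V_S)\), defined intrinsically via monomials in elements of \(V_S\) of weight summing to at most \(p\), to coincide with \((F_pV)_S\). The inclusion \((F_pV)_S\subset F_p(V_S)\) follows from Lemma~\ref{lem:spanning-set-of-localization}: elements \((s^{-1})_{(-1)}m\) with \(m\in F_pV\) are monomials with an extra weight-zero factor. For the reverse inclusion, I would show that \((F_\bullet V)_S\) satisfies \eqref{eq:condition-filtration} inside \(V_S\), using formula \eqref{eq:the-action-on-the-localization}. That formula shows commutators with \(D_s\) lower filtration, and that \((V_S)_\Delta\subset(F_\Delta V)_S\). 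Minimality of Li's filtration then gives \(F_p(V_S)\subset(F_pV)_S\).
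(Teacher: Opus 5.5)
Your proposal is correct and follows the paper's argument. The paper likewise derives \eqref{eq:Li-filtration} from Lemma~\ref{lem:spanning-set-of-localization}, applies Lemma~\ref{lemma:filtration-commutes-with-localization}, and reads off \(R\) as the diagonal part.

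Your extra checks fill in details the paper leaves implicit: compatibility with the vertex Poisson structure, and the reverse inclusion \(F_p(V_S)\subset(F_pV)_S\) by minimality. For that inclusion, \eqref{eq:the-action-on-the-localization} only controls fields of elements of \(V\). You also need the estimate from \eqref{eq:def-of-field-of-of-inverse}: nonnegative modes of \(s^{-1}\) lower the filtration and negative modes preserve it. Together these handle fields of localized elements such as \((s^{-1})_{(-1)}b\).
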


\subsection{Sheaves of vertex algebras}
A \emph{sheaf of vertex algebras} on a variety \(Y\) is a quasi-loco sheaf \(\mc V\) such that each \(\mc V(U)\) is a vertex \(\mc O_Y(U)\)-algebra and all restriction maps are compatible vertex algebra homomorphisms. A \emph{\(\mc V\)-module} is a quasi-loco sheaf \(\mc M\) whose sections \(\mc M(U)\) are \(\mc V(U)\)-modules, compatibly with restriction. Write \(\mc V\on{-mod}\) for their category.

For \(X=\on{Spec}A\), Theorem~\ref{Th:localization} makes \(\Loc{X}{V}\) a sheaf of vertex algebras and gives an exact functor
\begin{align}\label{eq:localization-functor-for-affine}
V\on{-mod}_{\geq0}\longrightarrow\Loc{X}{V}\on{-mod},
\qquad M\longmapsto\Loc{X}{M},
\end{align}
with right adjoint the global sections functor \(\Gamma(X,-)\).

\subsection{Localization commutes with BRST reduction}
\label{subsection:Localization commutes with BRST reduction}
Let \(V\) be a vertex \(A\)-algebra, \(M\in V\on{-mod}_{\geq0}\), and \(Q\in V\) odd with \(Q(z)Q(w)\sim0\). Then \(d:=Q_{(0)}\) satisfies \(d^2=0\). Assume that \(Q_{(n)}A=0\) for \(n\geq0\), that \(d\) preserves conformal degrees, and that \(V,M\) have compatible cohomological gradings with \(Q\in V^1\), \(A\subset V^0\), and
\[
V^i_{(n)}V^j\subset V^{i+j},\qquad
V^i_{(n)}M^j\subset M^{i+j}.
\]
Then \(H^\bullet(V,d)\) is a vertex \(A\)-algebra via \(A\to H^0(V,d)_0\), and \(H^\bullet(M,d)\) is a positively graded module over it. Since \(d\) commutes with every \(A\)-mode, it extends to the localizations with the same properties.
\begin{lemma}\label{lem:compatibility-with-BRST}
There is a natural isomorphism \(H^\bullet(M,d)_S\cong H^\bullet(M_S,d)\).
\end{lemma}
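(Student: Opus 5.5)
The natural plan is to reduce the claim to the exactness of loop-module localization, Lemma~\ref{lem:exact}, applied to the complex $(M,d)$. The key observation is that $d=Q_{(0)}$ commutes with every mode of every $a\in A$. Indeed, by the commutator formula \eqref{identity:Com},
\[
[Q_{(0)},a_{(n)}]=(Q_{(0)}a)_{(n)}=0,
\]
since $Q_{(0)}A=0$ by hypothesis. Hence $d$ commutes with all $D_s$, $s\in S$, and with all modes of $A$. Two consequences follow.

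First, the cocycles $Z^i=\ker(d|_{M^i})$ and coboundaries $B^i=d(M^{i-1})$ are $A$-loop submodules of $M^i$. They are positively graded, because $d$ preserves conformal degree. Second, $H^i(M,d)=Z^i/B^i$ is an $A$-loop module, and its $A$-loop structure coincides with the one coming from $A\to H^0(V,d)_0$. That is, the loop module $H^\bullet(M,d)_S$ in the statement is the loop module localization of $Z^i/B^i$.

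Next I would check that each cohomological component $M^i$ is itself a positively graded $A$-loop module, since $A\subset V^0$. Moreover, localization commutes with the direct sum over $i$: it is $S_M^{-1}(-)$, and $S_M$ acts by degree-preserving operators. So $M_S=\bigoplus_i (M^i)_S$. The differential on $M_S$ is the unique extension $d_S$ of $d$, obtained by functoriality of Proposition~\ref{pro:localization-of-loop-modules} applied to the loop module map $d\colon M^{i-1}\to M^i$. Concretely, $d_S(u^{-1}m)=u^{-1}dm$ for $u\in S_M$, since $d$ commutes with $u$; this agrees with formula \eqref{eq:the-action-on-the-localization}, where all $\on{ad}_u^k$ terms vanish.

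Now apply the exact functor of Lemma~\ref{lem:exact} to the exact sequences
\[
0\to Z^i\to M^i\xrightarrow{d} M^{i+1}
\qquad\text{and}\qquad
0\to B^i\to Z^i\to H^i(M,d)\to0 .
\]
Exactness gives $(Z^i)_S=\ker(d_S|_{(M^i)_S})$, $(B^i)_S=\on{im}(d_S)$, and $H^i(M,d)_S\cong (Z^i)_S/(B^i)_S=H^i(M_S,d_S)$. The isomorphism is induced by the canonical map $H^\bullet(M,d)\to H^\bullet(M_S,d)$, so it is natural in $M$.

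The one point needing care is that this is an isomorphism of $H^\bullet(V,d)_S$-modules, not just of loop modules. Both sides are $H^\bullet(V,d)_S$-modules: the left via Theorem~\ref{Th:localization}(2), the right because $M_S$ is a $V_S$-module whose cohomology is a module over $H^\bullet(V_S,d)\supset$ the image of $H^\bullet(V,d)$, with inverses of $S$ acting through $A_S\to H^0(V_S,d)_0$. The canonical map $H^\bullet(M,d)\to H^\bullet(M_S,d)$ is $H^\bullet(V,d)$-linear. Its target is an $H^\bullet(V,d)_S$-module lying in the positively graded category, so the universal property of Theorem~\ref{Th:localization}(2) shows that the induced map from $H^\bullet(M,d)_S$ is $H^\bullet(V,d)_S$-linear. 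It coincides with the loop-module isomorphism above by the uniqueness part of Proposition~\ref{pro:localization-of-loop-modules}. I expect the main obstacle to be only this bookkeeping of identifications, since the homological content is entirely carried by Lemma~\ref{lem:exact}.
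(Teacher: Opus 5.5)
Your proposal is correct and follows the paper's proof: $d$ is an $A$-loop module homomorphism, exactness of localization identifies $Z^i(M)_S$ and $B^i(M)_S$ with the cocycles and coboundaries of $M_S$, and localizing $0\to B^i\to Z^i\to H^i(M,d)\to0$ gives the isomorphism. The only difference is how the localized differential is matched with $d=Q_{(0)}$ on $M_S$. The paper uses Lemma~\ref{lemm:elements-of-localization} for this, while you use formula \eqref{eq:the-action-on-the-localization} with the $\on{ad}_u$ terms vanishing; your additional check of $H^\bullet(V,d)_S$-linearity goes beyond what the statement requires.
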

\begin{proof}
The differential is a homomorphism of \(A\)-loop modules. Exactness of localization 
and Lemma
\ref{lemm:elements-of-localization}  give
\begin{equation}\label{eq:cocycle-sheaf}
Z^i(M)_S\cong Z^i(M_S),\qquad B^i(M)_S\cong B^i(M_S).
\end{equation}
The defining sequence therefore localizes to
\begin{equation}\label{eq:compatibility-BRST}
0\longrightarrow B^i(M)_S\longrightarrow Z^i(M)_S
\longrightarrow H^i(M,d)_S\longrightarrow0.
\end{equation}
Taking the quotient proves the assertion.
\end{proof}

\subsection{\texorpdfstring{The CDO \(\mc D_{\wedge F}^{\mathrm{ch}}\)}{The CDO of the trivial bundle}}\label{subsection:cdo}
For finite-dimensional complex vector spaces \(W_0,W_1\) and the trivial bundle \(F=W_0\times W_1\to W_0\), the associated CDO \cite{GMS03} is the localization sheaf
\[
\mc D_{\wedge F}^{\mathrm{ch}}
:=\bigLoc{W_0}{\mc S(W_0)\otimes\mc E(W_1)},
\]
formed using the vertex \(\mc O(W_0)\)-algebra structure of the \(\beta\gamma bc\) system.

\subsection{Sheaves of BRST reductions on affine GIT quotients}
\label{subsection:Sheaves of BRST reductions}
Let \(X=\on{Spec}A\) be a variety with an action of a complex reductive group \(G\), and set \(\g=\on{Lie}(G)\). For an invariant symmetric form \(\kappa\) on \(\g\), write \(\widehat\g_\kappa=\g((t))\oplus\C\mathbf1\), with bracket
\[
[xf,yg]=[x,y]fg+\on{Res}_{t=0}(g\,df)\kappa(x,y)\mathbf1,
\qquad[\mathbf1,\widehat\g_\kappa]=0.
\]
Its universal affine vertex algebra is
\[
V^\kappa(\g)=U(\widehat\g_\kappa)
\underset{U(\g[[t]]\oplus\C\mathbf1)}\otimes\C,
\]
where \(\g[[t]]\) acts trivially on \(\C\) and \(\mathbf1\) acts as the identity. We identify \(x\in\g\) with \((xt^{-1})\otimes1\).

Suppose \(V\) is a vertex \(A\)-algebra with \(A\subset V_0\) and a graded homomorphism
\begin{equation}\label{chiral-moment-map}
V^{-\kappa_\g}(\g)\longrightarrow V,\qquad\xi\longmapsto J^\xi\in V_1,
\end{equation}
where \(\kappa_\g\) is the Killing form, \(J^\xi_{(0)}A\subset A\), and this action agrees with the infinitesimal \(G\)-action. Every positively graded \(V\)-module is then a smooth \(\affg_{-\kappa_\g}\)-module. Its semi-infinite cohomology \cite{Fei} is computed by
\[
C^{\infty/2+\bullet}(\affg_{-\kappa_\g},M)
=M\otimes\mc E(\g\oplus\g^*),\qquad d=Q_{(0)},
\]
where, for dual bases \(\{\xi_i\}\subset\g\) and \(\{\xi_i^*\}\subset\g^*\),
\[
Q=\sum_i:J^{\xi_i}c^{\xi_i^*}:
-\frac12\sum_{i,j}:c^{\xi_i^*}c^{\xi_j^*}b^{[\xi_i,\xi_j]}:.
\]
The cohomological grading is
\begin{equation}\label{eq:cohomological-grading}
\deg M=0,\qquad\deg c^{\xi_i^*}=1,\qquad\deg b^{\xi_i}=-1.
\end{equation}
The cohomology of \(M\) is naturally a module over that of \(V\), and inherits the conformal grading. There is a level-zero homomorphism
\begin{equation}\label{eq:level-zero-action}
V^0(\g)\longrightarrow C^{\infty/2+\bullet}(\affg_{-\kappa_\g},V),
\quad\xi_i\longmapsto\widehat J^{\xi_i}
:=d b^{\xi_i}=J^{\xi_i}-\sum_j:c^{\xi_j^*}b^{[\xi_i,\xi_j]}:,
\end{equation}
whose induced action on cohomology is trivial. The \emph{relative complex} is
\begin{equation}\label{eq:relative-complex}
C^{\infty/2+\bullet}(\affg_{-\kappa_\g},\g,M)
:=\{u\in C^{\infty/2+\bullet}(\affg_{-\kappa_\g},M)
\mid\widehat J^\xi_{(0)}u=b^\xi_{(0)}u=0\ \ (\xi\in\g)\}.
\end{equation}
Its cohomology, denoted by \(H^{\infty/2+\bullet}(\affg_{-\kappa_\g},\g,M)\), is the relative semi-infinite cohomology \cite{FGZ}.

The natural vertex algebra homomorphism
\begin{equation}\label{map:invariants-to-relative-cohomology}
V^{\g[t]}\longrightarrow H^{\infty/2+0}(\affg_{-\kappa_\g},\g,V),
\qquad a\longmapsto[a\otimes\mathbf1],
\end{equation}
restricts to a ring homomorphism on \(A^G\subset V^{\g[t]}\). Thus relative cohomology is a vertex \(A^G\)-algebra, and the relative cohomology of \(M\) is a loop module over \(A^G\).

Let \(\pi\colon X\to X\sslash G=\on{Spec}(A^G)\) be the quotient map. Denote by \(\pi_*\mc H^{\infty/2+\bullet}(\affg_{-\kappa_\g},\g,\Loc{X}{M})\) the sheaf associated with the presheaf
\[
\mathcal U\longmapsto H^{\infty/2+\bullet}
\bigl(\affg_{-\kappa_\g},\g,\Loc{X}{M}(\pi^{-1}(\mathcal U))\bigr).
\]
\begin{proposition}\label{Pro:pushforward-is-a-sheaf}
There is a natural isomorphism
\[
\pi_*\mc H^{\infty/2+\bullet}(\affg_{-\kappa_\g},\g,\Loc{X}{M})
\cong\bigLoc{X\sslash G}{H^{\infty/2+\bullet}(\affg_{-\kappa_\g},\g,M)}.
\]
\end{proposition}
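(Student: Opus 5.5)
The plan is to compare the two sides on principal opens \(\mathcal U=(X\sslash G)_f\) with \(f\in A^G\). Both are sheaves on the basis of principal opens, and such opens form a basis of \(X\sslash G\). So it suffices to produce, for each \(f\in A^G\), a natural isomorphism
\[
H^{\infty/2+\bullet}(\affg_{-\kappa_\g},\g,M)_f\;\cong\;H^{\infty/2+\bullet}\bigl(\affg_{-\kappa_\g},\g,M_f\bigr),
\]
compatible with restrictions. Here \(\pi^{-1}(\mathcal U)=X_f\), and \(\Loc{X}{M}(X_f)=M_f\) is localization at \(f\) viewed in \(A\). Once this is done, the presheaf on principal opens defining the left side already agrees with the sheaf \(\bigLoc{X\sslash G}{H}\) on a basis. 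The associated sheaf is therefore the right-hand side.

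To get the local isomorphism, apply Lemma~\ref{lem:compatibility-with-BRST} with the commutative algebra \(A^G\) and the relative complex \(C:=C^{\infty/2+\bullet}(\affg_{-\kappa_\g},\g,M)\) in place of \(V,M\). This requires checking its hypotheses.

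\begin{enumerate}
\item Since \(f\) is \(G\)-invariant and \(J^\xi_{(0)}\) acts on \(A\) by the infinitesimal action, \(J^\xi_{(n)}f=0\) for all \(n\geq0\); for \(n\geq1\) this holds by weight. Hence \(f\in V^{\g[t]}\), and \(Q_{(n)}f=0\) for \(n\ge0\).
\item The element \(f\otimes\mathbf 1\) is \(d\)-closed, and its modes commute with \(\widehat J^\xi_{(0)}\) and \(b^\xi_{(0)}\). Thus the full complex \(M\otimes\mc E(\g\oplus\g^*)\) and the relative subcomplex are \(A^G\)-loop modules, and \(d\) commutes with all \(f\)-modes.
\end{enumerate}

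Lemma~\ref{lem:compatibility-with-BRST} then gives \(H(C)_f\cong H(C_f)\). It remains to identify \(C_f\) with the relative complex of \(M_f\), for which there are two points.

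\begin{enumerate}
\item \(\bigl(M\otimes\mc E(\g\oplus\g^*)\bigr)_f\cong M_f\otimes\mc E(\g\oplus\g^*)\) as modules over \(V_f\otimes\mc E\). This holds because \(f\) acts only on the first factor, and localizing at \(f\in A^G\) agrees with localizing at \(f\in A\): the \(A^G\)-loop structure is the restriction of the \(A\)-loop structure, and Lemma~\ref{lem:in-localization} shows one only needs to invert \(f_{(-1)}\).
\item The relative subcomplex commutes with localization. The conditions \(\widehat J^\xi_{(0)}u=b^\xi_{(0)}u=0\) define the kernel of a map of \(A^G\)-loop modules. By exactness (Lemma~\ref{lem:exact}), kernels commute with localization, which gives \(C_f\cong C^{\infty/2+\bullet}(\affg_{-\kappa_\g},\g,M_f)\).
\end{enumerate}

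Naturality in \(f\) comes from the universal property in Proposition~\ref{pro:localization-of-loop-modules}: both restriction maps are the unique loop-module extensions.

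I expect the main obstacle to be the second point. One must verify that \(\widehat J^\xi_{(0)}\) and \(b^\xi_{(0)}\) genuinely commute with the inverse-field action of \(f^{-1}\) on \(M_f\otimes\mc E\), so that they are \(A^G\)-loop module endomorphisms. For \(b^\xi_{(0)}\) this is clear. For \(\widehat J^\xi_{(0)}\) I would use the commutator formula \eqref{identity:Com} with \(\widehat J^\xi_{(j)}f=0\) for \(j\ge0\), extended to \(f^{-1}\) via the explicit formula \eqref{eq:def-of-field-of-of-inverse} or via \eqref{eq:the-action-on-the-localization}.

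A secondary check is that the cohomological and conformal gradings persist and stay positive after localization. This holds because \(D_f\) has degree zero.
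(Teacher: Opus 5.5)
Your proposal is correct and follows essentially the same route as the paper. On principal opens \((X\sslash G)_f\) with \(f\in A^G\), the operators defining the relative complex commute with the \(A^G\)-modes, so exactness of localization identifies the localized relative complex with the relative complex of \(M_f\); Lemma~\ref{lem:compatibility-with-BRST} then identifies the cohomology, and compatibility with restriction gives the sheaf isomorphism. You fill in details the paper leaves implicit, such as \(Q_{(n)}f=0\), the agreement of localization at \(f\) over \(A^G\) and over \(A\), and the sheafification-on-a-basis step.
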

\begin{proof}
On a principal open \((X\sslash G)_f\), with \(f\in A^G\), all operators defining the relative complex commute with the \(A^G\)-modes. Exactness of localization therefore identifies the localized relative complex with the relative complex of \(M_f\). Lemma~\ref{lem:compatibility-with-BRST} identifies their cohomology, and these identifications commute with restriction.
\end{proof}
\begin{remark}\label{rem:BRSTvsRelativeBRST}
Let \(M\) be a smooth \(\widehat\g_{-\kappa_\g}\)-module whose \(\g\)-action is locally finite and semisimple. There is a natural isomorphism of cohomologically graded vector spaces
\[
H^{\infty/2+\bullet}(\affg_{-\kappa_\g},M)
\cong H^{\infty/2+\bullet}(\affg_{-\kappa_\g},\g,M)
\otimes H_{\mathrm{Lie}}^\bullet(\g,\C),
\]
where \(H_{\mathrm{Lie}}^\bullet(\g,\C)\) denotes Lie algebra cohomology with trivial coefficients. The proof of \cite[Proposition 3.2]{A18} (see also \cite{AMbook}) uses the Hochschild--Serre spectral sequence and applies also to reductive \(\g\) under these hypotheses. The isomorphism is compatible with automorphisms commuting with the affine action.
\end{remark}

\section{The Faithfulness Theorem}\label{sec:faithfulness}
We prove the main lemma and the general Faithfulness Theorem~\ref{theorem:iso}; its geometric form appears in Theorem~\ref{th:faithfulness-2}.

We use quasi-conformal structures in the sense of an action of the vector fields regular on the formal disc; see \cite[Section 6.2.4]{Fre}. We allow rational conformal weights. Explicitly, such a structure consists of even operators \(L_n\), \(n\geq-1\), with \(L_{-1}=T\), \(L_n\vac=0\), and
\begin{equation}\label{eq:quasi-conformal-covariance}
[L_m,L_n]=(m-n)L_{m+n},\qquad
[L_n,a_{(k)}]=\sum_{j=-1}^{n}\binom{n+1}{j+1}
(L_ja)_{(n+k-j)},
\end{equation}
where the first identity is understood for \(m+n\geq-1\).
The operator \(L_0\) is semisimple on \(V\), and the positive operators act locally nilpotently. In the nonnegatively graded setting below, the latter condition follows from the grading.
We write \(L\) for the structure, without requiring a state \(L\in V\).
A compatible module is supplied with operators \(L_n^M\), \(n\geq-1\), satisfying the same vector-field relations and covariance identity, with the right-hand side acting through the module fields.
Morphisms intertwine these operators as well as the vertex algebra action.
For a structure induced by a conformal vector, we use its usual modes on modules. A vector \(a\) is primary of weight \(\Delta\) if \(L_0a=\Delta a\) and \(L_na=0\) for \(n>0\).

\subsection{An annihilation lemma}
\begin{lemma}\label{lem:vanish}
Let \(V\) have a quasi-conformal structure \(L\), and let \(M\) be a compatible module on which \(L_0\) acts locally finitely with lower-bounded spectrum.
Let \(F,G\in V\) be parity-homogeneous primary vectors
of real weights \(\Delta_F,\Delta_G\), respectively.
Assume that
\[
\Delta_F+\Delta_G\in\mathbb Z,\qquad
\Delta_F,\Delta_G\ne\tfrac12,\qquad
F_{(\Delta_F+\Delta_G-1)}G=C_0\mathbf1
\]
for some \(C_0\in\mathbb C^\times\).
If \(m\in M\) satisfies
\[
L_nm=0\ (n\geq1),\qquad
F_{(s)}m=0\ (s\geq\Delta_F),\qquad
G_{(l)}m=0\ (l\geq\Delta_G),
\]
and \(F_{(-1)}m=G_{(-1)}m=0\), then \(m=0\).
\end{lemma}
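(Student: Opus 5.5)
The plan is to first upgrade the annihilation hypotheses on \(m\), using the quasi-conformal structure, until every mode \(F_{(k)}\) and \(G_{(l)}\) with \(k,l\geq -1\) kills \(m\). Then a single commutator \([F_{(p)},G_{(q)}]\) applied to \(m\) should isolate the constant \(C_0\), which forces \(m=0\).

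\emph{Step 1: covariance for primaries.} For a primary vector \(F\) of weight \(\Delta_F\), we have \(L_jF=0\) for \(j\geq1\), \(L_0F=\Delta_F F\) and \(L_{-1}F=TF\). Together with \((TF)_{(k)}=-kF_{(k-1)}\), the covariance identity \eqref{eq:quasi-conformal-covariance} then reduces to
\[
[L_n,F_{(k)}]=\bigl((n+1)\Delta_F-(n+k+1)\bigr)F_{(n+k)}\qquad(n\geq -1).
\]
Apply this with \(k=-1\) to the vector \(m\), using \(L_nm=0\) for \(n\geq1\) and \(F_{(-1)}m=0\):
\[
0=L_nF_{(-1)}m=\bigl((n+1)\Delta_F-n\bigr)F_{(n-1)}m .
\]
Hence \(F_{(n-1)}m=0\) whenever \((n+1)\Delta_F\neq n\), i.e. whenever \(\Delta_F\neq n/(n+1)\).
\begin{itemize}
\item For \(n=1\), the exceptional value is \(\Delta_F=\tfrac12\), which the hypotheses exclude.
\item For \(n\geq2\), the exceptional value satisfies \(\Delta_F<1\leq n-1\), so the hypothesis \(F_{(s)}m=0\) for \(s\geq\Delta_F\) already covers \(F_{(n-1)}m\).
\item The index \(k=0\) is the one place where covariance gives nothing directly, and it is covered by the hypothesis \(s\geq\Delta_F\) when \(\Delta_F\leq0\). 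For \(\Delta_F>0\) I would instead apply \(L_1\) to \(F_{(-1)}m=0\), which gives \(F_{(0)}m=0\) when \(2\Delta_F-1\neq 0\); this is exactly where \(\Delta_F\neq\tfrac12\) enters.
\end{itemize}
The same argument applies to \(G\). The conclusion of this step is that \(F_{(k)}m=G_{(k)}m=0\) for all \(k\geq-1\).

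\emph{Step 2: the commutator.} Set \(N=\Delta_F+\Delta_G-1\in\mathbb Z\). Weight considerations show that \(F_{(i)}G\) has weight \(N-i\). Since \(M\) has lower-bounded spectrum and \(F_{(N)}G=C_0\mathbf1\neq0\), I expect to reduce to the case \(N\geq0\); if \(N<0\), I would argue separately, via the grading, that \(F_{(N)}G\) is an element of weight \(0\) which can be handled by the same commutator with a shifted index. Take \(p=N\) and \(q=-1\) in \eqref{identity:Com}. By Step 1, both orderings \(F_{(N)}G_{(-1)}m\) and \(G_{(-1)}F_{(N)}m\) vanish, so
\[
0=\sum_{i=0}^{N}\binom{N}{i}(F_{(i)}G)_{(N-1-i)}m
= C_0\,m+\sum_{i<N}\binom Ni (F_{(i)}G)_{(N-1-i)}m .
\]

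\emph{Step 3: the correction terms (main obstacle).} It remains to show that each correction term \((F_{(i)}G)_{(N-1-i)}m\) with \(i<N\) vanishes. I would expand it with the iterate identity \eqref{identity:Bor}. This writes it as a sum of products \(F_{(a)}G_{(b)}m\) and \(G_{(b')}F_{(a')}m\), with \(a+b=N-2\) and \(a\) ranging downward from \(i\).

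In the terms where the rightmost mode has index \(\geq-1\), Step 1 kills them. In the remaining terms, the left mode has index \(\geq -1\). I would then commute this left mode to the right using \eqref{identity:Com}. This produces new iterates \((F_{(j)}G)_{(\cdot)}\), with \(j\) strictly larger than the original \(i\).

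This suggests a descending induction on \(i\) from \(N\). The point I expect to need the most care is checking that this bookkeeping closes up: that the commutators generated never reintroduce the term \(C_0\,m\) with a coefficient that cancels the one in Step 2. If the direct reduction proves messy, the first alternative I would try is to avoid it altogether. For that I would choose \(p,q\geq-1\) with \(p+q=N-1\) (for instance \(p=\lceil\Delta_F\rceil-1\)), so that the only iterate of low enough weight to act nontrivially on the lowest-weight part of the cyclic submodule generated by \(m\) is \(F_{(N)}G\).
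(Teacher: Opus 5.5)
\textbf{Gap: the argument does not cover $N<0$.} Your Step 1 matches the paper's first step. The identity $[L_n,F_{(-1)}]m=((n+1)\Delta_F-n)F_{(n-1)}m$ needs $\Delta_F\neq\tfrac12$ only at $n=1$, and it correctly gives $F_{(k)}m=G_{(k)}m=0$ for all $k\geq-1$.

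The trouble is Step 2. You use the commutator formula \eqref{identity:Com} for $[F_{(N)},G_{(-1)}]$, and that formula only involves products $F_{(i)}G$ with $i\geq0$. Nothing in the lemma forces $N=\Delta_F+\Delta_G-1\geq0$. Here $V$ is not assumed nonnegatively graded, and the lower bound on the spectrum of $M$ says nothing about $\Delta_F,\Delta_G$; for example, $\Delta_F=\Delta_G=0$ with $F_{(-1)}G=\mathbf 1$ is allowed. For $N<0$, the term $C_0m$ can never appear in such a commutator. Also, $G_{(-1)}F_{(N)}m$ is not known to vanish when $N\leq-2$. So your proposed reduction to $N\geq0$ is unjustified, and the proposal does not prove the lemma as stated.

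\textbf{The missing idea.} Apply the iterate identity \eqref{identity:Bor} to the main term itself, as the paper does. Since $(C_0\mathbf 1)_{(-1)}=C_0\,\mathrm{id}$,
\[
C_0m=(F_{(N)}G)_{(-1)}m=\sum_{j\geq0}(-1)^j\binom Nj\Bigl(F_{(N-j)}G_{(j-1)}m-(-1)^{p(F)p(G)+N}G_{(N-1-j)}F_{(j)}m\Bigr).
\]
Every summand ends in $G_{(l)}$ with $l\geq-1$ or in $F_{(j)}$ with $j\geq0$, so it vanishes by Step 1. This works uniformly for every integer $N$.

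\textbf{Step 3 is not a real obstacle when $N\geq0$.} For $0\leq i\leq N-1$, both indices in $(F_{(i)}G)_{(N-1-i)}m$ are $\geq0$. Its iterate expansion therefore consists only of terms $F_{(i-j)}G_{(N-1-i+j)}m$ and $G_{(N-1-j)}F_{(j)}m$ with $0\leq j\leq i$. All of these are already killed by Step 1, so there are no remaining terms to commute and no descending induction is needed. In that expansion the two mode indices add up to $N-1$, not $N-2$. With this observation your commutator route does work for $N\geq0$, but it still cannot reach $N<0$.
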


\begin{proof}
For every integer \(l\) with \(0\leq l<\Delta_G\),
the primary-field commutator formula gives
\[
0=[L_{l+1},G_{(-1)}]m
=\bigl((l+2)\Delta_G-(l+1)\bigr)G_{(l)}m.
\]
The coefficient can vanish in this range only if
\(l=0\) and \(\Delta_G=\tfrac12\).
Thus \(G_{(l)}m=0\) for all \(l\geq0\).
Similarly, \(F_{(k)}m=0\) for all \(k\geq0\).

Set \(n=\Delta_F+\Delta_G-1\).
By \eqref{identity:Bor},
\[
\begin{aligned}
C_0m
&=(F_{(n)}G)_{(-1)}m\\
&=\sum_{j\geq0}(-1)^j\binom nj
\left(
F_{(n-j)}G_{(-1+j)}m
-(-1)^{p(F)p(G)+n}G_{(n-1-j)}F_{(j)}m
\right)=0.
\end{aligned}
\]
Since \(C_0\ne0\), we obtain \(m=0\).
\end{proof}

\subsection{Vertex algebras with two quasi-conformal structures}
Let \(V\) have two quasi-conformal structures \(L,L'\), each defining a
\(\mathbb Q_{\geq0}\)-grading, with commuting grading operators \(L_0,L'_0\).
For a \(V\)-module \(M\), write \(M_{L,d}\) and \(M_{L',d}\)
for the generalized eigenspaces of \(L_0\) and \(L'_0\), respectively.
Denote by \(V\on{-mod}_{L,L',\geq0}\) the category of compatible modules for both structures, with commuting, locally finite grading operators and lower-bounded spectra. Morphisms intertwine both quasi-conformal actions.
For even elements \(f\in V_{L,0}\) and \(g\in V_{L',0}\),
the localizations \(M_f\) and \(M_g\) are therefore defined.

\begin{theorem}\label{theorem:iso}
Let \(V,L,L'\) be as above.
Suppose that there exist even elements
\(f\in V_{L,0}\) and \(g\in V_{L',0}\) such that \(f\) is a
primary vector with respect to \(L'\) of weight
\(\lambda=p/q>0\), where \(p,q\) are coprime positive integers.
Assume that, for every sufficiently large \(N\),
\begin{equation}\label{eq:non-zero-coefficient}
(f^{Nq})_{(Np-1)}g^{Nq}=C_N\mathbf1,
\qquad C_N\in\mathbb C^\times.
\end{equation}
Then:
\begin{enumerate}[label=\textup{(\arabic*)}]
\item
If \(M\in V\on{-mod}_{L,L',\geq0}\) satisfies
\(M_f=M_g=0\), then \(M=0\).

\item
A morphism \(\varphi\colon M\to M'\) in
\(V\on{-mod}_{L,L',\geq0}\) is injective
\textup{(resp.} surjective\textup{)} if both
\(\varphi_f\) and \(\varphi_g\) are injective
\textup{(resp.} surjective\textup{)}.

\item
If \(M_f\) and \(M_g\) are Noetherian over \(V_f\) and \(V_g\), respectively, every ascending chain of \(V\)-submodules of \(M\) stable under both quasi-conformal actions stabilizes. If both structures are induced by conformal vectors in \(V\), then \(M\) is Noetherian as an ordinary \(V\)-module.

\item
If \(V_f\) and \(V_g\) are simple, then \(V\) is simple.
\end{enumerate}
\end{theorem}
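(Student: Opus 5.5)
The plan is to derive everything from part (1), and to prove (1) by applying Lemma~\ref{lem:vanish} to the structure \(L'\) with
\[
F=f^{Nq},\qquad G=g^{Nq},\qquad N\gg0.
\]
Since \(f\in V_{L,0}\) and \(V_{L,0}\) generates a commutative vertex subalgebra, the powers \(f^k\) are computed by normally ordered products of mutually commuting fields. The primary-field covariance formula \eqref{eq:quasi-conformal-covariance} then shows that \(f^{Nq}\) is \(L'\)-primary of weight \(Nq\lambda=Np\). The vector \(g^{Nq}\) has \(L'\)-weight \(0\), so \(L'_ng^{Nq}=0\) for \(n\ge1\) by nonnegativity of the \(L'\)-grading; thus it is \(L'\)-primary of weight \(0\).

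With \(\Delta_F=Np\) and \(\Delta_G=0\), the hypotheses of Lemma~\ref{lem:vanish} hold:
\[
\Delta_F+\Delta_G=Np\in\mathbb Z,\qquad \Delta_F,\Delta_G\ne\tfrac12,\qquad F_{(\Delta_F+\Delta_G-1)}G=(f^{Nq})_{(Np-1)}g^{Nq}=C_N\vac,
\]
the last by \eqref{eq:non-zero-coefficient}. So (1) reduces to the following: if \(M\neq0\) and \(M_f=M_g=0\), find a nonzero \(m\) with
\[
L'_nm=0\ (n\ge1),\qquad G_{(l)}m=0\ (l\ge0),\qquad F_{(s)}m=0\ (s\ge Np),\qquad F_{(-1)}m=G_{(-1)}m=0.
\]

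I would look for \(m\) inside a space \(\Omega\) of extremal vectors. Start from a nonzero homogeneous vector. Using the lower-bounded spectra and the fact that positive \(L'\)-modes and the modes \(G_{(l)}\) (\(l\ge0\)) and \(F_{(s)}\) (\(s\ge Np\)) all strictly lower the \(L'\)-weight, pass to a nonzero vector of minimal \(L'\)-weight inside the finitely many weights reachable. Let \(\Omega\) be the joint \((L_0,L'_0)\)-generalized weight space of such vectors; it is finite dimensional and nonzero. Every vector of \(\Omega\) satisfies the first three conditions. For the last two I would use \(M_f=M_g=0\) together with Lemma~\ref{lem:in-localization}(1): each vector is killed by a power of \(f_{(-1)}\) and by a power of \(g_{(-1)}\).

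The operator \(g_{(-1)}\) preserves \(L'\)-weight. Because \(g\) and its translates commute with \(f\) modulo terms that lower the \(L'\)-weight, \(g_{(-1)}\) should preserve the subspace of \(L'\)-extremal vectors. Being nilpotent there, it has a nonzero kernel. Expansion \eqref{eq:identify-f^N} then shows that \((g^{Nq})_{(-1)}\) agrees with \((g_{(-1)})^{Nq}\) on such vectors, since all correction terms contain nonnegative modes of \(g\), which kill extremal vectors. Hence \(G_{(-1)}m=0\).

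\textbf{Main obstacle.} The hard step is the simultaneous condition \(F_{(-1)}m=0\). The operator \(f_{(-1)}\) raises \(L'\)-weight by \(\lambda\), so it leaves \(\Omega\). My first attempt would be to enlarge \(\Omega\) to the subspace \(K\) of all vectors satisfying the first three conditions together with \(G_{(-1)}m=0\). I would then check, using the commutator formula \eqref{identity:Com} and the fact that every \(f_{(j)}g\) with \(j\ge0\) lowers the \(L'\)-weight, that \(K\) is stable under \(f_{(-1)}\) up to a filtration argument by \(L'\)-weight. Granting this, local nilpotency of \(f_{(-1)}\) on \(K\) gives \(0\neq m\in K\) with \(f_{(-1)}m=0\), and the same expansion \eqref{eq:identify-f^N} upgrades this to \(F_{(-1)}m=0\). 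Lemma~\ref{lem:vanish} then forces \(m=0\), a contradiction, which proves (1).

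For (2), the kernel and cokernel of \(\varphi\) lie in \(V\on{-mod}_{L,L',\geq0}\), since both quasi-conformal actions are intertwined by \(\varphi\). Localization is exact by Lemma~\ref{lem:exact}, so \((\ker\varphi)_f=\ker\varphi_f\), and similarly for \(g\) and for cokernels. Part (1) then gives the assertion.

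For (3), let \(M_1\subset M_2\subset\cdots\) be an ascending chain of submodules stable under both structures. The localized chains stabilize, so there is a single \(i_0\) beyond which both \((M_j)_f\) and \((M_j)_g\) are constant. Then \(M_j/M_{i_0}\) has vanishing localizations for \(j\geq i_0\), so it is \(0\) by (1). If \(L\) and \(L'\) come from conformal vectors, every \(V\)-submodule is automatically stable under their modes, so \(M\) is Noetherian as an ordinary \(V\)-module.

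For (4), let \(I\subset V\) be a nonzero ideal; it is stable under both structures. Since \(V_f\) is simple, \(I_f\) is an ideal of \(V_f\), so \(I_f\in\{0,V_f\}\); likewise \(I_g\in\{0,V_g\}\). If \(I_f=I_g=0\), then \(I=0\) by (1), a contradiction. If \(I_f=V_f\), then the image of \(\vac\) in \((V/I)_f\) vanishes. By Lemma~\ref{lem:in-localization}(1), \(f^k=(f^k)_{(-1)}\vac\in I\) for \(k\) large, so \(f^{Nq}\in I\) for \(N\) large. Since \(I\) is an ideal,
\[
C_N\vac=(f^{Nq})_{(Np-1)}g^{Nq}\in I,
\]
hence \(I=V\). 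The case \(I_g=V_g\) is symmetric, using the skew-symmetry of the pairing in \eqref{eq:non-zero-coefficient} to place \(g^{Nq}\) first. Therefore \(V\) is simple.
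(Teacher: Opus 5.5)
Your setup ($F=f^{Nq}$ and $G=g^{Nq}$ with $L'$-weights $Np$ and $0$, a nonzero vector of lowest $L'$-weight in its coset modulo $\Z$, and Lemma~\ref{lem:vanish}) is the paper's, and (2) and (3) are fine. Part (1), however, has a genuine gap exactly at the step you flag, and your proposed repair is unsupported. For $m\in K$ one has $L'_nf_{(-1)}m=((n+1)\lambda-n)f_{(n-1)}m$. For $1\leq n\leq\lambda$ this vector has $L'$-weight $d+\lambda-n\geq d$, and the conditions defining $K$ say nothing about the modes $f_{(j)}$, $j\geq0$. So there is no reason for $f_{(-1)}K\subset K$, and the same problem arises for the condition $G_{(-1)}f_{(-1)}m=0$. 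Passing instead to the last nonzero vector of an $f_{(-1)}$-string raises the weight and destroys extremality. (Also, $\Omega$ need not be finite dimensional, since the grading operators are only assumed locally finite.)

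The missing idea is that no joint kernel is needed, because $N$ may be chosen after $m$. Fix any nonzero $m\in M_{L',d}$ with $M_{L',d-j}=0$ for all integers $j\geq1$. Since $F$ and $G$ have integral $L'$-weights, the operators $L'_n$ $(n\geq1)$, $F_{(s)}$ $(s\geq Np)$ and $G_{(l)}$ $(l\geq0)$ lower the $L'$-weight by a positive integer, so they kill $m$ for every $N$. Since $M_f=M_g=0$, Lemma~\ref{lem:in-localization} and the expansion in its proof give $(f^k)_{(-1)}m=(g^k)_{(-1)}m=0$ for all sufficiently large $k$; once $(f_{(-1)})^{N_0}m=0$, this holds for all $k\geq N_0+2D$. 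As \eqref{eq:non-zero-coefficient} holds for every sufficiently large $N$, choose $N$, depending on $m$, with $C_N\neq0$ and $F_{(-1)}m=G_{(-1)}m=0$. Lemma~\ref{lem:vanish} then gives $m=0$. This is the paper's proof of (1).

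There is a smaller gap in (4): you apply (1) to the ideal $I$ itself, asserting that it is stable under both structures. This is automatic when $L,L'$ come from conformal vectors in $V$. For general quasi-conformal structures, however, the operators $L_n,L'_n$ ($n\geq0$) are not modes of states of $V$, and an ideal need not be stable under them, or even graded. So $I$ need not be an object of $V\on{-mod}_{L,L',\geq0}$. The paper avoids this as follows. Once $I_f=I_g=0$, one has $I\subset K':=\ker(V\to V_f)\cap\ker(V\to V_g)$. For $s=f,g$, $[L'_n,s_{(-1)}]$ is a scalar multiple of $s_{(n-1)}$, which commutes with $s_{(-1)}$, so each $s_{(-1)}$-torsion subspace, and hence $K'$, is $L'$-stable. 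Since the argument of (1) uses only the $L'$-action, it gives $K'=0$, hence $I=0$.
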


\begin{proof}
For sufficiently large \(N\), set
\(F=f^{Nq}\) and \(G=g^{Nq}\).
Since
\[
[L'_n,f_{(-1)}]=((n+1)\lambda-n)f_{(n-1)}
\]
and \(f_{(j)}V_{L,0}=0\) for \(j\geq0\), we have
\(L'_nF=0\) for \(n>0\), and \(F\) has \(L'_0\)-weight \(Np\).
Also, \(G\in V_{L',0}\), so \(L'_nG=0\) for \(n>0\).
Thus \(F,G\) satisfy the 
primary-vector hypotheses of
of Lemma~\ref{lem:vanish}.

(1) Suppose \(M\ne0\).
Since the weights are bounded below,
 we can choose a nonzero vector
\(m\in M_{L',d}\) such that
\(M_{L',d-j}=0\) for every integer \(j\geq1\).
Choose a lowest weight in a
nonzero coset modulo \(\mathbb Z\).
Since \(F\) and \(G\) have integral \(L'_0\)-weights,
this choice gives
\[
L'_nm=0\ (n\geq1),\qquad
F_{(s)}m=0\ (s\geq Np),\qquad
G_{(l)}m=0\ (l\geq0).
\]
As \(M_f=M_g=0\), Lemma~\ref{lem:in-localization} gives
\(F_{(-1)}m=G_{(-1)}m=0\) for sufficiently large \(N\).
Lemma~\ref{lem:vanish} then implies \(m=0\), a contradiction. Once the two localization kernels are defined, 
the argument requires
only the compatible \(L'\)-action.

(2) Apply (1) to \(\ker\varphi\) or \(\coker\varphi\),
using the exactness of localization.

(3) Let \(M_0\subset M_1\subset\cdots\) be an ascending chain
of \(V\)-submodules of \(M\) stable under both quasi-conformal actions.
The localized chains stabilize because \(M_f\) and \(M_g\)
are Noetherian.
Part~(2), applied to \(M_i\hookrightarrow M_{i+1}\), then
shows that the original chain stabilizes. When both actions come from conformal vectors in \(V\), every \(V\)-submodule is stable under their modes.

(4) Since \(F\) and \(G\) have \(L'_0\)-weights \(Np\) and
zero, respectively, \(G_{(n)}F=0\) for \(n\geq Np\).
Skew-symmetry and \eqref{eq:non-zero-coefficient} give
\begin{equation}\label{eq:by-skew-symmetry}
(g^{Nq})_{(Np-1)}f^{Nq}=(-1)^{Np}C_N\mathbf1.
\end{equation}
Let \(I\subsetneq V\) be an ideal.
If \(I_f\ne0\), simplicity gives \(I_f=V_f\), so
\((V/I)_f=0\).
Lemma~\ref{lem:in-localization}, applied to the image of
\(\mathbf1\) in \(V/I\), implies \(f^r\in I\) for some \(r\).
Choosing \(N\) sufficiently large with \(Nq\geq r\), we obtain
\(f^{Nq}\in I\).
Equation~\eqref{eq:by-skew-symmetry} then gives
\(\mathbf1\in I\), a contradiction.
Thus \(I_f=0\).
The same argument, using \eqref{eq:non-zero-coefficient},
gives \(I_g=0\).
Put \(K=\ker(V\to V_f)\cap\ker(V\to V_g)\), so that \(I\subset K\).
This is a \(V\)-submodule stable under the \(L'\)-action.
Indeed, for \(s=f,g\), \([L'_n,s_{(-1)}]\) is a scalar multiple of \(s_{(n-1)}\), and it commutes with \(s_{(-1)}\).
Hence the \(s_{(-1)}\)-torsion subspace is stable under every \(L'_n\).
The argument of (1), applied to \(K\) with its inherited \(L'\)-action, gives \(K=0\).
Thus \(I=0\), proving simplicity.
\end{proof}

\section{Chiral Hamiltonian reduction and global sections}\label{sec:reduction}

\subsection{The BRST sheaf}\label{subsection:our-sheaf}
Let \(G\) be a connected reductive algebraic group with Lie algebra \(\g\), and let \(W_0,W_1\) be finite-dimensional \(G\)-modules. Set \(\widetilde X=W_0\sslash G\), let \(\pi\colon W_0\to\widetilde X\) be the quotient map, and write
\begin{align}\label{eq:mc{A}}
\mathcal A:=\cD_{\wedge F}^{ch}(W_0),\qquad F=W_0\times W_1\to W_0.
\end{align}
We use the notation of Sections~\ref{subsection:betagamma bc-system} and \ref{subsection:Sheaves of BRST reductions}, abbreviating the generating fields to \(\beta^i,\gamma^i,b^j,c^j\). The currents
\begin{align}\label{eq:J-current}
J^\xi
=-\sum_i:\beta^{\xi v_i}\gamma^{v_i^*}:
+\sum_s:b^{\xi w_s}c^{w_s^*}:
\end{align}
define a homomorphism \(V^{\kappa_{W_0,W_1}}(\g)\to\mathcal A\), where \(\kappa_{W_0,W_1}(\xi,\eta)=-\on{tr}_{W_0}(\xi\eta)+\on{tr}_{W_1}(\xi\eta)\). Throughout this section, assume
\begin{equation}\label{eq:level-condition}
\kappa_{W_0,W_1}=-\kappa_\g,\qquad \on{tr}_{W_0}\xi=\on{tr}_{W_1}\xi\quad(\xi\in\g).
\end{equation}
The signs in \eqref{eq:J-current} give
\[
J^\xi_{(0)}\beta^v=\beta^{\xi v},\quad
J^\xi_{(0)}\gamma^\alpha=-\gamma^{\alpha\circ\xi},\quad
J^\xi_{(0)}b^w=b^{\xi w},\quad
J^\xi_{(0)}c^\delta=-c^{\delta\circ\xi}.
\]
The double contractions give the form \(\kappa_{W_0,W_1}\) above.
The trace condition, automatic for semisimple \(G\), ensures BRST closedness of the conformal vectors \(L,L'\) below. The construction of Section~\ref{subsection:Sheaves of BRST reductions} gives a sheaf of relative BRST cohomology on \(\widetilde X\). Its degree-zero part is
\begin{equation}\label{eq:def-of-sheaf-V_X}
\boldsymbol{\mc V}_{\widetilde X}
:=\pi_*\mc H^{\infty/2+0}(\affg_{-\kappa_\g},\g,\mc D_{\wedge F}^{ch}).
\end{equation}
Both the full cohomology sheaf and its degree-zero part are conformal, with conformal vector
\begin{equation}\label{eq:L}
L=\sum_i:\beta^{v_i}\partial\gamma^{v_i^*}:
-\sum_j:b^{w_j}\partial c^{w_j^*}:
-\sum_k:b^{\xi_k}\partial c^{\xi_k^*}:.
\end{equation}
Indeed, \(Q_{(0)}L=\widehat J^\xi_{(0)}L=b^\xi_{(0)}L=0\). By Proposition~\ref{Pro:pushforward-is-a-sheaf}, global sections identify with relative cohomology. In particular,
\begin{equation}\label{eq:def-of-V}
\mathbf V:=\Gamma(\widetilde X,\boldsymbol{\mc V}_{\widetilde X})
\cong H^{\infty/2+0}(\affg_{-\kappa_\g},\g,\mathcal A)
\end{equation}
is a conformal vertex \(\mc O(\widetilde X)\)-algebra.

\subsection{Fourier transform}\label{subsection:Fourier-transform}
Choose a real structure \(\sigma\) on \(G\), compatible real forms \((W_i)_{\mathbb R}\), and a positive-definite Hermitian form on \(W_0\oplus W_1\), conjugate-linear in the first argument, such that \(W_0\perp W_1\), \((\bar x,\bar y)=\overline{(x,y)}\), and
\[
(\xi x,y)=-(x,\omega(\bar\xi)y),\qquad \bar\xi=d\sigma(\xi),
\]
where \(\omega\) is the Chevalley involution. Thus \(\overline{gv}=\sigma(g)\bar v\). Choose orthonormal real bases \(\{v_i\},\{w_j\}\), with dual bases \(v_i^*=(v_i,\cdot)\), \(w_j^*=(w_j,\cdot)\). Extend the form to \(\mc O(T^*W_0)=\on{Sym}(W_0\oplus W_0^*)\), using the induced orthogonal direct-sum form, by
\[
(x_1\cdots x_k,y_1\cdots y_k)
=\sum_{\tau\in S_k}\prod_i(x_i,y_{\tau(i)}),
\]
with distinct polynomial degrees orthogonal and \((\mathbf1,\mathbf1)=1\).

The conjugate-linear Fourier isomorphism
\(\Phi\colon\mathcal A\to\cD_{\wedge F^*}^{ch}(W_0^*)\) is defined by
\begin{equation}\label{eq:Phi-for-cdo}
\gamma^{v_i^*}\mapsto-\beta^{v_i^*},\quad
\beta^{v_i}\mapsto\gamma^{v_i},\quad
b^{w_j}\mapsto c^{w_j},\quad c^{w_j^*}\mapsto b^{w_j^*}.
\end{equation}
Its restriction to polynomial functions satisfies \(\Phi(gf)=\Omega(\sigma(g))\Phi(f)\), where \(d\Omega=\omega\); hence it identifies \(\mc O(W_0)^G\) with \(\mc O(W_0^*)^G\). The basis-dependent complex-linear identification
\begin{equation}\label{eq:identification-dual}
\beta^{v_i^*}\mapsto\beta^{v_i},\quad
\gamma^{v_i}\mapsto\gamma^{v_i^*},\quad
b^{w_j^*}\mapsto b^{w_j},\quad c^{w_j}\mapsto c^{w_j^*}
\end{equation}
identifies the dual free field algebra with \(\mathcal A\). Composing it with \(\Phi\) gives a conjugate-linear automorphism \(\mathring{\mf F}\). Define the complex-linear Fourier transform by \(\mf F(a)=\overline{\mathring{\mf F}(a)}\). Explicitly,
\[
\mf F(\beta^i)=\gamma^i,\quad \mf F(\gamma^i)=-\beta^i,
\quad\mf F(b^j)=c^j,\quad\mf F(c^j)=b^j.
\]
Thus \(\mf F^4=1\); its square is minus the identity on the \(\beta\gamma\) generators and the identity on the \(bc\) generators. Moreover, \(\mf F(J^\xi)=J^{\omega(\xi)}\) and \(\mf F(\mc O(W_0)^G)=\mc O(W_0^*)^G\).

Extend \(\mf F\) to the BRST complex by \(b^\xi\mapsto b^{\omega(\xi)}\) and \(c^\alpha\mapsto c^{\omega^*(\alpha)}\). It preserves the relative subcomplex and satisfies \(\mf F(Q)=Q\), so it induces a complex-linear automorphism of relative cohomology. Consequently,
\[
L':=\mf F(L)=L-TH,\qquad
H=\sum_i:\beta^i\gamma^i:-\sum_j:b^jc^j:
\]
is another conformal vector, and \(\mathbf V\) is nonnegatively graded with respect to both \(L\) and \(L'\). The \emph{canonical conformal vector}
\begin{equation}\label{eq:canonical-conformal-vector}
L_{\mathrm{can}}:=\tfrac12(L+L')=L-\tfrac12TH
\end{equation}
makes \(\mathbf V\) conical and \(\frac12\mathbb Z_{\geq0}\)-graded
with central charge
\begin{equation}\label{eq:cc}
c_{\mathbf V}=-\dim W_0+\dim W_1-2\dim\g.
\end{equation} 

Set \(\widetilde X^*=W_0^*\sslash G\), with quotient map \(\pi^\vee\). Applying the construction to the dual representation gives \(\boldsymbol{\mc V}^*_{\widetilde X^*}\). Under \eqref{eq:identification-dual}, with the corresponding Chevalley twist on the ghost factors, this is the localization sheaf associated with \(\mathbf V\) as a vertex \(\mc O(\widetilde X^*)\)-algebra with conformal vector \(L'\).

\subsection{The commutant and its quasi-conformal structures}\label{subsection:Invariant subalgebras}
Write \(\mathcal C:=\mathcal A^{\g[t]}\).
The ambient free field conformal vectors are
\begin{equation}\label{eq:conformal-vector-of-invariant-subagelrba}
\widetilde L=\sum_i:\beta^{v_i}\partial\gamma^{v_i^*}:
-\sum_j:b^{w_j}\partial c^{w_j^*}:,
\qquad \widetilde L'=\mf F(\widetilde L).
\end{equation}
These are states of \(\mathcal A\); they need not belong to \(\mathcal C\).
The trace condition in \eqref{eq:level-condition} makes each \(J^\xi\) primary of weight one for both ambient structures. Thus
\[
[\widetilde L_n,J^\xi_{(m)}]=-mJ^\xi_{(n+m)},
\qquad n\geq-1,
\]
and similarly for \(\widetilde L'\).
For \(m\geq0\), the right-hand side annihilates \(\mathcal C\): when \(n=-1,m=0\), its coefficient is zero.
Consequently, the operators \(\widetilde L_n,\widetilde L'_n\), \(n\geq-1\), preserve \(\mathcal C\) and define two quasi-conformal structures there.
Their grading operators commute and have nonnegative integral spectra.
The inclusions
\[
\mc O(W_0)^G\subset\mathcal C_{\widetilde L,0},
\qquad
\mc O(W_0^*)^G\subset\mathcal C_{\widetilde L',0}
\]
therefore make \(\mathcal C\) a vertex algebra over both invariant rings.

There is a natural homomorphism
\begin{equation}\label{eq:g[t]-invariant}
\varphi:\mathcal C\longrightarrow\mathbf V,
\qquad a\longmapsto[a\otimes\mathbf1].
\end{equation}
\begin{proposition}\label{prop:vs-beta-gamma-bc}
The map \eqref{eq:g[t]-invariant} intertwines the quasi-conformal structures:
\[
\varphi(\widetilde L_na)=L_n\varphi(a),\qquad
\varphi(\widetilde L'_na)=L'_n\varphi(a)
\quad(a\in\mathcal C,\ n\geq-1).
\]
\end{proposition}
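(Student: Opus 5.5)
The plan is to compare the two operator families directly on the BRST complex. The conformal vector $L$ of \eqref{eq:L} decomposes as $L=\widetilde L+L^{\mathrm{gh}}$, where $L^{\mathrm{gh}}=-\sum_k:b^{\xi_k}\partial c^{\xi_k^*}:$ is the ghost part. Since $\mathcal A\otimes\mc E(\g\oplus\g^*)$ is a tensor product and the two summands mutually commute, we have $L_n=\widetilde L_n\otimes 1+1\otimes L^{\mathrm{gh}}_n$ on the complex.

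For $a\in\mathcal C$ we apply this to the cocycle $a\otimes\mathbf 1$. The ghost modes $L^{\mathrm{gh}}_n$ annihilate the ghost vacuum for $n\geq-1$: for $n\geq0$ this holds by weight and because the vacuum is annihilated by positive modes, and for $n=-1$ because $T\mathbf1=0$. Hence
\[
L_n(a\otimes\mathbf1)=(\widetilde L_na)\otimes\mathbf1 ,
\]
and $\widetilde L_na\in\mathcal C$ by the preceding discussion. We still have to check that $a\otimes\mathbf1$ lies in the relative subcomplex and is closed. This is what makes $\varphi$ well defined. It follows from $J^\xi_{(m)}a=0$ for $m\geq0$, together with $b^\xi_{(0)}\mathbf1=0$ and $c_{(m)}\mathbf1=0$ for $m\geq0$. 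Since $L$ is BRST closed and relative-invariant, $L_n$ acts on cohomology by its action on representatives. This gives $\varphi(\widetilde L_na)=L_n\varphi(a)$.

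For $L'$ we argue the same way. Since $\mf F$ is extended to the complex with $\mf F(Q)=Q$, we get $L'=\mf F(L)=\widetilde L'+\mf F(L^{\mathrm{gh}})$. The transformed ghost vector $\mf F(L^{\mathrm{gh}})$ equals $L^{\mathrm{gh}}$ up to relabeling of bases by $\omega$, because $\mf F$ sends $b^\xi\mapsto b^{\omega(\xi)}$ and $c\mapsto c^{\omega^*(\cdot)}$ rather than swapping $b$ and $c$. Its modes $n\geq-1$ therefore again kill the ghost vacuum, and the identical argument applies.

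The main obstacle is bookkeeping rather than ideas. We must make sure that $L'$ as written, $L-TH$ with $H$ containing only matter bilinears, is consistent with $\mf F(L^{\mathrm{gh}})=L^{\mathrm{gh}}$. This amounts to checking that $\widetilde L'=\widetilde L-TH$, which follows from the explicit formulas for $\mf F$ on generators. We also need that the operators $\widetilde L'_n$ on $\mathcal C$ are exactly the restrictions of the modes of the ambient field $\widetilde L'(z)$ of $\mathcal A$, so that the tensor-product decomposition of modes is legitimate even though $\widetilde L'\notin\mathcal C$.
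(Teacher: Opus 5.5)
Your proposal is correct and follows essentially the paper's argument. You decompose $L=\widetilde L+L_{\mathrm{gh}}$ on the BRST complex, use $(L_{\mathrm{gh}})_n\mathbf 1=0$ for $n\geq-1$ to get $L_n(a\otimes\mathbf 1)=(\widetilde L_na)\otimes\mathbf 1$, and pass to cohomology because $L_n$ commutes with $d$ and preserves the relative complex. For $L'$ the paper just invokes the Fourier transform, while you spell it out via $\mf F(L_{\mathrm{gh}})=L_{\mathrm{gh}}$; this holds exactly, since $\sum_k:b^{\xi_k}\partial c^{\xi_k^*}:$ does not depend on the choice of dual bases, so it is the same argument.
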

\begin{proof}
On the BRST complex, \(L=\widetilde L+L_{\mathrm{gh}}\), where
\(L_{\mathrm{gh}}=-\sum_i:b^{\xi_i}\partial c^{\xi_i^*}:\).
For \(n\geq-1\), \((L_{\mathrm{gh}})_n\mathbf1=0\), so
\[
L_n(a\otimes\mathbf1)=(\widetilde L_na)\otimes\mathbf1.
\]
The modes \(L_n\) commute with the BRST differential and preserve the relative complex. Passing to cohomology proves the first identity; Fourier transform proves the second.
\end{proof}

We also record the inner coset conformal vectors when \(G\) is semisimple. With dual bases for \(-\kappa_\g/2\), put
\[
L_\g=\tfrac12\sum_i:J^{\xi_i}J^{\xi_i^*}:,
\qquad
L^{\mathcal C}:=\widetilde L-L_\g,
\qquad
L^{\mathcal C}_{\mathrm{can}}:=\tfrac12
\bigl(L^{\mathcal C}+\mf F(L^{\mathcal C})\bigr).
\]
The standard coset calculation gives \(L^{\mathcal C}\in\mathcal C\).
Moreover, \((L_\g)_na=0\) for \(a\in\mathcal C\) and \(n\geq-1\), so the modes of \(L^{\mathcal C}\) induce exactly the quasi-conformal structure defined above.
The BRST identity
\begin{equation}\label{eq:re-expression-of-onformal-vector}
Q_{(0)}\left(\tfrac12\sum_i:b^{\xi_i}J^{\xi_i^*}:\right)
=L_\g-\sum_i:b^{\xi_i}\partial c^{\xi_i^*}:
=L-L^{\mathcal C}
\end{equation}
shows that \(\varphi(L^{\mathcal C})=L\), and Fourier transform gives
\(\varphi(\mf F(L^{\mathcal C}))=L'\).
The quasi-conformal construction and Proposition~\ref{prop:vs-beta-gamma-bc} do not require this Sugawara correction and apply to every connected reductive \(G\) satisfying \eqref{eq:level-condition}.

\subsection{Geometric faithfulness}\label{subsection:faithfullth}
\begin{lemma}\label{lem:ne0}
For a nonzero homogeneous \(f\in\mc O(W_0)\) of degree \(k\), we have \(f_{(k-1)}\Phi(f)=(f,f)\mathbf1\ne0\).
\end{lemma}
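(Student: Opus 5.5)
The strategy is to reduce the $(k-1)$-st product to an iterated differentiation: commute the modes of $f$ past the $\beta$-modes making up $\Phi(f)$ one at a time. Everything will be computed inside $\mathcal A$, reading $\Phi(f)$ through the identification \eqref{eq:identification-dual}. By \eqref{eq:Phi-for-cdo}, if $f=\sum_{I}c_I\,\gamma^{v_{i_1}^*}\cdots\gamma^{v_{i_k}^*}$ (normally ordered, commuting $\gamma$'s), then
\[
\Phi(f)=(-1)^k\sum_I\overline{c_I}\;\beta^{v_{i_1}}_{(-1)}\cdots\beta^{v_{i_k}}_{(-1)}\mathbf 1 .
\]
The complex conjugation comes from conjugate-linearity of $\Phi$ and the reality of the basis. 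Since $f_{(k-1)}\Phi(f)$ is linear in $f$ and conjugate-linear in $\Phi(f)$'s coefficients, it suffices to compute $f_{(k-1)}\beta^{j_1}_{(-1)}\cdots\beta^{j_k}_{(-1)}\mathbf 1$ for monomials and then reassemble.

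The key identity is a commutator computation from \eqref{identity:Com}. For $h\in\mc O(W_0)\subset\mathcal A_0$ we have $\beta^{j}_{(i)}h=0$ for $i\geq1$ by weight, and $\beta^j_{(0)}h=\partial_j h:=\partial h/\partial\gamma^{v_j^*}$. Hence
\[
[\beta^j_{(-1)},h_{(m)}]=(\partial_jh)_{(m-1)},
\]
and so
\[
h_{(m)}\beta^j_{(-1)}B'=\beta^j_{(-1)}h_{(m)}B'-(\partial_jh)_{(m-1)}B'.
\]
Apply this with $h=f$, $m=k-1$ and $B'$ a $\beta$-monomial of degree $k-1$. The first term vanishes, because $f_{(k-1)}B'$ would have $L$-weight $0+(k-1)-(k-1)-1=-1<0$, by \eqref{eq:grading}. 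Iterating, each step strips one $\beta$, replaces $f$ by a derivative, and lowers the mode index by one. After $k$ steps we get
\[
f_{(k-1)}\beta^{j_1}_{(-1)}\cdots\beta^{j_k}_{(-1)}\mathbf 1=(-1)^k\,(\partial_{j_1}\cdots\partial_{j_k}f)_{(-1)}\mathbf 1,
\]
a constant multiple of $\mathbf 1$, since $f$ has degree $k$. The weight argument at each stage is what kills all partially contracted terms, so no Wick bookkeeping is required.

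Finally, the sign $(-1)^k$ cancels the one in $\Phi(f)$, which leaves
\[
f_{(k-1)}\Phi(f)=\Bigl(\sum_J\overline{c_J}\,\partial_{j_1}\cdots\partial_{j_k}\Bigr)f\cdot\mathbf 1 .
\]
One then checks that this pairing $\langle\partial^J,x^I\rangle$, with conjugate coefficients, is exactly the extended form $(\,\cdot\,,\,\cdot\,)$ of Section~\ref{subsection:Fourier-transform}. For orthonormal real $v_i$ and monomials $x_{i_1}\cdots x_{i_k}$, the number of ways $\partial_{j_1}\cdots\partial_{j_k}$ hits $x_{i_1}\cdots x_{i_k}$ nontrivially equals $\sum_{\tau\in S_k}\prod_s\delta_{i_s,j_{\tau(s)}}$, which is the permanent formula defining the form. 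The expected main obstacle is this last matching of normalizations and conjugations, namely which slot of the Hermitian form is conjugate-linear. The sign is already controlled by the recursion. Positive-definiteness of the extended form, an orthogonal sum of symmetric powers of a positive Hermitian form, then gives $(f,f)>0$ for $f\neq0$.
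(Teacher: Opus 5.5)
Your proposal is correct and takes essentially the paper's approach. The paper's proof just identifies $f_{(k-1)}\Phi(f)$ as the full-contraction term of the free field OPE and invokes positive-definiteness of the Hermitian form. Your recursion $[\beta^j_{(-1)},h_{(m)}]=(\partial_j h)_{(m-1)}$, with the weight argument killing $f_{(k-1)}B'$ at each step, is an explicit derivation of that full contraction: the signs $(-1)^k$ cancel and you recover the permanent formula for $(f,f)$.
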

\begin{proof}
This is the full-contraction term in the free field OPE; the chosen Hermitian form is positive definite.
\end{proof}
For \(U_i=\widetilde X\setminus V(I)\), define \(U_i^*:=\widetilde X^*\setminus V(\Phi(I))\).
\begin{theorem}\label{th:faithfulness-2}
Assume \eqref{eq:level-condition}. Let \(V=\mathbf V\) or \(V=\mathcal C\), using the quasi-conformal structures \((L,L')\) or \((\widetilde L,\widetilde L')\), respectively, and let \(U_0\subset\widetilde X\) be a nonempty affine open subset. For \(M\in V\on{-mod}_{L,L',\geq0}\), set
\[
\mathcal L(M):=\bigl(\widetilde M_{\widetilde X}(U_0),
\widetilde M_{\widetilde X^*}(U_0^*)\bigr).
\]
Then:
\begin{enumerate}[label=\textup{(\arabic*)}]
\item \(\mathcal L(M)=0\) implies \(M=0\).
\item A morphism is injective (resp. surjective) if both components of its image under \(\mathcal L\) are injective (resp. surjective).
\item If the two components of \(\mathcal L(M)\) are Noetherian over the corresponding localized vertex algebras, every ascending chain of submodules stable under both quasi-conformal actions stabilizes. When the actions are induced by conformal vectors in \(V\), \(M\) is Noetherian as an ordinary module.
\item If both localized vertex algebras are simple, then \(V\) is simple.
\end{enumerate}
\end{theorem}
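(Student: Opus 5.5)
The plan is to reduce Theorem~\ref{th:faithfulness-2} to Theorem~\ref{theorem:iso}. To do this we need even elements $f\in V_{L,0}$ and $g\in V_{L',0}$, primary for $L'$, satisfying the nondegeneracy condition \eqref{eq:non-zero-coefficient}. We also need to compare localization at $f$, $g$ with the sections over $U_0$, $U_0^*$. Since $U_0$ is a nonempty affine open of $\widetilde X$, we can shrink it to a principal open $\widetilde X_h\subset U_0$ with $h\in\mc O(W_0)^G$ nonzero and homogeneous of degree $k$; homogeneity can be arranged because the ideal of the complement is graded by the scaling action.

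Set $f:=h$ and $g:=\Phi(h)\in\mc O(W_0^*)^G$. Then $f$ lies in $V_{L,0}$, and after the identification \eqref{eq:identification-dual} $g$ lies in $V_{L',0}$; the same holds in $\mathbf V$ via $\varphi$ and Proposition~\ref{prop:vs-beta-gamma-bc}. Under $L'=L-TH$, the element $f$ has $L'_0$-weight $k/2$: the $\gamma$'s acquire weight $1/2$ under the shift by $H$. Taking $H$ into account, $\gamma$ remains primary for $L'$, hence so is a polynomial in the $\gamma$'s. So $f$ is primary of weight $\lambda=k/2=p/q$.

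For \eqref{eq:non-zero-coefficient} we need $(f^{Nq})_{(Np-1)}g^{Nq}$ to be a nonzero scalar. Now $f^{Nq}$ is homogeneous of degree $Nqk/2\cdot 2/q\ldots$; more precisely, $\deg f^{Nq}=Nqk=2Np$. The wanted mode index for full contraction of a degree-$K$ polynomial with its transform is $K-1$ by Lemma~\ref{lem:ne0}. Since $2Np-1\neq Np-1$, I must recheck the convention. The cleanest fix is to apply Lemma~\ref{lem:vanish} directly with $F=f^{r}$ and $G=\Phi(f^r)=g^r$, the latter being $\Phi$ multiplicative. Their weights are $\Delta_F=rk/2$ in $L'$ and $\Delta_G=rk/2$ for the appropriately chosen structure, so that $\Delta_F+\Delta_G-1=rk-1$. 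Lemma~\ref{lem:ne0} then gives $F_{(rk-1)}G=(f^r,f^r)\mathbf1\neq0$, by positivity of the Hermitian form. In other words, I would verify that \eqref{eq:non-zero-coefficient} is exactly Lemma~\ref{lem:ne0} once weights are normalized. For $\mathbf V$, the scalar identity is pushed through the vertex algebra map $\varphi$, which sends $\mathbf1$ to $\mathbf1$.

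For the localizations, note that $M_f$ is the value on $\widetilde X_f$ of $\widetilde M_{\widetilde X}$. Restriction from $U_0$ to $\widetilde X_f$ is an exact localization, so $\widetilde M_{\widetilde X}(U_0)=0$ forces $M_f=0$. Similarly, $U_0^*\supset\widetilde X^*_g$, since $\Phi$ carries the ideal to $\Phi(I)$, and so $M_g=0$. Parts (1), (2) and (4) then follow from Theorem~\ref{theorem:iso}. For (2) and (4), note that injectivity, surjectivity and simplicity pass from $U_0$ to the smaller principal opens: localization is exact, and a localization of a simple vertex algebra $V_{U_0}$ at an element is simple or zero, and it is nonzero by the contraction identity. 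For (3), Noetherianity likewise passes to the localization at $f$, because submodules of $M_f$ are localizations of their preimages' intersections with $M_{U_0}$ (Lemma~\ref{lemm:elements-of-localization}).

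The main obstacle I anticipate is the weight bookkeeping. One must check that $f$ is genuinely $L'$-primary, which needs the trace condition in \eqref{eq:level-condition} and relative-complex invariance. One must also match the mode index in \eqref{eq:non-zero-coefficient} with the full-contraction index of Lemma~\ref{lem:ne0}, including the sign and conjugation between $\Phi$ and $\mf F$.
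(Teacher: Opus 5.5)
Your plan has a genuine gap: the reduction to a homogeneous $h$ is false in general, and without it Theorem~\ref{theorem:iso} cannot be quoted. The open set $U_0$ is arbitrary and need not be stable under the scaling $\C^*$-action, whereas $V(h)$ is a cone whenever $h$ is homogeneous. For example, take a nonconstant homogeneous invariant $h_0$ and $U_0=\widetilde X_{h_0-1}$. Any homogeneous $h$ with $\widetilde X_h\subset U_0$ vanishes on $\{h_0=1\}$, hence on its $\C^*$-saturation $\{h_0\neq0\}$. That set is dense, so $h=0$.

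You must therefore allow a non-homogeneous $f$ with $\widetilde X_f\subset U_0$, and then Theorem~\ref{theorem:iso} does not apply. Such an $f$ is not $L'$-homogeneous, hence not $L'$-primary. You also cannot replace $f$ by its top-degree component $\widetilde f$ (of degree $k$) at the level of localizations, because $M_f=0$ says nothing about $M_{\widetilde f}$.

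The missing idea is to make this replacement on a single vector, using the second grading.
\begin{itemize}
\item Take $m\neq0$ a joint generalized eigenvector of $L_0,L'_0$ whose $L'_0$-weight is lowest in its class modulo $\mathbb Z$.
\item Vanishing of the two localizations gives $(f^N)_{(-1)}m=(\Phi(f)^N)_{(-1)}m=0$ for large $N$.
\item A degree-$j$ component of $f$ (resp.\ of $\Phi(f)$) is an eigenvector of $H_0=L'_0-L_0$ with eigenvalue $j$ (resp.\ $-j$). Projecting onto the extreme $H_0$-eigenvalue gives $(\widetilde f^N)_{(-1)}m=(\Phi(\widetilde f)^N)_{(-1)}m=0$.
\item Lemma~\ref{lem:vanish} then applies to $F=\widetilde f^N$ and $G=\Phi(\widetilde f)^N$.
\end{itemize}

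Part (4) must also be redone by hand rather than quoted.
\begin{itemize}
\item One checks that $(\Phi(\widetilde f)^N)_{(Nk-1)}f^N$ is still a nonzero multiple of $\mathbf 1$: the lower-degree parts of $f^N$ only produce terms of negative $L'$-weight.
\item Stability of the common kernel of the two localization maps can no longer use $[L'_n,f_{(-1)}]\in\C f_{(n-1)}$. Instead, these commutators are modes of polynomials in the same commuting free fields, so they commute with $f_{(-1)}$.
\item This stability is now needed under both quasi-conformal actions, since the argument of (1) uses $H_0$.
\end{itemize}

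Separately, your weight bookkeeping is wrong. Since $L'_0=L_0+H_{(0)}$, under $L'$ the $\gamma$'s have weight $1$ and the $\beta$'s weight $0$; the value $k/2$ is the $L_{\mathrm{can}}$-weight. So a degree-$k$ invariant in the $\gamma$'s is $L'$-primary of weight $k$, i.e.\ $p=k$ and $q=1$. Then \eqref{eq:non-zero-coefficient} reads $(f^N)_{(Nk-1)}g^N=(f^N,f^N)\mathbf 1$, which is literally Lemma~\ref{lem:ne0}.

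With your $\lambda=k/2$ the hypothesis of Theorem~\ref{theorem:iso} actually fails: $(f^{Nq})_{(Np-1)}g^{Nq}$ then has $L'$-weight $Np>0$, so it cannot be a nonzero multiple of $\mathbf 1$. Your proposed repair with $\Delta_F=\Delta_G=rk/2$ mixes two structures. Lemma~\ref{lem:vanish} needs $F$ and $G$ primary for the same structure that makes $m$ lowest-weight. For $L'$ the correct weights are $rk$ and $0$, which give the same index $rk-1$.

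The rest of your plan is fine:
\begin{itemize}
\item taking $g=\Phi(h)$, which even makes the real-coefficient trick ($f\mapsto f\bar f$) unnecessary, since Lemma~\ref{lem:ne0} and $U_0^*$ are both phrased via $\Phi$;
\item pushing the scalar identity through $\varphi$;
\item passing vanishing, injectivity, surjectivity, Noetherianity and simplicity from $U_0$ to principal opens.
\end{itemize}
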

\begin{proof}
Choose a nonempty principal open \(\widetilde X_f\subset U_0\). Replacing \(f\) by \(f\bar f\), we may assume that it has real coefficients in the chosen bases. Thus \(\mf F(f)=\Phi(f)\) under the fixed identification, and \(\widetilde X^*_{\mf F(f)}\subset U_0^*\). If \(f\) is constant, all assertions are immediate; otherwise, let \(\widetilde f\) be its highest-degree component, of degree \(k>0\). The powers \(\widetilde f^N\) and \(\mf F(\widetilde f)^N\) are \(L'\)-primary of weights \(Nk\) and zero, respectively, and have nonzero full contraction by Lemma~\ref{lem:ne0}.

(1) Write \(L,L'\) also for the selected quasi-conformal structures when \(V=\mathcal C\), and put \(H_0=L'_0-L_0\). The commuting, locally finite operators \(L_0,L'_0\) admit a joint generalized eigenspace decomposition. If \(M\ne0\), choose a joint eigenvector \(m\ne0\) whose \(L'_0\)-weight is lowest within its coset modulo \(\mathbb Z\). The vanishing of the two localizations gives \((f^N)_{(-1)}m=(\mf F(f)^N)_{(-1)}m=0\) for large \(N\). Taking the highest, respectively lowest, \(H_0\)-weight components gives the same identities for \(\widetilde f^N\) and \(\mf F(\widetilde f)^N\). Their integral \(L'_0\)-weights and the choice of \(m\) provide the remaining annihilation conditions in Lemma~\ref{lem:vanish}, which forces \(m=0\).

(2)--(3) These follow from (1) and exactness of localization, as in Theorem~\ref{theorem:iso}.

(4) The principal-open localizations \(V_f,V_{\mf F(f)}\) are nonzero localizations of the assumed simple algebras, hence simple. Let \(I\subsetneq V\) be an ideal. If \(I_f\ne0\), then \((V/I)_f=0\), so \(f^N\in I\) for large \(N\).
The mode \((\mf F(\widetilde f)^N)_{(Nk-1)}\) applied to \(f^N\) is a nonzero scalar multiple of the vacuum: the top-degree component gives the full contraction of Lemma~\ref{lem:ne0}, while every lower-degree component has too few factors to contribute.
This contradicts properness of \(I\). Similarly, \(I_{\mf F(f)}=0\).
The common kernel \(K\) of the two localization maps from \(V\) is stable under both quasi-conformal actions. To see this, for \(s=f,\mf F(f)\), the commutator of any nonnegative quasi-conformal operator with \(s_{(-1)}\) is a mode of a polynomial in the same mutually commuting free fields, and hence commutes with \(s_{(-1)}\). The same assertion holds for the translation operator. Thus the \(s_{(-1)}\)-torsion subspace is stable under these operators.
Part~(1) applied to \(K\) gives \(K=0\); since \(I\subset K\), we have \(I=0\).
\end{proof}
The same argument applies to the full relative BRST cohomology.

\subsection{BRST vanishing and simplicity}
Assume that a proper \(G\)-stable closed subset \(Z\subset W_0\) satisfies:
\begin{enumerate}[label=(A\arabic*)]
\item \(G\) acts freely on \(U:=W_0\setminus Z\);\label{A1}
\item every orbit in \(U\) is closed in \(W_0\).\label{A2}
\end{enumerate}
\begin{theorem}\label{Th:vanishing-and-simplicity}
We have \(H^{\infty/2+i}(\affg_{-\kappa_\g},\g,\mathcal A)=0\) for \(i\ne0\), and \(\mathbf V\) is simple.
\end{theorem}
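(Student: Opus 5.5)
The plan is to reduce both assertions to the free locus via the Faithfulness Theorem~\ref{th:faithfulness-2}, applied to the full relative BRST cohomology (as remarked after that theorem), and then to compute locally on a slice.

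\textbf{Choice of open sets.} Since $Z$ is a proper closed $G$-stable subset and the orbits in $U$ are closed, \ref{A2} ensures that $\pi(Z)$ is a proper closed subset of $\widetilde X$ and that $\pi^{-1}(\pi(U))=U$. So I choose a nonempty principal open $U_0=\widetilde X_f\subset\pi(U)$ with $f\in\mathcal O(W_0)^G$, and I may take $f$ real as in the proof of Theorem~\ref{th:faithfulness-2}. Then $\pi^{-1}(U_0)=(W_0)_f\subset U$. On this set $G$ acts freely with closed orbits, so by Luna's étale slice theorem $\pi\colon(W_0)_f\to U_0$ is a principal $G$-bundle. For the dual side, the Fourier transform carries $f$ to $\mf F(f)=\Phi(f)$, and $\Phi$ is compatible with $\sigma$ and $\Omega$. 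Conjugating by $\Phi$, the open set $(W_0^*)_{\Phi(f)}$ is the image of $(W_0)_f$ under a real-structure twist, so it is again a free, closed-orbit locus. Hence the dual localization is a principal bundle situation of the same kind.

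\textbf{Local computation.} By Proposition~\ref{Pro:pushforward-is-a-sheaf} and Lemma~\ref{lem:compatibility-with-BRST}, the localization of $H^{\infty/2+\bullet}(\affg_{-\kappa_\g},\g,\mathcal A)$ at $f$ is the relative BRST cohomology of $\mathcal A_f=\Gamma((W_0)_f,\mc D^{ch}_{\wedge F})$. After étale-locally trivializing $(W_0)_f\cong U_0\times G$, the CDO becomes the tensor product of a CDO on the base (twisted by $W_1$) with the chiral differential operators $\mc D^{ch}_G$ on $G$ at the appropriate level. By Arakawa--Gorelik-type results (or Gorbounov--Malikov--Schechtman), the semi-infinite cohomology of $\mc D^{ch}_{G}$ with respect to $\affg_{-\kappa_\g}$, relative to $\g$, is $\C$ concentrated in degree $0$. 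Concretely, $\mc D^{ch}_G$ is free over $U(t^{-1}\g[t^{-1}])$ and cofree over $\g[t]$, so the standard vanishing theorem applies.

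To make this global on $U_0$, I would filter by Li's filtration, or equivalently pass to $\mathrm{gr}$ via Lemma~\ref{eq:filtration-commutes-with-localization2}. On the associated graded, the complex becomes the classical Koszul--Chevalley complex for the free Hamiltonian action on $J_\infty T^*(W_0)_f$. The moment map $J_\infty\mu$ is flat, and its zero fibre is a principal $J_\infty G$-bundle, so the cohomology is concentrated in degree $0$. A spectral sequence argument, using convergence from the finiteness of the filtration on each weight space, then gives
\[
H^{\infty/2+i}(\affg_{-\kappa_\g},\g,\mathcal A)_f=0\quad(i\ne0).
\]
It also identifies the degree-$0$ part with $\Gamma(U_0/\!\!/, \mc D^{ch}_{\wedge E})$, a CDO on the smooth base $U_0$. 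The same argument applies verbatim at $\mf F(f)$.

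\textbf{Conclusion.} For vanishing, the full relative cohomology $M=H^{\infty/2+i}$ with $i\neq 0$ is a module over $\mathbf V$ lying in $\mathbf V\on{-mod}_{L,L',\ge0}$, since $L$ and $L'$ are BRST closed and both gradings are nonnegative. Both of its localizations vanish, so part (1) of Theorem~\ref{th:faithfulness-2} gives $M=0$. For simplicity, the two localizations of $\mathbf V$ are global sections of CDOs on smooth affine varieties. These are simple, because a nonzero ideal in a CDO over a smooth affine base localizes to the symbol level and generates the unit, by the standard argument for $\beta\gamma bc$ systems: an ideal stable under the $\beta$-derivations contains a nonzero function, hence $\mathbf 1$ after localization. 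Part (4) of Theorem~\ref{th:faithfulness-2} then gives simplicity of $\mathbf V$.

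The main obstacle is the local step: justifying the degree-zero concentration of the BRST cohomology on the free locus, including convergence of the spectral sequence and the identification with a CDO. I would handle this first on the $C_2$/arc-space level using flatness of $J_\infty\mu$ over the free locus, which follows from $\mu$ being a submersion on $T^*U$, and then lift it through Li's filtration.
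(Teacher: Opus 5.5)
Your global structure matches the paper's. Theorem~\ref{th:faithfulness-2}, with part (1) applied to the full relative cohomology and part (4) applied to $\mathbf V$, reduces both assertions to a Fourier-corresponding pair of principal opens in the free locus. Your check that $\pi^{-1}(\pi(U))=U$ is correct, so $(W_0)_f\to\widetilde X_f$ is a principal $G$-bundle by Luna. The dual side is easier than you make it. $\mf F$ is an automorphism of the relative complex with $\mf F(Q)=Q$, sending $L$ to $L'$ and $f$ to $\mf F(f)$, so it identifies the localizations at $f$ and at $\mf F(f)$.

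You depart from the paper in the local computation. The paper's local step is:
\begin{itemize}
\item pass to a finite \'etale Galois cover;
\item use the local classification of CDOs and the transfer isomorphism \eqref{eq:AG-iso} to move the $W_1$-current onto the group factor;
\item compute with \cite{AG};
\item descend by the exact invariants of the finite Galois group.
\end{itemize}
This gives \eqref{eq:locally-cdo} exactly.

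Your Li-filtration argument is a legitimate alternative for the vanishing half. $d=Q_{(0)}$ preserves $F_\bullet$, the filtration is finite in each weight, and on the associated graded all descent is ordinary quasi-coherent descent over $\mc O(J_\infty U_0)$. So you need neither Galois covers nor the CDO classification. You must, however, run it on $J_\infty\bigl(T^*(W_0)_f\times\Pi T^*W_1\bigr)$ with the super moment map, whose odd part comes from the $bc$-term in \eqref{eq:J-current}. This is harmless: that term is quadratic in odd variables, so the super moment map is still a submersion with free action. Alternatively, filter by fermion number, which is bounded in each weight. This step is the classical shadow of \eqref{eq:AG-iso}. Your \'etale-local paragraph, by contrast, tacitly puts the whole $\affg_{-\kappa_\g}$-action on $\mc D^{ch}_G$, which is exactly what \eqref{eq:AG-iso} is needed to justify.

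The genuine gap is in the simplicity step. The spectral sequence gives only the associated graded of the localized $H^{\infty/2+0}$, for the filtration induced from the complex. It does not identify $\mathbf V_f$ with sections of a CDO. The \'etale-local picture cannot supply that identification without a descent mechanism for loop modules, which are not ordinary $\mc O(U_0)$-modules; this is why the paper works with a finite Galois cover. Your simplicity argument rests precisely on the CDO identification, so as written it is unsupported.

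There are two ways to repair it:
\begin{itemize}
\item Establish \eqref{eq:locally-cdo} as the paper does, then invoke \cite[Corollary 9.3]{AM} together with simplicity of $\cE(W_1)$.
\item Bypass the CDO. The induced filtration satisfies \eqref{eq:condition-filtration}, so the symbols of an ideal $I\subset\mathbf V_f$ form a Poisson vertex ideal of the arc algebra of the smooth (super)symplectic classical reduction over $U_0$. The argument behind \cite[Corollary 9.3]{AM}, extended to the fermionic directions, shows such an ideal is zero or contains $1$. Hence $I\neq0$ forces $\mathbf 1\in I$.
\end{itemize}
With this repair and its Fourier transform, part (4) of Theorem~\ref{th:faithfulness-2} finishes the proof as you say.
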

\begin{proof}
By Theorem~\ref{th:faithfulness-2}, it suffices to prove
vanishing in nonzero degrees and simplicity in degree zero
on a pair of corresponding nonempty affine opens.
Choose \(x\in U\). Luna's slice theorem \cite{BKS} gives
an affine slice \(\mathcal S\) through \(x\) and a Cartesian
diagram with \'etale horizontal arrows:
\begin{equation}\label{eq:luna}
\begin{CD}
G\times\mathcal S @>{\varphi}>> W_0\\
@V{\on{pr}}VV @VV{\pi}V\\
\mathcal S @>{\bar\varphi}>>\widetilde X.
\end{CD}
\end{equation}
Choose a connected component of \(\mathcal S\) dominating its image. After shrinking this image to a nonempty affine open \(U_0\subset U/G\), the slice map is finite \'etale. Taking the normalization in the Galois closure of the corresponding function-field extension and shrinking \(U_0\) further gives a connected finite \'etale Galois cover \(\widetilde{\mathcal S}\to U_0\) factoring through \(\mathcal S\). We may also arrange that \(E|_{U_0}\) is trivial. Write \(H\) for its Galois group.
Then the diagram
\begin{equation}\label{eq:Galois}
\begin{CD}
G\times\widetilde{\mathcal S}@>>>\pi^{-1}(U_0)\\
@VVV @VV{\pi}V\\
\widetilde{\mathcal S}@>>>U_0
\end{CD}
\end{equation}
is Cartesian. By finite \'etale descent for CDOs
\cite[3.7.1]{M}, sections downstairs are identified with
the \(H\)-invariant sections upstairs.

After further shrinking \(U_0\), the local classification of CDOs \cite{GMS,GMS04} gives an identification compatible with the affine action:
\begin{equation}\label{eq:local-cdo}
\mc D_{\wedge F}^{ch}(G\times\widetilde{\mathcal S})
\cong
\cD^{ch}_{G,-\operatorname{tr}_{W_0}}(G)
\otimes\cD^{ch}_{\widetilde{\mathcal S}}
(\widetilde{\mathcal S})\otimes\cE(W_1),
\end{equation}
Here  \(J^\xi\) corresponds to the sum of the group current and the \(bc\) current. The latter has level \(\operatorname{tr}_{W_1}\), so the group current has level
\(\kappa_{W_0,W_1}-\operatorname{tr}_{W_1}=-\operatorname{tr}_{W_0}\).
Since the \(\g[[t]]\)-action on \(\cE(W_1)\) integrates to \(G[[t]]\),
the transfer isomorphism \cite{AMbook} (see also \cite[Section 5]{AG})
\begin{equation}\label{eq:AG-iso}
\cD^{ch}_{G,-\operatorname{tr}_{W_0}}(G)\otimes\cE(W_1)
\cong
\cD^{ch}_{G,-\kappa_\g}(G)\otimes\cE(W_1)
\end{equation}
identifies the diagonal affine action on the left with
the action on the first factor on the right.
Transport the \(H\)-descent action through these identifications. The deck transformations fix the currents pulled back from \(\pi^{-1}(U_0)\), so their action commutes with every \(J^\xi_{(n)}\). Extending it trivially to the ghost factor, it preserves the relative BRST complex and commutes with its differential. The relative form of the calculation in \cite[Section 5]{AG}, using Remark~\ref{rem:BRSTvsRelativeBRST} (see also \cite[Section 5]{A18}), therefore gives an \(H\)-equivariant isomorphism
\begin{equation}\label{eq:local-BRST}
H^{\infty/2+i}(\affg_{-\kappa_\g},\g,
\mc D_{\wedge F}^{ch}(G\times\widetilde{\mathcal S}))
\cong\delta_{i,0}\,
\cD^{ch}_{\widetilde{\mathcal S}}
(\widetilde{\mathcal S})\otimes\cE(W_1),
\end{equation}
Since taking \(H\)-invariants is exact, descent yields
\begin{equation}\label{eq:locally-cdo}
\pi_*\mc H^{\infty/2+i}
(\affg_{-\kappa_\g},\g,\mc D_{\wedge F}^{ch})(U_0)
\cong\delta_{i,0}\,\mc D_{U_0}^{ch}(U_0)\otimes\cE(W_1)
\cong\delta_{i,0}\,\mc D_{\wedge E}^{ch}(U_0).
\end{equation}
Since \(U_0\) is smooth and irreducible, the associated scheme of \(\mc D_{U_0}^{ch}(U_0)\) is the smooth symplectic variety \(T^*U_0\), and
\(\operatorname{gr}_F\mc D_{U_0}^{ch}(U_0)\cong\mc O(J_\infty T^*U_0)\).
Thus \(\mc D_{U_0}^{ch}(U_0)\) is simple by \cite[Corollary 9.3]{AM}. Since \(E|_{U_0}\) is trivial and \(\cE(W_1)\) is simple, their tensor product \(\mc D_{\wedge E}^{ch}(U_0)\) is also simple.
 The dual construction similarly gives
\begin{equation}\label{eq:local-description-dual}
\pi_*^\vee\mc H^{\infty/2+i}
(\affg_{-\kappa_\g},\g,\mc D_{\wedge F^*}^{ch})(U_0^*)
\cong\delta_{i,0}\,\mc D_{\wedge E^*}^{ch}(U_0^*).
\end{equation}
The assertion follows.
\end{proof}

\subsection{Restriction to the smooth quotient}
Define
\begin{align}
X&:=U/G\subset\widetilde X,\label{eq:def-of-X}\\
E&:=(U\times W_1)/G\longrightarrow X.\label{eq:def-of-E}
\end{align}
A CDO associated with \(E\) is locally of the form \(\cD^{ch}_{U_0}\otimes\cE(W_1)\) on a trivializing open \cite{GMS}. The local calculation 
 \eqref{eq:locally-cdo}
 gives the following.
\begin{theorem}\label{th:restriction-is-a-cdo}
The restriction of \(\boldsymbol{\mc V}_{\widetilde X}\) to \(X\) is a CDO \(\mc D_{\wedge E}^{ch}\).
\end{theorem}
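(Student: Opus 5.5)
The plan is to verify the defining properties of a CDO locally on $X$, using the local computation \eqref{eq:locally-cdo}, and then to check that the local identifications glue. Recall that a CDO associated with $E$ (in the sense of \cite{GMS03,GMS}) is a sheaf of vertex algebras on $X$. On any open $U_0$ over which $X$ admits étale coordinates and $E$ is trivial, it must be isomorphic to $\cD^{ch}_{U_0}\otimes\cE(W_1)$, compatibly with the vertex $\mc O_X$-algebra structure. Since the class of CDOs is defined by this local condition, it suffices to produce an open cover of $X$ by such $U_0$ on which $\boldsymbol{\mc V}_{\widetilde X}|_{U_0}$ has the required form.

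First, note that $X=U/G$ is open in $\widetilde X$. By \ref{A1} and \ref{A2}, $U=\pi^{-1}(\pi(U))$ is saturated, so $\pi(U)$ is open and $\pi|_U\colon U\to X$ is a principal $G$-bundle; in particular $X$ is smooth. Next, for every point $x\in U$, the argument in the proof of Theorem~\ref{Th:vanishing-and-simplicity} applies verbatim. It uses the Luna slice \eqref{eq:luna}, the finite étale Galois cover \eqref{eq:Galois}, the local splitting \eqref{eq:local-cdo}, the transfer isomorphism \eqref{eq:AG-iso} and descent. These produce an affine open neighbourhood $U_0\ni\pi(x)$ inside $X$ with $E|_{U_0}$ trivial and
\[
\boldsymbol{\mc V}_{\widetilde X}(U_0)\cong\mc D_{U_0}^{ch}(U_0)\otimes\cE(W_1)\cong\mc D^{ch}_{\wedge E}(U_0).
\]
Since the points $x$ range over $U$, these $U_0$ cover $X$. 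Moreover, the identification is one of vertex $\mc O(U_0)$-algebras: the map $\mc O(W_0)^G\to\boldsymbol{\mc V}_{\widetilde X}$ localizes to $\mc O(U_0)$, and under descent this corresponds to the functions in $\cD^{ch}_{\widetilde{\mathcal S}}$. Finally, by Proposition~\ref{Pro:pushforward-is-a-sheaf}, the sheaf $\boldsymbol{\mc V}_{\widetilde X}$ is the localization sheaf of $\mathbf V$. Its restrictions to smaller principal opens of $U_0$ are therefore localizations, which for the local CDO are the CDO restriction maps, by the universal property in Theorem~\ref{Th:localization}.

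The main obstacle is checking that \eqref{eq:locally-cdo} identifies the restriction with a CDO in the strict sense, not merely as an abstract vertex algebra. The isomorphism must respect the vertex $\mc O_{U_0}$-algebra structure and the grading. It must also match the $\mathrm{Vect}$-part: the weight-one generators must map onto vector fields and one-forms of $U_0$, together with the $\cE(W_1)$ fibre generators twisted by the bundle $E$. For this I would use the associated graded. By Lemma~\ref{eq:filtration-commutes-with-localization2} and the local computation, $\on{gr}_F\boldsymbol{\mc V}_{\widetilde X}|_{U_0}\cong\mc O(J_\infty(T^*U_0\times\Pi T^*E))$. Weight-zero sections are then $\mc O_{U_0}\otimes\wedge E^*$, and the weight-one part gives an extension of $\Theta_{U_0}$ by $\Omega_{U_0}$, together with the $E$-directions. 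This is the characterizing data of a CDO by \cite{GMS}.

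A secondary point is that the $E$ obtained from descent is the quotient bundle $(U\times W_1)/G$ of \eqref{eq:def-of-E}. This holds because $W_1$ enters only through $\cE(W_1)$, on which $G[[t]]$ acts via the $bc$ currents, and on the cover $G\times\widetilde{\mathcal S}$ the transfer \eqref{eq:AG-iso} realizes exactly the associated-bundle twist.
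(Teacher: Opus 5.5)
Your proposal takes the same route as the paper. The paper deduces the theorem in one line from the local computation \eqref{eq:locally-cdo} in the proof of Theorem~\ref{Th:vanishing-and-simplicity}, together with the fact that a CDO attached to $E$ is locally of the form $\cD^{ch}_{U_0}\otimes\cE(W_1)$ on trivializing opens; your extra checks (the quasi-loco gluing on $U_0$, the symbol-level comparison, and the identification of the descended bundle with $E$) make explicit what the paper leaves implicit. One caveat, which the paper shares: the shrinkings in that proof are only generic, so to get an open $U_0$ containing a prescribed point $\pi(x)$ you must perform them around $\pi(x)$, which is possible, e.g., because the principal bundle $U\to X$ is locally isotrivial, i.e.\ trivialized by a finite \'etale cover of a Zariski neighbourhood of each point.
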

Dually, set
\begin{equation}\label{eq:def-of-X*}
U^*:=W_0^*\setminus V(\Phi(I_Z)),\qquad
X^*:=U^*/G,\qquad E^*:=(U^*\times W_1^*)/G,
\end{equation}
where \(I_Z\) is the ideal of \(Z\). The dual sheaf restricts to \(\cdo{\wedge E^*}\) on \(X^*\).

\subsection{The simple quotient of the invariant subalgebra}
\begin{theorem}\label{surj:generator}
Assume \eqref{eq:level-condition} and \ref{A1}--\ref{A2}. The map \(\mathcal C\to\mathbf V\) is surjective. Thus \(\mathbf V\) is a simple quotient of \(\mathcal C\).
\end{theorem}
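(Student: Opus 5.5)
The plan is to apply the Faithfulness Theorem~\ref{th:faithfulness-2} to the morphism \(\varphi\colon\mathcal C\to\mathbf V\) of \eqref{eq:g[t]-invariant}, regarding both sides as \(\mathcal C\)-modules. By Proposition~\ref{prop:vs-beta-gamma-bc}, \(\varphi\) intertwines \((\widetilde L,\widetilde L')\) on \(\mathcal C\) with \((L,L')\) on \(\mathbf V\). Hence \(\mathbf V\), with the quasi-conformal operators \(L_n,L'_n\) (\(n\ge-1\)), is an object of \(\mathcal C\on{-mod}_{L,L',\geq0}\), and \(\varphi\) is a morphism there. Its grading operators commute and have lower-bounded, locally finite spectra, since \(\mathbf V\) is nonnegatively graded for both conformal vectors. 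By part (2) of Theorem~\ref{th:faithfulness-2}, applied with \(V=\mathcal C\), it then suffices to prove surjectivity of \(\varphi_f\) and \(\varphi_{\mf F(f)}\) for a suitable real invariant \(f\) with \(\widetilde X_f\subset U_0\). Here \(U_0\subset X=U/G\) is the affine open produced in the proof of Theorem~\ref{Th:vanishing-and-simplicity}. The localization of \(\mathbf V\) as a \(\mathcal C\)-module at \(f\in\mc O(W_0)^G\subset\mathcal C_{\widetilde L,0}\) agrees with its localization as a vertex \(\mc O(\widetilde X)\)-algebra, because \(\varphi\) is the identity on invariant functions. Once surjectivity is established, simplicity of \(\mathbf V\) (Theorem~\ref{Th:vanishing-and-simplicity}) shows that \(\mathbf V\cong\mathcal C/\ker\varphi\) with \(\ker\varphi\) a maximal ideal. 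This gives the second sentence, with "simple quotient" meaning the unique simple quotient when \(\mathcal C\) is conical.

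For the local surjectivity, note that localization commutes with taking \(\g[t]\)-invariants on the free-field side and with BRST cohomology (Lemma~\ref{lem:compatibility-with-BRST}, Proposition~\ref{Pro:pushforward-is-a-sheaf}). The map \(\varphi_f\) is therefore identified with the natural map
\[
\mathcal A_f^{\g[t]}\longrightarrow H^{\infty/2+0}(\affg_{-\kappa_\g},\g,\mathcal A_f)\cong\mc D^{ch}_{\wedge E}(U_0).
\]
This identification should hold up to passing through the localization \((\mathcal C)_f\to\mathcal A_f^{\g[t]}\); since invariants under \(\g[t]\) are defined by the kernels of modes commuting with \(D_f\), exactness should make this an isomorphism. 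I would then use the local picture of \eqref{eq:local-cdo}--\eqref{eq:local-BRST}. After the Galois cover, \(\mathcal A\) becomes \(\cD^{ch}_{G,-\kappa_\g}(G)\otimes\cD^{ch}_{\widetilde{\mathcal S}}\otimes\cE(W_1)\) via the transfer isomorphism \eqref{eq:AG-iso}. The \(\g[t]\)-invariants contain the commutant of the left currents: the right currents of \(\cD^{ch}(G)\) (at the dual level), together with \(\cD^{ch}_{\widetilde{\mathcal S}}\otimes\cE(W_1)\). The BRST cohomology \(\cD^{ch}_{\widetilde{\mathcal S}}\otimes\cE(W_1)\) is generated by classes of elements in \(1\otimes\cD^{ch}_{\widetilde{\mathcal S}}\otimes\cE(W_1)\), which are manifestly \(\g[t]\)-invariant. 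So every class has a \(\g[t]\)-invariant representative, and the map upstairs is surjective. Taking \(H\)-invariants is exact and commutes with everything, since the deck transformations commute with the currents. Descent then gives surjectivity on \(U_0\). The dual statement on \(U_0^*\) follows by applying \(\mf F\), which preserves \(\mathcal C\), \(Q\) and the relative complex.

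I expect the main obstacle to be this local surjectivity step. The transfer isomorphism \eqref{eq:AG-iso} mixes the \(\cE(W_1)\) factor with the group factor, so "the" tensor factor \(1\otimes\cD^{ch}_{\widetilde{\mathcal S}}\otimes\cE(W_1)\) must be checked to commute with the original diagonal currents \(J^\xi_{(n)}\), \(n\ge0\), and not merely with the transferred ones. My first attempt would be to use the explicit form of the AG transfer: it conjugates the \(bc\) fields by the \(G[[t]]\)-action pulled back along the group coordinate. Under this conjugation, "dressed" \(bc\) fields \(g^{-1}\cdot b\), \(g^{-1}\cdot c\) are invariant under the left currents, and it is these that represent the cohomology.

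A second subtlety is making \(\mathcal A_f^{\g[t]}\) agree with \((\mathcal A^{\g[t]})_f\). Here \(\mathcal A_f\) is localized over \(\mc O(W_0)\) only at the invariant \(f\), so \(D_f\) commutes with \(\g[t]\). Invariants are the kernel of a family of commuting-with-\(D_f\) operators, and localization is exact by Lemma~\ref{lem:exact}, which should settle it.
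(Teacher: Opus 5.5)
Your proposal is correct and follows essentially the same route as the paper. You prove local surjectivity of \(\varphi\) over \(U_0\) via the transfer isomorphism \eqref{eq:AG-iso}, so that every degree-zero class has a \(\g[t]\)-invariant representative, and obtain the same on \(U_0^*\), where your Fourier-transport argument is fine. You then apply Theorem~\ref{th:faithfulness-2}(2) to \(\varphi\) as a morphism of \(\mathcal C\)-modules, with Proposition~\ref{prop:vs-beta-gamma-bc} giving the compatible quasi-conformal actions on the cokernel, exactly as in the paper. The obstacle you anticipate is not one: \eqref{eq:AG-iso} is stated to identify the diagonal action with the action on the first factor, so elements of the remaining tensor factors are automatically annihilated by \(J^\xi_{(n)}\), \(n\geq0\).
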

\begin{proof}
The local transfer isomorphism \eqref{eq:AG-iso} shows that every degree-zero BRST class on \(U_0\) has a \(\g[t]\)-invariant representative. Hence the map is surjective after restriction to \(U_0\), and likewise on \(U_0^*\). Apply Theorem~\ref{th:faithfulness-2}(2), using Proposition~\ref{prop:vs-beta-gamma-bc} to compare the quasi-conformal actions. Both source and target have nonnegative gradings, and the cokernel inherits the compatible actions, so part~(1) applies to this cokernel.
\end{proof}

\subsection{Cotangent reductions and extension of functions}
Let \(\mu\colon T^*W_0=W_0\times W_0^*\to\g^*\) be given by \(\mu(v,w)(\xi)=\langle w,\xi v\rangle\). Its zero fiber has moment-map ideal
\begin{equation}\label{eq:def-of-I_0}
I_0=(\mu^*(\xi)\mid\xi\in\g),\qquad
\mu^*(\xi)=\sum_i(\xi v_i)v_i^*.
\end{equation}
Write \(\cB=\mu^{-1}(0)\cap(U\times W_0^*)\) and \(\cB^*=\mu^{-1}(0)\cap(W_0\times U^*)\).
\begin{lemma}\label{lem:smooth}
The variety \(\cB\) is smooth and irreducible of dimension \(2\dim W_0-\dim\g\).
\end{lemma}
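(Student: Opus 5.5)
The plan is to show that the moment map restricted to \(U\times W_0^*\) is a submersion along its zero fiber. Smoothness and the dimension count then follow from the regular value theorem, and irreducibility from a fibration structure over \(U\).

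\textbf{Step 1: \(\mu\) is submersive on \(U\times W_0^*\).} For \((v,w)\in U\times W_0^*\), the differential of \(\mu\) in the \(W_0^*\)-direction is the linear map
\[
W_0^*\to\g^*,\qquad w'\mapsto\bigl(\xi\mapsto\langle w',\xi v\rangle\bigr).
\]
This map is the transpose of the infinitesimal action \(\g\to W_0\), \(\xi\mapsto\xi v\). By \ref{A1}, \(G\) acts freely on \(U\). Hence the stabilizer of \(v\) is trivial, and in characteristic zero its Lie algebra \(\g_v\) is then zero. So \(\xi\mapsto\xi v\) is injective, its transpose is surjective, and \(d\mu\) is surjective at every point of \(U\times W_0^*\). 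It follows that \(\mu^{-1}(0)\cap(U\times W_0^*)\) is smooth of pure dimension \(2\dim W_0-\dim\g\). It is also reduced: the generators \(\mu^*(\xi)\) of \(I_0\) have linearly independent differentials there, so \(\cB\) is scheme-theoretically a complete intersection.

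\textbf{Step 2: irreducibility.} Consider the projection \(\cB\to U\), \((v,w)\mapsto v\). The fiber over \(v\) is
\[
\{w\in W_0^*\mid \langle w,\xi v\rangle=0\ \text{for all }\xi\in\g\}=(\g v)^\perp,
\]
which is a linear subspace of dimension \(\dim W_0-\dim\g\), a constant. So \(\cB\) is the total space of the vector bundle on \(U\) that is the annihilator of the image of the injective bundle map \(\g\times U\to TU=U\times W_0\). This image is a subbundle, being of constant rank. Since \(U\) is a nonempty open subset of the irreducible space \(W_0\), it is irreducible. A vector bundle over an irreducible base is irreducible, so \(\cB\) is irreducible. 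The bundle description also recovers smoothness and the dimension \(\dim U+\dim W_0-\dim\g\) directly.

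\textbf{Main obstacle.} The only subtle point is the passage from free action to \(\g_v=0\). This uses that the scheme-theoretic stabilizer is reduced in characteristic zero (Cartier), so a trivial stabilizer group has zero Lie algebra. Everything else is linear algebra.
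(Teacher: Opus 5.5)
Your proposal is correct, and your Step 2 is exactly the paper's argument: by \ref{A1} the infinitesimal action makes \(U\times\g\) a rank-\(\dim\g\) subbundle of \(U\times W_0\), and \(\cB\) is its annihilator, a vector bundle over the smooth irreducible variety \(U\). Your Step 1 (the submersion argument) is not needed for the statement; it adds that the scheme-theoretic zero fiber is reduced over \(U\), and your use of Cartier's theorem to get \(\g_v=0\) from freeness spells out a point the paper leaves implicit.
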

\begin{proof}
The infinitesimal action embeds \(U\times\g\) as a rank-\(\dim\g\) subbundle of \(U\times W_0\), by \ref{A1}. Its annihilator is precisely \(\cB\), a vector bundle over the smooth irreducible variety \(U\).
\end{proof}
\begin{lemma}\label{lemma:smooth}
The singular locus of \(\mu^{-1}(0)\) is contained in \(\mu^{-1}(0)\cap(Z\times Z^*)\).
\end{lemma}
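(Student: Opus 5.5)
The approach is to show that \(\mu^{-1}(0)\) is smooth at every point \((v,w)\) with \(v\in U\) or \(w\in U^*\); the complement of this set is exactly \(\mu^{-1}(0)\cap(Z\times Z^*)\). Throughout, singular locus means the locus where the scheme \(\operatorname{Spec}\mc O(T^*W_0)/I_0\) fails to be smooth of the expected dimension \(2\dim W_0-\dim\g\). We use the Jacobian criterion for the generators \(\mu^*(\xi_k)\) of \(I_0\), where \(\{\xi_k\}\) is a basis of \(\g\). If the differentials \(d\mu^*(\xi_k)\) are linearly independent at a point, i.e. \(d\mu_{(v,w)}\colon T^*W_0\to\g^*\) is surjective, then \(\mu^{-1}(0)\) is smooth there of dimension \(2\dim W_0-\dim\g\).

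Next we compute \(d\mu\) at \((v,w)\). For a tangent vector \((x,y)\),
\[
d\mu_{(v,w)}(x,y)(\xi)=\langle y,\xi v\rangle+\langle w,\xi x\rangle .
\]
Suppose \(v\in U\). By \ref{A1} the stabilizer of \(v\) is trivial, so the stabilizer algebra \(\g_v=\{\xi\mid \xi v=0\}\) vanishes (we are in characteristic zero). Restricting to \(x=0\), the map \(y\mapsto(\xi\mapsto\langle y,\xi v\rangle)\) is the transpose of the injection \(\g\to W_0\), \(\xi\mapsto\xi v\), and is therefore surjective onto \(\g^*\). Hence \(d\mu\) is surjective at every point of \(\mu^{-1}(0)\) over \(U\). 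This is consistent with Lemma~\ref{lem:smooth}, which shows that the fiber over \(U\) is the vector bundle \(\cB\).

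The dual case \(w\in U^*\) is handled symmetrically, and this is where I expect the main obstacle: we need \(G\) to act freely on \(U^*\), or at least \(\g_w=0\) for \(w\in U^*\). Restricting to \(y=0\), the map \(x\mapsto\langle w,\xi x\rangle=-\langle \xi w,x\rangle\) is surjective exactly when \(\g\to W_0^*\), \(\xi\mapsto\xi w\), is injective. To transfer the freeness, I would use the Fourier isomorphism \(\Phi\) of Section~\ref{subsection:Fourier-transform}. It is induced by the conjugate-linear identification \(W_0\to W_0^*\), \(v\mapsto(v,\cdot)\), which satisfies \(\overline{gv}\mapsto\Omega(\sigma(g))\)-equivariance. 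Since \(U^*=W_0^*\setminus V(\Phi(I_Z))\), this identification carries \(U\) onto \(U^*\) and intertwines the action of \(g\) on \(U\) with that of \(\Omega(\sigma(g))\) on \(U^*\). Since \(\Omega\circ\sigma\) is a (real) group automorphism, freeness of the action on \(U\) gives freeness on \(U^*\), and in particular \(\g_w=0\).

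The one point to check carefully is that \(V(\Phi(I_Z))\) equals the image of \(Z\) under this antilinear map. This holds because \(\Phi\) on polynomials is precomposition with the inverse of that map, conjugated on coefficients, so the zero loci correspond. With this in hand, \(d\mu\) is surjective at every point of \(\mu^{-1}(0)\) with \(v\in U\) or \(w\in U^*\). Every singular point therefore lies in \(\mu^{-1}(0)\cap(Z\times Z^*)\), where \(Z^*:=W_0^*\setminus U^*\).
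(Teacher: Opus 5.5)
Your proposal is correct and follows essentially the paper's argument. The paper's one-line proof observes that \(\mu^{-1}(0)\) is smooth over \(U\) and over \(U^*\) because there it is the vector bundle \(\cB\) (respectively \(\cB^*\)); this rests on the same constant-rank fact that you verify through the surjectivity of \(d\mu\). Your Fourier transfer of freeness from \(U\) to \(U^*\) makes explicit the ``dual'' step that the paper leaves implicit.
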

\begin{proof}
Both \(\cB\) and \(\cB^*\) are smooth, by Lemma~\ref{lem:smooth} and its dual.
\end{proof}
\begin{lemma}\label{lem:reduction}
If a reductive group acts freely on a smooth variety \(W\) and admits a smooth geometric quotient \(Y=W/G\) with \(W\to Y\) a principal bundle, then \(T^*Y\cong\mu^{-1}(0)/G\).
\end{lemma}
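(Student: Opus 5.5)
The plan is to realize $T^*Y$ as the quotient by $G$ of the annihilator bundle of the vertical tangent bundle. Write $p\colon W\to Y$ for the principal $G$-bundle. The action is free, so the infinitesimal action map $W\times\g\to TW$ is injective on fibres. Its image is the vertical subbundle $T_{W/Y}=\ker(dp)$, which has rank $\dim\g$. Smoothness of $p$ gives the exact sequence of vector bundles on $W$
\[
0\longrightarrow W\times\g\longrightarrow TW\longrightarrow p^*TY\longrightarrow0.
\]
Dualizing, $p^*T^*Y$ is identified with the annihilator of $W\times\g$ in $T^*W$. By definition of the moment map $\mu(w,\alpha)(\xi)=\langle\alpha,\xi w\rangle$, this annihilator is exactly $\mu^{-1}(0)$. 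So $\mu^{-1}(0)$ is a smooth vector subbundle of $T^*W$ and is $G$-equivariantly isomorphic to $p^*T^*Y=W\times_Y T^*Y$. This is the same observation as in Lemma~\ref{lem:smooth}.

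Next we pass to the quotient. The identification $\mu^{-1}(0)\cong W\times_Y T^*Y$ is $G$-equivariant, with $G$ acting on the first factor only; equivariance follows from naturality of $dp$ and $p\circ g=p$. Since $W\to Y$ is a principal $G$-bundle, so is its base change $W\times_Y T^*Y\to T^*Y$. Hence $T^*Y$ is a geometric quotient of $\mu^{-1}(0)$ by $G$, and therefore $\mu^{-1}(0)/G\cong T^*Y$.

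If one reads $\mu^{-1}(0)/G$ as $\on{Spec}$ of invariants when $Y$ is affine, the same conclusion holds. For a principal bundle (locally trivial in the étale topology), the invariant functions on $W\times_Y T^*Y$ are $\mc O(T^*Y)$. This is checked étale locally on $Y$, where the bundle is $G\times(\text{base})$ and $\mc O(G)^G=\C$.

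The main obstacle is making the descent step precise in the algebraic category. One must check that pullback of a principal bundle remains principal and that the quotient is categorical. I would handle this by faithfully flat (étale) descent: $\mu^{-1}(0)\to T^*Y$ is $G$-invariant and, étale locally on $Y$, isomorphic to the trivial torsor. Geometric quotients are local in the étale topology on the base, so the result follows. The symplectic structure is not asked for, so no further compatibility needs to be checked.
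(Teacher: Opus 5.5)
Your proposal is correct and follows essentially the paper's argument: the cotangent sequence identifies $\mu^{-1}(0)$ with $p^*T^*Y$ as the annihilator of the orbit directions, and taking the geometric quotient then yields $T^*Y$. The paper states this in two lines; your added details (equivariance of the identification, base change of the principal bundle, and the invariant-functions version) are all sound.
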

\begin{proof}
The cotangent sequence identifies \(\mu^{-1}(0)\) with the pullback of \(T^*Y\): a covector lies in the zero fiber exactly when it annihilates the orbit directions. Taking the geometric quotient gives the assertion.
\end{proof}
\begin{lemma}\label{lem:smoothopen}
The cotangent bundles \(T^*E\) and \(T^*X\) are smooth irreducible open subvarieties of \(T^*F\ssslash_0G\) and \(T^*W_0\ssslash_0G\), respectively.
\end{lemma}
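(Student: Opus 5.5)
The plan is to realize both cotangent bundles as open pieces of the zero fibers of moment maps, using Lemma~\ref{lem:reduction}, and then to compare those open pieces with the categorical quotients. Treat \(T^*X\) first. By \ref{A1}--\ref{A2}, \(U\) is a \(G\)-stable open subset of \(W_0\) on which \(G\) acts freely with closed orbits. Luna's slice theorem (as in the proof of Theorem~\ref{Th:vanishing-and-simplicity}) then shows that \(U\to X=U/G\) is a principal \(G\)-bundle in the étale topology, with \(X\) smooth. Lemma~\ref{lem:reduction} gives \(T^*X\cong\cB/G\), where \(\cB=\mu^{-1}(0)\cap(U\times W_0^*)\). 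By Lemma~\ref{lem:smooth}, \(\cB\) is smooth and irreducible, so \(T^*X\) is smooth and irreducible.

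Next, I would show that \(\cB/G\) embeds as an open subset of \(T^*W_0\ssslash_0G=\on{Spec}(\mc O(T^*W_0)/I_0)^G\), reduced as appropriate. The natural map is \(\cB/G\to\mu^{-1}(0)\sslash G\). To see it is an open immersion, I would check that \(\cB\) is \(G\)-saturated, i.e. a union of fibers of \(\mu^{-1}(0)\to\mu^{-1}(0)\sslash G\), and that every orbit in \(\cB\) is closed in \(\mu^{-1}(0)\).

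Closedness: the orbit \(G(v,w)\) with \(v\in U\) projects to \(Gv\), which is closed by \ref{A2}. Since stabilizers are trivial and orbits are closed, the standard Luna-type argument (finite map to \(Gv\) plus the free action) shows that \(G(v,w)\) is closed.

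Saturation: if \(G(v',w')\) has the closed orbit \(G(v,w)\) in its closure, then the projection \(Gv'\) contains a point of \(U\) in its closure. Since \(Gv\) is closed in \(W_0\) and \(U\) is open and \(G\)-stable, \(v'\in U\). I would verify this by the Hilbert–Mumford criterion: a one-parameter subgroup degenerating \(v'\) to a point of \(Gv\) gives a limit in the free locus. Under \ref{A1}--\ref{A2} this forces \(v'\in Gv\), because the stabilizer of the limit would otherwise contain a nontrivial torus. This is the main obstacle, and I would handle it via \(\lambda\)-fixed points.

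With both properties, \(\cB\) is the preimage of an open subset \(\mathcal U\) of \(\mu^{-1}(0)\sslash G\). By Luna's étale slice theorem applied at points with trivial stabilizer, \(\cB\to\mathcal U\) is a geometric quotient, so \(\mathcal U\cong\cB/G\cong T^*X\).

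For \(T^*E\), I would repeat the argument with the super-space \(T^*F=T^*W_0\times\Pi T^*W_1\). The fermionic directions contribute nilpotent coordinates only, so the underlying even reduction is unchanged. The odd part \(\Pi(W_1\oplus W_1^*)\) descends over \(\cB/G\) to the odd fibers of \(\wedge T^*E\), namely \(E\oplus E^*\) pulled back to \(T^*X\). The identification is again Lemma~\ref{lem:reduction}, applied to the principal bundle \(U\times W_1\to E\), or equivalently to the pullback. Irreducibility and smoothness are inherited from the even part.
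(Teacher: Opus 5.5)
Your argument for \(T^*X\) is correct. It is more complete than the paper's proof, which only applies Lemma~\ref{lem:reduction} to the free loci \(U\) and \(U\times W_1\) (giving \(T^*X\cong\cB/G\)) and leaves openness in the categorical quotient implicit. Your closed-orbit and saturation checks supply exactly that missing step.

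Both checks are easier than you make them:
\begin{enumerate}
\item \(G(v,w)\) is closed because it is the graph of \(gv\mapsto gw\) over the closed orbit \(Gv\cong G\).
\item Saturation needs no Hilbert--Mumford argument. If \(v'\in Z\), then \(\overline{Gv'}\subset Z\), since \(Z\) is closed and \(G\)-stable; this contradicts \(v\in\overline{Gv'}\).
\end{enumerate}
Instead of ``reduced as appropriate'', note that freeness makes \(d\mu\) surjective along \(\cB\). So \(\on{Spec}(\mc O(T^*W_0)/I_0)\) is already smooth there, and the corresponding open piece of \(T^*W_0\ssslash_0G\) is reduced.

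The genuine problem is the \(T^*E\) half: you have misread the objects. In the statement, \(T^*F\) and \(T^*E\) are the ordinary cotangent bundles of the total spaces \(F=W_0\times W_1\) and \(E\). The super objects are written \(\wedge T^*F\) and \(\wedge T^*E\). Compare Lemma~\ref{lem:restrictions}(iii), which lists \(T^*E\) and \(\wedge J_\infty T^*E\) separately, and the paper's proof, which applies Lemma~\ref{lem:reduction} to the free locus \(U\times W_1\).

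For this statement your key step fails, namely that ``the fermionic directions contribute nilpotent coordinates only, so the underlying even reduction is unchanged''. The \(W_1\)-coordinates are even and enter the moment map through \(\langle u^*,\xi u\rangle\) for \((u,u^*)\in W_1\times W_1^*\). They are not nilpotent, and they do change the even reduction. What your argument actually produces is a superspace with body \(T^*X\), of dimension \(2\dim X\), whereas \(T^*E\) has dimension \(2\dim X+2\dim W_1\).

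The fix is the route you mention only in passing: run your \(T^*X\) argument verbatim for the \(G\)-module \(W_0\oplus W_1\) with the closed \(G\)-stable subset \(Z\times W_1\). Concretely:
\begin{enumerate}
\item \(G\) acts freely on \(U\times W_1\).
\item The orbit of any point of \(T^*F\) lying over \((v,u)\) with \(v\in U\) is closed, being a graph over the closed orbit \(Gv\).
\item The zero fiber of the moment map over \((v,u)\) is the annihilator \((\g\cdot(v,u))^\perp\subset W_0^*\oplus W_1^*\) of a rank-\(\dim\g\) subbundle. Hence it is a vector bundle over \(U\times W_1\), so smooth and irreducible.
\item Saturation follows exactly as before.
\item Lemma~\ref{lem:reduction}, applied to the principal bundle \(U\times W_1\to E\), identifies the quotient with \(T^*E\).
\end{enumerate}
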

\begin{proof}
Apply Lemma~\ref{lem:reduction} to the free loci \(U\times W_1\) and \(U\). In particular, \(T^*X\cong\cB/G\).
\end{proof}
We impose the following additional assumption:
\begin{enumerate}[label=(A\arabic*),start=3]
\item \(\mu^{-1}(0)\cap(U\times Z^*)\) has codimension at least two in \(\cB\).\label{A3}
\end{enumerate}
\begin{lemma}\label{lem:restrictions}
Restriction to the common open locus gives isomorphisms
\begin{enumerate}[label=\textup{(\roman*)}]
\item \(\mc O(T^*X)\cong\mc O(T^*X^*)\);
\item \(\mc O(J_\infty T^*X)\cong\mc O(J_\infty T^*X^*)\);
\item the analogous isomorphisms for \(T^*E,T^*E^*\), their arc spaces, and \(\wedge J_\infty T^*E,\wedge J_\infty T^*E^*\).
\end{enumerate}
\end{lemma}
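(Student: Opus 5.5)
We will view \(T^*X\cong\cB/G\) and \(T^*X^*\cong\cB^*/G\) (Lemma~\ref{lem:smoothopen} and its dual) as open subvarieties of the common space \(\mu^{-1}(0)\). Their intersection is \(\cB\cap\cB^*/G\), where \(\cB\cap\cB^*=\mu^{-1}(0)\cap(U\times U^*)\). The "common open locus" in the statement is therefore \((\cB\cap\cB^*)/G\). We will show that restriction from each side to this locus is an isomorphism on regular functions; composing one restriction with the inverse of the other then gives (i).

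\emph{Step 1 (functions).} By \ref{A3}, the complement \(\cB\setminus(\cB\cap\cB^*)=\mu^{-1}(0)\cap(U\times Z^*)\) has codimension at least two in \(\cB\). Lemma~\ref{lem:smooth} says \(\cB\) is smooth and irreducible, hence normal, so Hartogs extension gives
\[
\mc O(\cB)\cong\mc O(\cB\cap\cB^*).
\]
This isomorphism is \(G\)-equivariant, so it identifies the invariants:
\[
\mc O(T^*X)=\mc O(\cB)^G\cong\mc O(\cB\cap\cB^*)^G=\mc O\bigl((\cB\cap\cB^*)/G\bigr),
\]
using that \(\cB\to\cB/G\) is a principal bundle and that \(\cB\cap\cB^*\) is \(G\)-stable. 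For the dual side we use the dual of \ref{A3}. I expect this to follow by applying the Fourier transform \(\Phi\), which exchanges \(W_0\leftrightarrow W_0^*\) and \(Z\leftrightarrow Z^*\) and preserves \(\mu^{-1}(0)\) up to sign. Equivalently, the set \(\mu^{-1}(0)\cap(Z\times U^*)\) should have codimension at least two in \(\cB^*\). This symmetry claim is the point I would check most carefully, since the definition of \(U^*\) via \(V(\Phi(I_Z))\) is exactly designed for it.

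\emph{Step 2 (arc spaces).} On a smooth variety \(Y\), the jet scheme \(J_\infty Y\to Y\) is a limit of affine-space bundles, so \(\mc O(J_\infty Y)\) has an exhaustive filtration whose associated graded is \(\mc O\bigl(J_\infty Y\bigr)\) pulled back as sheaves of symmetric algebras of locally free \(\mc O_Y\)-modules, namely the duals of the jet bundles. For \(Y=T^*X\) and an open \(Y'\subset Y\) with complement of codimension \(\ge2\), we argue as follows. Filter by jet order and polynomial degree. Each graded piece is the space of global sections of a locally free sheaf, and Hartogs applies to it since \(Y\) is normal. Hence restriction is an isomorphism on each graded piece, and by induction and exhaustiveness it is an isomorphism on \(\mc O(J_\infty Y)\). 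The same argument on the dual side gives (ii).

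\emph{Step 3 (the super and bundle cases).} For \(T^*E\) we repeat Steps 1--2 with \(U\times W_1\) in place of \(U\). The zero fiber is now \(\cB\times T^*W_1\) restricted suitably, and the bad locus is the \ref{A3} locus times a vector space, so its codimension is still at least two. Alternatively, \(T^*E\to T^*X\) is a vector bundle, and Hartogs on \(T^*X\) for each symmetric power already suffices. The odd factor \(\wedge J_\infty T^*E\) is the exterior algebra over \(\mc O(J_\infty T^*X)\) of a locally free sheaf, so each filtered piece is again a locally free sheaf on a normal variety and the same extension argument applies.

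The main obstacle is the codimension statement on the dual side in Step 1, together with making the identification of the two opens inside \(T^*W_0\ssslash_0G\) precise. The issue is that \(\cB/G\) and \(\cB^*/G\) must sit compatibly inside one quotient; I would handle this by working on \(\mu^{-1}(0)\) upstairs and taking \(G\)-invariants only at the end.
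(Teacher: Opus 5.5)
Your Steps~1 and~2 follow the paper's proof and are correct in substance. Step~3, however, contains a genuine error for the bosonic $T^*E$.

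\textbf{Step 1 and the dual bound.} Step~1 is the paper's argument for (i): Hartogs on the smooth irreducible $\cB$ using \ref{A3}, then $G$-invariants taken upstairs. The dual codimension bound you single out as the main obstacle does follow from \ref{A3}, and this is what the paper's ``dual argument'' uses.

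Let $\psi\colon W_0\to W_0^*$ be the antilinear isomorphism underlying $\Phi$, essentially $x\mapsto\pm(x,\cdot)$. It carries $Z$ onto $Z^*$ and intertwines $g$ with $\Omega(\sigma(g))$. The identity $(\xi x,y)=-(x,\omega(\bar\xi)y)$ gives
\[
\mu(\Psi(x,w))(\xi)=-\overline{\mu(x,w)(\eta)}\quad\text{with }\omega(\bar\eta)=\xi,
\qquad\text{where }\Psi(x,w)=(\psi^{-1}(w),\psi(x)).
\]
So the antiholomorphic involution $\Psi$ of $T^*W_0$ preserves $\mu^{-1}(0)$. It exchanges $\cB$ with $\cB^*$, and $\mu^{-1}(0)\cap(U\times Z^*)$ with $\mu^{-1}(0)\cap(Z\times U^*)$. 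Since it preserves dimensions, the dual bound follows.

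\textbf{Step 2.} For (ii) the paper works on the total spaces. $J_m(T^*X)$ is smooth, and the preimage of the bad locus keeps codimension at least two because $J_m(T^*X)\to T^*X$ is locally an affine-space bundle. Hartogs then applies on each $J_m(T^*X)$, and one passes to the colimit over $m$.

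Your pushed-down version is equivalent, but two points need fixing.
\begin{itemize}
\item Grade by the canonical weight coming from rescaling the arc parameter, with $k$-th derivatives in weight $k$, rather than by polynomial degree. Chain-rule coordinate changes do not preserve polynomial degree.
\item Apply Hartogs directly to the weight pieces, which are locally free of finite rank on $T^*X$. An induction through graded pieces of global sections would require lifting sections, and $\Gamma$ is only left exact.
\end{itemize}

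\textbf{Step 3.} Write $\mu_F=\mu_0+\mu_1$ for the moment map of $W_0\oplus W_1$. The relevant zero fiber over $U\times W_1$ is
\[
\{\mu_0(v,v^*)=-\mu_1(w,w^*)\},
\]
not $\cB\times T^*W_1$. So its bad locus is not the \ref{A3} locus times a vector space.

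Your alternative also fails: $T^*E$ has no vector-bundle projection to $T^*X$. Writing $E^\vee$ for the dual bundle of $E$, $T^*E$ is an affine bundle over the total space of $E\oplus E^\vee$, modeled on the pullback of $T^*X$.

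Even in a local trivialization of $E$, the common open locus $\{v^*\in U^*\}$ is not pulled back from $T^*X$, because $v^*$ depends on $(w,w^*)$ through $\mu_1$. Neither argument therefore yields the codimension bound needed for $\mc O(T^*E)\cong\mc O(T^*E^*)$. That would require controlling $\{v^*\in Z^*\}$ on the fibers $\mu_0=c$ with $c\neq0$.

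For the super objects $\wedge T^*E$ and $\wedge J_\infty T^*E$, with $W_1$ odd, your last argument is correct. These are what Proposition~\ref{prop:non-negatively-graded} and Theorem~\ref{th:no-support} actually use. There the odd part of the super moment map is nilpotent, the body is $\cB/G=T^*X$, and the structure sheaf is a nilpotent thickening with locally free filtration quotients. Hartogs on $T^*X$ for these pieces is what the paper means by applying the argument to exterior-power sheaves.
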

\begin{proof}
Normality and \ref{A3} give
\begin{equation}\label{eq:O(T*X))}
\mc O(T^*X)=\mc O(\cB)^G
=\mc O(\mu^{-1}(0)\cap(U\times U^*))^G.
\end{equation}
The dual argument proves (i). For (ii), the inverse image of the common open locus under \(J_m(T^*X)\to T^*X\) has complement of the same codimension, since this map is locally an affine-space bundle. Hartogs extension on the smooth jet schemes identifies the rings
of functions for every \(m\); passing to arcs proves (ii). The same argument, applied to the relevant vector bundles and exterior-power sheaves, proves (iii).
\end{proof}
\begin{lemma}\label{lem:codim2}
Both \(Z\subset W_0\) and \(\widetilde X\setminus X\) have codimension at least two. Hence \(\mc O(X)=\mc O(W_0)^G\).
\end{lemma}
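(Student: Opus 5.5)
Two things have to be shown: $\operatorname{codim}_{W_0} Z\geq 2$ and $\operatorname{codim}_{\widetilde X}(\widetilde X\setminus X)\geq 2$. The identity $\mc O(X)=\mc O(W_0)^G$ then follows by a Hartogs argument. Throughout I use only \ref{A1}--\ref{A3} and the smoothness statements of Lemmas~\ref{lem:smooth} and~\ref{lem:smoothopen}.

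\emph{Step 1: $\operatorname{codim}_{W_0}Z\geq2$.} I use the dual picture. Lemma~\ref{lem:smooth} applies symmetrically, so $\cB^*=\mu^{-1}(0)\cap(W_0\times U^*)$ is smooth and irreducible of dimension $2\dim W_0-\dim\g$, and it is fibered over $U^*$. Consider instead the projection of $\cB$ to $W_0^*$.

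Suppose $Z$ contained a divisor $D\subset W_0$. Over a generic point of $D^*$-type loci I compare dimensions: the subset $\mu^{-1}(0)\cap(U\times Z^*)$ has codimension at least two in $\cB$ by \ref{A3}. By the Fourier symmetry of the setup (swapping $W_0\leftrightarrow W_0^*$, $Z\leftrightarrow Z^*$), the same argument applied to $\cB^*$ gives codimension at least two for $\mu^{-1}(0)\cap(Z\times U^*)$ in $\cB^*$.

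Now fix a generic $w\in U^*$. Its fiber $\{v:\mu(v,w)=0\}$ is the annihilator of the rank-$\dim\g$ space $\g w$, i.e. a linear subspace $P_w$ of codimension $\dim\g$. If $D$ is a divisor, then $D\cap P_w$ has codimension at most one in $P_w$ for every $w$. Hence $\mu^{-1}(0)\cap(Z\times U^*)$ would have codimension at most one in $\cB^*$, a contradiction.

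The one delicate point is the case $P_w\subset D$. This is an open condition's complement, and I would handle it by noting that the union of the $P_w$ sweeps out $W_0$. If $D\supset P_w$ for generic $w$, then $D=W_0$ by dimension count. So generic $w$ gives proper intersection.

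\emph{Step 2: $\operatorname{codim}_{\widetilde X}(\widetilde X\setminus X)\geq2$.} Since orbits in $U$ are closed (\ref{A2}), $U=\pi^{-1}(X)$ is saturated. Hence $\widetilde X\setminus X=\pi(Z)$ with $\pi^{-1}\pi(Z)=Z$.

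Suppose $\pi(Z)$ contained a divisor $\Delta$ of $\widetilde X$. Because $\widetilde X$ is normal, $\mc O(W_0)^G$ is a UFD-like invariant ring for connected $G$. Concretely, I would argue that $\pi^{-1}(\Delta)$ has codimension at most one in $W_0$: the map $\pi$ is surjective, and the fibers over points of $X$ have dimension $\dim G$, which is the minimal fiber dimension. By upper semicontinuity, fibers over $\Delta$ have dimension at least $\dim G$, so $\dim\pi^{-1}(\Delta)\geq\dim\widetilde X-1+\dim G=\dim W_0-1$. This gives $\pi^{-1}(\Delta)\subset Z$ of codimension at most one, contradicting Step 1.

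\emph{Step 3: the coordinate ring.} Since $W_0$ is normal and $\operatorname{codim}Z\geq2$, Hartogs gives $\mc O(U)=\mc O(W_0)$. Taking $G$-invariants and using that $U\to X$ is a geometric quotient yields $\mc O(X)=\mc O(U)^G=\mc O(W_0)^G$.

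The main obstacle is Step 1, namely extracting the codimension bound on $Z$ itself from the cotangent-level hypothesis \ref{A3}. This requires care with the generic fibers $P_w$. If the fibration argument proves awkward, my fallback is to bound $\dim\big(\mu^{-1}(0)\cap(Z\times U^*)\big)$ from below by $\dim Z+\dim W_0-\dim\g$ directly, using $w\mapsto P_w$ and generic freeness on $U^*$.
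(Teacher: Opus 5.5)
\textbf{Step 1 has a genuine gap.} You claim that $D\cap P_w$ has codimension at most one in $P_w$ for every $w$. This overlooks the possibility $D\cap P_w=\emptyset$. That happens exactly when the irreducible polynomial $f$ defining $D$ has no zero on the affine space $P_w$, i.e.\ restricts to a nonzero constant there. Such fibers contribute nothing to $\mu^{-1}(0)\cap(D\times U^*)$, and your dimension count collapses.

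You cannot dismiss this case by assuming $0\in D$. Hypotheses \ref{A1} and \ref{A2} survive enlarging $Z$ by a $G$-stable hypersurface that misses the origin, for instance $\{h=1\}$ with $h$ a homogeneous invariant of positive degree. Excluding such components is precisely the job of \ref{A3}. The case you single out as delicate, $P_w\subset D$, is harmless, since it only enlarges the intersection. Your fallback has the same defect, because the fibers $(\g v)^\perp\cap U^*$ over points $v\in D$ may be empty.

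\textbf{The missing idea is the one the paper's proof turns on.} Since $P_w$ is a linear subspace through $0$, if $f$ has no zero on $P_w$ then $f|_{P_w}\equiv f(0)$. If this held for generic $w$, then $f-f(0)$ would vanish on the image of $\cB^*\to W_0$. To reach a contradiction you need this image to be dense. That is not automatic (your remark that the $P_w$ sweep out $W_0$ is unproved), and it must itself be extracted from \ref{A3}. The complement $\mu^{-1}(0)\cap(U\times Z^*)$ of $\cB\cap\cB^*$ in $\cB$ has codimension at least two. Hence $\cB\cap\cB^*=\mu^{-1}(0)\cap(U\times U^*)$ is a nonempty open, hence dense, subset of both irreducible varieties $\cB$ and $\cB^*$. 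This makes both projections $\cB^*\to W_0$ and $\cB\to W_0^*$ dominant.

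The paper runs this argument globally, using \ref{A3} as stated. Take a prime divisor $V(f)\subset Z^*$. The pullback of $f$ to $\cB$ is nonzero by dominance. It is not a unit, since a unit would restrict to the constant $f(0)$ on every fiber of $\cB\to U$, again contradicting dominance. So its zero locus is a nonempty divisor of $\cB$ contained in $\mu^{-1}(0)\cap(U\times Z^*)$, contradicting \ref{A3}. The bound for $Z$ then follows because $Z^*$ is the image of $Z$ under the Fourier identification, so no dualized form of \ref{A3} is needed.

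With Step 1 repaired this way, your Steps 2 and 3 are correct. Step 2 is a genuinely different route from the paper's. You use saturation of $U$ and the lower bound $\dim G$ on fiber dimensions of $\pi$. The paper instead deduces that $\widetilde X\setminus X$ has no divisorial component from $\mc O(X)=\mc O(W_0)^G$ and the normality of $\widetilde X$.
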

\begin{proof}
By \ref{A3}, \(\cB\cap\cB^*\) is nonempty and open in \(\cB^*\). Since \(\cB^*\to U^*\) is a vector bundle, the projection \(\cB\to W_0^*\) is dominant. Suppose that \(Z^*\) contains a prime divisor defined by an irreducible polynomial \(f\). Its pullback to \(\cB\) is nonzero and is not a unit: a unit would restrict to a constant on every vector-space fiber of \(\cB\to U\), necessarily \(f(0)\), contradicting dominance. Its zero locus is therefore a nonempty divisor in \(\cB\), contradicting \ref{A3}. Thus \(\on{codim}_{W_0^*}Z^*\geq2\), and Fourier duality gives the same bound for \(Z\). Hartogs extension and descent give \(\mc O(X)=\mc O(U)^G=\mc O(W_0)^G\). Since \(\widetilde X\) is normal and affine, this equality also excludes a divisorial component of its complement.
\end{proof}

\subsection{Filtrations on global sections}
The sheafwise Li filtration satisfies
\begin{equation}\label{eq:gr D}
\on{gr}_F\cD_{\wedge E}^{ch}\cong \cO_{\wedge J_\infty T^*E}.
\end{equation}
Here and below, we regard \(\cO_{\wedge J_\infty T^*E}\) as a sheaf on \(X\), suppressing pushforward notation. Define \(\mc R_p\Gamma(X,\cD_{\wedge E}^{ch}):=\Gamma(X,F_p\cD_{\wedge E}^{ch})\). Left exactness applied to \(0\to F_{p-1}\to F_p\to F_p/F_{p-1}\to0\) gives a natural embedding of vertex Poisson algebras
\begin{equation}\label{eq:embedding-grR}
\on{gr}_{\mc R}\Gamma(X,\cD_{\wedge E}^{ch})\hookrightarrow\Gamma(X, \cO_{\wedge J_\infty T^*E}).
\end{equation}
There is also a natural vertex Poisson homomorphism \(\on{gr}_F\Gamma(X,\cD_{\wedge E}^{ch})\to\on{gr}_{\mc R}\Gamma(X,\cD_{\wedge E}^{ch})\).

\begin{lemma}\label{lemma:L0-action-on-associated graded}
We have \(L,L'\in\mc R_1\Gamma(X,\cD_{\wedge E}^{ch})_2\).
The operators \(L_0,L'_0\) therefore preserve the sheafwise filtration \(F_\bullet\) and the filtration \(\mc R_\bullet\) on global sections. They induce actions on
\(\on{gr}^p_{\mc R}\Gamma(X,\cdo{\wedge E})\) and
\(\Gamma(X,\on{gr}^p_F\cD_{\wedge E}^{ch})\), and \eqref{eq:embedding-grR} intertwines these actions.
\end{lemma}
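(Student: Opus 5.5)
The plan is to compute $L$ and $L'$ locally on a trivializing affine open, check that they lie in $F_1$, and then deduce everything else from the filtration property \eqref{eq:condition-filtration} together with naturality.

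\textbf{Step 1: $L,L'$ are global sections of weight two in $F_1$.} By Theorem~\ref{th:restriction-is-a-cdo} and the isomorphism $\mathbf V\to\Gamma(X,\cD_{\wedge E}^{ch})$ given by restriction, the conformal vectors $L,L'$ of $\mathbf V$ define global sections of $L_0$-weight $2$ and $L_0'$-weight $2$. Since $L_0$ and $L_0'$ commute and $L'=L-TH$, the section $L'$ also has $L_0$-weight $2$.

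To show $L\in F_1$ on each open, I would work locally on an affine open $U_0$ with $\cD_{\wedge E}^{ch}(U_0)\cong\cD^{ch}_{U_0}(U_0)\otimes\cE(W_1)$, as in \eqref{eq:locally-cdo}. There $L$ is a normally ordered expression of the form
\[
\sum :\beta\,\partial\gamma: \;-\;\sum :b\,\partial c:,
\]
possibly corrected by terms $\partial^2$ of weight-zero functions. The $\beta$ and $b$ factors have weight one and the $\gamma$, $c$ factors have weight zero, so every summand is a product of weight-zero and weight-one generators. By the definition of Li's increasing filtration, each such term lies in $F_1$. The same holds for $L'=L-TH$, because $H$ is a weight-one element built from one weight-one and one weight-zero factor, so $H\in F_1$, and $T$ preserves $F_1$ by \eqref{eq:condition-filtration} (since $T=L_{-1}$ acts by a $(-2)$-product with $\vac$, which does not raise the filtration). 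As $\mc R_1\Gamma=\Gamma(X,F_1)$, this proves $L,L'\in\mc R_1\Gamma(X,\cD_{\wedge E}^{ch})_2$.

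\textbf{Step 2: $L_0,L_0'$ preserve the filtrations.} We have $L_0=L_{(1)}$. By \eqref{eq:condition-filtration}, sheafwise on sections, $L_{(1)}F_p\subset F_{p+1-1}=F_p$, and likewise for $L'$. Since the restriction maps are vertex algebra homomorphisms, this holds compatibly on all opens, hence on the sheaf $F_\bullet$ and on its global sections $\mc R_\bullet$.

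\textbf{Step 3: the induced actions on the graded pieces are intertwined.} The induced actions on $\on{gr}^p$ are then well defined. The embedding \eqref{eq:embedding-grR} is induced by the maps $\Gamma(F_p)\to\Gamma(F_p/F_{p-1})$, and both $L_0$ and $L_0'$ commute with these maps by naturality. Hence \eqref{eq:embedding-grR} intertwines the actions.

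The main obstacle is the local claim $L\in F_1$, which must hold uniformly in every coordinate chart. I would settle it either through the explicit Wakimoto-type local formula via the transfer isomorphism \eqref{eq:AG-iso}, or through the general fact that the weight-two space of a CDO satisfies $F_1$ modulo $F_0$-contributions $\partial^2 f$ and $\partial f\partial g$ with $f,g$ of weight zero; both of these contributions also lie in $F_0\subset F_1$.
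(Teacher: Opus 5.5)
Your Steps 2 and 3 agree with the paper: once $L,L'$ lie in $F_1$ on every chart, \eqref{eq:condition-filtration} gives $(F_1)_{(1)}F_p\subset F_p$, and the intertwining follows from naturality of the symbol maps. The gap is in Step 1, which is the real content of the lemma.

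You assert that on a trivializing chart $L$ equals $\sum:\beta\partial\gamma:-\sum:b\partial c:$ up to $\partial^2$ of functions, but nothing you invoke gives this. The local identification \eqref{eq:locally-cdo} is built from \eqref{eq:local-cdo}, the transfer isomorphism \eqref{eq:AG-iso}, BRST reduction and Galois descent. None of these is shown to carry the BRST conformal vector \eqref{eq:L} to the coordinate one. A priori the restriction of $L$ is just some weight-two section. Weight-two sections such as $:\beta^i\beta^j:$ or $:b^sb^t:$ have nonzero symbols in $R$, so they lie in $F_2$ but not in $F_1$. In particular, the ``general fact'' you offer as a fallback is false: the weight-two space of a CDO is not contained in $F_1$. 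Your other fallback, tracking $L$ through the Wakimoto-type transfer, is only named and not carried out.

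The missing idea is an argument that uses only $L_{(0)}=L'_{(0)}=T$. This identity is intrinsic and survives all the local identifications, since restrictions are vertex algebra homomorphisms.
\begin{itemize}
\item For $a$ of weight $\Delta_a$, we have $Ta=a_{(-2)}\vac\in F_{\Delta_a}V_{\Delta_a+1}$. Hence the symbols $\bar L,\bar L'\in R$ satisfy $\{\bar L,\bar a\}=\overline{Ta}=0$, so they are Poisson central.
\item On a connected affine chart $U_\alpha$ trivializing $E$, the algebra $R\cong\mc O(T^*U_\alpha)\otimes\bigwedge(W_1\oplus W_1^*)$ has a nondegenerate Poisson bracket. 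Its Poisson supercenter is therefore $\C$, of weight zero.
\item Since $\bar L,\bar L'$ have weight two, they vanish. This is exactly $L,L'\in F_1$ on each chart, and hence $L,L'\in\mc R_1$.
\end{itemize}
This handles $L'$ directly. Your reduction via $L'=L-TH$ is fine but unnecessary: every weight-one section lies in $F_1$, and $T$ preserves each $F_p$.

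Finally, Step 1 only needs the restriction map $\mathbf V\to\Gamma(X,\cD^{ch}_{\wedge E})$, not its bijectivity. Theorem~\ref{th:no-support} is proved later using this lemma, so citing it as an isomorphism here would be circular.
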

\begin{proof}
Choose a sufficiently small connected affine open covering \(\{U_\alpha\}\) of \(X\) trivializing \(E\). The local CDO description gives
\[
R_{\cdo{\wedge E}(U_\alpha)}
\cong\Gamma(U_\alpha,\cO_{\wedge T^*E})
\cong\mc O(T^*U_\alpha)\otimes\bigwedge(W_1\oplus W_1^*).
\]
By nondegeneracy of the canonical Poisson bracket, its Poisson supercenter consists of constants, which have \(L_0\)-weight zero. The images of \(L,L'\) are Poisson central because \(L_{(0)}=L'_{(0)}=T\), and both have \(L_0\)-weight two. Thus their images vanish. The diagonal description of the local \(C_2\)-algebra gives \(L,L'\in F_1\cdo{\wedge E}(U_\alpha)_2\). These local inclusions yield the asserted inclusion in \(\mc R_1\). Finally, \eqref{eq:condition-filtration} implies
\((F_1)_{(1)}F_p\subset F_p\), so both operators preserve the filtrations. Equivariance follows from the naturality of the symbol map.
\end{proof}
\subsection{The global-sections isomorphism}
\begin{theorem}\label{th:no-support}
The restriction map
\begin{equation}\label{eq:restriction-map-of-V}
\mathbf V=\Gamma(\widetilde X,\boldsymbol{\mc V}_{\widetilde X})
\longrightarrow \Gamma(X,\cD_{\wedge E}^{ch})
\end{equation}
is an isomorphism.
\end{theorem}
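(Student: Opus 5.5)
The plan is to prove injectivity from simplicity, and surjectivity by applying the Faithfulness Theorem~\ref{th:faithfulness-2}(2) to the restriction map regarded as a morphism of $\mathbf V$-modules. Write $\rho$ for the map \eqref{eq:restriction-map-of-V} and $M:=\Gamma(X,\cD^{ch}_{\wedge E})$. By Theorem~\ref{th:restriction-is-a-cdo}, $\rho$ is a vertex algebra homomorphism and sends $\vac$ to $\vac\neq0$. Its kernel is therefore a proper ideal of $\mathbf V$, and it vanishes by the simplicity part of Theorem~\ref{Th:vanishing-and-simplicity}. Thus $\rho$ is injective, and the real content is surjectivity.

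For surjectivity, I regard $M$ as a $\mathbf V$-module through $\rho$, with the operators $L_n,L'_n$ acting through the images of $L,L'$. The first step is to check that $M\in\mathbf V\on{-mod}_{L,L',\geq0}$. By Lemma~\ref{lemma:L0-action-on-associated graded}, $L_0$ and $L'_0$ preserve $\mc R_\bullet$. Via \eqref{eq:embedding-grR}, their actions on $\on{gr}_{\mc R}M$ embed into $\Gamma(X,\cO_{\wedge J_\infty T^*E})=\mc O(\wedge J_\infty T^*E)$. By Lemma~\ref{lem:restrictions}(iii), this ring is also $\mc O(\wedge J_\infty T^*E^*)$. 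On the first model the $L_0$-weights are visibly nonnegative, and on the Fourier-dual model the $L'_0$-weights are. The two weights commute and are locally finite, since each weight space of the jet algebra is controlled by finitely many jet coordinates and functions. Because the filtration $\mc R_\bullet$ is exhaustive and is bounded below in each weight, these properties lift from $\on{gr}_{\mc R}M$ to $M$.

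Next I compare localizations. Pick $f\in\mc O(W_0)^G$ with real coefficients and $\widetilde X_f\subset X$ affine, as in the proof of Theorem~\ref{th:faithfulness-2}. Since $\cD^{ch}_{\wedge E}$ is quasi-loco on $X$, localizing $M$ at $f$ gives $\Gamma(X_f,\cD^{ch}_{\wedge E})$, and this is also $\mathbf V_f$. Hence $\rho_f$ is an isomorphism.

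The main obstacle is the dual side: computing $M_{\mf F(f)}$, where $M$ is viewed as an $\mc O(\widetilde X^*)$-loop module through $\mc O(W_0^*)^G$. The plan is to pass to the common open locus $X\cap X^*$, on which $\cD^{ch}_{\wedge E}$ and the dual CDO $\cD^{ch}_{\wedge E^*}$ agree, since both are restrictions of the BRST sheaf. One then shows that $\Gamma(X,\cD^{ch}_{\wedge E})\to\Gamma(X\cap X^*,-)\leftarrow\Gamma(X^*,\cD^{ch}_{\wedge E^*})$ are isomorphisms. This should follow from the filtration argument: take associated graded pieces, use the Hartogs isomorphisms of Lemma~\ref{lem:restrictions}(iii) (coming from \ref{A3}) on each $\on{gr}^p$ via \eqref{eq:gr D}, and induct on $p$ with left exactness. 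With $M\cong\Gamma(X^*,\cD^{ch}_{\wedge E^*})$, the same quasi-loco argument gives $M_{\mf F(f)}\cong\Gamma(X^*_{\mf F(f)},\cD^{ch}_{\wedge E^*})=\mathbf V_{\mf F(f)}$, so $\rho_{\mf F(f)}$ is also an isomorphism.

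I expect the delicate point to be that the graded pieces of $\mc R$ only embed into the jet-algebra sections. One has to show that the obstruction $H^1$ terms do not prevent extending sections from the intersection. If they cause trouble, I would instead argue only that $\rho_{\mf F(f)}$ is surjective, using $\mathbf V_{\mf F(f)}\cong\cD^{ch}_{\wedge E^*}(U_0^*)$ and the fact that localization of $M$ maps compatibly to it. Finally, Theorem~\ref{th:faithfulness-2}(2) applied to $\rho$ gives surjectivity, hence the isomorphism.
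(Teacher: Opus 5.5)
Your injectivity argument, your check that $M=\Gamma(X,\cD^{ch}_{\wedge E})$ lies in $\mathbf V\on{-mod}_{L,L',\geq0}$, and your $f$-side argument all match the paper. On the $f$-side, the identification $M_f\cong\Gamma(\widetilde X_f,\cD^{ch}_{\wedge E})$ is most cleanly obtained from the chain of injections $\mathbf V_f\hookrightarrow M_f\hookrightarrow\Gamma(\widetilde X_f,\cD^{ch}_{\wedge E})$, whose composite is \eqref{eq:locally-cdo}.

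The gap is on the $\mf F(f)$-side. There is no common open locus ``$X\cap X^*$'' carrying both CDOs:
\begin{itemize}
\item $X$ is open in $W_0\sslash G$, while $X^*$ is open in $W_0^*\sslash G$.
\item The two sheaves are restrictions of localization sheaves of $\mathbf V$ along two different commutative subalgebras, the $\gamma$-invariants and the $\beta$-invariants.
\item The common open locus of Lemma~\ref{lem:restrictions} is $(\mu^{-1}(0)\cap(U\times U^*))/G$. It is open in $T^*X$ and in $T^*X^*$, but it is cut out by a condition on the fiber coordinate, so it is not the preimage of an open subset of either base.
\item Loop localization cannot produce a chiral sheaf on that locus, because $f$ and $\mf F(f)$ do not even commute: their full contraction is the nonzero scalar of Lemma~\ref{lem:ne0}.
\end{itemize}
If you instead identify $\widetilde X\cong\widetilde X^*$ through $\mf F$, the resulting isomorphism $\Gamma(X,\cD^{ch}_{\wedge E})\cong\Gamma(X^*,\cD^{ch}_{\wedge E^*})$ is $\mf F$-twisted. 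It therefore turns localization at $\mf F(f)$ back into localization at $\pm f$ and computes nothing new. So the Hartogs isomorphisms of Lemma~\ref{lem:restrictions}(iii) apply only to the commutative symbol algebras, and your inductive extension argument has no sheaf to run on.

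Your fallback fails for the same reason. No compatible map $M_{\mf F(f)}\to\mathbf V_{\mf F(f)}$ is available. Even a retraction of $\rho_{\mf F(f)}$ would give injectivity of $\rho_{\mf F(f)}$, not surjectivity.

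The missing idea is to keep $\mathbf V$ in the source and argue only with symbols. Consider
\[
\on{gr}_F\mathbf V\longrightarrow\on{gr}_{\mc R}M\hookrightarrow\Gamma(X,\cO_{\wedge J_\infty T^*E}).
\]
\begin{itemize}
\item On the right-hand side, $\mf F(f)$ acts by multiplication by its symbol, a fiberwise polynomial on $T^*X$, so localizing there at $\mf F(f)$ makes sense.
\item After localizing at $f$, the composite is an isomorphism by the local CDO description.
\item Li's filtration is intrinsic, hence $\mf F$-stable. Lemma~\ref{lem:restrictions} identifies $\Gamma(X,\cO_{\wedge J_\infty T^*E})$ with its dual counterpart, compatibly with the symbol maps. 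So the composite after localizing at $\mf F(f)$ is the Fourier transform of the previous isomorphism, hence itself an isomorphism.
\item Therefore $(\on{gr}_F\mathbf V)_{\mf F(f)}\to(\on{gr}_{\mc R}M)_{\mf F(f)}$ is an isomorphism. Since the filtrations are finite on each weight space, $\rho_{\mf F(f)}$ is an isomorphism too.
\end{itemize}
This squeeze also removes the $H^1$ issue you flagged. You never need $\on{gr}_{\mc R}M\to\Gamma(X,\on{gr}_F\cD^{ch}_{\wedge E})$ to be surjective before localization.
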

\begin{prop}\label{prop:non-negatively-graded}
On \(\Gamma(X,\cD_{\wedge E}^{ch})\), the operator \(L_0\) acts semisimply with nonnegative weights, and \(L'_0\) acts locally finitely with nonnegative spectrum.
\end{prop}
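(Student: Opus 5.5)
Since \(L_0\) and \(L'_0\) come from global sections \(L,L'\) of \(\cD_{\wedge E}^{ch}\) (Lemma~\ref{lemma:L0-action-on-associated graded}), both operators act on \(\Gamma(X,\cD_{\wedge E}^{ch})\) compatibly with restriction. The plan is to prove the assertions on the associated graded and lift them through the filtration \(\mc R_\bullet\).

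\textbf{The operator \(L_0\).} On each affine open \(U_\alpha\) trivializing \(E\), the local description \(\cD^{ch}_{U_\alpha}(U_\alpha)\otimes\cE(W_1)\) shows that \(L_0\) is the standard conformal grading: \(\gamma,c\) and functions have weight zero, and \(\beta\)-type generators, \(b\) and derivatives raise the weight. Hence \(L_0\) acts semisimply with nonnegative integral weights on every \(\cD_{\wedge E}^{ch}(U_\alpha)\). The restriction maps to overlaps commute with \(L_0\), so the Čech equalizer describing global sections is a subspace of a product of \(L_0\)-graded spaces, stable under \(L_0\). To conclude that it is the direct sum of its weight components, I would argue that each \(\cD_{\wedge E}^{ch}(U_\alpha)\) is a direct sum (not a product) of weight spaces, and that a global section has only finitely many weights. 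The finiteness follows because \(\Gamma\) of a quasi-loco sheaf is computed weightwise: the sheaf splits as \(\bigoplus_d(\cD_{\wedge E}^{ch})_d\) as sheaves of vector spaces, since \(L_0\) is a sheaf endomorphism with degree-zero restrictions. Each weight component is a sheaf, and \(\Gamma\) commutes with this direct sum on the Noetherian space \(X\). This gives semisimplicity with nonnegative weights.

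\textbf{The operator \(L'_0\).} The difficulty is that \(L'\) is only globally defined: locally \(L'=L-TH\), where \(H_0\) measures the fiber degree in \(T^*X\) and \(E\). I would use \eqref{eq:embedding-grR}, which is \(L'_0\)-equivariant by Lemma~\ref{lemma:L0-action-on-associated graded}. On \(\Gamma(X,\cO_{\wedge J_\infty T^*E})_d\) for fixed \(L_0\)-weight \(d\), \(L'_0\) acts as \(L_0\) minus the \(H_0\)-grading. By Lemma~\ref{lem:restrictions}(iii), this space equals the corresponding space on \(T^*E^*\) over \(X^*\). There the roles of \(L\) and \(L'\) are exchanged by Fourier transform, so \(L'_0\) acts semisimply with nonnegative weights on the dual side. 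Thus on each \(\on{gr}^p_{\mc R}\) of a fixed \(L_0\)-weight space, \(L'_0\) acts semisimply with nonnegative eigenvalues.

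Lifting this through the filtration \(\mc R_\bullet\) is the main obstacle. Each fixed \(L_0\)-weight space \(\Gamma(X,\cD_{\wedge E}^{ch})_d\) carries the finite filtration \(\mc R_{d-j}\), and \(L'_0\) preserves it by Lemma~\ref{lemma:L0-action-on-associated graded}. The \(L'_0\)-eigenvalues in a weight-\(d\) space are bounded by finitely many symbol weights, so the graded pieces are finite-dimensional in each \((L_0,L'_0)\)-bigrading. I expect the needed finiteness to come from \(\on{gr}\) being a subspace of \(\mc O(J_\infty T^*X)\otimes\wedge\) with \(\mc O(T^*X)=\mc O(\cB)^G\), which is a finitely generated invariant ring by Lemma~\ref{lem:restrictions}. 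Local finiteness then follows because, on each \(L_0\)-weight space, \(L'_0\) is an endomorphism preserving a finite filtration whose graded pieces are sums of generalized eigenspaces of a locally finite operator. An extension of locally finite modules over \(\C[L'_0]\) along a finite filtration is locally finite, with spectrum contained in the union of the graded spectra, and hence nonnegative.
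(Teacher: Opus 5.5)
Your proposal is correct and follows essentially the same route as the paper. The paper also reads off the $L_0$-grading directly from the sheaf. It shows that $L'_0$ is semisimple with nonnegative weights on $\on{gr}_{\mc R}$, using the $L'_0$-equivariant embedding \eqref{eq:embedding-grR} and the identification of Lemma~\ref{lem:restrictions} with the dual side. It then lifts local finiteness and nonnegativity through the finite filtration on each $L_0$-weight space. Your detour on finite-dimensionality of bigraded pieces is unnecessary, and Lemma~\ref{lem:restrictions} does not supply it; your closing extension argument over $\C[L'_0]$ is enough by itself.
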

\begin{proof}
The \(L_0\)-grading of the CDO sheaf gives a nonnegative grading on its global sections. By Lemma~\ref{lemma:L0-action-on-associated graded}, \eqref{eq:embedding-grR} is \(L'_0\)-equivariant. The common-open-locus identification in Lemma~\ref{lem:restrictions} intertwines the \(L'_0\)-actions on
\(\Gamma(X,\cO_{\wedge J_\infty T^*E})\) and
\(\Gamma(X^*,\cO_{\wedge J_\infty T^*E^*})\). On the latter, \(L'_0\) defines the natural nonnegative grading. Hence \(L'_0\) acts semisimply with nonnegative weights on \(\on{gr}_{\mc R}\Gamma(X,\cD_{\wedge E}^{ch})\).

Since \(L_0\) and \(L'_0\) commute, \(L'_0\) preserves each \(L_0\)-weight space, on which \(\mc R_\bullet\) is a finite filtration.
Local finiteness and nonnegativity of the spectrum therefore lift from its associated graded to the original space. This proves the assertion.
\end{proof}
\begin{proof}[Proof of Theorem~\ref{th:no-support}]
Set $B=\Gamma(X,\cD_{\wedge E}^{ch})$.
Choose a real-coefficient invariant \(f\) with \(\widetilde X_f\subset X\) and \(\widetilde X^*_{\mf F(f)}\subset X^*\) as in the local calculation. Simplicity makes \eqref{eq:restriction-map-of-V} injective. By Proposition~\ref{prop:non-negatively-graded} and faithfulness, it suffices to prove that
\begin{equation}\label{eq:need-to-show}
\mathbf V_f\to B_f,\qquad\mathbf V_{\mf F(f)}\to B_{\mf F(f)}
\end{equation}
are isomorphisms. The first follows from the chain of injections \(\mathbf V_f\hookrightarrow B_f\hookrightarrow\Gamma(\widetilde X_f,\cD_{\wedge E}^{ch})\), whose composite is the local isomorphism \eqref{eq:locally-cdo}.

For the second, use the compatible symbol maps
\begin{equation}\label{eq:series-of-embeddings}
\on{gr}_F\mathbf V\longrightarrow\on{gr}_{\mc R}B
\hookrightarrow\Gamma(X,\cO_{\wedge J_\infty T^*E}).
\end{equation}
After localization at \(f\), they give
\begin{equation}\label{eq:embedding-f}
(\on{gr}_F\mathbf V)_f\longrightarrow(\on{gr}_{\mc R}B)_f
\hookrightarrow\Gamma(X, \cO_{\wedge J_\infty T^*E})_f
\hookrightarrow\Gamma(\widetilde X_f,\cO_{\wedge J_\infty T^*E}).
\end{equation}
The composite is an isomorphism by the local CDO description, so every map is an isomorphism. In particular,
\begin{equation}\label{eq:iso-gr-R-loc}
(\on{gr}_F\mathbf V)_f\cong\Gamma(X,\cO_{\wedge J_\infty T^*E})_f.
\end{equation}
Similarly, localization of \eqref{eq:series-of-embeddings} at \(\mf F(f)\) gives
\begin{equation}\label{eq:embedding-f-Fourier}
(\on{gr}_F\mathbf V)_{\mf F(f)}\longrightarrow
(\on{gr}_{\mc R}B)_{\mf F(f)}\hookrightarrow
\Gamma(X,\cO_{\wedge J_\infty T^*E})_{\mf F(f)}.
\end{equation}
Li's decreasing filtration is intrinsic and hence preserved by \(\mf F\). After degreewise reindexing, \(\mf F\) therefore identifies the increasing associated graded algebras defined using \(L\) and \(L'\). By the naturality of symbol maps under restriction and the identifications of Lemma~\ref{lem:restrictions}, the composite in \eqref{eq:embedding-f-Fourier} is the Fourier transform of \eqref{eq:iso-gr-R-loc}, hence an isomorphism. Thus the filtered map \(\mathbf V_{\mf F(f)}\to B_{\mf F(f)}\) is an isomorphism, proving the theorem.
\end{proof}
\begin{cor}
The vertex algebra \(\Gamma(X,\cD_{\wedge E}^{ch})\) is simple and nonnegatively graded with respect to \(L,L'\).
\end{cor}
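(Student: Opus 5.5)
The corollary follows by transporting the known properties of \(\mathbf V\) across the isomorphism of Theorem~\ref{th:no-support}. First I will check that the restriction map \eqref{eq:restriction-map-of-V} is a homomorphism of vertex algebras, not just a linear bijection. It is obtained by restricting sections of the sheaf of vertex algebras \(\boldsymbol{\mc V}_{\widetilde X}\) from \(\widetilde X\) to the open subset \(X\). By Theorem~\ref{th:restriction-is-a-cdo}, \(\boldsymbol{\mc V}_{\widetilde X}|_X\) is the CDO \(\cD_{\wedge E}^{ch}\). Restriction maps of a sheaf of vertex algebras are vertex algebra homomorphisms, so \(\mathbf V\cong\Gamma(X,\cD_{\wedge E}^{ch})\) as vertex algebras.

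Simplicity then transfers directly. By Theorem~\ref{Th:vanishing-and-simplicity}, \(\mathbf V\) is simple. An ideal of \(\Gamma(X,\cD_{\wedge E}^{ch})\) pulls back under the isomorphism to an ideal of \(\mathbf V\). That ideal is therefore \(0\) or everything, and the same holds for the original ideal.

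For the gradings, I will check that the isomorphism intertwines the conformal vectors. The vectors \(L,L'\) on \(\Gamma(X,\cD_{\wedge E}^{ch})\) are by construction the images of the global sections \(L,L'\in\mathbf V\) of \eqref{eq:L} and \(L'=\mf F(L)\). Restriction sends these sections to the corresponding sections over \(X\), so the modes \(L_n,L'_n\) correspond under \eqref{eq:restriction-map-of-V}. On \(\mathbf V\), both \(L_0\) and \(L'_0\) act semisimply with nonnegative spectrum, as recorded in Section~\ref{subsection:Fourier-transform}. Hence the same holds on the global sections.

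This also upgrades Proposition~\ref{prop:non-negatively-graded}, which gave only local finiteness of \(L'_0\), to semisimplicity. The only point needing care is this compatibility of \(L'\) under restriction. It holds because \(L'=L-TH\) with \(H\) a global section, and restriction commutes with \(T\).
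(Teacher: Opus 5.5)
Your argument is correct and matches the paper. The paper states this corollary without separate proof, as an immediate consequence of Theorem~\ref{th:no-support}. The restriction isomorphism is a vertex algebra map sending \(L,L'\) to \(L,L'\), so simplicity of \(\mathbf V\) (Theorem~\ref{Th:vanishing-and-simplicity}) and its nonnegative \(L_0\)- and \(L'_0\)-gradings (Section~\ref{subsection:Fourier-transform}, where \(\mf F\) intertwines \(L_0\) with \(L'_0\)) transfer directly. Your remark that this upgrades Proposition~\ref{prop:non-negatively-graded} to semisimplicity of \(L'_0\) is also right, and it is not circular, since that proposition was only used as input to the proof of Theorem~\ref{th:no-support}.
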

There is consequently an exact localization functor \(\on{Loc}_X\colon\mathbf V\on{-mod}_{L,\geq0}\to\cdo{\wedge E}\on{-mod}_{L,\geq0}\), sending \(M\) to \(\widetilde M_X:=\widetilde M_{\widetilde X}|_X\). The lowest-weight component of a sheaf module, when it exists, is naturally a module over the sheaf \(\mc D_{\wedge E}\) of differential operators \cite{ACF}.

\subsection{A fiberwise criterion for normality and affinization}
Assume further:
\begin{enumerate}[label=(A\arabic*),start=4]
\item \(T^*W_0\ssslash_0G\) is the affinization of \(T^*X\), equivalently \(\mc O(T^*X)=\mc O(\mu^{-1}(0))^G\).\label{A4}
\end{enumerate}
\begin{lemma}\label{lem:isoHAcond}
Let \(\on{pr}\colon\mu^{-1}(0)\to W_0\) be projection.
\begin{enumerate}
\item If \(\mu^{-1}(0)\cap(\{v\}\times Z^*)\) has codimension at least one in \(\on{pr}^{-1}(v)\) for every \(v\), then \(\mu^{-1}(0)\) is irreducible and normal.
\item If these codimensions are at least two, then \ref{A3} and \ref{A4} hold.
\end{enumerate}
\end{lemma}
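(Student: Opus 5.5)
The plan is to treat \(\mu^{-1}(0)\) as a family over \(W_0\) via \(\on{pr}\), whose fibers are linear subspaces, and to use the free locus \(U^*\) in the second factor as a large smooth open set. Note that \(\on{pr}^{-1}(v)=\{v\}\times(\g v)^\perp\) is a vector space of dimension \(\dim W_0-\dim(\g\cdot v)\geq\dim W_0-\dim\g\). The open set \(\cB^*=\mu^{-1}(0)\cap(W_0\times U^*)\) is smooth and irreducible of dimension \(2\dim W_0-\dim\g\), by the dual of Lemma~\ref{lem:smooth}. Its complement in \(\mu^{-1}(0)\) is \(\mu^{-1}(0)\cap(W_0\times Z^*)\).

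For (1), I will first bound dimensions. \(\mu^{-1}(0)\) is cut out by \(\dim\g\) equations in \(T^*W_0\), so every irreducible component has dimension at least \(2\dim W_0-\dim\g\). Under the fiberwise hypothesis, the complement \(\mu^{-1}(0)\cap(W_0\times Z^*)\) meets each fiber \(\on{pr}^{-1}(v)\) in codimension \(\geq1\).

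I will stratify \(W_0\) by \(\dim\g\cdot v\), so that on each stratum the fiber dimension is constant. This gives \(\dim(\mu^{-1}(0)\cap(W_0\times Z^*))\le\max_v(\dim \text{stratum}+\dim\on{pr}^{-1}(v)-1)\). I also need \(\dim\text{stratum}+\dim\on{pr}^{-1}(v)\le 2\dim W_0-\dim\g\). This is the standard estimate that \(\mu^{-1}(0)\) has expected dimension, and I expect it to be the main obstacle.

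To get that estimate, I will use the symmetric roles. Every point of the complement lies in a fiber over \(v\) where the fiber meets \(U^*\) (the hypothesis forces \(\on{pr}^{-1}(v)\not\subset W_0\times Z^*\)). Hence every point of \(\mu^{-1}(0)\) is a limit, inside its own fiber, of points of \(\cB^*\). So \(\mu^{-1}(0)=\overline{\cB^*}\), which gives irreducibility directly. It follows that \(\mu^{-1}(0)\) is a complete intersection of the expected dimension, hence Cohen--Macaulay. A reducedness statement is also needed; this comes from the generic smoothness on \(\cB^*\).

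Normality then follows from Serre's criterion once the singular locus has codimension \(\geq2\). By Lemma~\ref{lemma:smooth} the singular locus lies in \(\mu^{-1}(0)\cap(Z\times Z^*)\). The fiberwise hypothesis, combined with \(\on{codim}Z\geq1\) in the base, should give codimension \(\geq2\). Here I expect to need the dimension count: fiber codimension \(1\) plus base codimension \(1\), taken stratum by stratum.

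For (2), the same stratified count with fiber codimension \(\geq2\) shows that \(\mu^{-1}(0)\cap(U\times Z^*)\) has codimension \(\geq2\) in \(\cB\); this is \ref{A3}. For \ref{A4}, I will take \(f\in\mc O(T^*X)=\mc O(\cB)^G\). By \ref{A3} and \eqref{eq:O(T*X))}, it is a function on \(\mu^{-1}(0)\cap(U\times U^*)\). Its extension to \(\cB^*\) is obtained by the dual codimension-two argument.

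The complement of \(\cB\cup\cB^*\) inside \(\mu^{-1}(0)\) is contained in \(Z\times Z^*\), which has codimension \(\geq2\) by the fiberwise hypothesis. Normality from (1) and Hartogs extension then give \(f\in\mc O(\mu^{-1}(0))^G\), which is \ref{A4}.
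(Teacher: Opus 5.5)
Your proposal follows the paper's proof closely. Each fibre \(\{v\}\times(\g v)^\perp\) is an irreducible vector space meeting \(W_0\times U^*\), so \(Y:=\mu^{-1}(0)\) equals \(\overline{\cB^*}\), which gives irreducibility and the expected dimension. The complete-intersection, Cohen--Macaulay and generic-smoothness argument gives reducedness, Lemma~\ref{lemma:smooth} and Serre's criterion give normality, and fibre counting with Hartogs extension gives \ref{A3} and \ref{A4}.

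The one step to correct is the bound \(\dim\bigl(Y\cap(Z\times Z^*)\bigr)\leq\dim Y-2\). The input ``\(\on{codim}Z\geq1\) in the base'' is not what makes this work. The fibres \((\g v)^\perp\) jump in dimension over \(Z\); over \(v=0\) the fibre is all of \(W_0^*\). Consequently your stratum estimate \(\dim Z_k+k\leq2\dim W_0-\dim\g\), with \(k\) the fibre dimension over \(Z_k\), combined with fibrewise codimension one, yields only codimension one in \(Y\). What you need is \(\dim\on{pr}^{-1}(Z)\leq\dim Y-1\), i.e. \(\dim Z_k+k\leq\dim Y-1\) for every stratum \(Z_k\subset Z\). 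This follows at once from the irreducibility you have just proved, since \(\on{pr}^{-1}(Z)\) is closed and disjoint from the nonempty open set \(\cB\). (The paper reaches the same bound via \(\cB^*\cap\on{pr}^{-1}(Z)\), a proper closed subset of \(\cB^*\), together with a stratification of \(Z\) by fibre dimension.) With this in hand, the fibrewise codimension-one hypothesis gives codimension two stratum by stratum.

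For \ref{A4}, your route is a harmless variant of the paper's. You glue \(f\in\mc O(\cB)^G\) with its extension over \(\cB^*\) and then extend across \(Y\cap(Z\times Z^*)\), which needs only the bound from (1). The paper instead uses the stronger hypothesis of (2) to get \(\on{codim}_Y(Y\setminus\cB^*)\geq2\), hence \(\mc O(Y)=\mc O(\cB^*)\), and then takes \(\mc O(\cB^*)^G=\mc O(\cB)^G\) from Lemma~\ref{lem:restrictions}(i).

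Both routes rest on the same extra input: \(\cB^*\setminus\cB=\cB^*\cap\on{pr}^{-1}(Z)\) has codimension at least two in \(\cB^*\). Your fibre count over \(W_0\) does not give this, because it controls only the \(Z^*\)-direction and yields just codimension one for \(\on{pr}^{-1}(Z)\). It is the dual of \ref{A3}, supplied by the ``dual argument'' in Lemma~\ref{lem:restrictions}, so you should cite it as such rather than as a consequence of the stratified count.
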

\begin{proof}
Write \(Y=\mu^{-1}(0)\), \(Y_v=(\g v)^\perp\subset W_0^*\), and \(d=\dim W_0\). In (1), \(\cB^*\cap Y_v\) is nonempty and dense in every affine fiber. Hence the smooth irreducible open \(\cB^*\) is dense in \(Y\), so \(Y\) is irreducible of dimension \(2d-\dim\g\). The \(\dim\g\) moment-map equations therefore define a complete intersection. It is Cohen--Macaulay and generically reduced, hence reduced.

To check regularity in codimension one, note that \(\cB^*\cap\on{pr}^{-1}(Z)\) is a proper closed subset of \(\cB^*\). Stratifying \(Z\) by fiber dimension shows that \(\dim\on{pr}^{-1}(Z)\leq\dim Y-1\). The fiberwise codimension-one hypothesis then gives
\[
\dim\bigl(Y\cap(Z\times Z^*)\bigr)\leq\dim Y-2.
\]
Apply Lemma~\ref{lemma:smooth} and Serre's criterion \cite[Chapter II, Section 8]{Har}.

In (2), the same fiber-dimension argument gives \(\on{codim}_Y(Y\setminus\cB^*)\geq2\), and \ref{A3} follows by restriction to \(\cB\). Hartogs extension and Lemma~\ref{lem:restrictions} give
\[
\mc O(Y)^G=\mc O(\cB^*)^G=\mc O(\cB)^G=\mc O(T^*X),
\]
which is \ref{A4}.
\end{proof}

\subsection{The associated variety}
Transport the \(\mc R\)-filtration to \(\mathbf V\) using Theorem~\ref{th:no-support}. Its diagonal symbol algebra
\[
\mf R_{\mathbf V}:=\bigoplus_\Delta
\mathbf V_\Delta/\mc R_{\Delta-1}\mathbf V_\Delta
\]
is Poisson. The symbol maps give \(R_{\mathbf V}\twoheadrightarrow \mf R_{\mathbf V}\hookrightarrow\Gamma(X,\mc{O}_{\wedge T^*E})\). 
Passing to the reduced rings gives a Poisson homomorphism
to \(\mc O(T^*X)\), which is identified with the classical
Hamiltonian reduction by \ref{A4}.
Thus we obtain
\begin{equation}\label{eq:surjc-RV}
(R_{\mathbf V})_{\mathrm{red}}\longrightarrow\mc O(T^*W_0\ssslash_0G).
\end{equation}
\begin{thm}\label{thm:surjection-to-classical-invairiant-rings}
The map \eqref{eq:surjc-RV} is surjective.
\end{thm}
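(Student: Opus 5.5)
The strategy is to show that the target ring \(\mc O(T^*W_0\ssslash_0G)\) is generated by symbols of explicit elements of \(\mathbf V\). Concretely, every \(G\)-invariant polynomial in \(\mc O(T^*W_0)\), restricted to \(\mu^{-1}(0)\), should arise this way. By \ref{A4}, \(\mc O(T^*W_0\ssslash_0G)=\mc O(\mu^{-1}(0))^G\). Since \(G\) is reductive, the restriction \(\mc O(T^*W_0)^G\to\mc O(\mu^{-1}(0))^G\) is surjective. So it suffices to lift each homogeneous invariant \(P\in\on{Sym}(W_0\oplus W_0^*)^G\) to an element of \(\mathbf V\) whose image under \(R_{\mathbf V}\to\Gamma(X,\mc O_{\wedge T^*E})\to\mc O(T^*X)\) is \(P|_{\cB}/G\). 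Here we use the identification \(T^*X\cong\cB/G\) of Lemma~\ref{lem:smoothopen}.

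\textbf{Step 1: build the lift.} I would use the commutant \(\mathcal C=\mathcal A^{\g[t]}\) and the surjection \(\mathcal C\to\mathbf V\) of Theorem~\ref{surj:generator}. Alternatively, one can use the BRST complex directly, since any \(Q\)-closed class works. For \(P\) of bidegree \((a,b)\) in \((\gamma,\beta)\), take the normally ordered polynomial \(\hat P=:P(\gamma,\beta):\in\mathcal A_b\) in the \(\beta\gamma\) system. Its \(C_2\)-symbol in \(R_{\mathcal A}\cong\mc O(T^*W_0)\otimes\bigwedge\) is \(P\).

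\(\hat P\) need not be \(\g[t]\)-invariant. It is \(\g\)-invariant, since \(J^\xi_{(0)}\) acts by the derivation of the \(G\)-action. The positive modes \(J^\xi_{(n)}\hat P\) with \(n\geq1\) lie in strictly lower Li filtration, so \(\hat P\) is invariant modulo \(F_{b-1}\). The real requirement, though, is a BRST cocycle in the relative complex. I would take \(\hat P\otimes\mathbf 1\) and correct it by lower-filtration terms involving ghosts. This is a standard spectral-sequence argument: on the associated graded, the relative BRST complex becomes the classical Koszul–Chevalley complex of \(\mu^{-1}(0)\) computing \(G\)-invariants. In that complex \(P\) is a cocycle, because \(P\) is \(G\)-invariant and the ghost part of the Koszul differential acts trivially on \(\on{Sym}\).

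To avoid heavy spectral-sequence bookkeeping, I would instead argue locally. By Theorem~\ref{th:no-support}, \(\mathbf V\cong B=\Gamma(X,\cD^{ch}_{\wedge E})\). On the free locus, \eqref{eq:locally-cdo} identifies the BRST reduction with a CDO. There \(\on{gr}\) of the classical reduction is \(\mc O(J_\infty T^*X)\otimes\bigwedge\), and the vanishing of higher cohomology (Theorem~\ref{Th:vanishing-and-simplicity}) lets the obstruction argument run.

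\textbf{Step 2: compute the symbol.} The lift \(v\in\mathbf V_b\) has image in \(\mf R_{\mathbf V}\subset\Gamma(X,\mc O_{\wedge T^*E})\) equal to the descent of \(P|_{\cB}\). Its reduced image in \(\mc O(T^*X)\) is then \(P|_{\cB}/G\). This should follow from naturality of the symbol map with respect to restriction and the local CDO identification, where BRST reduction on symbols is classical Hamiltonian reduction (Lemma~\ref{lem:reduction}). Nonvanishing of \(v\) modulo \(\mc R_{b-1}\) is automatic whenever \(P|_{\cB}\neq0\), since the symbol is \(P|_{\cB}\). If \(P|_{\cB}=0\), there is nothing to prove.

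\textbf{Main obstacle.} The hardest step is the existence of a genuine cocycle lift whose leading symbol is exactly \(P\), that is, that the relevant spectral sequence degenerates at the needed place. My first attempt would be the filtration-by-filtration argument on the relative complex of \(\mathcal A\). At each stage the obstruction lives in a degree-one cohomology of the associated graded Koszul complex. I would try to kill that obstruction using the degree-one vanishing of Theorem~\ref{Th:vanishing-and-simplicity}, combined with the codimension hypothesis \ref{A3} via Lemma~\ref{lem:restrictions} to pass from the free locus back to global sections.
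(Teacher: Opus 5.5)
\textbf{Genuine gap: the lift is never constructed.} Your reduction to lifting invariants and reading off their symbols is sound. But the lift itself is the whole content of the theorem, and the tool you propose for producing it cannot work.

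In the filtered (spectral-sequence) version, the obstruction to lifting the classical degree-zero cocycle \(P\) is a class \(d_rP\) on the \(E_r\)-pages in cohomological degree one. The vanishing of \(H^{\infty/2+1}\) in Theorem~\ref{Th:vanishing-and-simplicity} concerns only the abutment, so it gives no control over these pages. A nonzero degree-one class on \(E_r\) may disappear precisely by being hit from degree zero, which is exactly the situation in which some \(P\) fails to lift.

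The sheaf-theoretic version has the same defect in another form. After identifying \(\mathbf V\) with \(\Gamma(X,\cD^{ch}_{\wedge E})\), lifting a global section of \(\on{gr}_p\) to one of \(F_p\) is obstructed by \(H^1(X,F_{p-1}\cD^{ch}_{\wedge E})\) on the non-affine \(X\). Neither BRST vanishing nor the Hartogs-type identifications of Lemma~\ref{lem:restrictions} say anything about this group; the latter concern only global functions. This is why the paper has only the injection \eqref{eq:embedding-grR}, not a surjection.

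\textbf{The missing idea.} You need a lift only modulo \(I_0^G\), and choosing the representative well removes all correction terms. The paper's argument runs as follows.
\begin{itemize}
\item Use the Hermitian form of Section~\ref{subsection:Fourier-transform}. The degreewise decomposition \(\mc O(T^*W_0)=I_0\oplus I_0^\perp\) is stable under a compact form of \(G\), hence under \(G\). Therefore \((I_0^\perp)^G\cong\mc O(T^*W_0)^G/I_0^G=\mc O(T^*W_0\ssslash_0G)\).
\item Regard \(f\in(I_0^\perp)^G\) as an element of \(\C[\beta^i_{(-1)},\gamma^i_{(-1)}]\vac\subset\mathcal A\).
\item \(J^\xi_{(0)}f=0\) by invariance.
\item \(J^\xi_{(n)}f=0\) for \(n\geq2\), since only \(\beta^i_{(0)}\) and \(\gamma^i_{(0)}\) act nontrivially on this subspace.
\item \(J^\xi_{(1)}\) acts on this subspace as \(\sum_{i,j}A_{ji}\partial_{\beta^i}\partial_{\gamma^j}\), where \((A_{ji})\) is the matrix of \(\xi\) in the orthonormal basis. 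Its Hermitian adjoint is multiplication by \(-\mu^*(\omega(\bar\xi))\), which maps into \(I_0\). Hence \((J^\xi_{(1)}f,h)=0\) for all \(h\), so \(J^\xi_{(1)}f=0\).
\end{itemize}
Thus \(f\in\mathcal C=\mathcal A^{\g[t]}\), and \(f\otimes\vac\) is already a relative BRST cocycle with no ghost corrections. Its class in \(\mathbf V\) has symbol \(f\), and your Step~2 then finishes the argument.
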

\begin{proof}
Put \(P=\mc O(T^*W_0)\). The degreewise Hermitian orthogonal decomposition \(P=I_0\oplus I_0^\perp\) is stable under a compact form of \(G\), hence under \(G\). Thus
\[
(I_0^\perp)^G\cong P^G/I_0^G=\mc O(T^*W_0\ssslash_0G).
\]
Identify \(P\) with \(\C[\beta^i_{(-1)},\gamma^i_{(-1)}]\mathbf1\subset\mathcal A\). For \(f\in(I_0^\perp)^G\), the zero modes \(J^\xi_{(0)}\) vanish by invariance, while \(J^\xi_{(n)}f=0\) for \(n\geq2\). The remaining condition follows from the adjoint identity
\[
(J^\xi_{(1)}f,h)=-(f,\mu^*(\omega(\bar\xi))h),\qquad h\in P.
\]
Indeed, in the chosen orthonormal bases, if \(\rho(\xi)=(A_{ji})\), the first-mode action is
\[
J^\xi_{(1)}|_P=\sum_{i,j}A_{ji}\partial_{\beta^i}\partial_{\gamma^j}.
\]
Its Hermitian adjoint is multiplication by \(-\mu^*(\omega(\bar\xi))\). The right-hand side vanishes since its second argument belongs to \(I_0\). Hence \(f\in\mathcal C\), and its image in \(\mathbf V\) has the prescribed classical symbol. This proves surjectivity.
\end{proof}
The reduced Hamiltonian reduction has finitely many symplectic leaves \cite[Section 2.3]{Lo}.
\begin{conj}\label{mainconj}
We have \(X_{\mathbf V}\cong T^*W_0\ssslash_0G\). In particular, \(\mathbf V\) is quasi-lisse.
\end{conj}
\subsection{A generation criterion}\label{subsection:generators}
\begin{proposition}\label{prop:generation-criterion}
Let \(V\subset\mathbf V\) be a vertex subalgebra stable under the quasi-conformal operators \(L_n,L'_n\), \(n\geq-1\).
Suppose that \(V\) contains both \(\mc O(W_0)^G\) and \(\mc O(W_0^*)^G\), and that the natural symbol map
\[
R_V\longrightarrow R_{\mathbf V}
\longrightarrow\Gamma(X,\mc O_{\wedge T^*E})
\]
is surjective. Then \(V=\mathbf V\).
\end{proposition}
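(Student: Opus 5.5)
The plan is to apply the Faithfulness Theorem~\ref{th:faithfulness-2}(2) to the inclusion \(\iota\colon V\hookrightarrow\mathbf V\), regarded as a morphism in \(V\on{-mod}_{L,L',\geq0}\). Because \(V\) is stable under all \(L_n,L'_n\) (\(n\geq-1\)), both \(V\) and \(\mathbf V\) are compatible \(V\)-modules for the two quasi-conformal structures. They are nonnegatively graded with commuting, locally finite grading operators, since \(\mathbf V\) has these properties. Injectivity of \(\iota\) is automatic, so only surjectivity is at stake. By part~(2) of the theorem, it suffices to show that \(\iota_f\) and \(\iota_{\mf F(f)}\) are surjective, where \(f\) is a real-coefficient invariant as in the proof of Theorem~\ref{th:no-support}, with \(\widetilde X_f\subset X\) and \(\widetilde X^*_{\mf F(f)}\subset X^*\). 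These localizations make sense as \(V\)-modules because \(f\in\mc O(W_0)^G\subset V\) and \(\mf F(f)\in\mc O(W_0^*)^G\subset V\); this is where the hypothesis that \(V\) contains both invariant rings is used. Since \(f\) and \(\mf F(f)\) lie in \(V\), the localization \(V_f\) is a vertex subalgebra of \(\mathbf V_f\cong\cD^{ch}_{\wedge E}(\widetilde X_f)\).

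For surjectivity of \(\iota_f\), I will pass to Li filtrations. By Lemma~\ref{eq:filtration-commutes-with-localization2}, \(R_{V_f}=(R_V)_f\) and \(R_{\mathbf V_f}=(R_{\mathbf V})_f\). On the target side, the local description \eqref{eq:locally-cdo} together with \eqref{eq:gr D} identifies \(R_{\mathbf V_f}\) with \(\Gamma(\widetilde X_f,\mc O_{\wedge T^*E})=\Gamma(X,\mc O_{\wedge T^*E})_f\). The hypothesis that the symbol map to \(\Gamma(X,\mc O_{\wedge T^*E})\) is surjective then gives that \(R_{V_f}\to R_{\mathbf V_f}\) is surjective after localizing. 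Since \(\on{gr}_F\mathbf V_f\) is generated by \(R_{\mathbf V_f}\) as a differential algebra (the surjection \eqref{eq:Li-surjection}), the map \(\on{gr}_FV_f\to\on{gr}_F\mathbf V_f\) is also surjective. The Li filtration is finite on each conformal weight space (\(L\)-weight for \(V_f\)), so surjectivity lifts from the associated graded to \(V_f\to\mathbf V_f\) by induction on the filtration degree.

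The \(\mf F(f)\) side is the main obstacle, because the symbol hypothesis is stated only in the \(L\)-picture on \(X\). My approach follows the proof of Theorem~\ref{th:no-support}. I will use the Li filtration for \(L'\), which by Fourier invariance of the intrinsic decreasing Li filtration corresponds to the \(L\)-filtration after reindexing. I will then invoke Lemma~\ref{lem:restrictions}: the rings \(\Gamma(X,\mc O_{\wedge J_\infty T^*E})\) and \(\Gamma(X^*,\mc O_{\wedge J_\infty T^*E^*})\), and likewise their weight-zero parts, are identified via restriction to the common open locus. This transports the surjectivity of the symbol map \(R_V\to\Gamma(X^*,\mc O_{\wedge T^*E^*})\) to the dual picture. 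Localizing at \(\mf F(f)\) and using the dual local description \eqref{eq:local-description-dual} gives surjectivity of \(R_{V_{\mf F(f)}}\to R_{\mathbf V_{\mf F(f)}}\). The lifting argument of the previous step then yields surjectivity of \(\iota_{\mf F(f)}\).

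The delicate point is checking that the diagonal symbols computed with respect to \(L'\) agree with those computed with respect to \(L\) under these identifications. For this I will appeal to the naturality of symbol maps already used in \eqref{eq:embedding-f-Fourier}. Granting both localized surjectivities, Theorem~\ref{th:faithfulness-2}(2) gives \(V=\mathbf V\).
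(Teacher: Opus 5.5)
Your proposal is correct and follows essentially the same route as the paper. You localize at a real-coefficient invariant \(f\) and at \(\mf F(f)\); you get surjectivity on \(R\) from the hypothesis, Lemma~\ref{eq:filtration-commutes-with-localization2} and the local CDO description; you lift it through \eqref{eq:Li-surjection} and the weightwise-finite Li filtration; you transport the symbol hypothesis to the dual polarization via Lemma~\ref{lem:restrictions}; and you conclude with part~(2) of Theorem~\ref{th:faithfulness-2} applied to \(V\hookrightarrow\mathbf V\). The one point to state more carefully is that Theorem~\ref{th:faithfulness-2} is formulated only for \(\mathbf V\) and \(\mathcal C\), so you are really reusing its proof for the subalgebra \(V\). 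As the paper notes, this works because \(V\) contains \(f\), \(\mf F(f)\) and their top-degree components (homogeneous components of invariants are invariant), and \(V\) inherits both quasi-conformal actions.
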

\begin{proof}
Choose a nonzero real-coefficient invariant \(f\in\mc O(W_0)^G\) with \(\widetilde X_f\subset X\), as in the proof of Theorem~\ref{th:faithfulness-2}, and put \(g=\mf F(f)=\Phi(f)\).
Localizing the symbol map at \(f\) gives a surjection
\[
R_{V_f}\cong(R_V)_f
\twoheadrightarrow
\Gamma(\widetilde X_f,\mc O_{\wedge T^*E})
\cong R_{\mathbf V_f}.
\]
Here we use Lemma~\ref{eq:filtration-commutes-with-localization2}, localization of sections of the quasi-coherent symbol sheaf, and the local CDO description of \(\mathbf V_f\).
Since \(\operatorname{gr}_F\mathbf V_f\) is generated as a differential algebra by \(R_{\mathbf V_f}\), the induced map
\[
\operatorname{gr}_F V_f\longrightarrow
\operatorname{gr}_F\mathbf V_f
\]
is surjective; see \eqref{eq:Li-surjection}.
Both localized algebras are nonnegatively \(L_0\)-graded, and the Li filtration is finite in each weight. Surjectivity 
\(V_f\to\mathbf V_f\)
therefore follows by filtered reconstruction.

On the common open locus used in Lemma~\ref{lem:restrictions}, the symbol maps for the two polarizations agree under the classical cotangent identification.
The same lemma identifies their spaces of global symbols, so the assumed surjectivity also holds for the dual symbol map on \(X^*\).
Repeating the preceding argument with \(L'\) and \(g\) gives a surjection
\(V_g\to\mathbf V_g\).

The proof of Theorem~\ref{th:faithfulness-2} applies to \(V\), since it contains both invariant rings and inherits the quasi-conformal actions and full-contraction identities.
Applying part~(2) to the inclusion \(V\hookrightarrow\mathbf V\), which is surjective after localization at both \(f\) and \(g\), gives \(V=\mathbf V\).
\end{proof}

\subsection{Modules over CDOs}
Let \(\cdo{\wedge E}\on{-mod}_{L,\geq0}\) denote the category of modules on which \(L_0\) acts locally finitely with 
spectrum bounded below.

\begin{theorem}[\cite{ACF}]\label{th:ACF}
The weight-zero functor gives an equivalence
\[
\begin{aligned}
\cdo{\wedge E}\on{-mod}_{L,\geq0}
&\simeq \mc D_{\wedge E}\on{-mod},
&\mc M&\longmapsto\mc M_{top}.
\end{aligned}
\]
\end{theorem}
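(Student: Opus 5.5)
The plan is to reduce the statement to a local Stone--von Neumann type statement for the free field system, and then to glue. Both $\cdo{\wedge E}\on{-mod}_{L,\geq0}$ and $\mc D_{\wedge E}\on{-mod}$ are categories of quasi-loco, respectively quasi-coherent, sheaves. Localization (Section~\ref{sec:localization}) is exact and compatible with restriction, so it suffices to do three things. First, construct the weight-zero functor and a candidate inverse on a small affine open $U_\alpha$. Second, prove that they are mutually inverse there. Third, check compatibility with the gluing data of the CDO.

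\emph{Local step.} Choose a connected affine open $U_\alpha\subset X$ with étale coordinates $x^1,\dots,x^r$ over which $E$ is trivial. By \cite{GMS}, $\cdo{\wedge E}(U_\alpha)$ is then the localization of $\mc S(\C^r)\otimes\cE(W_1)$ along the étale map. Its Zhu algebra is $\mc D_{\wedge E}(U_\alpha)$, the localized analogue of $\on{Weyl}\otimes\on{Cl}$ from Section~\ref{subsection:betagamma bc-system}. For a module $M$ with $L_0$ locally finite and bounded below, let $\Omega(M)$ be the joint kernel of all positive modes $\beta_{(n)},\gamma_{(n-1)},b_{(n)},c_{(n-1)}$ with $n\geq1$. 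Because the fields are free, the positive modes form a commuting or anticommuting family of annihilation operators. These act locally nilpotently by the weight bound.

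\emph{Weight zero equals the top.} The standard Fock-space argument (Kac's lemma for Heisenberg and Clifford algebras) shows that $M\cong \mathcal{F}\otimes\Omega(M)$, with the creation modes acting freely. The ordering in $L$ makes $L_0$ act by $0$ on $\Omega(M)$. Hence all weights lie in $\Z_{\geq0}$, and $M_{top}=M_0=\Omega(M)$. The zero modes of $\beta,\gamma,b,c$ and of the localized functions make $\Omega(M)$ a module over $\mc D_{\wedge E}(U_\alpha)$.

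\emph{Inverse functor.} Conversely, for a $\mc D_{\wedge E}(U_\alpha)$-module $N$, define
\[
\on{Ind}(N):=\cdo{\wedge E}(U_\alpha)\otimes_{\cdo{\wedge E}(U_\alpha)_{\geq0\text{-modes}}}N .
\]
The locality of the inverse fields, Proposition~\ref{pro:locality-of-inverse}, makes the localized fields act on this induced module. It has top component $N$, and Kac's lemma shows that $\on{Ind}(\Omega(M))\to M$ is an isomorphism. Exactness and faithfulness of $M\mapsto M_0$ follow from the same tensor decomposition.

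\emph{Gluing.} On overlaps, the CDO is glued by the automorphisms of \cite{GMS} induced by coordinate changes and by changes of trivialization of $E$. These preserve $L$ and the subalgebra of nonnegative modes, so they induce automorphisms of the Zhu algebra. The key check is that these induced automorphisms are exactly the transition maps defining $\mc D_{\wedge E}$. This should follow from naturality of the Zhu construction, using $\Zhu(\cdo{\wedge E}(U))\cong\mc D_{\wedge E}(U)$ compatibly with restriction. It gives natural isomorphisms between the local functors, so they glue to an equivalence of sheaf categories.

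I expect this gluing step to be the main obstacle. The CDO transition maps carry quantum corrections, namely the anomaly terms of \cite{GMS} involving $\partial\log\det$ of the Jacobian. One must verify that these either act trivially on weight-zero vectors or reproduce precisely the twist in $\mc D_{\wedge E}$, which is where the choice of $\mc D_{\wedge E}$ in \cite{ACF} enters. A secondary technical point is the case where $L_0$ acts only by generalized eigenvalues. There I would first show that $L_0$ is semisimple on $\Omega(M)$, using $L_0|_{\Omega(M)}=0$ from the normal ordering, and then propagate this to $M$ through the Fock decomposition.
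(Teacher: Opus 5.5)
Your outline is correct, and both proofs use the same inverse functor: your $\on{Ind}(N)$ is the paper's universal induced module $S(N)$. The local step, however, follows a different route.

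\textbf{The paper's local argument.} It never decomposes modules over free fields. It uses only that $\Zhu(A)\cong\mc D(\mc U)\otimes\on{Cl}(W_1)$ acts faithfully on $A_0$ and that $L_0A_0=0$; together these force $[L]=0$ in $\Zhu(A)$.
\begin{itemize}
\item In any coset modulo $\Z$ that occurs in the spectrum, the lowest generalized weight space is a Zhu-module, so $L_0$ acts on it by $[L]=0$.
\item Hence the spectrum lies in $\Z_{\geq0}$, and $M_0\neq0$ whenever $M\neq0$.
\item The map $S(M_0)\to M$ is an isomorphism in degree zero. Its kernel and cokernel therefore have no weight-zero part, so they vanish.
\end{itemize}

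\textbf{What your Kac-lemma route buys and costs.} It yields more: the decomposition $M\cong\mathcal F\otimes M_0$, with $L_0$ acting as the Fock degree, so semisimplicity of $L_0$ is immediate. The price is that you need an honest $\beta\gamma bc$ subsystem on each chart whose annihilation modes supercommute. Two points need care here.
\begin{itemize}
\item Étale coordinates give a CDO that is an étale extension, not a localization, of $\mc S(\C^r)\otimes\cE(W_1)$. This is harmless as long as the free fields embed.
\item A CDO may carry a closed $3$-form twist on the chart, which makes $\beta_i(z)\beta_j(w)$ singular. Such a twist need not disappear on smaller Zariski opens, and then Kac's lemma does not apply as stated.
\end{itemize}
The Zhu argument is insensitive to both issues, since it needs only the Zhu algebra and its faithful action on $A_0$.

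\textbf{The gluing step.} This is not the obstacle you expect.
\begin{itemize}
\item Apart from the classical frame-change term $:\!b\,h\,c\!:$ for $E$, the quantum corrections to $\beta$ in the transition maps of \cite{GMS} are weight-one elements of the commutative part. That is, they are $1$-forms, sums of terms $:\!(\partial g)f\!:$ with $\Delta_g=0$, including the $\partial\log\det$ terms.
\item Each such term satisfies $:\!(\partial g)f\!:=g_{(-2)}f=g\circ f\in O(V)$, so it vanishes in $\Zhu$.
\item Moreover $:\!fa\!:=f*a$ when $\Delta_f=0$, so normal ordering introduces no twist. At the Zhu level the transition maps are therefore the classical ones.
\end{itemize}
More simply, the paper implicitly takes $\mc D_{\wedge E}$ to be the sheaf $\mc U\mapsto\Zhu(\cdo{\wedge E}(\mc U))$. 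With that reading, the weight-zero functor and $S$ are natural for restriction, and the local equivalences glue with no cocycle comparison needed.
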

\begin{proof}
Work locally on a sufficiently small affine open
\(\mc U\subset X\) trivializing \(E\), and put
\(A=\cdo{\wedge E}(\mc U)\). Then
\[
\on{Zhu}(A)\cong
\mc D(\mc U)\otimes\on{Cl}(W_1).
\]
This algebra acts faithfully on
\(A_0=\mc O_{\wedge E}(\mc U)\).
Since \(L_0A_0=0\), we have \([L]=0\) in \(\on{Zhu}(A)\).

Let \(M\) be a nonzero lower-truncated \(A\)-module
on which \(L_0\) acts locally finitely.
The lowest generalized weight space in any nonzero coset modulo \(\Z\) is a Zhu module. Since \([L]=0\), this space has weight zero and \(L_0\) acts on it as zero. Hence the spectrum lies in \(\Z_{\geq0}\), and the generalized weight-zero space is the ordinary eigenspace \(M_0\neq0\).

For a \(\on{Zhu}(A)\)-module \(N\), let \(S(N)\) be its
universal induced nonnegatively graded \(A\)-module,
with \(S(N)_0=N\).
For any such \(A\)-module \(M\), the canonical map
\(S(M_0)\to M\) is an isomorphism in degree zero.
Its kernel and cokernel therefore have zero
degree-zero components and hence vanish.
Thus \(S\) and the weight-zero functor are quasi-inverse.
These local equivalences are compatible with restriction
and therefore glue.
\end{proof}
\begin{remark}
Let \(N\) be the exterior-degree Euler operator and set
\[
e:=\prod_{k=1}^{\on{rank}E}\left(1-\frac Nk\right).
\]
Then \(e\) is the projection onto exterior degree zero.
Locally, the Clifford-algebra description shows that
\(e\) is a full idempotent and
\[
e(\mc D_{\wedge E})e\cong\mc D_X.
\]
Thus \(\mc M\mapsto e\mc M\) gives a Morita equivalence
between \(\mc D_{\wedge E}\)-modules and
\(\mc D_X\)-modules.
Under this equivalence, \(\mc O_{\wedge E}\)
corresponds to \(\mc O_X\).
\end{remark}
\subsection{Equivalence of categories}\label{subsection:equivalence}
By Theorems~\ref{th:faithfulness-2} and \ref{th:ACF}, there is a faithful exact functor
\begin{align}
\mathbf V\on{-mod}_{L,L',\geq0}\longrightarrow
\mc D_{\wedge E}\on{-mod}^{\oplus2},\qquad
M\longmapsto\bigl((\widetilde M_X)_{\mathrm{top}},
(\widetilde{\mf F^*M}_X)_{\mathrm{top}}\bigr).
\end{align}
Consider \(\mathbf V\)-modules on which the commuting operators \(L_0,L'_0\) act locally finitely. Such a module \(M\) is called \emph{finitely strongly generated} if it has finitely many simultaneous generalized eigenvectors \(m_1,\ldots,m_t\) such that
\[
M=\operatorname{span}_\C\left\{
 a^1_{(-n_1)}\cdots a^r_{(-n_r)}m_j
 \ \middle|\ r\geq0,\ n_i\geq1,\ 1\leq j\leq t
\right\},
\]
where the \(a^i\in\mathbf V\) are simultaneous homogeneous vectors for \(L_0,L'_0\).
We write \(\mathbf V\on{-mod}^{\mathrm{fin}}\) for the category of these modules.
Finite strong generation implies lower-boundedness for both conformal gradings. Indeed, for \(a\) of \(L_0\)-weight \(\Delta_L(a)\),
\[
[L_0,a_{(-n)}]=(\Delta_L(a)+n-1)a_{(-n)},
\qquad n\geq1,
\]
and the weight increment is nonnegative; the same holds for \(L'_0\).
Thus both spectra are bounded below, and
\(\mathbf V\on{-mod}^{\mathrm{fin}}\) is a full subcategory of
\(\mathbf V\on{-mod}_{L,L',\geq0}\).
Fourier twist preserves this category, since it exchanges \(L,L'\) and preserves mode indices.

We will also use
\[
C_2(M):=\operatorname{span}_\C\{a_{(-2)}m\mid a\in\mathbf V,\ m\in M\}.
\]
For a finitely strongly generated module, \(M/C_2(M)\) is generated over \(R_{\mathbf V}\) by the images of the \(m_j\).

\begin{lemma}\label{lem:finite-module-localization}
For \(M\in\mathbf V\on{-mod}^{\mathrm{fin}}\), the generalized eigenspaces of \(L_{\mathrm{can},0}\) are finite-dimensional, and
\((\widetilde M_X)_{\mathrm{top}}\) is \(\mc O_X\)-coherent.
\end{lemma}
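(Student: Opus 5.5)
Both assertions come from the strong generation. Since \(\mathbf V\) is conical and \(\frac12\mathbb Z_{\geq0}\)-graded for \(L_{\mathrm{can}}\), each \(\mathbf V_{\mathrm{can},\Delta}\) is finite-dimensional: it is a finite-dimensional piece of the doubly graded space. For the first assertion we combine the spanning set with this finiteness. Each \(m_j\) is a simultaneous generalized eigenvector. For a homogeneous \(a\) of \(L_{\mathrm{can},0}\)-weight \(\Delta\) and \(n\geq1\), the mode \(a_{(-n)}\) raises \(L_{\mathrm{can},0}\)-weight by \(\Delta+n-1\geq0\).

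The increment is zero only when \(n=1\) and \(\Delta=0\), that is, \(a\in\C\vac\), whose \((-1)\)-mode is the identity. Such factors can therefore be discarded. Every remaining factor raises the weight by at least \(\frac12\). Hence a fixed target weight \(d\) bounds both the number \(r\) of factors and each \(n_i\), and it restricts each \(a^i\) to \(\bigoplus_{\Delta\leq D}\mathbf V_{\mathrm{can},\Delta}\), which is finite-dimensional. So \(M_{\mathrm{can},d}\) is spanned by finitely many vectors. Generalized eigenspaces are used throughout, and only local finiteness of \(L_0,L'_0\) enters.

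For coherence, I would first argue that \((\widetilde M_X)_{\mathrm{top}}\) is the localization of a finitely generated \(\mc O(\widetilde X)\)-module. Let \(d_{\min}\) be the lowest \(L_0\)-weight of \(M\) in each coset modulo \(\mathbb Z\). By the proof of Theorem~\ref{th:ACF}, only weight \(0\) survives after restriction to \(X\), so the top component on \(X\) is the localization of \(M_{L,0}\). We therefore need \(M_{L,0}\) to be a finitely generated \(\mc O(W_0)^G\)-module under the \((-1)\)-modes. Since \(\mc O(W_0)^G\) has \(L_0\)-weight zero, Lemma~\ref{lem:in-localization} and the loop filtration reduce this to the associated graded.

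To see finite generation, apply the spanning set again. The \(L_0\)-weight-zero part of \(M\) is spanned by monomials whose factors all satisfy \(\Delta_L(a^i)+n_i-1=0\), applied to those \(m_j\) of \(L_0\)-weight \(0\). This forces \(n_i=1\) and \(a^i\in\mathbf V_{L,0}\). So \(M_{L,0}\) is spanned by \(\mathbf V_{L,0}\)-monomials applied to finitely many vectors, once generalized-eigenvector considerations are handled. It then remains to show that \(\mathbf V_{L,0}\) is a finitely generated algebra acting through a finitely generated module over \(\mc O(W_0)^G\).

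This last step is the main obstacle. My approach is to use Theorem~\ref{th:no-support}: \(\mathbf V_{L,0}\cong\Gamma(X,\mc O_{\wedge E})=(\mc O(W_0)\otimes\bigwedge W_1^*)^G\), where the equality uses Lemma~\ref{lem:codim2} and Hartogs. This is a finitely generated \(\mc O(W_0)^G\)-module by Hilbert's finiteness theorem for modules of covariants. Its elements are weight zero, so they act by commuting \((-1)\)-modes up to lower filtration terms, and products of them reduce to single elements modulo the loop filtration. Hence \(M_{L,0}\) has a finite filtration whose graded pieces are finitely generated over \(\mc O(W_0)^G\), so it is itself finitely generated. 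Localizing to \(X\) then gives an \(\mc O_X\)-coherent sheaf. If the weight-zero part sits in a nonintegral coset, Theorem~\ref{th:ACF} shows that it vanishes on \(X\), so there is nothing to prove.
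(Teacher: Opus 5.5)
Your proof of the first assertion is the paper's argument: conicality makes every nontrivial factor raise canonical weight by at least $\frac12$, so each weight sees only finitely many monomials. The phrase ``a finite-dimensional piece of the doubly graded space'' does not justify finite-dimensionality of $\mathbf V_{\mathrm{can},\Delta}$, though. The actual reason is that $\mathbf V$ is a subquotient of $(\mathcal A\otimes SF(\g))^{\g}$, where every matter generator has canonical weight $\frac12$ and the ghosts $b^\xi,\partial c^{\xi^*}$ have weight $1$.

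The coherence argument has a genuine gap. You claim that $M_{L,0}$ is spanned by $(-1)$-modes of $\mathbf V_{L,0}$ applied to the generators $m_j$ of $L_0$-weight $0$. You also need these modes to act multiplicatively on $M_{L,0}$. Both facts require that $M$ itself has no $L_0$-weights in $\Z_{<0}$, and that is not available here. Modules in $\mathbf V\on{-mod}^{\mathrm{fin}}$ are only known to have lower-bounded spectrum; nonnegativity follows from Theorem~\ref{th:equivalence}, which depends on this lemma through Lemma~\ref{lem:trivial-top-connection}. If some $m_j$ has weight $-1$, then $a_{(-1)}m_j$ with $a\in\mathbf V_{L,1}$ lies in $M_{L,0}$ but is not of the form you describe. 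Likewise $(a_{(-1)}b)_{(-1)}\neq a_{(-1)}b_{(-1)}$ on $M_{L,0}$ for $a,b\in\mathbf V_{L,0}$. The loop-filtration remark does not repair this. It addresses at most the second problem, and ``finite filtration with finitely generated graded pieces implies finitely generated'' presupposes a module structure on $M_{L,0}$ that you do not have. Theorem~\ref{th:ACF} shows that the bad weights die on $X$, but it constrains $M_f$, not $M$.

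To fix this, first replace $M$ by its image $\bar M$ in $M_f$ for a principal open $\widetilde X_f\subset X$.
\begin{itemize}
\item By Lemma~\ref{lem:kernel-is-a-submodule}, $\bar M$ is a $\mathbf V$-module, strongly generated by the images of the $m_j$.
\item By Theorem~\ref{th:ACF}, $\bar M$ has nonnegative integral $L_0$-weights.
\item Its localization is unchanged: $\bar M_f=M_f$.
\end{itemize}
Now weight zero is the bottom of $\bar M$. So $\bar M_{L,0}$ is an honest module over the commutative algebra $\mathbf V_{L,0}$, generated by the weight-zero $\bar m_j$. Your identification $\mathbf V_{L,0}\cong(\mc O(W_0)\otimes\bigwedge W_1^*)^G$, together with Hilbert finiteness, then shows that $(M_f)_0=(\bar M_{L,0})_f$ is finitely generated over $\mc O(\widetilde X_f)$.

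The paper takes a local route directly on $M_f$. It uses three facts:
\begin{itemize}
\item $M_f/C_2(M_f)$ is generated over $R_{\mathbf V_f}$ by the $\bar m_j$.
\item $C_2(M_f)_0=0$, because the weights of $M_f$ are nonnegative integers.
\item $(R_{\mathbf V_f})_0=\mc O_{\wedge E}(\widetilde X_f)$ is locally free of finite rank.
\end{itemize}
This avoids both the global description of $\mathbf V_{L,0}$ and invariant theory. Once patched, your route gives slightly more if you take the image of $M$ in $\Gamma(X,\widetilde M_X)$ instead of in one $M_f$. The top sheaf is then the restriction of a finitely generated $\mc O(W_0)^G$-module.
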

\begin{proof}
The commuting, locally finite operators \(L_0,L'_0\) give a lower-bounded canonical grading on \(M\).
The canonical weight spaces of \(\mathbf V\) are finite-dimensional, as follows from the relative free field BRST construction.
Choose finitely many simultaneous generalized eigenvectors strongly generating \(M\), as in the definition above.
Every nontrivial factor \(a_{(-n)}\), \(n\geq1\), raises canonical weight by
\(\Delta_a+n-1\geq\tfrac12\), since \(\mathbf V_{\mathrm{can},0}=\C\vac\).
There are therefore only finitely many choices in any fixed weight, proving the first assertion.

Cover \(X\) by principal opens \(\widetilde X_f\subset X\).
By Theorem~\ref{th:no-support}, the localized CDO and module are \(\mathbf V_f\) and \(M_f\).
Modulo \(C_2\), loop localization is ordinary localization: inverting \(f_{(-1)}\) induces multiplication by \(\bar f^{-1}\). In particular, there is a natural surjection
\[
(M/C_2(M))_{\bar f}\twoheadrightarrow M_f/C_2(M_f).
\]
Thus the target is finitely generated over \(R_{\mathbf V_f}\).
By Theorem~\ref{th:ACF}, \(M_f\) has nonnegative integral \(L_0\)-weights.
Consequently \(C_2(M_f)_0=0\), and its weight-zero space is finitely generated over
\[
(R_{\mathbf V_f})_0=(\mathbf V_f)_0
=\mc O_{\wedge E}(\widetilde X_f).
\]
This algebra is  locally free of finite rank over \(\mc O_X\). The resulting local finite generation proves coherence.
\end{proof}

We use the following elementary fact about connections whose Euler operator acts locally finitely.
\begin{lemma}\label{lem:euler-connection}
Let \(R=\C[x_1,\ldots,x_d]\), and let \(P\) be a finite \(R\)-module with an integrable connection. If
\[
\mathsf E:=\sum_{i=1}^d x_i\nabla_{\partial_i}
\]
acts locally finitely on \(P\), multiplication induces an isomorphism of connections
\[
R\otimes_\C P^\nabla\xrightarrow{\ \sim\ }P,
\qquad
P^\nabla:=\bigcap_i\ker\nabla_{\partial_i}.
\]
In particular, \(P^\nabla\) is finite-dimensional.
\end{lemma}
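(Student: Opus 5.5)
The plan is to decompose \(P\) into generalized eigenspaces of \(\mathsf E\), show that the nonzero ones disappear and the zero one is spanned by flat sections, and then check that the multiplication map is bijective. The commutation relations are the starting point. In \(\mathrm{End}_\C(P)\) we have
\[
[\mathsf E,x_i]=x_i,\qquad [\mathsf E,\nabla_{\partial_i}]=-\nabla_{\partial_i},
\]
the latter by integrability. Since \(\mathsf E\) is locally finite, \(P=\bigoplus_{\lambda\in\C}P_\lambda\) into generalized eigenspaces. Multiplication by \(x_i\) maps \(P_\lambda\to P_{\lambda+1}\), and \(\nabla_{\partial_i}\) maps \(P_\lambda\to P_{\lambda-1}\). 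Because \(P\) is a finite \(R\)-module, it is generated by finitely many homogeneous components. Hence the set of \(\lambda\) with \(P_\lambda\neq0\) lies in finitely many cosets \(\lambda_0+\Z_{\ge0}\), and each \(P_\lambda\) is finite-dimensional: the degree-\(k\) monomials times the generators give finitely many spanning vectors.

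The key step, which I expect to be the main obstacle, is to show that \(P_\lambda=0\) unless \(\lambda\in\Z_{\ge0}\), and that \(P_0=P^\nabla\) with \(\mathsf E\) acting by zero there. Take the minimal \(\lambda\) in a coset with \(P_\lambda\ne0\). Then \(\nabla_{\partial_i}P_\lambda\subset P_{\lambda-1}=0\), so \(\mathsf E=\sum x_i\nabla_{\partial_i}\) kills \(P_\lambda\), which forces \(\lambda=0\). Hence only the coset \(\Z\) occurs, with lowest weight \(0\), and \(P_0\subset P^\nabla\). Conversely, \(P^\nabla\) is \(\mathsf E\)-stable and killed by \(\mathsf E\), so \(P^\nabla\subset P_0\). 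Moreover \(\mathsf E\) is semisimple on \(P^\nabla\).

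For surjectivity of \(R\otimes P^\nabla\to P\), I will show that \(P_k\subset R_k P_0\) by induction on \(k\). Let \(u\in P_k\) with \(k\ge1\). On \(P_k\) the operator \(\mathsf E\) has the single eigenvalue \(k\ne0\), so it is invertible there. Write \(u=\mathsf E v=\sum x_i\nabla_{\partial_i}v\) with \(v\in P_k\). Each \(\nabla_{\partial_i}v\) lies in \(P_{k-1}\), and induction applies to it.

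For injectivity, suppose \(\sum_\alpha x^\alpha\otimes p_\alpha\) maps to zero, where \(p_\alpha\in P^\nabla\). After taking homogeneous components we may assume all \(|\alpha|=k\). Applying \(\nabla_{\partial^\beta}\) with \(|\beta|=k\) kills the flat sections' derivatives and gives \(\beta!\,p_\beta=0\), because \(\nabla_{\partial_i}(x^\alpha p)=\alpha_i x^{\alpha-e_i}p\) for flat \(p\). Compatibility with connections is clear, since \(\nabla(r\otimes p)=dr\otimes p\). Finally, finite-dimensionality of \(P^\nabla=P_0\) was already obtained above.
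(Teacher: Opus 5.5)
Your proof is correct, but it takes a different route from the paper's. The paper uses the spectral bound $\operatorname{Spec}(\mathsf E;P)\subset\Lambda+\Z_{\geq0}$ only to conclude that $\nabla_\partial^\alpha v=0$ for $|\alpha|\gg0$. It then writes down explicit mutually inverse maps: the Taylor projection $\tau(v)=\sum_\alpha\frac{(-x)^\alpha}{\alpha!}\nabla_\partial^\alpha v$ onto $P^\nabla$, and the expansion $v=\sum_\alpha\frac{x^\alpha}{\alpha!}\tau(\nabla_\partial^\alpha v)$. Finite-dimensionality there comes from $P^\nabla\cong P/(x_1,\ldots,x_d)P$.

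You instead use the weight decomposition more fully:
\begin{itemize}
\item the lowest-weight argument shows that the spectrum lies in $\Z_{\geq0}$ and that $P^\nabla=P_0=\ker\mathsf E$;
\item surjectivity follows from an Euler-homotopy induction, inverting $\mathsf E$ on $P_k$ for $k\geq1$;
\item injectivity follows by applying $\nabla_\partial^\beta$ to the degree-$k$ pieces;
\item finite-dimensionality comes from the weight spaces being finite-dimensional.
\end{itemize}

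Your route gives extra structure at no cost: $\mathsf E$ is semisimple, and the $\mathsf E$-grading on $P$ matches the polynomial grading of $R\otimes_\C P^\nabla$. It also checks every step directly. The paper's route is shorter and gives a closed formula for the inverse isomorphism, but it leaves to the reader the verification that $\tau$ lands in $P^\nabla$ and the two Taylor identities.
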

\begin{proof}
Choose a finite-dimensional \(\mathsf E\)-stable subspace containing a set of \(R\)-generators of \(P\). Since
\([\mathsf E,x_i]=x_i\), there is a finite set \(\Lambda\subset\C\) such that
\[
\operatorname{Spec}(\mathsf E;P)\subset\Lambda+\Z_{\geq0}.
\]
The identities \([\mathsf E,\nabla_{\partial_i}]=-\nabla_{\partial_i}\) therefore imply
\(\nabla_\partial^\alpha v=0\) for every \(v\in P\) and all sufficiently large \(|\alpha|\).
Thus the Taylor operator
\[
\tau(v):=\sum_{\alpha\in\Z_{\geq0}^d}
\frac{(-x)^\alpha}{\alpha!}\nabla_\partial^\alpha v
\]
is a finite sum and takes values in \(P^\nabla\).
The identities
\[
v=\sum_\alpha\frac{x^\alpha}{\alpha!}
\tau(\nabla_\partial^\alpha v),
\qquad
\tau(x^\alpha h)=
\begin{cases}h,&\alpha=0,\\0,&\alpha\ne0,\end{cases}
\quad(h\in P^\nabla)
\]
give mutually inverse maps between \(P\) and \(R\otimes_\C P^\nabla\).
Finally, \(P^\nabla\cong P/(x_1,\ldots,x_d)P\) is finite-dimensional.
\end{proof}

\begin{lemma}\label{lem:trivial-top-connection}
For \(M\in\mathbf V\on{-mod}^{\mathrm{fin}}\), there is a finite-dimensional vector space \(S\) such that
\[
(\widetilde M_X)_{\mathrm{top}}
\cong\mc O_{\wedge E}\otimes_\C S
\]
as \(\mc D_{\wedge E}\)-modules.
\end{lemma}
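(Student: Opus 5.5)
Via the Morita equivalence of the preceding Remark, it suffices to show that the coherent $\mc D_X$-module $\mc P:=e(\widetilde M_X)_{\mathrm{top}}$ is isomorphic to $\mc O_X\otimes_\C S$ with its trivial connection. Applying the inverse equivalence then gives $(\widetilde M_X)_{\mathrm{top}}\cong\mc O_{\wedge E}\otimes_\C S$. By Lemma~\ref{lem:finite-module-localization}, $\mc P$ is $\mc O_X$-coherent, so it is a vector bundle with an integrable connection on the smooth variety $X$. I would pull it back along the principal bundle $q\colon U\to X$ to obtain a $G$-equivariant vector bundle with flat connection $q^*\mc P$ on $U$. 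The connection comes from the derivations of $\mc O(U)$ that are horizontal lifts; orbit directions act through the equivariant structure.

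\emph{Extension step.} By Lemma~\ref{lem:codim2}, $Z$ has codimension at least two in $W_0$. A vector bundle with flat connection on $U$ has regular singularities along a codimension-two complement, so no monodromy obstruction survives there. Set $P:=\Gamma(U,q^*\mc P)$. Because $q^*\mc P$ is locally free and $W_0$ is normal, Hartogs extension makes $P$ a reflexive $\mc O(W_0)$-module, and the connection $\nabla_{\partial_i}$ extends to it. A reflexive module with integrable connection on the smooth affine space $W_0$ is locally free, since coherent $\mc D$-modules that are $\mc O$-coherent are vector bundles. Hence $P$ is a finite $R=\C[x_1,\ldots,x_d]$-module with integrable connection. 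Here one still has to check that the $G$-action and the connection glue to an honest $\mc D_{W_0}$-module on $U$, not merely on horizontal directions. I would handle this using the equivariant trivialization of the group directions coming from the transfer isomorphism \eqref{eq:AG-iso}.

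\emph{Euler step, the main obstacle.} To apply Lemma~\ref{lem:euler-connection} I need $\mathsf E=\sum x_i\nabla_{\partial_i}$ to act locally finitely on $P$. The plan is to identify $\mathsf E$ with the action of $H_0=L'_0-L_0$, up to a scalar shift, on top components. Indeed $H=\sum:\beta^i\gamma^i:-\sum :b^jc^j:$, and on weight-zero components $H_0$ acts as the Euler vector field in the $\gamma$ (coordinate) variables plus the exterior degree. After passing through $e$, which kills the exterior contribution, this is exactly $\mathsf E$ acting via the connection. Since $L_0,L'_0$ act locally finitely on $M$, $H_0$ does too. I would transport this to the localization using the formula \eqref{eq:the-action-on-the-localization} and the fact that $H_0$ commutes appropriately with $f_{(-1)}$ for homogeneous $f$. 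This shows that $H_0$ acts locally finitely on $\Gamma(X,(\widetilde M_X)_{\mathrm{top}})$, and hence on $P$. The delicate point is matching the $H_0$-action on global sections of the localized module with the Euler connection action on the Hartogs extension. I would check it on $G$-invariant generators, for which $f_{(n)}$ with $n\geq0$ vanishes on top components.

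\emph{Descent step.} Lemma~\ref{lem:euler-connection} then gives $P\cong R\otimes_\C S$ with $S=P^\nabla$ finite-dimensional. Since $G$ is connected and acts compatibly with $\nabla$, $G$ acts trivially on flat sections, by differentiating the equivariance: the $\g$-action on flat sections is given by $\nabla$ along vector fields, hence is zero. Restricting to $U$ and descending along $q$ gives $\mc P\cong\mc O_X\otimes_\C S$ as $\mc D_X$-modules. Applying the Morita inverse then yields the claimed isomorphism $(\widetilde M_X)_{\mathrm{top}}\cong\mc O_{\wedge E}\otimes_\C S$.
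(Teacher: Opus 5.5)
Your overall route matches the paper's: Morita reduction to $e(\widetilde M_X)_{\mathrm{top}}$, pullback to $U$, extension across $Z$, local finiteness of the Euler operator via $H_{(0)}=L'_0-L_0$, Lemma~\ref{lem:euler-connection}, and descent using connectedness of $G$. However, the Euler step fails at ``and hence on $P$''.

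Your $P=\Gamma(U,q^*\mc P)$ is the full module of sections over $U$, i.e.\ $\Gamma(X,\mc P\otimes q_*\mc O_U)$. It is much larger than $\Gamma(X,\mc P)$ and contains sections that are not pulled back from $X$. You have not shown that $P$ is generated over $R$ by pulled-back sections. Nor have you shown that $\mathsf E$ is locally finite on sections that exist only in the reflexive hull. This is not automatic: on $\C[x]$ the integrable connection $\nabla_{\partial_x}=\partial_x+1$ gives $\mathsf E=x\partial_x+x$, which is not locally finite. Your proposed check on ``$G$-invariant generators, for which $f_{(n)}$ with $n\geq0$ vanishes'' concerns generators of the function ring $\mc O(W_0)^G$, not of the module, so it does not address this.

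Two ingredients are missing, and the paper supplies both.
\begin{enumerate}
\item You need finitely many sections that generate $q^*\mc P$ on $U$ and on which $\mathsf E$ is locally finite. The paper takes the images $em$ of $m\in M_{L,0}$. These generate because, on each principal chart, the weight-zero part of the loop localization is the ordinary localization of the image of $M_{L,0}$. Since $H_{(0)}$ is already locally finite on $M$, no homogeneity of the cover is needed. The $\mathsf E$-orbits of finitely many such sections span a finite-dimensional stable $K$ with $(RK)|_U=q^*\mc P$.
\item You need to pass from $P_0=RK$ to $P_0^{**}\cong\Gamma(U,q^*\mc P)$, which uses $\on{codim}_{W_0}Z\geq2$. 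On $P_0$, $\mathsf E$ is a grading compatible with that of $R$ plus an $R$-linear nilpotent part. $\operatorname{Hom}_R(-,R)$ preserves this structure for finite modules, so applying it twice gives local finiteness on $P$. This also proves that $P$ is finite over $R$, which your Hartogs step asserts without justification.
\end{enumerate}

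Your route to (1) through $\Gamma(X,(\widetilde M_X)_{\mathrm{top}})$ can be repaired, since $X$ is quasi-affine and hence $\mc P$ is globally generated. But your local-finiteness claim there needs a cover of $X$ by principal opens $\widetilde X_f$ with $f$ homogeneous: for inhomogeneous $f$, $H_{(0)}$ need not be locally finite on $M_f$.

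Finally, the transfer isomorphism is unnecessary in the extension step. Since $q$ is smooth, $q^*\mc P$ carries the canonical pullback connection, which is already a $\mc D_U$-module structure.
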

\begin{proof}
Put \(T=(\widetilde M_X)_{\mathrm{top}}\) and \(N=eT\), using the Morita idempotent introduced after Theorem~\ref{th:ACF}.
Lemma~\ref{lem:finite-module-localization} makes \(N\) a coherent \(\mc O_X\)-module with an integrable connection.
Write \(p:U\to X\), \(j:U\hookrightarrow W_0\), and \(R=\mc O(W_0)=\C[x_1,\ldots,x_d]\).

We first use the global chiral localization
\(\widetilde M_X=\widetilde M_{\widetilde X}|_X\) to obtain generating sections.
The images of the generalized weight-zero space \(M_{L,0}\) generate \(T\) over \(\mc O_{\wedge E}\), since on every principal affine chart taking weight zero in loop localization gives ordinary localization of these images.
The idempotent \(e\) commutes with \(\mc O_X\) and kills left multiplication by functions of positive exterior degree. Hence the sections \(em\), \(m\in M_{L,0}\), generate \(N\) over \(\mc O_X\).
Moreover, \(H_{(0)}=L'_0-L_0\) is locally finite on \(M_{L,0}\) and commutes with \(e\).
Under the Zhu identification its action on \(N\) is the Euler differential operator of the base; after pullback this becomes
\[
\mathsf E=\sum_i x_i\nabla_{\partial_i}
\]
on \(p^*N\).
Thus \(p^*N\) is generated by global sections on which \(\mathsf E\) acts locally finitely.
By coherence and quasi-compactness of \(U\), finitely many suffice. Their \(\mathsf E\)-orbits span a finite-dimensional stable space
\(K\subset\Gamma(U,p^*N)\).

Set
\[
P_0=RK\subset\Gamma(U,p^*N),\qquad
P=P_0^{**},\qquad (-)^*=\operatorname{Hom}_R(-,R).
\]
The finite \(R\)-module \(P_0\) restricts to \(p^*N\).
Since \(\operatorname{codim}_{W_0}(W_0\setminus U)\geq2\) by Lemma~\ref{lem:codim2}, reflexive extension identifies its double dual with
\[
\widetilde P\cong j_*p^*N.
\]
The connection extends uniquely to \(\widetilde P\) by pushing forward its operators along the constant vector fields; here
\(j_*(p^*N\otimes\Omega^1_U)=\widetilde P\otimes\Omega^1_{W_0}\).
The Leibniz and integrability identities extend from \(U\), so \(P\) is a finite \(R\)-module with an integrable connection.

Its Euler operator is locally finite.
Indeed, \(P_0=RK\) is \(\mathsf E\)-locally finite; the semisimple part of \(\mathsf E\) gives a grading compatible with the polynomial grading of \(R\), and its nilpotent part is an \(R\)-linear nilpotent endomorphism.
Duality preserves these properties: an \(R\)-linear map from \(P_0\) to \(R\) has finitely many homogeneous components, as is seen from its values on finitely many homogeneous generators.
Applying this twice proves local finiteness on \(P=P_0^{**}\).
The resulting operator is the Euler operator of the extended connection, since they agree on \(U\).
Lemma~\ref{lem:euler-connection} therefore gives the natural horizontal-section isomorphism
\[
\widetilde P\cong\mc O_{W_0}\otimes_\C S,
\qquad S=P^\nabla,
\]
with \(S\) finite-dimensional.

Finally, the canonical strong \(G\)-equivariance of \(p^*N\) extends uniquely to \(j_*p^*N\) and preserves \(S\).
Its infinitesimal action on horizontal sections is zero, and connectedness of \(G\) makes its algebraic action on \(S\) trivial.
Restricting to \(U\) and descending gives \(N\cong\mc O_X\otimes S\).
Morita equivalence yields \(T\cong\mc O_{\wedge E}\otimes S\), as required.
\end{proof}

\begin{theorem}\label{th:equivalence}
Assume \eqref{eq:level-condition} and \ref{A1}--\ref{A3}. Then the functor
\[
\mathbf{Vect}_{\mathrm{fd}}\xrightarrow{\ \sim\ }
\mathbf V\on{-mod}^{\mathrm{fin}},\qquad
S\longmapsto\mathbf V\otimes S
\]
is an equivalence, where \(\mathbf{Vect}_{\mathrm{fd}}\) denotes finite-dimensional vector spaces.
Equivalently, every finitely strongly generated module is a finite direct sum of copies of \(\mathbf V\).
\end{theorem}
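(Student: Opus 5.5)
The argument has three parts: the functor lands in the category, it is fully faithful, and it is essentially surjective. For the first, \(\mathbf V\otimes S\) is strongly generated by \(\vac\otimes s_k\) for a basis \(\{s_k\}\) of \(S\), which are simultaneous eigenvectors of weight zero. So \(S\mapsto\mathbf V\otimes S\) lands in \(\mathbf V\on{-mod}^{\mathrm{fin}}\).

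Full faithfulness reduces to \(\on{End}_{\mathbf V}(\mathbf V)=\C\). A morphism \(\mathbf V\to M\) is determined by the image of \(\vac\), which is a vector killed by all nonnegative modes and by \(T=L_{-1}\). For \(M=\mathbf V\), such a vector has \(L_0\)- and \(L'_0\)-weight zero. Since \(\mathbf V\) is conical for \(L_{\mathrm{can}}\), this weight space is \(\C\vac\). Hence \(\on{Hom}(\mathbf V\otimes S,\mathbf V\otimes S')=\on{Hom}(S,S')\).

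Essential surjectivity is the main step. Take \(M\in\mathbf V\on{-mod}^{\mathrm{fin}}\). By Lemma~\ref{lem:trivial-top-connection}, \((\widetilde M_X)_{\mathrm{top}}\cong\mc O_{\wedge E}\otimes S\) for a finite-dimensional \(S\). The sections corresponding to \(1\otimes S\) are global sections of the top component on \(X\), and they should be realized by vectors of \(M\). The natural candidate is the space \(M^{\mathrm{vac}}\) of vectors in \(M_{L,0}\cap M_{L',0}\), or equivalently vectors of canonical weight zero killed by \(T\). On such vectors \(H_{(0)}\) acts trivially, matching the horizontal sections of weight zero for \(\mathsf E\). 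In the proof of Lemma~\ref{lem:trivial-top-connection}, \(S\) arose as horizontal sections with \(\mathsf E\) locally finite. The \(\mathsf E\)-weight-zero horizontal sections lie in the span of the images of \(M_{L,0}\) in its \(H_{(0)}\)-generalized weight-zero part. I would pick \(\tilde s\in M\) lifting a basis of \(S\) and mapping to these sections.

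With these lifts fixed, define \(\Psi\colon\mathbf V\otimes S\to M\) by \(a\otimes s\mapsto a_{(-1)}\tilde s\). This is a module map once each \(\tilde s\) is a vacuum-like vector, i.e.\ \(T\tilde s=0\); in general \(\mathbf V\)-morphisms out of \(\mathbf V\) correspond to vectors annihilated by \(T\). Proving \(T\tilde s=0\) is the delicate point. After localization to \(X\), \(T\tilde s\) has \(L_0\)-weight one, and its image in the CDO module is the image under \(\on{Loc}\) of a section of \(\mc O_{\wedge E}\otimes S\) that is horizontal. The description of the induced module \(S((\widetilde M_X)_{\mathrm{top}})\) from Theorem~\ref{th:ACF} identifies \(\widetilde M_X\cong\cdo{\wedge E}\otimes S\) with \(\tilde s\mapsto\vac\otimes s\), and there \(T(\vac\otimes s)=0\). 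So \(T\tilde s\) vanishes after localization to \(X\), and by the symmetric argument after the Fourier-twisted localization. The faithfulness statement, Theorem~\ref{th:faithfulness-2}(1), applied to the submodule generated by \(T\tilde s\), then gives \(T\tilde s=0\). This needs the submodule to be compatible with both quasi-conformal structures, which holds because it is generated by a joint eigenvector for \(L_0\) and \(L'_0\).

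Finally, \(\Psi\) is an isomorphism. After localization to \(X\), \(\Psi\) becomes the map \(\cdo{\wedge E}\otimes S\to\widetilde M_X\) induced by the isomorphism of top components, which is an isomorphism by Theorem~\ref{th:ACF}. The same holds for \(\mf F^*\Psi\), where Lemma~\ref{lem:trivial-top-connection} applies to \(\mf F^*M\) and the same \(\tilde s\) serve as lifts because \(L\) and \(L'\) are swapped. Theorem~\ref{th:faithfulness-2}(2) then gives that \(\Psi\) is injective and surjective. Since \(\on{Ext}^1\)-type gluing issues are absent once \(\Psi\) is built, the main obstacle is constructing the lifts \(\tilde s\) that are simultaneously horizontal for both localizations. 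I would handle this by taking \(M^{\mathrm{vac}}\) and showing its dimension equals \(\dim S\), via the faithful exact functor applied to the map \(\mathbf V\otimes M^{\mathrm{vac}}\to M\).
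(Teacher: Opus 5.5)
\textbf{There is a genuine gap.} Your full-faithfulness step is fine: a morphism out of \(\mathbf V\) is determined by the image of \(\vac\), and conicality for \(L_{\mathrm{can}}\) gives \(\on{End}(\mathbf V)=\C\).

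The gap is in essential surjectivity, exactly where you flag it. You never produce vectors \(\tilde s\in M\) mapping to \(\vac\otimes s\), and the proof of Lemma~\ref{lem:trivial-top-connection} does not supply them. There \(S=P^\nabla\) lives in the reflexive hull \(P=P_0^{**}\), while the images of \(M_{L,0}\) only span \(P_0=RK\). Nothing forces \(S\subset P_0\). For instance, the images of \(M_{L,0}\) in \(\Gamma(X,N)\cong\mc O(W_0)^G\otimes S\) could a priori all lie in (positive-degree invariants)\(\otimes S\). These still generate \(N\) over \(\mc O_X\), because \(X\) avoids the image of the nullcone, yet they miss \(1\otimes S\).

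Your proposed repair via \(\Psi\colon\mathbf V\otimes M^{\mathrm{vac}}\to M\) is circular. \(\Psi\) is injective by simplicity of \(\mathbf V\), and localizing it only yields an injection \(\cdo{\wedge E}\otimes M^{\mathrm{vac}}\hookrightarrow\cdo{\wedge E}\otimes S\), i.e.\ \(\dim M^{\mathrm{vac}}\le\dim S\). The reverse inequality is the entire content of the theorem. The Faithfulness Theorem cannot supply it: it detects injectivity or surjectivity of maps you already have, but never produces vectors. The same problem recurs on the Fourier side. There you would also need the images of the \(\tilde s\) to span \(\vac\otimes S'\), and a priori you do not even know that \(\dim S=\dim S'\).

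\textbf{The fix is to map out of \(M\), not into it.} The unit of the adjunction \(\on{Loc}_X\dashv\Gamma(X,-)\), combined with Theorem~\ref{th:no-support}, gives \(\alpha\colon M\to\Gamma(X,\widetilde M_X)\cong\mathbf V\otimes S\) with no lifting required. The counit is an isomorphism on \(\cdo{\wedge E}\otimes S\), so the triangle identity shows that \(\alpha\) becomes an isomorphism after localization on \(X\). The same construction for \(\mf F^*M\) gives \(\beta\colon M\to\mathbf V\otimes S'\). The joint kernel of \((\alpha,\beta)\) has both localizations zero, so it vanishes by Theorem~\ref{th:faithfulness-2}(1). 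Hence \(M\) embeds in \(\mathbf V\otimes(S\oplus S')\). This module is semisimple because \(\mathbf V\) is simple, also after forgetting parity, so \(M\) is a finite direct sum of copies of \(\mathbf V\).

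If you want to keep your \(\Psi\), the same maps supply the lifts. The image \(\alpha(M)\) is a submodule \(\mathbf V\otimes S_1\) of the semisimple module \(\mathbf V\otimes S\), and it localizes onto \(\cdo{\wedge E}\otimes S\), so \(S_1=S\). Take a joint-weight-\((0,0)\) preimage \(m\) of \(\vac\otimes s\). Then \(\alpha(Tm)=T(\vac\otimes s)=0\), and \(\beta(Tm)=T\beta(m)=0\) since \(\beta(m)\in\vac\otimes S'\). So \(Tm=0\). Either way, the maps \(\alpha\) and \(\beta\) are indispensable.
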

\begin{proof}
For \(M\in\mathbf V\on{-mod}^{\mathrm{fin}}\), Lemma~\ref{lem:trivial-top-connection} and Theorem~\ref{th:ACF} give
\[
\widetilde M_X\cong\cdo{\wedge E}\otimes S
\]
for a finite-dimensional vector space \(S\).
Apply the same argument to \(\mf F^*M\) to obtain
\(\widetilde{\mf F^*M}_X\cong\cdo{\wedge E}\otimes S'\).

Taking global sections and using Theorem~\ref{th:no-support}, the natural adjunction morphisms give
\[
\alpha:M\longrightarrow\mathbf V\otimes S,\qquad
\beta:M\longrightarrow\mf F^{-1*}(\mathbf V\otimes S')
\cong\mathbf V\otimes S'.
\]
For a sheaf module \(\cdo{\wedge E}\otimes S\), the counit of localization and global sections is an isomorphism by Theorem~\ref{th:no-support}. The adjunction identities therefore show that \(\alpha\) becomes an isomorphism after localization on \(X\); the Fourier twist of \(\beta\) does so as well.
For \(K:=\ker(\alpha,\beta)\), exactness of localization therefore gives
\(\widetilde K_X=\widetilde{\mf F^*K}_X=0\).
Faithfulness implies \(K=0\), so \(M\) embeds into
\(\mathbf V\otimes(S\oplus S')\).
The vacuum module remains simple after forgetting parity: a proper ungraded submodule and its parity conjugate would give a direct-sum decomposition of the graded-simple module \(\mathbf V\), contradicting its one-dimensional canonical weight-zero space. Thus \(\mathbf V\otimes(S\oplus S')\) is semisimple, and its submodule \(M\) is a finite direct sum of copies of \(\mathbf V\).
\end{proof}

\begin{theorem}\label{th:C1-cofinite}
We have
\[
\operatorname{Ext}^1_{\mathbf V\on{-mod}}
(\mathbf V,\mathbf V)=0.
\]
\end{theorem}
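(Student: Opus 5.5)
The plan is to realize an arbitrary extension class as a module $M$ sitting in a short exact sequence
\[
0\longrightarrow\mathbf V\longrightarrow M\longrightarrow\mathbf V\longrightarrow0
\]
of $\mathbf V$-modules, and to deduce from Theorem~\ref{th:equivalence} that it splits. The key point is therefore to check that $M$ lies in $\mathbf V\on{-mod}^{\mathrm{fin}}$.

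\textbf{Step 1: a lift of the vacuum.} Let $m\in M$ be a preimage of $\vac$, and put $m_0:=\iota(\vac)$, where $\iota$ is the inclusion. Since $\mathbf V$ is generated by $\vac$, the module $M$ is strongly generated by $m_0$ and $m$ in the sense of the definition in Section~\ref{subsection:equivalence}. Indeed, every element of the quotient copy of $\mathbf V$ has the form $a_{(-1)}\vac$, with $a$ taken as a simultaneous eigenvector. Its lift $a_{(-1)}m$ differs from any other lift by an element of $\iota(\mathbf V)=\mathbf V\cdot m_0$. Moreover, $a_{(-1)}$ expands into modes $a^i_{(-n_i)}$ with $n_i\geq1$ by the spanning property.

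\textbf{Step 2: the gradings.} I must arrange that $L_0$ and $L'_0$ act locally finitely on $M$. If $\on{Ext}^1$ is taken in a category where the conformal vectors $L,L'\in\mathbf V$ act through their modes, then $M$ is automatically a module for both structures. On $\iota(\mathbf V)$ and on the quotient, $L_0$ and $L'_0$ act semisimply with the known weights. Hence on $M$ they satisfy polynomial identities on each weight piece. Concretely, $(L_0-\Delta)(L'_0-\Delta')$ maps the $(\Delta,\Delta')$-preimage into $\iota(\mathbf V)$, where it is killed by another such factor.

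The main obstacle is that $M$ is not a priori finite-dimensional in each bigraded piece. It is, however, a union of $\on{span}\{m\}$ together with $\mathbf V$-translates, and the operators $[L_0,a_{(-n)}]$ are known. I would first move $m$ to a generalized eigenvector. Since $L_0\vac=L'_0\vac=0$, the elements $L_0m$ and $L'_0m$ lie in $\iota(\mathbf V)$. The canonical weight-zero space of $\mathbf V$ is $\C\vac$, and the $(\mathbf V)$-components of nonzero weight can be subtracted from $m$ by applying $L_0^{-1}$ on those weight spaces. After this correction, $m$ spans, together with $m_0$, a two-dimensional space stable under $L_0$ and $L'_0$. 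Local finiteness on all of $M$ then follows from the commutator formula $[L_0,a_{(-n)}]=(\Delta_a+n-1)a_{(-n)}$ applied to the strong generators. Both spectra are lower-bounded by the same argument as in the paper.

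\textbf{Step 3: splitting.} With $M\in\mathbf V\on{-mod}^{\mathrm{fin}}$, Theorem~\ref{th:equivalence} gives $M\cong\mathbf V\otimes S$ with $\dim S=2$. The category is equivalent to $\mathbf{Vect}_{\mathrm{fd}}$, so every short exact sequence in it splits; equivalently, $\mathbf V$ is simple and $M$ is semisimple. Therefore $\iota(\mathbf V)$ has a complement and the extension is trivial, so $\on{Ext}^1(\mathbf V,\mathbf V)=0$.

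If the $\on{Ext}$ is taken among modules where $L'$ is not assumed a priori to be locally finite, I expect this to be automatic, because $L'=\mf F(L)$ is a state of $\mathbf V$ and acts through its modes. The remaining subtlety is then that $L_0$ may act only through a Jordan block on $\{m_0,m\}$. This is handled by generalized eigenspaces, which are allowed in the definition.
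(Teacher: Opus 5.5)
Your proposal is correct and follows the paper's argument: lift the vacuum to a joint generalized weight-$(0,0)$ vector $u$, note that $u$ and $\iota(\vac)$ strongly generate $M$ via $m-q(m)_{(-1)}u\in\iota(\mathbf V)$ and $\iota(b)=b_{(-1)}\iota(\vac)$, and split the sequence using Theorem~\ref{th:equivalence}. The only difference is Step 2, where the paper simply observes that local finiteness of the commuting operators $L_0,L'_0$ passes to extensions. If you keep your explicit correction of the lift instead, do it with respect to $L_{\mathrm{can},0}$ rather than $L_0$, because $\mathbf V_{L,0}\supset\mathcal O(W_0)^G$ is not $\C\vac$; since $L_0$ and $L'_0$ commute with $L_{\mathrm{can},0}$, this then forces $L_0u,L'_0u\in\C\iota(\vac)$.
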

\begin{proof}
Consider an extension
\[
0\longrightarrow\mathbf V\xrightarrow{\iota}M
\xrightarrow{q}\mathbf V\longrightarrow0.
\]
Local finiteness of the commuting operators \(L_0,L'_0\) is preserved under extensions.
The joint generalized eigenspace decomposition therefore allows us to choose a lift \(u\in M\) of the vacuum with joint generalized weight \((0,0)\).
The vectors \(u\) and \(\iota(\vac)\) strongly generate \(M\): for \(m\in M\), put \(a=q(m)\in\mathbf V\); then
\[
m-a_{(-1)}u\in\iota(\mathbf V),
\qquad
\iota(b)=b_{(-1)}\iota(\vac)\quad(b\in\mathbf V).
\]
Thus \(M\in\mathbf V\on{-mod}^{\mathrm{fin}}\).
By Theorem~\ref{th:equivalence}, this category is equivalent to the semisimple category \(\mathbf{Vect}_{\mathrm{fd}}\), so the extension splits.
\end{proof}

\subsection{Supercharacter formula}
Set \(SF(\g):=\bigcap_i\ker b^{\xi_i}_{(0)}\subset\mc E(\g\oplus\g^*)\). This is the rank-\(\dim\g\) symplectic fermion vertex superalgebra generated by \(b^\xi,\partial c^{\xi^*}\), graded by \eqref{eq:cohomological-grading}. The relative complex is
\begin{equation}\label{eq:description-of-relative-complex-via-SF}
C^{\infty/2+i}(\affg_{-\kappa_\g},\g,M)
=(M\otimes SF^i(\g))^\g,
\end{equation}
with \(\g\)-action given by \eqref{eq:level-zero-action}. Let \(P_{\mathrm{mat}}\) denote parity on \(\mathcal A\), extended trivially to the ghost factor. By the BRST vanishing and Euler--Poincar\'e principle,
\[
\on{sch}_{\mathbf V}(q)
=q^{-c_{\mathbf V}/24}\sum_i(-1)^i
\on{tr}_{(\mathcal A\otimes SF^i(\g))^\g}
\bigl((-1)^{P_{\mathrm{mat}}}q^{(L_{\mathrm{can}})_0}\bigr),
\qquad c_{\mathbf V}=-\dim W_0+\dim W_1-2\dim\g.
\]
Here ghost parity equals cohomological degree modulo two, so it is already accounted for by \((-1)^i\). Let \(\Delta\) be the root system, \(r=\on{rk}\g\), and \(\on{Wt}(W_i)\) the weight multisets. Before taking invariants, the Euler--Poincar\'e supercharacter is
\[
\begin{aligned}
\mathcal I(t,q)={}&q^{-c_{\mathbf V}/24}
\prod_{n\geq0}
\frac{\prod_{\nu\in\on{Wt}(W_1)}(1-t^\nu q^{n+1/2})(1-t^{-\nu}q^{n+1/2})}
{\prod_{\mu\in\on{Wt}(W_0)}(1-t^\mu q^{n+1/2})(1-t^{-\mu}q^{n+1/2})}\\
&\times\prod_{n\geq1}\left((1-q^n)^{2r}
\prod_{\alpha\in\Delta}(1-t^\alpha q^n)^2\right).
\end{aligned}
\]
Weyl integration, with normalized Haar measure on the maximal torus \(T_K\) of a compact form \(K\), gives \(\on{sch}_{\mathbf V}(q)=|W_\g|^{-1}\int_{T_K}\mathcal I(t,q)\prod_{\alpha\in\Delta}(1-t^\alpha)\,dt\),
where $W_\g$ is the Weyl group of $\g$.

For \(|q|<1\), put \(\theta(x;q)=\prod_{n\geq0}(1-xq^n)(1-x^{-1}q^{n+1})\) and \(\eta(q)=q^{1/24}\prod_{n\geq1}(1-q^n)\). Combining the  root factors with the Weyl Jacobian gives the following.
\begin{theorem}\label{th:character}
The supercharacter of \(\mathbf V\) is
\begin{equation}\label{eq:int}
\on{sch}_{\mathbf V}(q)
=\frac{q^{(-c_{\mathbf V}-2r)/24}\eta(q)^{2r}}{|W_\g|}
\int_{T_K}
\frac{\prod_{\nu\in\on{Wt}(W_1)}\theta(t^\nu q^{1/2};q)
\prod_{\alpha\in\Delta}\theta(t^\alpha q;q)}
{\prod_{\mu\in\on{Wt}(W_0)}\theta(t^\mu q^{1/2};q)}\,dt.
\end{equation}
\end{theorem}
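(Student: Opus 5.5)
The plan is to start from the Weyl-integration identity displayed just before the statement,
\[
\on{sch}_{\mathbf V}(q)=|W_\g|^{-1}\int_{T_K}\mathcal I(t,q)\prod_{\alpha\in\Delta}(1-t^\alpha)\,dt,
\]
and rewrite each factor of \(\mathcal I(t,q)\) as a theta function. First I will justify that identity, working coefficientwise in \(q\). By the BRST vanishing of Theorem~\ref{Th:vanishing-and-simplicity}, the supercharacter of \(\mathbf V\) equals the Euler--Poincar\'e characteristic of the relative complex \eqref{eq:description-of-relative-complex-via-SF}. Each fixed \(L_{\mathrm{can},0}\)-eigenspace of \(\mathcal A\otimes SF^i(\g)\) is a finite-dimensional \(G\)-module: in canonical weight, \(\beta,\gamma,b,c\) of the matter system all have weight \(\tfrac12\), and the symplectic-fermion generators have weight \(\geq1\). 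So the Euler--Poincar\'e sum is a well-defined formal series in \(q^{1/2}\). Its graded \(G\)-supercharacter is the product \(\mathcal I(t,q)\):
\begin{itemize}
\item each weight \(\mu\) of \(W_0\) contributes bosonic modes of weights \(n+\tfrac12\), giving the denominator;
\item each weight \(\nu\) of \(W_1\) contributes fermionic modes, giving the numerator with the sign \((-1)^{P_{\mathrm{mat}}}\);
\item the ghosts \(b^\xi_{(-n)}\) and \((\partial c^{\xi^*})_{(-n)}\), \(n\geq1\), carry sign \((-1)^i\) and \(\g\)-weights in \(\Delta\cup\{0\}^r\), giving \((1-q^n)^{2r}\prod_\alpha(1-t^\alpha q^n)^2\).
\end{itemize}
Taking \(G\)-invariants of a finite-dimensional module of the connected reductive group \(G\) is the same as taking \(K\)-invariants. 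By the Weyl integration formula and orthogonality of characters, the dimension of the invariants is \(|W_\g|^{-1}\int_{T_K}\chi(t)\prod_\alpha(1-t^\alpha)\,dt\). Applying this coefficientwise gives the displayed identity.

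Next I will carry out the theta-function bookkeeping. With \(x=t^\mu q^{1/2}\) one has
\[
\theta(t^\mu q^{1/2};q)=\prod_{n\geq0}(1-t^\mu q^{n+1/2})(1-t^{-\mu}q^{n+1/2}),
\]
which handles the \(W_0\) and \(W_1\) factors. For the roots, with \(x=t^\alpha q\), the second factor of \(\theta\) is \(\prod_{n\geq0}(1-t^{-\alpha}q^{n})\), and its \(n=0\) term is \((1-t^{-\alpha})\). Hence
\[
\prod_{\alpha\in\Delta}\theta(t^\alpha q;q)=\prod_{\alpha\in\Delta}(1-t^{-\alpha})\prod_{n\geq1}(1-t^\alpha q^n)(1-t^{-\alpha}q^n).
\]
Since \(\Delta=-\Delta\), this equals
\[
\prod_{\alpha\in\Delta}(1-t^\alpha)\,\prod_{\alpha\in\Delta}\prod_{n\geq1}(1-t^\alpha q^n)^2 .
\]
This absorbs both the root part of \(\mathcal I\) and the Weyl Jacobian. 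Finally, \(\prod_{n\geq1}(1-q^n)^{2r}=q^{-2r/24}\eta(q)^{2r}\). Combined with the prefactor \(q^{-c_{\mathbf V}/24}\), this yields exactly \eqref{eq:int}.

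The main obstacle is the first step, namely legitimizing the exchange of the invariant-taking, the alternating sum over \(i\), and the infinite product. I would handle it by working degree by degree in \(q^{1/2}\) and checking that only finitely many cohomological degrees \(i\) contribute in each canonical weight. This holds because each ghost mode raises weight by at least \(1\), so each coefficient is a finite alternating sum of characters of finite-dimensional \(G\)-modules. The integral over \(T_K\) then extracts the constant term coefficientwise. For \(|q|<1\), the product converges absolutely and uniformly on \(T_K\), so the formal identity becomes an analytic one.
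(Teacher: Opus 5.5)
Your proposal is correct and takes the same route as the paper. You use BRST vanishing and the Euler--Poincar\'e principle to compute \(\on{sch}_{\mathbf V}\) from the relative complex, Weyl integration over \(T_K\) to extract the \(G\)-invariants of \(\mathcal I(t,q)\), and theta-function bookkeeping in which the \(n=0\) factor \(\prod_{\alpha\in\Delta}(1-t^{-\alpha})=\prod_{\alpha\in\Delta}(1-t^{\alpha})\) of \(\prod_{\alpha}\theta(t^\alpha q;q)\) absorbs the Weyl Jacobian. Your degreewise finiteness check (matter generators of canonical weight \(\tfrac12\), ghost modes of weight at least \(1\)) spells out the convergence justification that the paper leaves implicit.
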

 For related integrals, see \cite{AKM,BDF,PP}.

\section{Applications to classical groups}\label{sec:classical}
We now take \(\g\) to be a classical simple Lie algebra with normalized invariant form \(\kappa_0\). Write \(V^k(\g):=V^{k\kappa_0}(\g)\) and denote its simple quotient by \(L_k(\g)\). Throughout, \(\on{Mat}(a,b)=\on{Hom}(\C^b,\C^a)\), with the group $G$ acting from the left.

\subsection{\texorpdfstring{The case \(G=SL_n\)}{The special linear case}}\label{section6.1}
Assume \(n\geq2\). Here \(h^\vee=n\) and \(\kappa_{\on{Mat}(n,m)}=\frac{m}{2n}\kappa_\g\). Choose \(M,N\geq0\) with \(M-N=2n\), and set
\[
W_0=\on{Mat}(n,M),\qquad W_1=\on{Mat}(n,N).
\]
Then \eqref{eq:level-condition} holds. Let \(Z\subset W_0\) be the determinantal variety of matrices of rank less than \(n\), and put \(U=W_0\setminus Z\). The codimension 
of $Z$ is \(M-n+1\geq2\), and \(G\) acts freely on \(U\) with closed orbits, giving \ref{A1} and \ref{A2}. The moment map is
\[
\mu(A,B)=AB-\frac{\on{tr}(AB)}nI_n,
\qquad
\mu^{-1}(0)=\{(A,B):AB=\lambda I_n\text{ for some }\lambda\in\C\}.
\]
\begin{lemma}\label{lem:codim}
Assumptions \ref{A3} and \ref{A4} hold.
\end{lemma}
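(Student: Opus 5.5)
The plan is to verify the fiberwise criterion of Lemma~\ref{lem:isoHAcond}(2). That lemma yields both \ref{A3} and \ref{A4} once we show the following: for every \(A\in W_0=\on{Mat}(n,M)\), the locus \(\mu^{-1}(0)\cap(\{A\}\times Z^*)\) has codimension at least two in \(\on{pr}^{-1}(A)\).

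Here \(W_0^*\) is identified with \(\on{Mat}(M,n)\) via the trace pairing. The set \(Z^*\) is the locus of \(B\) with rank less than \(n\), which is the dual determinantal variety \(V(\Phi(I_Z))\). The fiber is the linear space
\[
\on{pr}^{-1}(A)=Y_A=\{B\in\on{Mat}(M,n) : AB\in\C I_n\}.
\]

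We then compute \(\dim Y_A\) and \(\dim(Y_A\cap Z^*)\) by splitting on \(r=\on{rank}A\).

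\emph{Case \(r=n\).} The map \(B\mapsto AB\) is surjective onto \(\on{Mat}(n,n)\). Hence \(Y_A\) is the preimage of the line \(\C I_n\), an affine space of dimension \(n(M-n)+1\). Its points with \(\lambda\neq0\) have rank-\(n\) \(B\). So \(Y_A\cap Z^*\) lies in \(\{AB=0\}\), of dimension \(n(M-n)\), and the codimension is only one.

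This is the step I expect to be the main obstacle. The fix is to refine the locus inside \(\{AB=0\}\): we need \(B\) with \(\on{rank}B<n\) and \(AB=0\). The equation \(AB=0\) means the columns of \(B\) lie in \(\ker A\cong\C^{M-n}\). The condition \(\on{rank}B<n\) then cuts out a determinantal subvariety of \(\on{Mat}(M-n,n)\) of codimension \((M-n)-n+1=M-2n+1=N+1\geq1\). This gives total codimension at least \(2\) in \(Y_A\), as required.

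\emph{Case \(r<n\).} The scalar \(\lambda\) is forced to be \(0\) (apply a row vector killed by \(A\)). So \(Y_A=\{B: \on{Im}B\subset\ker A\}\), of dimension \(n(M-r)\). We need \(\{B\in\on{Mat}(M-r,n):\on{rank}B<n\}\) to have codimension at least two, which amounts to \(M-r-n+1\geq2\). Since \(M-r\geq M-n+1\geq n+1\), the codimension is \(M-r-n+1\geq2\).

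Combining the two cases verifies the hypothesis of Lemma~\ref{lem:isoHAcond}(2), which then gives \ref{A3} and \ref{A4}. One point remains to check: \(Z^*\) is indeed the rank-deficient locus under \(\Phi\). This holds because \(\Phi\) sends the \(n\times n\) minors generating \(I_Z\) to the corresponding minors on the dual side, and these minors cut out the rank-deficient locus.
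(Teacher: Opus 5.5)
Your proposal is correct and follows the paper's proof: you reduce to the fiberwise criterion of Lemma~\ref{lem:isoHAcond}(2) and split on \(r=\on{rank}A\). For \(r=n\) you force \(\lambda=0\) and impose a determinantal condition on the \(\ker A\)-valued part, giving codimension \(1+(M-2n+1)=N+2\); for \(r<n\) you get codimension \(M-r-n+1\geq2\). Your extra check that \(Z^*=V(\Phi(I_Z))\) is the dual rank-deficient locus is a point the paper leaves implicit.
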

\begin{proof}
By Lemma~\ref{lem:isoHAcond}(2), it suffices to show that the rank-deficient locus in every fiber \(\mu^{-1}(0)\cap(\{A\}\times W_0^*)\) has codimension at least two. After changing bases, write \(A=(I_k\oplus0\ \ 0)\), where \(k=\on{rank}A\).
If \(k=n\), then \(B=\binom{\lambda I_n}{D}\), with \(D\in\on{Mat}(M-n,n)\). Rank deficiency requires \(\lambda=0\) and \(\on{rank}D<n\), so its codimension is \(M-2n+2\).
If \(k<n\), then \(AB=0\) and \(B=\binom0D\), with \(D\in\on{Mat}(M-k,n)\). The rank-deficient locus has codimension \(M-n-k+1\geq M-2n+2\geq2\).
\end{proof}

By \eqref{eq:cc} and Theorems~\ref{Th:vanishing-and-simplicity} and \ref{th:no-support},
\[
\mathbf V=H^{\infty/2+0}(\affg,\g,\mc D^{ch}_{\wedge F}(W_0))
\cong\Gamma(X,\cdo{\wedge E})
\]
is simple, has canonical central charge \(2-4n^2\), and has the supercharacter given in Theorem~\ref{th:character}.

We prove Conjecture~\ref{mainconj}. Write \(\beta^{ij},\gamma^{ij},b^{is},c^{is}\) for the generators of \(\mc A\), with singular OPEs
\[
\beta^{ij}(z)\gamma^{kl}(w)\sim\frac{\delta_{ik}\delta_{jl}}{z-w},
\qquad b^{is}(z)c^{kt}(w)\sim\frac{\delta_{ik}\delta_{st}}{z-w}.
\]
Bars denote their images in \(R_{\mc A}\cong\mc O(T^*W_0\times\Pi T^*W_1)\). For \(V=\C^n\), the first fundamental theorem for \(SL_n\) (see \cite{LS25}) says that \(\mc O(T^*W_0)^G\) is generated by
\begin{equation}\label{eq:generators-of-classical-invairaint-rings-sl_n}
\begin{gathered}
\bar E_{ij}=\sum_{a=1}^n\bar\beta^{ai}\bar\gamma^{aj},\qquad
\bar D(I)=\det(\bar\gamma^{a i_b})_{a,b=1}^n,\qquad
\bar D'(I)=\det(\bar\beta^{a i_b})_{a,b=1}^n,
\end{gathered}
\end{equation}
where \(1\leq i,j\leq M\) and \(I=(i_1,\ldots,i_n)\).
\begin{prop}\label{prop:classicalinv}
We have \(\mc O(T^*X)\cong\mc O(T^*W_0)^G/\widetilde I\), where \(\widetilde I\) is generated by
\begin{align}
&\sum_{k=1}^M\bar E_{ik}\bar E_{kj}-\frac{\on{tr}(\bar E)}n\bar E_{ij},\label{eqn:invI1}\\
&\sum_{k=1}^M\bar E_{kj}\bar D(i_1,\ldots,i_{n-1},k)
-\frac{\on{tr}(\bar E)}n\bar D(i_1,\ldots,i_{n-1},j),\label{eqn:invI2}\\
&\sum_{k=1}^M\bar E_{jk}\bar D'(i_1,\ldots,i_{n-1},k)
-\frac{\on{tr}(\bar E)}n\bar D'(i_1,\ldots,i_{n-1},j),\label{eqn:invI3}
\end{align}
for \(1\leq i,j\leq M\) and \(\{i_1,\ldots,i_{n-1}\}\subset\{1,\ldots,M\}\).
\end{prop}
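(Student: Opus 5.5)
By Lemma~\ref{lem:codim} and \ref{A4}, $\mc O(T^*X)=\mc O(\mu^{-1}(0))^G=(\mc O(T^*W_0)/I_0)^G$. Since $G$ is reductive, taking invariants is exact, so $(\mc O(T^*W_0)/I_0)^G\cong\mc O(T^*W_0)^G/I_0^G$. The plan has three steps. First, show that $\widetilde I\subset I_0^G$. Second, show that the induced map $\mc O(T^*W_0)^G/\widetilde I\to\mc O(T^*W_0)^G/I_0^G$ is injective, i.e.\ $I_0^G\subset\widetilde I$. Surjectivity is automatic, since both sides are quotients of $\mc O(T^*W_0)^G$.

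For the inclusion $\widetilde I\subset I_0^G$, put $E=B^{t}A$ in matrix form, so that $\bar E_{ij}=(A^tB)_{ij}$ with $A=(\bar\beta)$ and $B=(\bar\gamma)$, up to transpose conventions. Write $\mu(A,B)$ for the traceless part of the $n\times n$ matrix $P:=A B^t$ (or $BA^t$). Then $\on{tr}\bar E=\on{tr}P$, and
\[
E^2-\tfrac{\on{tr}E}{n}E=B^t\bigl(P-\tfrac{\on{tr}P}{n}I\bigr)A,
\]
which lies in $I_0$. In the same way, $\sum_k\bar E_{kj}\bar D(\dots,k)$ is the determinant with one column replaced by the $j$-th column of $PB$ (or $P^tB$), expanded by multilinearity. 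Subtracting $\tfrac{\on{tr}P}{n}$ times the original column gives a column of $\mu\cdot B$, and hence an element of $I_0$. The relation \eqref{eqn:invI3} is handled symmetrically. All these elements are invariant, so $\widetilde I\subset I_0^G$.

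For the reverse inclusion, use the Reynolds operator. $I_0^G$ is the image under the Reynolds projection of $I_0=\sum_\xi\mu^*(\xi)\mc O(T^*W_0)$, so it is spanned by the $G$-invariant parts of $\mu^*(\xi)h$ with $h\in\mc O(T^*W_0)$. Equivalently, it is spanned by contractions $\sum_\xi\mu^*(\xi)\,h_\xi$ where $\xi\mapsto h_\xi$ is an equivariant map $\g\to\mc O(T^*W_0)$, i.e.\ $(h_\xi)$ is a $\g^*\cong\g$-valued covariant. So the second step reduces to describing the module of $\mathfrak{sl}_n$-valued polynomial covariants on $T^*W_0=\on{Mat}(n,M)\oplus\on{Mat}(n,M)^*$. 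By the first fundamental theorem for $SL_n$ applied to $W_0\oplus W_0^*\oplus\mathfrak{sl}_n$, in the form of \cite{LS25}, the invariants involving $\mu$ linearly are generated over $\mc O(T^*W_0)^G$ by three kinds of expressions. The first kind is $\on{tr}(\mu\, a_ib_j^t)=\bar E$-type contractions $u_j^t\mu v_i$. The second kind is determinants with one column $\mu a_k$ and the remaining columns $a$'s. The third kind is the analogous determinants in the $b$'s with $\mu^t$. Here $\mu$ is written as the traceless part of $P$.

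The contraction $b_j^t\mu a_i=\sum_k(\cdots)$ gives exactly \eqref{eqn:invI1}, after expressing $P$ through the $a,b$ vectors. The determinants with one column $\mu a_k$ (respectively $\mu^tb_k$) give \eqref{eqn:invI2} and \eqref{eqn:invI3}. One concern is whether mixed determinant invariants can appear, for instance $\det$ with columns $\mu a$ and also $b$'s. For $SL_n$ with vectors and covectors, the only invariants are pairings and the two kinds of determinants, and $\mu$ can be absorbed into a vector slot or a covector pairing. So mixed invariants do not arise.

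The main obstacle is this covariant-theory step. It requires making precise that every invariant linear in an extra $\mathfrak{sl}_n$-argument is an $\mc O(T^*W_0)^G$-combination of the three listed types. It also requires checking that the relations with general $i,j$ and index sets $I$ exhaust them, which includes handling invariants where $\mu$ appears inside a pairing $b^t\mu a$ multiplied by determinants. To do this, I would polarize, treating $\mu$ as $\sum x\otimes y$ with $x\in\C^n$ and $y\in(\C^n)^*$ with a trace correction. I would then apply the first fundamental theorem to the enlarged collection of vectors $a_1,\dots,a_M,x$ and covectors $b_1,\dots,b_M,y$, and restitute $\mu$. Each resulting invariant multilinear in $(x,y)$ is a product of a pairing or determinant involving $x$ and one involving $y$. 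Restituting these products yields exactly the generators \eqref{eqn:invI1}--\eqref{eqn:invI3} times invariants, up to the Plücker-type identities $D(I)D'(J)=\det(\bar E_{I,J})$.
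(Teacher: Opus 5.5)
Your proposal is correct and takes essentially the paper's route. You reduce to computing $I_0^G$, use reductivity to get the surjection $(R\otimes\g)^G\twoheadrightarrow I_0^G$, and generate the covariants by the first fundamental theorem with one extra vector $x$ and covector $y$, pairing with the traceless matrix; the type $\det(a_J,x)\det(b_I,y)$ is then eliminated by the cofactor (Cauchy--Binet) identity. One correction: your intermediate claim that ``mixed invariants do not arise'' is false as stated. That product is a genuine generator type (the paper's $d'_I\otimes d_J$), and it is exactly the identity invoked in your last paragraph that reduces it to the first two types.
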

\begin{proof}
Put \(R=\mc O(T^*W_0)\), \(A=(a_1,\ldots,a_M)\), \(B=(b_1,\ldots,b_M)^t\), where \(a_j=(\bar\gamma^{ij})_i\in R\otimes V\) and \(b_j=(\bar\beta^{ij})_i\in R\otimes V^*\). Then \(\bar E=BA\) and
\[
Z:=AB-\frac{\on{tr}(\bar E)}nI_V
=\sum_a\mu^*(\xi_a)\otimes\xi^a,
\]
where the bases are dual for the trace form. Since \(\mc O(\mu^{-1}(0))^G=R^G/I_0^G\), it suffices to determine \(I_0^G\). Reductivity and projection onto the traceless part give surjections
\[
(R\otimes\on{End}V)^G\twoheadrightarrow(R\otimes\g)^G
\twoheadrightarrow I_0^G,
\qquad F\longmapsto\on{tr}_V(FZ).
\]
Thus covariant generators give ideal generators by pairing with \(Z\).

The map \(F\mapsto\xi(F(a,b)u)\) identifies the covariant module with the part of \(\mc O(V^{\oplus(M+1)}\oplus(V^*)^{\oplus(M+1)})^G\) of bidegree \((1,1)\) in the extra variables \(u,\xi\). Define polynomial covariants by
\[
d_J(v)=\det(a_{j_1},\ldots,a_{j_{n-1}},v),\qquad
\xi(d'_I)=\det(b_{i_1},\ldots,b_{i_{n-1}},\xi).
\]
The first fundamental theorem gives the generators
\(I_V,a_j\otimes b_i,a_j\otimes d_I,d'_I\otimes b_j,d'_I\otimes d_J\).
Pairing the identity with \(Z\) gives 
 zero, and the next three types give
\begin{align*}
b_i(Za_j)&=\sum_k\bar E_{ik}\bar E_{kj}-\frac{\on{tr}(\bar E)}n\bar E_{ij},\\
d_I(Za_j)&=\sum_k\bar E_{kj}d_I(a_k)-\frac{\on{tr}(\bar E)}n d_I(a_j),\\
b_j(Zd'_I)&=\sum_k\bar E_{jk}b_k(d'_I)-\frac{\on{tr}(\bar E)}n b_j(d'_I).
\end{align*}
Finally, for \(C_{I,J}=(\bar E_{i_rj_s})_{r,s=1}^{n-1}\), the cofactor identity
\[
d'_I\otimes d_J=\det(C_{I,J})I_V
-\sum_{r,s=1}^{n-1}\on{Cof}_{rs}(C_{I,J})\,a_{j_s}\otimes b_{i_r}
\]
expresses the last type in terms of the first two. Pairing with \(Z\) therefore gives exactly the ideal generated by \eqref{eqn:invI1}--\eqref{eqn:invI3}.
\end{proof}

Recall the surjection \(\mc C:=\mc A^{\g[t]}\twoheadrightarrow\mathbf V\). By \cite[Theorem 4.4]{LS24}, \(\mc C\) is strongly generated by
\begin{equation}\label{eq:gl(2n+N|N)}
\begin{aligned}
E_{ij}&=\sum_a:\beta^{ai}\gamma^{aj}:,&
\mathring E_{st}&=\sum_a:b^{as}c^{at}:,\\
P_{it}&=\sum_a:\beta^{ai}c^{at}:,&
\mathring P_{sj}&=\sum_a:b^{as}\gamma^{aj}:,
\end{aligned}
\end{equation}
and the supersymmetric determinants
\[
D(I;J)=\det(\gamma^{i_1},\ldots,\gamma^{i_r},c^{j_{r+1}},\ldots,c^{j_n}),\quad
D'(I;J)=\det(\beta^{i_1},\ldots,\beta^{i_r},b^{j_{r+1}},\ldots,b^{j_n}).
\]
Here \(0\leq r\leq n\); the even indices are distinct elements of \(\{1,\ldots,M\}\), while the odd indices lie in \(\{1,\ldots,N\}\) and may repeat. Columns have length \(n\), and determinants are interpreted with the super sign conventions of \cite{LS24}. 
In particular,
\[
\begin{aligned}
D(J;s)=(-1)^{n-1}\sum_a C_a(J)c^{as},\qquad 
D'(J;s)=(-1)^{n-1}\sum_a C'_a(J)b^{as},
\end{aligned}
\]
where \(J=(i_2,\ldots,i_n)\) and the cofactors are fixed by
\(D(j,J)=\sum_a\gamma^{aj}C_a(J)\) and
\(D'(j,J)=\sum_a\beta^{aj}C'_a(J)\).
Equivalently, \(D(J;s)=D(i_2,\ldots,i_n;s)\) and
\(D'(J;s)=D'(i_2,\ldots,i_n;s)\), with the odd column last.
Moving it to the first position multiplies either determinant by
\((-1)^{n-1}\).
We abbreviate \(D(I;\varnothing)\) and \(D'(I;\varnothing)\) to \(D(I)\) and \(D'(I)\).
The currents \eqref{eq:gl(2n+N|N)} give homomorphisms
\(V^{-n}(\mf{gl}_{M|N})\to\mc C\twoheadrightarrow\mathbf V\), conformal with respect to the canonical conformal vectors by \cite{LS24} and Theorem~\ref{surj:generator}.

\begin{proposition}\label{prop:relations-super}
The kernel of \(\mc C\twoheadrightarrow\mathbf V\) contains the following vectors, where \(J=(i_2,\ldots,i_n)\) and all even indices lie in \(\{1,\ldots,M\}\):
\begin{align}
&\sum_{t=1}^M:E_{it}E_{tj}:+\sum_{s=1}^N:P_{is}\mathring P_{sj}:
-\frac1n:HE_{ij}:+n\partial E_{ij}-\delta_{ij}L^{\mathcal C},
\label{re:3}\\
&\sum_{t=1}^M:E_{tj}D(t,J):
+(-1)^n\sum_{s=1}^N:\mathring P_{sj}D(J;s):
-\frac1n:HD(j,J):-\partial D(j,J),\label{re:1}\\
&\sum_{t=1}^M:E_{jt}D'(t,J):
-(-1)^n\sum_{s=1}^N:P_{js}D'(J;s):
-\frac1n:HD'(j,J):+\partial D'(j,J).\label{re:2}
\end{align}
\end{proposition}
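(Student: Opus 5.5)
The three vectors \eqref{re:3}--\eqref{re:2} visibly lie in \(\mc C\), being normally ordered expressions in the strong generators \eqref{eq:gl(2n+N|N)}. The plan is to show that each is a BRST coboundary, i.e.\ its class in \(\mathbf V\) vanishes. Rather than constructing a contracting element by hand, we will test the images in \(\mathbf V\) against the geometric description. By Theorem~\ref{th:no-support}, \(\mathbf V\) embeds in \(\Gamma(X,\cdo{\wedge E})\), so it suffices to show that each image vanishes on a covering of \(X\). Simpler still, we use the injective restriction \(\mathbf V\hookrightarrow\mathbf V_f\cong\cdo{\wedge E}(\widetilde X_f)\) for a single principal open \(\widetilde X_f\subset X\). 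Injectivity holds because \(\mathbf V\) is simple (Theorem~\ref{Th:vanishing-and-simplicity}) and \(\mathbf V_f\neq0\).

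The local computation is carried out on the Luna-slice chart of the proof of Theorem~\ref{Th:vanishing-and-simplicity}. Take \(f=D(I_0)\), with \(I_0=(1,\ldots,n)\). On \(W_0\), the locus \(f\ne0\) consists of those \(A\) whose first \(n\) columns form an invertible matrix \(g\in GL_n\) with \(\det g=f\). Gauge fixing by \(SL_n\) brings \(A\) to the form \((\mathrm{diag}(f,1,\ldots,1)\mid A')\), so \(\cdo{}(G\times\mc S)\cong \cD^{ch}(G)\otimes\cD^{ch}(\mc S)\), with \eqref{eq:AG-iso} removing the group factor. After this step, the BRST class of any \(\g[t]\)-invariant element is obtained by the standard transfer: write the free fields as \(\gamma=g\cdot\gamma_{\mc S}\) and \(\beta=(\beta_{\mc S}+\text{current terms})g^{-1}\). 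We then set the gauge currents \(J^\xi\) to zero, which is legitimate in cohomology since \(\widehat J^\xi=db^\xi\) and \(J^\xi\) acts trivially.

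Concretely, at the classical level each expression becomes the pairing of a covariant with \(Z=AB-\frac{\on{tr}(\bar E)}{n}I\), as in the proof of Proposition~\ref{prop:classicalinv}. Its chiral lift is therefore a normally ordered product \(:\!(\text{covariant})\,J^\xi\!:\) plus derivative and quantum corrections. This gives the planned identity: for each of \eqref{re:3}--\eqref{re:2}, write it as
\[
\sum_a :\!\Phi_a\, J^{\xi^a}\!: \;+\; (\text{correction}),
\]
where \(\Phi_a\) is the matrix coefficient of the covariant \(a_j\otimes b_i\), \(a_j\otimes d_I\) or \(d'_I\otimes b_j\), respectively. The aim is to show that the correction is exactly the term \(n\partial E_{ij}-\delta_{ij}L^{\mathcal C}\) (resp.\ \(\mp\partial D\)). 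Since \(:\!\Phi_aJ^{\xi^a}\!:\) equals \(d(:\!\Phi_a b^{\xi^a}\!:)\) up to ghost terms, and \(\Phi_a\) is \(\g\)-covariant so that \(:\!\Phi_a b^{\xi^a}\!:\) lies in the relative complex after symmetrization, the expression is exact.

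For \eqref{re:3} we use the Sugawara identity \eqref{eq:re-expression-of-onformal-vector}. The term \(\delta_{ij}L^{\mathcal C}\) absorbs the \(\sum :J J:\) contribution coming from the identity covariant \(I_V\), which explains its presence. For \eqref{re:1} and \eqref{re:2}, the odd columns of the supersymmetric determinants account for the \(bc\) parts \(\mathring P_{sj}D(J;s)\) and \(P_{js}D'(J;s)\) of the current, with the signs \((-1)^n\) coming from moving the odd column, as recorded before the statement.

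The main obstacle is the exact determination of the quantum corrections: the coefficients \(n\partial E_{ij}\), \(-\partial D\), \(+\partial D'\), and the \(-\frac1n :\!HE\!:\) normal-ordering terms arising from the trace part of \(Z\). We would first try to fix these by a direct OPE computation of \(:\!E_{it}E_{tj}\!:\) in terms of \(:\!\beta\gamma\beta\gamma\!:\) and derivatives, since the rearrangement of normal ordering produces double contractions proportional to \(\partial E_{ij}\) with coefficient \(n\). As a cross-check we would use conformal weight and \(H\)-charge, which pin down the correction up to scalars, and then determine those scalars by evaluating on a lowest-weight test vector or by invoking \(\mathbf V\)'s \(L_1\)-primarity constraints.
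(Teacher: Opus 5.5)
Although you open with a geometric test, the argument you actually outline is the paper's. Pairing the covariants $a_j\otimes d_J$, $d'_J\otimes b_j$, $a_j\otimes b_i$ with $\xi$ gives, up to normalization, exactly the homotopies used there:
$\sum_a:b^{\xi_a}D_{\xi^a}(j,J):$, $\sum_a:b^{\xi_a}D'_{\xi^a}(j,J):$ and $\sum_{a,k}:b^{\xi_a}:(J^{\xi^a}_{(0)}\beta^{ki})\gamma^{kj}::$. Here $D_\xi(j,J)$ is $D(j,J)$ with its first column replaced by $J^\xi_{(0)}\gamma^j$. The paper then uses \eqref{eq:re-expression-of-onformal-vector} to trade $L$ for $L^{\mathcal C}$, as you also intend.

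The gap is that the proposition \emph{is} the assertion that $Q_{(0)}$ of these three elements equals the displayed vectors with exactly these coefficients, and you stop precisely there. Knowing that ``main term plus an undetermined correction'' is exact proves nothing about \eqref{re:3}--\eqref{re:2}.

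Your fallback does not close this. Weight and $H$-charge do not reduce the correction to a couple of scalars: the relevant weight space also contains $:E_{ij}E_{tt}:$, $:E_{ij}\mathring E_{ss}:$ and, for $i=j$, every weight-two invariant. So the form of the correction is itself an output of the Wick computation.

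The heuristic ``set $J^\xi$ to zero in cohomology'' is also not valid. $J^\xi$ does not lie in the relative complex, and only $\widehat J^\xi=Q_{(0)}b^\xi$ is exact. Moreover, $Q_{(0)}\bigl(\sum_a:b^{\xi_a}\Phi_{\xi^a}:\bigr)$ differs from $\sum_a:J^{\xi_a}\Phi_{\xi^a}:$ by ghost terms that do not always cancel. Once the identities are in hand, the localization to $\mathbf V_f$ is superfluous.

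The missing computation is short once organized.

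\emph{Relations \eqref{re:1} and \eqref{re:2}.} Here $\Phi_\xi=D_\xi(j,J)$, resp.\ $D'_\xi(j,J)$, is a polynomial in the $\gamma$'s alone, resp.\ the $\beta$'s alone. Hence $J^\eta_{(k)}\Phi_\xi=0$ for $k\geq1$. The remaining terms $:c^{\xi_k^*}(J^{\xi_k}_{(0)}\Phi_\xi):$ of $Q_{(0)}\Phi_\xi$ cancel the ghost part of $\widehat J^{\xi_a}$ by $\mathfrak g$-equivariance. So $Q_{(0)}$ of the homotopy is exactly $\sum_a:J^{\xi_a}\Phi_{\xi^a}:$.

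\emph{Relation \eqref{re:3}.} There is in addition the double contraction $J^\eta_{(1)}\Phi_\xi=\delta_{ij}\operatorname{tr}(\eta\xi)\mathbf 1$. It contributes $\delta_{ij}$ times the ghost part $-\sum_k:b^{\xi_k}\partial c^{\xi_k^*}:$ of $L$. This is where the $\delta_{ij}$ term comes from, not the identity covariant, which pairs with $Z$ to $\operatorname{tr}Z=0$.

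\emph{Expanding the Casimir.} Write $\sum_a\xi_a\otimes\xi^a=\sum_{p,q}e_{pq}\otimes e_{qp}-\frac1n I\otimes I$. The trace part gives the $H$-terms. The $bc$ part of $J$ gives the odd-column terms, with $(-1)^n$ coming from moving the odd column. Rewriting the nested normal orderings as $:E\,E:$, $:P\mathring P:$, $:E\,D:$, $:\mathring P\,D:$ produces single-contraction terms.

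For instance, in \eqref{re:3}:
\begin{itemize}
\item the $\beta\gamma$ part equals $-\sum_t:E_{it}E_{tj}:$ plus $-n\,\partial E_{ij}+(2n-M)\sum_a:\partial\beta^{ai}\gamma^{aj}:+\delta_{ij}\sum:\beta\partial\gamma:$;
\item the $bc$ part equals $-\sum_s:P_{is}\mathring P_{sj}:$ plus $N\sum_a:\partial\beta^{ai}\gamma^{aj}:-\delta_{ij}\sum:b\partial c:$.
\end{itemize}
The stray terms cancel precisely because $M-N=2n$, leaving $-n\,\partial E_{ij}+\delta_{ij}L$. In \eqref{re:1} the same cancellation, now of multiples of $\sum_a:\partial\gamma^{aj}C_a(J):$, leaves $-\partial D(j,J)$.

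So the coefficients are fixed by the anomaly condition, not by symmetry. Writing out these identities is what turns your outline into the paper's proof.
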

\begin{proof}
For \(I=(i_1,\ldots,i_n)\), define \(D_\xi(I)\) by replacing the first column of \(D(I)\) with \(J^\xi_{(0)}\gamma^{i_1}\); define \(D'_\xi(I)\) similarly using \(\beta\). With trace-dual bases \(\{\xi_a\},\{\xi^a\}\), direct calculation gives
\begin{align}
Q_{(0)}\sum_a:b^{\xi_a}D_{\xi^a}(j,J):
&=\sum_t:E_{tj}D(t,J):+(-1)^n\sum_s:\mathring P_{sj}D(J;s):\nonumber\\
&\quad-\frac1n:HD(j,J):-\partial D(j,J),\label{eqn:EL1}\\
Q_{(0)}\sum_a:b^{\xi_a}D'_{\xi^a}(j,J):
&=-\sum_t:E_{jt}D'(t,J):+(-1)^n\sum_s:P_{js}D'(J;s):\nonumber\\
&\quad+\frac1n:HD'(j,J):-\partial D'(j,J),\label{eqn:EL2}\\
Q_{(0)}\sum_{a,k}:b^{\xi_a}:(J^{\xi^a}_{(0)}\beta^{ki})\gamma^{kj}::
&=-\sum_t:E_{it}E_{tj}:-\sum_s:P_{is}\mathring P_{sj}:\nonumber\\
&\quad+\frac1n:HE_{ij}:-n\partial E_{ij}+\delta_{ij}L.\label{eqn:EL3}
\end{align}
The assertion follows from \eqref{eq:re-expression-of-onformal-vector}.
\end{proof}

\begin{proposition}\label{pro:reduce-to-even-case}
There is a surjection \(\mc O(T^*W_0)^G\twoheadrightarrow(R_{\mc C})_{\mathrm{red}}\).
\end{proposition}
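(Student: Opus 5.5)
The surjection will come from Li's result \eqref{eq:Li-surjection} applied at the level of the $C_2$-algebra. By \cite[Theorem 4.4]{LS24}, \(\mc C\) is strongly generated by the fields \eqref{eq:gl(2n+N|N)} and the supersymmetric determinants \(D(I;J)\), \(D'(I;J)\). Hence \(R_{\mc C}\) is generated as a supercommutative algebra by their symbols \(\bar E_{ij},\bar{\mathring E}_{st},\bar P_{it},\bar{\mathring P}_{sj},\bar D(I;J),\bar D'(I;J)\). Passing to the reduced ring kills every nilpotent element, and in particular every odd element. So \((R_{\mc C})_{\mathrm{red}}\) is generated by the images of the even generators. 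These are \(\bar E_{ij}\), \(\bar D(I)\), \(\bar D'(I)\), together with the even products of odd generators: \(\bar{\mathring E}_{st}\), and the determinants \(D(I;J)\) with an even number of odd columns.

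The main step is to show that each of these extra even generators is nilpotent in \(R_{\mc C}\), so that its image in the reduced ring is zero or lies in the subalgebra generated by \(\bar E,\bar D,\bar D'\). For this I would use the natural homomorphism \(R_{\mc C}\to R_{\mc A}\cong\mc O(T^*W_0)\otimes\bigwedge(W_1\oplus W_1^*)\), which sends the symbol of a state in \(\mc C\) to its classical symbol. Under this map \(\bar{\mathring E}_{st}=\sum_a \bar b^{as}\bar c^{at}\) and determinants with odd columns are sums of products of odd variables, hence nilpotent. Nilpotency of an image does not immediately give nilpotency in \(R_{\mc C}\), however.

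To get nilpotency in \(R_{\mc C}\) itself, I would argue directly with the odd generators. Any even generator involving odd fields is, up to sign, a normally ordered polynomial in the odd generators \(P,\mathring P\) and in the determinants with an odd number of odd columns, modulo \(C_2(\mc C)\). One way to see this is through the cofactor expansions of \(D(J;s)\) and \(D'(J;s)\) recorded above, together with their analogues. Concretely, \(\bar{\mathring E}_{st}\) should be expressible via quadratic relations in the image of \(V^{-n}(\mf{gl}_{M|N})\) and its determinant extension. If every such even generator is a polynomial in odd generators modulo \(C_2\), it is nilpotent in \(R_{\mc C}\) and vanishes in the reduced ring. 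In that case the map \(\mc O(T^*W_0)^G\to(R_{\mc C})_{\mathrm{red}}\), defined by \(\bar E_{ij}\mapsto\bar E_{ij}\), \(\bar D(I)\mapsto\bar D(I)\), \(\bar D'(I)\mapsto\bar D'(I)\), is surjective.

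It remains to check that this assignment is a well-defined ring homomorphism, meaning that the relations among \(\bar E,\bar D,\bar D'\) in \(\mc O(T^*W_0)^G\) hold in \((R_{\mc C})_{\mathrm{red}}\). By the second fundamental theorem these relations are Plücker-type and determinantal identities. I would deduce them from \(R_{\mc C}\) via the purely even subalgebra \(\mc S(W_0)^{\g[t]}\subset\mc C\). Its symbols satisfy these relations up to nilpotent corrections coming from the fermionic factor, and those corrections die after reduction.

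Expressing the mixed even generators through odd ones, so as to prove their nilpotency inside \(R_{\mc C}\) and not only in \(R_{\mc A}\), is the step I expect to be the main obstacle. If it fails, the fallback is to pass through the surjection \(\mc C\twoheadrightarrow\mathbf V\) and the embedding of \(R_{\mathbf V}\) into \(\Gamma(X,\mc O_{\wedge T^*E})\), where nilpotency of fermionic symbols is clear. That route, however, only gives the statement for \(\mathbf V\).
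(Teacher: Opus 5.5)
\textbf{There is a genuine gap: your central step is false as stated, not just unproved.}

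Your outer structure is right. The map itself also needs no relation-checking: the inclusion $\mc S(W_0)^{\g[t]}\subset\mc C$ induces a ring homomorphism $R_{\mc S(W_0)^{\g[t]}}\to R_{\mc C}$, and $R_{\mc S(W_0)^{\g[t]}}\cong\mc O(T^*W_0)^G$ by \cite[Corollary 4.2]{LS24}. So there are no ``fermionic corrections'' to control.

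To see that the central step fails, compose with the symbol map $R_{\mc C}\to R_{\mc A}\cong\mc O(T^*W_0)\otimes\bigwedge(W_1\oplus W_1^*)$ and use polynomial degree.
\begin{itemize}
\item Every strong generator has symbol of degree $2$ or $n\geq2$, so every odd element of $R_{\mc C}$ maps into degree $\geq2$.
\item Every even element of the ideal generated by odd elements is a sum of products of two odd elements, so it maps into degree $\geq4$.
\item But $\bar{\mathring E}_{st}$ maps to the nonzero quadratic $\sum_a\bar b^{as}\bar c^{at}$.
\end{itemize}
Hence $\bar{\mathring E}_{st}$ is not a polynomial in the odd generators modulo $C_2(\mc C)$, nor even in the ideal they generate. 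Refining the count by the charge separating $\gamma,c$ from $\beta,b$ rules out the even determinants with odd columns as well.

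Your fallback has the same defect. It is $\mf R_{\mathbf V}$, not $R_{\mathbf V}$, that embeds into $\Gamma(X,\mc O_{\wedge T^*E})$, so nilpotency there does not lift to $R_{\mathbf V}$.

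\textbf{What is missing} is a source of nilpotency inside $R_{\mc C}$ that does not go through odd elements.
\begin{itemize}
\item For $D(I;J)$ and $D'(I;J)$ with odd columns this is elementary. They lie in the commutative vertex subalgebra generated by $\gamma,c$ (resp.\ $\beta,b$), where there are no internal contractions and they have positive exterior degree. So their normally ordered powers already vanish in $\mc C$.
\item For the currents, the paper uses lisse subalgebras of $\mc C$. For $N\geq2$, the currents $\mathring E_{st}$ ($s\ne t$) and $\mathring E_{ss}-\mathring E_{s+1,s+1}$ generate $L_n(\mf{sl}_N)$. For each $s$, the fields $:b^{1s}\cdots b^{ns}:$, $:c^{1s}\cdots c^{ns}:$ and $\mathring E_{ss}$ generate a rank-one lattice vertex superalgebra. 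Positive-weight symbols of a lisse algebra are nilpotent, and their images in $R_{\mc C}$ remain so.
\end{itemize}
This input is unavoidable for $\mathring E_{ss}$. It generates a Heisenberg algebra, so its normally ordered powers never vanish. Its nilpotency comes from the lattice $C_2$-relation expressing $\bar{\mathring E}_{ss}^{\,n}$ as a nonzero multiple of the product of the symbols of $:b^{1s}\cdots b^{ns}:$ and $:c^{1s}\cdots c^{ns}:$.

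Once these elements are known to be nilpotent, $(R_{\mc C})_{\mathrm{red}}$ is generated by the images of $\bar E_{ij},\bar D(I),\bar D'(I)$. The map from $R_{\mc S(W_0)^{\g[t]}}$ is then surjective.
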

\begin{proof}
The odd generators \(P_{it},\mathring P_{sj}\) are nilpotent in \(R_{\mc C}\). So are \(D(I;J),D'(I;J)\) with fewer than \(n\) even columns: sufficiently high normally ordered powers vanish, since they involve odd variables and have no internal contractions.
For \(N\geq2\), the currents \(\mathring E_{st}\) with \(s\neq t\) and \(\mathring E_{ss}-\mathring E_{s+1,s+1}\) generate the lisse algebra \(L_n(\mf{sl}_N)\), so their images are nilpotent. For each \(1\leq s\leq N\), the fields \(:b^{1s}\cdots b^{ns}:\), \(:c^{1s}\cdots c^{ns}:\) and \(\sum_a:b^{as}c^{as}:\) generate a rank-one lattice vertex superalgebra. Its lisse property implies nilpotency of the last current, and hence of \(\sum_s\mathring E_{ss}\).
Thus the reduced ring is generated by the images of \(E_{ij},D(I),D'(I)\). The inclusion of the purely even commutant induces the required surjection, using
\(R_{\cdo{W_0}(W_0)^{\g[t]}}\cong\mc O(T^*W_0)^G\) from \cite[Corollary 4.2]{LS24}.
\end{proof}

\begin{theorem}\label{th:ass-variety-and-nilpotency-index}
The image \(\omega\) of \(L\) in \(R_{\mathbf V}\) is nilpotent, and
\[
(R_{\mathbf V})_{\mathrm{red}}\cong\mc O(T^*W_0\ssslash_0G).
\]
Thus Conjecture~\ref{mainconj} holds and \(\mathbf V\) is quasi-lisse.
\end{theorem}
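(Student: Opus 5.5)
We prove the isomorphism by squeezing \((R_{\mathbf V})_{\mathrm{red}}\) between two maps. One map is the surjection of Theorem~\ref{thm:surjection-to-classical-invairiant-rings}. The other is a surjection in the opposite direction, obtained from Propositions~\ref{prop:classicalinv}, \ref{prop:relations-super} and \ref{pro:reduce-to-even-case}. Lemma~\ref{lem:codim} gives \ref{A3} and \ref{A4}, so the target of \eqref{eq:surjc-RV} is \(\mc O(T^*X)\cong\mc O(T^*W_0)^G/\widetilde I\). Since \(\mc C\twoheadrightarrow\mathbf V\) (Theorem~\ref{surj:generator}) induces \(R_{\mc C}\twoheadrightarrow R_{\mathbf V}\), Proposition~\ref{pro:reduce-to-even-case} gives a surjection
\[
\mc O(T^*W_0)^G\twoheadrightarrow(R_{\mathbf V})_{\mathrm{red}},
\qquad \bar E_{ij}\mapsto\overline{E_{ij}},\ \bar D(I)\mapsto\overline{D(I)},\ \bar D'(I)\mapsto\overline{D'(I)}.
\]
The plan is to show that this map kills \(\widetilde I\). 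It then factors through \(\mc O(T^*X)\), and composing with \eqref{eq:surjc-RV} gives a composite \(\mc O(T^*X)\to(R_{\mathbf V})_{\mathrm{red}}\to\mc O(T^*X)\). On the generators this composite is the identity, since both maps are defined by classical symbols. Hence the second map is injective as well as surjective, and \((R_{\mathbf V})_{\mathrm{red}}\cong\mc O(T^*W_0\ssslash_0G)\). Quasi-lisseness then follows from finiteness of symplectic leaves \cite{Lo}, as noted before Conjecture~\ref{mainconj}.

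To kill \(\widetilde I\), we take symbols in \(R_{\mathbf V}\) of the relations of Proposition~\ref{prop:relations-super}. The terms \(\partial E_{ij}\), \(\partial D\) and \(\partial D'\) lie in \(C_2\) and vanish. The terms involving \(P_{is}\), \(\mathring P_{sj}\), \(D(J;s)\) and \(D'(J;s)\) are nilpotent by the proof of Proposition~\ref{pro:reduce-to-even-case}, so they vanish in the reduced ring. The term \(:HE_{ij}:\) has symbol \(\on{tr}(\bar E)\bar E_{ij}\) up to the \(bc\)-part of \(H\), which is the nilpotent \(\sum_s\mathring E_{ss}\). Thus \eqref{re:1} and \eqref{re:2} reduce in \((R_{\mathbf V})_{\mathrm{red}}\) to \eqref{eqn:invI2} and \eqref{eqn:invI3}, with the sign and normalization conventions to be checked. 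Relation \eqref{re:3} reduces to \eqref{eqn:invI1} plus \(-\delta_{ij}\bar\omega\), where \(\bar\omega\) is the image of \(\varphi(L^{\mc C})=L\).

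The main obstacle is therefore proving that \(\omega\) is nilpotent. Once this is known, \eqref{re:3} yields \eqref{eqn:invI1}, and the whole of \(\widetilde I\) dies in the reduced ring. The first approach is to take the trace over \(i=j\) in \eqref{re:3}. In reduced symbols this gives \(M\bar\omega=\on{tr}(\bar E^2)-\frac1n\on{tr}(\bar E)^2\) modulo nilradical, which expresses \(\bar\omega\) through the \(\bar E\). Multiplying the \(i\ne j\) and diagonal relations by \(\bar E\) and iterating then produces a polynomial identity of the form \(\bar\omega\cdot(\text{invertible constant}+\text{nilpotent-free terms})\). Alternatively, one uses the determinant relations \eqref{eqn:invI2} together with \(\bar D'\bar D=\det(\bar E_{I\times J})\) to show that \(\bar E\) satisfies \(\bar E^2=\frac{\on{tr}\bar E}{n}\bar E+\bar\omega\cdot 1\). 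From this we derive that \(\bar\omega\) annihilates the \(\bar D,\bar D'\) and that \(\bar\omega^{k}\) lies in the ideal they generate with \(\bar E\). A more robust fallback is geometric. The relations other than \eqref{re:3} already show that the reduced associated variety embeds into the cone over \(\mu^{-1}(0)/\!/G\) deformed by \(\bar\omega\). Since \(\omega\) has positive \(L_{\mathrm{can}}\)-weight and \(X_{\mathbf V}\) is conic, it suffices to check that \(\bar\omega\) vanishes on the open dense image of \(T^*X\). That holds because, via Theorem~\ref{th:no-support} and Lemma~\ref{lemma:L0-action-on-associated graded}, \(L\in\mc R_1\) has zero classical symbol in \(\Gamma(X,\mc O_{\wedge T^*E})\). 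The remaining task is to upgrade this to vanishing on all of \(X_{\mathbf V}\) by showing that every irreducible component of \(X_{\mathbf V}\) meets the locus where \(\bar E\) has full rank \(n\).
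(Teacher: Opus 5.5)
\textbf{Verdict: genuine gap.} Your overall architecture is the paper's: the two surjections
\(\mc O(T^*W_0)^G\twoheadrightarrow(R_{\mathbf V})_{\mathrm{red}}\twoheadrightarrow\mc O(T^*W_0\ssslash_0G)\),
the symbol computation of \eqref{re:1}--\eqref{re:3}, and the identity-composite argument all match. However, the step you yourself identify as the crux, nilpotency of \(\omega\), is not proved, and your first route cannot work.

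In \((R_{\mathbf V})_{\mathrm{red}}\), relation \eqref{re:3} reads
\[
\widetilde E^2-\frac{\on{tr}(\widetilde E)}{n}\widetilde E=\widetilde\omega I_M .
\]
This identity is satisfied by the scalar matrix \(\widetilde E=bI_M\) with \(\widetilde\omega=b^2(n-M)/n\neq0\). The same holds for its trace and for anything obtained by multiplying it by \(\widetilde E\) and iterating. It remains true even with \eqref{re:1}--\eqref{re:2} added, taking all \(\bar D,\bar D'\) to be zero. So no manipulation of these relations alone can produce \(\widetilde\omega\cdot(\text{unit})=0\).

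The other two routes also fall short. Your second route stops at ``\(\widetilde\omega^k\) lies in the ideal generated by \(\bar D,\bar D',\bar E\)'', which says nothing about nilpotency. Your geometric fallback presupposes that the image of \(T^*X\) is dense in \(X_{\mathbf V}\). That is essentially the statement being proved, and the ``remaining task'' you name is left open.

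\textbf{The missing input} is the rank constraint coming from classical invariant theory. Under the ring map \(\mc O(T^*W_0)^G\to(R_{\mathbf V})_{\mathrm{red}}\) you already have, \(\widetilde E\) is the image of \(\bar E=BA\). Here \(B\) is \(M\times n\), \(A\) is \(n\times M\), and \(M>n\), so \(\det\widetilde E=0\). Taking determinants in the displayed identity gives
\[
\widetilde\omega^{M}=\det\widetilde E\cdot\det\Bigl(\widetilde E-\frac{\on{tr}(\widetilde E)}{n}I_M\Bigr)=0,
\]
so \(\widetilde\omega=0\) in the reduced ring, i.e.\ \(\omega\) is nilpotent.

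Equivalently, pointwise: at every point of \(X_{\mathbf V}\) the matrix \(\widetilde E\) has rank at most \(n<M\). Applying the identity to a nonzero kernel vector forces \(\widetilde\omega=0\) there. This rank bound is what is absent from all three of your routes. With this line inserted, \eqref{re:3} reduces to \eqref{eqn:invI1} and the rest of your argument goes through as written.
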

\begin{proof}
In \((R_{\mathbf V})_{\mathrm{red}}\), write \(\widetilde\omega\) for the image of \(\omega\) and \(\widetilde E=(\widetilde E_{ij})\) for the matrix of current images. The odd terms vanish, so \eqref{eqn:EL3} gives
\begin{equation}\label{eq:square-of-E}
\widetilde E^2=\frac{\on{tr}(\widetilde E)}n\widetilde E+\widetilde\omega I_M.
\end{equation}
Proposition~\ref{pro:reduce-to-even-case} also preserves the classical relation \(\det(BA)=0\), since \(M>n\). Hence
\[
\widetilde\omega^M
=\det\!\left(\widetilde E\left(\widetilde E-\frac{\on{tr}(\widetilde E)}nI_M\right)\right)=0.
\]
Reducedness gives \(\widetilde\omega=0\), proving nilpotency of \(\omega\).
Proposition~\ref{pro:reduce-to-even-case} and Theorem~\ref{thm:surjection-to-classical-invairiant-rings} give surjections
\[
\mc O(T^*W_0)^G\twoheadrightarrow(R_{\mathbf V})_{\mathrm{red}}
\twoheadrightarrow\mc O(T^*W_0\ssslash_0G).
\]
The relations \eqref{re:1}--\eqref{re:3} reduce to \eqref{eqn:invI1}--\eqref{eqn:invI3}. By Proposition~\ref{prop:classicalinv}, the first map therefore factors through the classical reduction. The resulting two surjections in both directions have identity composite, so both are isomorphisms.
\end{proof}

When \(N=0\), we obtain a complete presentation.
\begin{theorem}\label{thm:relation}
Let \(N=0\), so \(M=2n\). The kernel of \(\mc{C}=\cdo{W_0}(W_0)^{\g[t]}\twoheadrightarrow\mathbf V\) is generated by
\begin{align}
&\sum_k:E_{ik}E_{kj}:-\frac1n:HE_{ij}:+n\partial E_{ij}-\delta_{ij}L^{\mathcal C},\label{re:30}\\
&\sum_k:E_{kj}D(k,J):-\frac1n:HD(j,J):-\partial D(j,J),\label{re:10}\\
&\sum_k:E_{jk}D'(k,J):-\frac1n:HD'(j,J):+\partial D'(j,J),\label{re:20}
\end{align}
where \(J=(i_2,\ldots,i_n)\) and all indices range from \(1\) to \(2n\).
\end{theorem}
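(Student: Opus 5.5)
Let \(\mc K\subset\mc C\) be the ideal generated by \eqref{re:30}--\eqref{re:20}, and put \(V:=\mc C/\mc K\). Proposition~\ref{prop:relations-super} (with \(N=0\)) shows \(\mc K\subset\ker(\mc C\to\mathbf V)\), so there is a surjection \(V\twoheadrightarrow\mathbf V\); the task is to prove it is injective. The plan is to apply the Faithfulness Theorem~\ref{theorem:iso}(2) to the morphism \(V\to\mathbf V\), regarded as \(V\)-modules. For this, first I would check that \(\mc K\) is stable under the quasi-conformal structures \(\widetilde L_n,\widetilde L'_n\) (\(n\geq-1\)). Since \(L^{\mathcal C}\) and \(\mf F(L^{\mathcal C})\) lie in \(\mc C\) and induce these structures, any ideal is stable under their modes, so \(V\) inherits both structures, nonnegatively graded. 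Moreover \(\mc O(W_0)^G\) and \(\mc O(W_0^*)^G\) map into \(V\), with the invariant \(f\) and \(g=\mf F(f)\) from the proof of Theorem~\ref{th:faithfulness-2}. The full-contraction identity of Lemma~\ref{lem:ne0} already holds in \(\mc C\), hence in \(V\) (the image of \(\vac\) is nonzero because \(\mathbf V\ne0\)). Thus Theorem~\ref{theorem:iso} applies, and it suffices to show that \(V_f\to\mathbf V_f\) and \(V_g\to\mathbf V_g\) are isomorphisms. By Fourier symmetry of the relations (\(\mf F\) exchanges \eqref{re:10} and \eqref{re:20} up to sign and preserves \eqref{re:30}), it suffices to treat \(f\).

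Next I would reduce to an associated graded statement. Choose \(f=D(I)\overline{D(I)}\)-type real invariant so that \(\widetilde X_f\subset X\) is an affine chart on which \(\mathbf V_f\cong\cdo{\wedge E}(\widetilde X_f)\), by \eqref{eq:locally-cdo} and Theorem~\ref{th:no-support}. Since the Li filtration is finite in each weight and localization commutes with it (Lemma~\ref{eq:filtration-commutes-with-localization2}), it suffices to show that \(R_{V_f}=(R_V)_f\to R_{\mathbf V_f}\cong\mc O(T^*X)_f\) is injective and that \(\on{gr}_FV_f\) is generated by \(R_{V_f}\) with no extra relations beyond those of \(J_\infty\). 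The key point is that \(R_V\) is a quotient of \(R_{\mc C}\cong\mc O(T^*W_0)^G\) (the \(N=0\) case of Proposition~\ref{pro:reduce-to-even-case}, via \cite[Corollary 4.2]{LS24}) by the symbols of \eqref{re:30}--\eqref{re:20}. These symbols are exactly \eqref{eqn:invI1}--\eqref{eqn:invI3} plus the term \(\delta_{ij}\bar\omega\), where \(\bar\omega\) is the symbol of \(L^{\mathcal C}\). The trace of \eqref{re:30} together with \(\det(\bar E)=0\) and the argument of Theorem~\ref{th:ass-variety-and-nilpotency-index} shows that \(\bar\omega\) is nilpotent. After localizing at \(f\), where the symplectic structure is nondegenerate, I would argue that \(\bar\omega\) actually vanishes in \((R_V)_f\). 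Then \((R_V)_f\cong\mc O(T^*W_0)^G_f/\widetilde I_f\cong\mc O(T^*X)_f\) by Proposition~\ref{prop:classicalinv}.

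Finally, one has to lift from \(R\) to \(\on{gr}_F\): it suffices that \(J_\infty R_{V_f}\to\on{gr}_FV_f\) followed by the map to \(\on{gr}_F\mathbf V_f\cong\mc O(J_\infty T^*\widetilde X_f)\) is an isomorphism. Since \(T^*\widetilde X_f\) is smooth, \(J_\infty\) of the isomorphism on \(R\) is an isomorphism, and so \(\on{gr}_FV_f\to\on{gr}_F\mathbf V_f\) is injective. Hence \(V_f\to\mathbf V_f\) is injective, and it is surjective because \(V\to\mathbf V\) is. The main obstacle I expect is the nilpotent correction \(\bar\omega\) and, more generally, the possible nonreducedness of \(R_V\). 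The ideal \(\widetilde I\) yields the reduced ring only after localization, so I must show that \((R_V)_f\) is reduced. My first attempt would be to use the relation \eqref{re:30} in the localized algebra to express \(L^{\mathcal C}\) through \(:E E:\) and \(\partial E\), and then observe that on the chart \(\mc O(T^*W_0)^G_f/\widetilde I_f\) is already smooth of the expected dimension. In that case a complete-intersection or dimension count forces the nilpotent \(\bar\omega\) to be zero, since \(\bar\omega\) is determined by \(\bar E\) via \eqref{eq:square-of-E} and the invertible determinant minor there.
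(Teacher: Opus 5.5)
Your architecture is the paper's: pass to \(V=\mc C/\mc K\), use faithfulness to reduce to the localizations at \(f\) and \(\mf F(f)\), identify \((R_V)_f\) with \(\mc O(T^*\widetilde X_f)\), and lift through \(J_\infty\) via \eqref{eq:Li-surjection}. However, the one step with real content is left open, and the mechanism you suggest for it does not work.

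You need the image \(\widetilde\omega\) of \(L^{\mathcal C}\) to be \emph{zero} in \((R_V)_f\) and in \((R_V)_{\mf F(f)}\). Nilpotency is not enough. The element \(\widetilde\omega\) maps to zero in the reduced ring \((R_{\mathbf V})_f\cong\mc O(T^*\widetilde X_f)\), so injectivity of \((R_V)_f\to(R_{\mathbf V})_f\) forces \(\widetilde\omega=0\) there.

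A dimension count is blind to nilpotents. Smoothness of \(\mc O(T^*W_0)^G_f/\widetilde I_f\) only tells you that \((R_V)_f\) has the right underlying reduced scheme. A complete-intersection argument would need the ideal generated by the symbols of \eqref{re:30}--\eqref{re:20} to be locally generated by a regular sequence. That ideal has far more than \(\dim\g\) generators, you give no argument for the regular-sequence property, and it is no easier than the vanishing you want.

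The missing idea is to use \eqref{re:10} and \eqref{re:20} at exactly this point.

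\begin{itemize}
\item Fix \(J=(2,\ldots,n)\), and let \(d=(d_i)\) and \(d'=(d'_i)\) be the images in \(R_V\) of \(D(i,J)\) and \(D'(i,J)\).
\item The symbols of \eqref{re:10} and \eqref{re:20} say \(\widetilde E^td=\frac{\operatorname{tr}\widetilde E}{n}d\) and \(\widetilde Ed'=\frac{\operatorname{tr}\widetilde E}{n}d'\). Meanwhile \eqref{re:30} gives \eqref{eq:square-of-E} in \(R_V\).
\item Apply the transpose of \eqref{eq:square-of-E} to \(d\), and \eqref{eq:square-of-E} itself to \(d'\). This yields \(\widetilde\omega d=\widetilde\omega d'=0\).
\item Since \(d_1=f\) and \(d'_1=(-1)^n\mf F(f)\), you obtain the exact identities \(f\widetilde\omega=\mf F(f)\widetilde\omega=0\) in \(R_V\). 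So \(\widetilde\omega\) dies in both localizations.
\end{itemize}

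Then \((R_V)_f\) is a quotient of \(\mc O(T^*W_0)^G_f/\widetilde I_f\cong\mc O(T^*\widetilde X_f)\). It maps onto \((R_{\mathbf V})_f\cong\mc O(T^*\widetilde X_f)\) with identity composite, hence is isomorphic to it. Reducedness is a consequence here, not an input, and your \(J_\infty\) lifting then finishes as in the paper.

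Your instinct about the invertible minor can also be made rigorous, but algebraically rather than by counting dimensions:
\begin{itemize}
\item In \(\mc O(T^*W_0)_f\), write \(\bar E=BA\) as in the proof of Proposition~\ref{prop:classicalinv}, where \(A\) now has a right inverse \(A^+\).
\item Sandwich the symbol of \eqref{re:30} between \(A\) and \(A^+\), and compare the traces of the resulting \(n\times n\) and \(M\times M\) identities. This puts \((M-n)\) times the symbol of \(L^{\mathcal C}\) into the ideal.
\item The Reynolds operator then brings this back to \(\mc O(T^*W_0)^G_f\).
\end{itemize}
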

\begin{proof}
Let \(\mc I\) be the ideal generated by these vectors and put \(V=\mc{C}/\mc I\). It is Fourier-stable, and there is a surjection \(p:V\twoheadrightarrow\mathbf V\). Let \(f=D(1,\ldots,n)\in\mc O(W_0)^G\), so
\(\mf F(f)=\Phi(f)=(-1)^nD'(1,\ldots,n)\). We first show
\begin{equation}\label{eq:omega-is-killed-by-f}
f\widetilde\omega=\mf F(f)\widetilde\omega=0\quad\text{in }R_V,
\end{equation}
where \(\widetilde\omega\) is the image of \(L^{\mathcal C}\). Let \(d_i,d'_i\) be the images of \(D(i,2,\ldots,n),D'(i,2,\ldots,n)\), and write \(d=(d_i)_i,d'=(d'_i)_i\) and \(\widetilde E=(\overline E_{ij})\) in \(R_V\). Relations \eqref{re:10} and \eqref{re:20} give
\[
\widetilde E^t d=\frac{\on{tr}(\widetilde E)}n d,\qquad
\widetilde E d'=\frac{\on{tr}(\widetilde E)}n d'.
\]
Relation \eqref{re:30} gives \eqref{eq:square-of-E} in \(R_V\). Applying it to \(d'\), and its transpose to \(d\), yields \(\widetilde\omega d=\widetilde\omega d'=0\). Since \(d_1=f\) and \(d'_1=(-1)^n\mf F(f)\), this proves \eqref{eq:omega-is-killed-by-f}.

The principal open \(\widetilde X_f\) lies in \(X\), and the local CDO description gives
\begin{equation}\label{eq:induced-hom-for-associated-grade}
\on{gr}_F V_f\twoheadrightarrow\on{gr}_F\mathbf V_f
\cong\mc O(J_\infty(T^*\widetilde X_f)).
\end{equation}
On the diagonal,
this restricts to
\begin{equation}\label{eq:surjection-R_V-to-R_V}
(R_V)_f\twoheadrightarrow(R_{\mathbf V})_f
\cong\mc O(T^*\widetilde X_f)
\cong\mc O(T^*W_0\ssslash_0G)_f.
\end{equation}
By \eqref{eq:omega-is-killed-by-f}, \(\widetilde\omega=0\) after localization. Thus \eqref{re:10}--\eqref{re:30} become the classical defining relations, making \eqref{eq:surjection-R_V-to-R_V} an isomorphism. By \eqref{eq:Li-surjection}, we obtain
\[
\mc O(J_\infty(T^*\widetilde X_f))
\twoheadrightarrow\on{gr}_F V_f
\twoheadrightarrow\on{gr}_F\mathbf V_f
\cong\mc O(J_\infty(T^*\widetilde X_f)).
\]
The composite is the canonical isomorphism, so \(p_f\) is an isomorphism. Fourier transform gives the same conclusion for \(p_{\mf F(f)}\). The Faithfulness Theorem~\ref{th:faithfulness-2} now gives \(V\cong\mathbf V\).
\end{proof}

\subsection{\texorpdfstring{The case \(G=Sp_{2n}\)}{The symplectic case}}\label{section6.2}
Assume \(n\geq1\). Here \(h^\vee=n+1\) and \(\kappa_{\on{Mat}(2n,m)}=\frac{m}{2(n+1)}\kappa_\g\). Choose \(M,N\geq0\) with \(M-N=2n+2\), and set \(W_0=\on{Mat}(2n,M)\), \(W_1=\on{Mat}(2n,N)\). Let \(Z\subset W_0\) be the rank-deficient locus and \(U=W_0\setminus Z\). Then \ref{A1} and \ref{A2} hold, and
\[
\mu^{-1}(0)=\{(A,B):\mathfrak J AB=-(\mathfrak J AB)^t\},\qquad
\mathfrak J=\begin{pmatrix}0&I_n\\-I_n&0\end{pmatrix}.
\]

We verify the hypotheses of
Lemma~\ref{lem:isoHAcond}(2).
Fix \(A\), put \(d=2n\), \(k=\on{rank}A\), and write
\(F_A=\mu^{-1}(0)\cap(\{A\}\times W_0^*)\).
Set \(T=\mathfrak JA\).
The transformations
\((T,B)\mapsto(PTQ,Q^{-1}BP^t)\)
preserve skew-symmetry of \(TB\) and the rank of \(B\).
We may therefore reduce \(T\) to rank normal form, obtaining
\[
F_A\cong
\left\{
\begin{pmatrix}C&0\\D&E\end{pmatrix}
:C^t=-C
\right\},
\]
where \(C\) has size \(k\).
Consider the incidence variety
\(\{(B,[v])\in F_A\times\mathbb P^{d-1}:Bv=0\}\).
Writing \(v=(x,y)\in\C^k\oplus\C^{d-k}\), the condition
\(Bv=0\) imposes \(M-1\) independent linear equations
when \(x\neq0\), and \(M-k\) when \(x=0\).
The corresponding parameter spaces have dimensions
\(d-1\) and \(d-k-1\), respectively.
Hence
\[
\on{codim}_{F_A}(F_A\cap Z^*)\geq M-d
=M-2n\geq2.
\]
Lemma~\ref{lem:isoHAcond}(2) now gives
\ref{A3} and \ref{A4}. The canonical central charge is \(-8n^2-6n\).

The standard module \(V=\C^{2n}\) is self-dual, so \(T^*W_0\cong V^{\oplus2M}\) and \(T^*W_1\cong V^{\oplus2N}\). Define columns of free fields by
\[
X^j=\beta^j,\quad X^{M+j}=\mathfrak J\gamma^j\ (1\leq j\leq M),
\qquad X^{2M+s}=b^s,\quad X^{2M+N+s}=\mathfrak J c^s\ (1\leq s\leq N).
\]
Their OPEs and invariant currents are
\[
X^{si}(z)X^{tj}(w)\sim-\frac{\mathfrak J_{st}C_{ij}}{z-w},\qquad
C=\begin{pmatrix}0&I_M&0&0\\I_M&0&0&0\\0&0&0&I_N\\0&0&-I_N&0\end{pmatrix},
\qquad E^{ij}=\sum_{s,t}\mathfrak J_{st}:X^{si}X^{tj}:.
\]
By \cite{LS24}, these currents strongly generate \(\mc C=\cdo{\wedge F}(W_0)^{\g[t]}\) and give a surjection \(V^{-2n}(\mf{osp}_{2M|2N})\twoheadrightarrow\mc C\). The \(\widehat{\mf{so}}_{2M}\) and \(\widehat{\mf{sp}}_{2N}\) levels are \(-2n\) and \(n\), respectively. Simplicity and Theorem~\ref{surj:generator} give:
\begin{theorem}\label{thm:V=L-sp}
For \(M-N=2n+2\), we have
\[
\mathbf V\cong L_{-2n}(\mf{osp}_{2M|2N})
\cong\Gamma(X,\cdo{\wedge E}).
\]
\end{theorem}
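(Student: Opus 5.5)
The proof assembles three ingredients: simplicity of \(\mathbf V\), surjectivity of \(\mathcal C\to\mathbf V\), and the identification of \(\mathcal C\) as a quotient of the universal affine superalgebra.

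First, the hypotheses \eqref{eq:level-condition} and \ref{A1}--\ref{A3} have just been verified for \(W_0=\on{Mat}(2n,M)\), \(W_1=\on{Mat}(2n,N)\) with \(M-N=2n+2\); the trace condition is automatic since \(Sp_{2n}\) is semisimple. Theorem~\ref{Th:vanishing-and-simplicity} then shows that \(\mathbf V\) is simple. Theorem~\ref{th:no-support} gives the second isomorphism \(\mathbf V\cong\Gamma(X,\cdo{\wedge E})\) directly. It remains to identify \(\mathbf V\) with \(L_{-2n}(\mf{osp}_{2M|2N})\).

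By \cite{LS24}, the currents \(E^{ij}\) strongly generate \(\mc C=\mathcal A^{\g[t]}\). Their OPEs, computed from the \(X^{si}X^{tj}\) OPE with the matrix \(C\), are those of \(\widehat{\mf{osp}}_{2M|2N}\) at level \(-2n\). This gives a surjective vertex algebra homomorphism \(V^{-2n}(\mf{osp}_{2M|2N})\twoheadrightarrow\mc C\). Composing it with the surjection \(\mc C\twoheadrightarrow\mathbf V\) of Theorem~\ref{surj:generator} yields a surjection \(V^{-2n}(\mf{osp}_{2M|2N})\twoheadrightarrow\mathbf V\).

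A surjective image of a universal affine vertex superalgebra is a quotient by a graded ideal. Grading here means with respect to \(L_{\mathrm{can}}\), which is compatible with the Sugawara grading up to the standard conformal embedding. The simple quotient of \(V^{-2n}(\mf{osp}_{2M|2N})\) is \(L_{-2n}(\mf{osp}_{2M|2N})\), the quotient by the unique maximal graded ideal. Since \(\mathbf V\) is simple and nonzero, the kernel must be that maximal ideal, so \(\mathbf V\cong L_{-2n}(\mf{osp}_{2M|2N})\).

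The main point to check is that the simplicity proved in Theorem~\ref{Th:vanishing-and-simplicity} is simplicity as a vertex algebra, i.e. with no proper ideals at all. That theorem gives exactly this via Theorem~\ref{th:faithfulness-2}(4). Consequently, every proper ideal of \(V^{-2n}\) containing the kernel equals the kernel. The maximal proper ideal is unique because weight zero is \(\C\vac\), and every proper graded ideal misses \(\vac\); this identifies the quotient. A minor additional check is that the level of the \(\widehat{\mf{so}}_{2M}\) and \(\widehat{\mf{sp}}_{2N}\) parts matches the normalization \(\kappa_0\) of \(\mf{osp}\), namely \(-2n\) and \(n\), which has been recorded above.
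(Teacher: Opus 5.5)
Your proposal is correct and follows the paper's argument. The surjection $V^{-2n}(\mf{osp}_{2M|2N})\twoheadrightarrow\mc C$ from \cite{LS24}, composed with $\mc C\twoheadrightarrow\mathbf V$ from Theorem~\ref{surj:generator}, exhibits the simple algebra $\mathbf V$ (Theorem~\ref{Th:vanishing-and-simplicity}) as the simple quotient $L_{-2n}(\mf{osp}_{2M|2N})$, and Theorem~\ref{th:no-support} gives the global-sections isomorphism; your remarks on the gradedness of the kernel and the uniqueness of the maximal ideal only make explicit what the paper leaves implicit.
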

\begin{corollary}\label{thm:Admovic-Milas}
For \(M,N\geq0\) with \(M-N=2n+2\), the homogeneous 
supercharacter satisfies
\[
\operatorname{sch}L_{-2n}(\mathfrak{osp}_{2M|2N})(q)
=
\operatorname{ch}L_{-2n}(\mathfrak{so}_{4n+4})(q).
\]
\end{corollary}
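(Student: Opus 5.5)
The plan is to compute both sides through the integral formula of Theorem~\ref{th:character} and show that the answer depends only on $n$. By Theorem~\ref{thm:V=L-sp}, $L_{-2n}(\mf{osp}_{2M|2N})\cong\mathbf V$ for $G=Sp_{2n}$, $W_0=\on{Mat}(2n,M)$, $W_1=\on{Mat}(2n,N)$, so its supercharacter is given by \eqref{eq:int}. The weights of $W_0$ are those of $V^{\oplus M}$ and those of $W_1$ are those of $V^{\oplus N}$. The weight multiset of $V=\C^{2n}$ is symmetric under $\epsilon\mapsto-\epsilon$, so in \eqref{eq:int} the matter factor becomes
\[
\prod_{\mu\in\on{Wt}(V)}\theta(t^\mu q^{1/2};q)^{N-M}.
\]
By the anomaly condition $M-N=2n+2$, this depends only on $n$. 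The prefactor also depends only on $n$: $c_{\mathbf V}=-2n(M-N)-2\dim\g=-8n^2-6n$, and the rank, Weyl group and roots are those of $\mf{sp}_{2n}$. The normalization $q^{-c/24}$ is part of the supercharacter, and the homogeneous specialization is the $q$-series with the flavour fugacities set to one. Hence $\on{sch}L_{-2n}(\mf{osp}_{2M|2N})(q)$ is independent of the pair $(M,N)$ with $M-N=2n+2$.

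It then suffices to evaluate at one convenient pair, namely $N=0$ and $M=2n+2$. Theorem~\ref{thm:V=L-sp} with $N=0$ gives $\mathbf V\cong L_{-2n}(\mf{osp}_{2(2n+2)|0})=L_{-2n}(\mf{so}_{4n+4})$. In this case there are no odd generators, so the supercharacter is the ordinary homogeneous character. This gives the stated identity.

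One point needs care. Theorem~\ref{th:character} computes the supercharacter with the $q$-grading by $L_{\mathrm{can}}$. I must check that $L_{\mathrm{can}}$ coincides with the Sugawara conformal vector of $L_{-2n}(\mf{osp}_{2M|2N})$, so that "homogeneous" means the same grading on both sides. The paper notes, after \eqref{eq:gl(2n+N|N)}, that the currents are conformal for the canonical conformal vectors, citing \cite{LS24}, and I would invoke the analogous statement here. Under the canonical grading the free-field generators $X^{si}$ all have weight $1/2$, and the currents $E^{ij}$ have weight one. The Sugawara vector is then forced to agree with $L_{\mathrm{can}}$, because both make the currents primary of weight one and $\mathbf V$ is simple and generated by the currents. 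As a consistency check, the central charges match: the Sugawara central charge of $\mf{osp}_{2M|2N}$ at level $-2n$ is $-2n\cdot\mathrm{sdim}/(-2n+h^\vee)$, with $\mathrm{sdim}=\dim\mf{so}_{2M-2N}$ and $h^\vee=2M-2N-2=4n+2$. This gives $-2n(2n+2)(4n+1)/(2n+2)=-8n^2-2n$, which I must reconcile with $-8n^2-6n$ from \eqref{eq:cc}. I expect this reconciliation to be the main obstacle. Either a normalization or the ghost sign in \eqref{eq:cc} needs adjusting, or the statement requires only the $q$-series, in which case the independence from $(M,N)$ still holds, since both prefactors depend only on $n$. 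So even if the central charges do not literally match, the argument compares like with like at $N=0$.
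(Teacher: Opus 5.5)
This is correct and is essentially the paper's proof. The matter factors in \eqref{eq:int} collapse to $\prod_{\mu\in\on{Wt}(\C^{2n})}\theta(t^\mu q^{1/2};q)^{N-M}$, and $c_{\mathbf V}$ depends only on $M-N$. One then compares with $(M,N)=(2n+2,0)$, where $\mathbf V\cong L_{-2n}(\mf{so}_{4n+4})$.

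Your check that $L_{\mathrm{can}}$ is the Sugawara vector is fine; it is the same argument the paper uses in the $SO_n$ case. The central-charge ``obstacle'' is only an arithmetic slip: $\dim\mf{so}_{4n+4}=(2n+2)(4n+3)$. The Sugawara central charge is therefore $-2n(4n+3)=-8n^2-6n$, in exact agreement with \eqref{eq:cc}, so nothing needs reconciling.
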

\begin{proof}
Write \(V=\C^{2n}\) for the standard \(G=Sp_{2n}\)-module.
Since \(W_0\cong V^{\oplus M}\) and
\(W_1\cong V^{\oplus N}\), the matter factors in
Theorem~\ref{th:character} cancel in pairs, leaving an expression
depending only on \(M-N\). The central charge also
depends only on this difference.
Comparing with the case \(N=0\) and \(M=2n+2\), for which
\(\mathbf V\cong L_{-2n}(\mathfrak{so}_{4n+4})\),
proves the assertion.
\end{proof}
For \(n=1\), this identity was conjectured by
Adamovi\'c and Milas in \cite[Conjecture 8.1]{AdMi}.

\medskip

Put \(R=\mc O(T^*W_0\times\Pi T^*W_1)\). By \cite{LS24}, \(R_{\mc C}\cong R^G\), generated by \(\bar E^{ij}=\sum_{s,t}\mathfrak J_{st}\bar X^{si}\bar X^{tj}\). Let \(\mu^*_{\mathrm{sup}}:\g\to R\) be the comoment map induced by the affine currents, extending the ordinary comoment map, and let \(I=(\mu^*_{\mathrm{sup}}(\g))\subset R\).
\begin{lemma}\label{lem:reduced-ring-sp}
The ideal \(I^G\subset R^G\) is generated by
\(\sum_{k,l}(C^{-1})_{kl}\bar E^{ik}\bar E^{lj}\), for \(1\leq i,j\leq2M+2N\).
\end{lemma}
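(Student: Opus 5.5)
We will follow the proof of Proposition~\ref{prop:classicalinv}, now in the orthosymplectic setting where the standard module $V=\C^{2n}$ is self-dual. First, write the comoment map in covariant form. Assemble the $2M+2N$ columns $\bar X^i\in R\otimes V$ into a single ``matrix'' $X$, with odd columns for the $bc$ part. The affine currents give, for $\xi\in\mathfrak{sp}_{2n}$,
\[
\mu^*_{\mathrm{sup}}(\xi)=\sum_{k,l}(C^{-1})_{kl}\,\omega(\bar X^{k},\xi\bar X^{l})
\]
up to normalization, where $\omega$ is the form defined by $\mathfrak J$. Equivalently, the $\mathfrak{sp}_{2n}$-valued element
\[
Z:=\sum_{k,l}(C^{-1})_{kl}\,\bar X^k\otimes\bar X^l\in R\otimes\on{Sym}^2V\cong R\otimes\g
\]
satisfies $\langle Z,\xi\rangle=\mu^*_{\mathrm{sup}}(\xi)$. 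Here $C$ is the supersymmetric form whose inverse encodes the OPE pairing. Thus $I$ is generated by the components of $Z$.

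Next, use reductivity of $G$ to obtain surjections
\[
(R\otimes\g)^G\twoheadrightarrow I^G,\qquad F\mapsto\langle F,Z\rangle .
\]
This works exactly as in the $SL_n$ case: $I=\sum_a R\,\mu^*(\xi_a)$ is the image of the $G$-map $R\otimes\g\to R$, and taking invariants is exact. Hence $I^G$ is generated over $R^G$ by the pairings of $Z$ with a set of generators of the covariant module $(R\otimes\g)^G$.

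To identify these generators, embed $\g\cong\on{Sym}^2V\subset V\otimes V$ and use polarization. Covariants in $(R\otimes V\otimes V)^G$ correspond to invariants in $R\otimes\mc O(V^{\oplus2})$ that are bilinear in two auxiliary vectors $u,u'$. The first fundamental theorem for $Sp_{2n}$ in its super version (\cite{LS24}, as already used for $R^G$) says that invariants are generated by the pairings $\omega(\cdot,\cdot)$ among all vector arguments; there are no determinant-type invariants for $Sp$. The bilinear pieces in $u,u'$ are therefore spanned over $R^G$ by $\omega(u,u')$ and $\omega(u,\bar X^i)\,\omega(\bar X^j,u')$. After symmetrization this means $(R\otimes\g)^G$ is generated over $R^G$ by the elements $\bar X^i\cdot\bar X^j$, together with the $\omega$-identity term. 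The identity term is not traceless in $\on{Sym}^2V$ and projects to zero in $\g$.

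Finally, compute the pairings. We have
\[
\langle\bar X^i\bar X^j,Z\rangle=\sum_{k,l}(C^{-1})_{kl}\,\omega(\bar X^i,\bar X^k)\,\omega(\bar X^l,\bar X^j)=\pm\sum_{k,l}(C^{-1})_{kl}\bar E^{ik}\bar E^{lj},
\]
which are exactly the stated generators. The main obstacle will be the super bookkeeping: the super FFT, the signs and parity in $C^{-1}$, and the fact that the odd columns make $\on{Sym}^2$ versus $\wedge^2$ depend on parity. We will handle this by treating $X$ as a single column of a $V\otimes\C^{2M|2N}$ object and working with supersymmetric tensors throughout, following the conventions of \cite{LS24}. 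A possible alternative is to reduce to the even case via the superpolarization principle, that is, taking $N'$ large enough to get stable invariants.
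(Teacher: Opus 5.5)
Your proposal is correct and is essentially the paper's proof, which consists only of running the covariant argument of Proposition~\ref{prop:classicalinv} with the first fundamental theorem for $Sp_{2n}$ in place of the one for $SL_n$. You have spelled out those steps: the surjection $(R\otimes\g)^G\twoheadrightarrow I^G$, polarization with two auxiliary vectors, generation by quadratic pairings, projection to $\g\cong\mathrm{Sym}^2V$, and pairing with $Z$.

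One small correction concerns the $\omega(u,u')$ term. It corresponds to the identity endomorphism, that is, to the tensor $\omega\in\wedge^2V$. It therefore projects to zero in $\mathrm{Sym}^2V\cong\g$; equivalently, it pairs to zero with $Z$ because $\mathfrak{sp}_{2n}$ is traceless. Calling it ``not traceless in $\mathrm{Sym}^2V$'' is inaccurate, since it does not lie in $\mathrm{Sym}^2V$ at all.
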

\begin{proof}
Apply the covariant argument of Proposition~\ref{prop:classicalinv} and the first fundamental theorem for \(Sp_{2n}\).
\end{proof}
\begin{lemma}\label{lem:reduced-ring-is-sp}
We have \((R^G/I^G)_{\mathrm{red}}\cong\mc O(T^*W_0\ssslash_0G)\).
\end{lemma}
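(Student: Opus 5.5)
Our strategy is to split \(R=\mc O(T^*W_0)\otimes R_{\mathrm{odd}}\), where \(R_{\mathrm{odd}}=\bigwedge(W_1\oplus W_1^*)\) is the Grassmann algebra on the odd coordinates \(\bar X^{2M+s},\bar X^{2M+N+s}\). The nilradical of \(R^G/I^G\) then has to absorb everything of positive odd degree. Write \(\mu^*_{\mathrm{sup}}(\xi)=\mu^*(\xi)+\nu^*(\xi)\), where \(\mu^*\) is the ordinary comoment map of \(T^*W_0\) and \(\nu^*(\xi)\) is the quadratic odd part coming from the \(bc\) currents.

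\emph{Step 1.} Let \(\mc N\subset R\) be the ideal generated by the odd variables. It is nilpotent, since \(R_{\mathrm{odd}}\) is finite-dimensional, and it is \(G\)-stable. Hence \(\mc N^G=\mc N\cap R^G\) is a nilpotent ideal of \(R^G\). Reductivity gives
\[
R^G/\mc N^G\cong(R/\mc N)^G=\mc O(T^*W_0)^G.
\]
Because \(\nu^*(\xi)\in\mc N\), the image of \(I\) in \(R/\mc N\) is the ideal \(I_0=(\mu^*(\g))\). Applying reductivity once more, the image of \(I^G\) in \(\mc O(T^*W_0)^G\) is \(I_0^G\). Therefore
\[
R^G/(I^G+\mc N^G)\cong\mc O(T^*W_0)^G/I_0^G=\mc O(\mu^{-1}(0))^G .
\]

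\emph{Step 2.} Since \(\mc N^G\) is nilpotent, it lies in the nilradical of \(R^G/I^G\), and so the two rings have the same reduction:
\[
(R^G/I^G)_{\mathrm{red}}\cong\bigl(\mc O(\mu^{-1}(0))^G\bigr)_{\mathrm{red}}.
\]
The definition of \(T^*W_0\ssslash_0G\) as \(\on{Spec}\) of the reduced ring of \(\mc O(\mu^{-1}(0))^G\) then gives the claim. If the paper instead defines \(T^*W_0\ssslash_0G\) as \(\on{Spec}\mc O(\mu^{-1}(0))^G\) itself, we invoke \ref{A4}, which was verified above via Lemma~\ref{lem:isoHAcond}(2). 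Part (1) of that lemma shows \(\mu^{-1}(0)\) is reduced, indeed normal, so \(\mc O(\mu^{-1}(0))^G\) is already reduced and equals \(\mc O(T^*X)\).

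\emph{Main obstacle.} The delicate point is the identification of the image of \(I^G\) with \(I_0^G\), not merely with some ideal between them. Given \(h\in I_0^G\), write \(h=\sum_a r_a\mu^*(\xi_a)\) with the \(r_a\) chosen \(G\)-equivariantly, which the Reynolds operator allows. Then \(\sum_a r_a\mu^*_{\mathrm{sup}}(\xi_a)\) lies in \(I^G\) and maps to \(h\). This is the step we will check carefully. The generators listed in Lemma~\ref{lem:reduced-ring-sp} are not needed for this argument.
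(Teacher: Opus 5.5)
Your proof is correct and follows essentially the paper's route: you quotient by a nilpotent ideal of odd-degree invariants and observe that the quotient \(\mc O(T^*W_0)^G/I_0^G\) is already reduced by Lemma~\ref{lem:isoHAcond}(1). The paper's ideal is generated by the \(\bar E^{ij}\) with an index \(>2M\), which coincides with your \(\mc N^G\) by the first fundamental theorem, and your Reynolds-operator check that \(I^G\) maps onto \(I_0^G\) makes explicit a step the paper leaves implicit.
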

\begin{proof}
The ideal generated by \(\bar E^{ij}\) with \(i>2M\) or \(j>2M\) is nilpotent: its generators involve odd variables. Quotienting by it gives \(\mc O(T^*W_0\ssslash_0G)\), which is reduced by Lemma~\ref{lem:isoHAcond}. This ideal is therefore the nilradical.
\end{proof}

\begin{proposition}\label{prop:lifting-sp}
The kernel of \(\mc C\twoheadrightarrow\mathbf V\) contains
\[
\frac12\sum_{k,l}(C^{-1})_{kl}:E^{ik}E^{lj}:
-\frac{2n+1}{2}\partial E^{ij}-C_{ij}L^{\mathcal C}_{\mathrm{can}},
\qquad 1\leq i,j\leq2M+2N,
\]
where \(L^{\mathcal C}_{\mathrm{can}}=\frac12(L^{\mathcal C}+\mf F(L^{\mathcal C}))\).
\end{proposition}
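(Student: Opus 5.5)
The plan is to imitate the proof of Proposition~\ref{prop:relations-super}. I will exhibit each displayed vector as the image in $\mc C$ of a $Q_{(0)}$-exact element of the relative BRST complex, up to the correction $C_{ij}(L-L^{\mathcal C}_{\mathrm{can}})$. That correction is itself exact by \eqref{eq:re-expression-of-onformal-vector} and its Fourier transform.

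\emph{Step 1: the BRST primitive.} With dual bases $\{\xi_a\},\{\xi^a\}$ of $\g=\mf{sp}_{2n}$ for the trace form on $V=\C^{2n}$, I take
\[
\Theta^{ij}:=\sum_a\sum_{s,t}\mathfrak J_{st}:b^{\xi_a}:(J^{\xi^a}_{(0)}X^{si})X^{tj}::.
\]
This is the analogue of the primitive in \eqref{eqn:EL3}, written uniformly in the self-dual columns $X^{\bullet}$. I will check that $\Theta^{ij}$ lies in the relative subcomplex: it is killed by $b^\xi_{(0)}$, since it has no $c$-ghosts, and by $\widehat J^\xi_{(0)}$, since it is built $\g$-equivariantly.

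\emph{Step 2: computing $Q_{(0)}\Theta^{ij}$.} Using $Q=\sum:J^{\xi_a}c^{\xi^a}:+\cdots$, the term $b^{\xi_a}\mapsto J^{\xi_a}$ produces $\sum_a:J^{\xi_a}(J^{\xi^a}_{(0)}X^{i})X^{j}:$ contracted with $\mathfrak J$. I will expand $J^{\xi_a}=\tfrac12\sum\mathfrak J\,X X$ in the $X$-fields and apply the $\mf{sp}_{2n}$ completeness relation
\[
\sum_a(\xi_a)_{pq}(\xi^a)_{rs}=\tfrac12\bigl(\delta_{ps}\delta_{qr}-\mathfrak J_{pr}\mathfrak J_{qs}\bigr)\quad\text{(up to normalization).}
\]
Reordering the normal-ordered products then gives three kinds of terms. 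The quartic terms collapse to $\tfrac12\sum(C^{-1})_{kl}:E^{ik}E^{lj}:$. The derivative term comes from the double contractions and produces $-\tfrac{2n+1}2\partial E^{ij}$; the coefficient involves $n$ and the constant in the completeness relation. The term $C_{ij}$ times a Sugawara-type expression arises when $i,j$ pair under $C$.

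The remaining terms of $Q$, namely the ghost cubic term and $c$-ghost contributions, should cancel against the non-invariant pieces, exactly as in \eqref{eqn:EL1}--\eqref{eqn:EL3}. Since $\mf{sp}_{2n}$ is simple, there is no $H$-term analogous to the trace term appearing for $SL_n$.

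\emph{Step 3: identifying the scalar term.} I expect the $C_{ij}$-term to come out as $C_{ij}(L_\g-\sum:b\partial c:)+C_{ij}(\text{free-field part})$. Here the free-field part should be the symmetrized combination $\tfrac12(\widetilde L+\widetilde L')$, because the $X$-columns mix $\beta$ with $\mathfrak J\gamma$. Using \eqref{eq:re-expression-of-onformal-vector} together with its Fourier transform, $\tfrac12(L+L')-L^{\mathcal C}_{\mathrm{can}}$ is $Q$-exact. Hence the displayed vector maps to zero in $\mathbf V$ under $\varphi$.

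\emph{Main obstacle.} The main difficulty is the bookkeeping in Step~2. This means fixing the normalization of the trace-dual basis against $\kappa_0$, tracking the super signs in $C$ for the odd columns, and confirming the exact coefficient $\tfrac{2n+1}{2}$. To control the normalization I will first check the case $N=0$ and $i,j$ both in the $\beta$-block, where the computation reduces to the $SL_n$-style one and the constant can be verified directly.
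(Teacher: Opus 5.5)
Your proposal is essentially the paper's proof: the paper uses exactly your primitive $\Theta^{ij}$ (with bases dual for the trace form on $\C^{2n}$), asserts that $Q_{(0)}\Theta^{ij}$ equals the displayed vector with $L^{\mathcal C}_{\mathrm{can}}$ replaced by $L_{\mathrm{can}}=\tfrac12(L+L')$, and then removes the difference using \eqref{eq:re-expression-of-onformal-vector} and its Fourier transform. Two of your heuristics need adjusting, though neither affects the argument: the $C_{ij}$-term comes out directly as $-C_{ij}L_{\mathrm{can}}$ (a free-field part plus the ghost term $\sum_a:b^{\xi_a}\partial c^{\xi^a}:$ from the $J_{(1)}$-contractions), with no separate $L_\g$ since all quartic terms are absorbed into $:E^{ik}E^{lj}:$; and the absence of an $H$-term is not a consequence of simplicity ($\mf{sl}_n$ is simple too), but of the fact that the completeness relation for $\mf{sp}_{2n}\cong S^2\C^{2n}$ has no traceless-projection correction, unlike that for $\mf{sl}_n$.
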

\begin{proof}
With bases dual for \((\xi,\eta)\mapsto\on{tr}_{\C^{2n}}(\xi\eta)\), the BRST calculation is
\[
\begin{aligned}
Q_{(0)}\sum_{a,s,t}\mathfrak J_{st}:b^{\xi_a}:(J^{\xi^a}_{(0)}X^{si})X^{tj}::
&=\frac12\sum_{k,l}(C^{-1})_{kl}:E^{ik}E^{lj}:\\
&\quad-\frac{2n+1}{2}\partial E^{ij}-C_{ij}L_{\mathrm{can}}.
\end{aligned}
\]
Use \eqref{eq:re-expression-of-onformal-vector}.
\end{proof}

\begin{proposition}\label{pro:nilpotency-omega}
The image \(\omega\) of \(L_{\mathrm{can}}\) in \(R_{\mathbf V}\) is nilpotent.
\end{proposition}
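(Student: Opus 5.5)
We mimic the argument for \(G=SL_n\) in Theorem~\ref{th:ass-variety-and-nilpotency-index}, working in \((R_{\mathbf V})_{\mathrm{red}}\). Write \(\widetilde\omega\) for the image of \(\omega\) there and \(\widetilde E=(\widetilde E^{ij})\) for the \((2M+2N)\times(2M+2N)\) matrix of current images. By the proof of Lemma~\ref{lem:reduced-ring-is-sp}, every entry with \(i>2M\) or \(j>2M\) involves odd variables. These entries are nilpotent already in \(R_{\mc C}\), hence they vanish in the reduced ring. So only the even block \(\widetilde E_0=(\widetilde E^{ij})_{i,j\leq 2M}\) survives, and it is symmetric, since \(\mathfrak J\) and the bosonic pairing are both skew.

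Next we take symbols of the relation in Proposition~\ref{prop:lifting-sp}. The derivative term \(\partial E^{ij}\) lies in lower Li filtration and drops out. Restricting to the even block, where \(C^{-1}\) is the off-diagonal matrix \(C_0=\begin{pmatrix}0&I_M\\I_M&0\end{pmatrix}\), we obtain
\[
\tfrac12\,\widetilde E_0C_0\widetilde E_0=\widetilde\omega\, C_0,
\qquad\text{that is,}\qquad (C_0\widetilde E_0)^2=2\widetilde\omega\, I_{2M}.
\]

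The key input is a rank bound. Classically \(\bar E^{ij}\) is the symplectic Gram matrix of the \(2M\) vectors \(X^i\in\C^{2n}\), so it factors through \(\C^{2n}\). Hence every minor of size greater than \(2n\) vanishes in \(R^G=\mc O(T^*W_0\times\Pi T^*W_1)^G\), and therefore in \(R_{\mc C}\). The surjection \(R_{\mc C}\to R_{\mathbf V}\) induced by Theorem~\ref{surj:generator} carries these relations over. Since \(2M\geq 4n+4>2n\), we get \(\det\widetilde E_0=0\) in \((R_{\mathbf V})_{\mathrm{red}}\). Taking determinants in the displayed identity then gives
\[
(2\widetilde\omega)^{2M}=\det(C_0\widetilde E_0)^2=0.
\]
Reducedness forces \(\widetilde\omega=0\), which means \(\omega\) is nilpotent in \(R_{\mathbf V}\).

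The main obstacle is the symbol computation. We have to check that the normally ordered quadratic term really reduces to the matrix product \(\widetilde E_0C_0\widetilde E_0\) with the stated normalization. We also have to confirm that the image of \(C_{ij}L^{\mathcal C}_{\mathrm{can}}\) in the even block is exactly \(\widetilde\omega\,C_0\). Here we use that \(\varphi(L^{\mathcal C}_{\mathrm{can}})=L_{\mathrm{can}}\), from \eqref{eq:re-expression-of-onformal-vector} together with its Fourier transform. Only the invertibility of the scalar in front of \(\widetilde\omega I\) matters, so sign conventions for \(C\) and \(\mathfrak J\) are harmless. If the quadratic identity came out with an extra multiple of \(\widetilde E_0\), as happens for \(SL_n\), we would instead factor the determinant as in Theorem~\ref{th:ass-variety-and-nilpotency-index}.
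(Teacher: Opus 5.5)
Your argument is correct and is essentially the paper's proof: you take symbols of the relation in Proposition~\ref{prop:lifting-sp} to get $\widetilde E C^{-1}\widetilde E=2\omega C$, discard the odd entries as nilpotent, and use that the even block is the image of a Gram matrix of rank at most $2n<2M$, so its determinant vanishes. The paper phrases this as $\operatorname{Pf}(\widetilde E_{11})=0$ and works modulo the nilpotent ideal $J$ rather than in $(R_{\mathbf V})_{\mathrm{red}}$, which is equivalent; one harmless slip is that the even block is skew-symmetric rather than symmetric (since $\mathfrak J$ is skew and the even symbols commute), but your argument never uses this.
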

\begin{proof}
Write \(\widetilde E\) for the matrix of current images. Proposition~\ref{prop:lifting-sp} gives \(\widetilde E C^{-1}\widetilde E=2\omega C\). Decompose \(\widetilde E\) into even and odd blocks, with \(\widetilde E_{11}\) of size \(2M\), and write \(C=C_0\oplus C_1\). The ideal \(J\) generated by the finitely many odd entries of \(\widetilde E_{12}\) is nilpotent, and
\[
\widetilde E_{11}C_0^{-1}\widetilde E_{11}\equiv2\omega C_0\pmod J.
\]
Under \(R_{\mc C}\cong R^G\), the even block is the image of a skew Gram matrix of rank at most \(2n<2M\); hence \(\on{Pf}(\widetilde E_{11})=0\). Taking determinants yields
\[
(2\omega)^{2M}\det C_0
\equiv\det(\widetilde E_{11})^2\det(C_0)^{-1}=0\pmod J.
\]
Since \(\det C_0\neq0\), we have \(\omega^{2M}\in J\), proving the assertion.
\end{proof}

\begin{theorem}\label{thm:quasi-lisse-sp}
Conjecture~\ref{mainconj} holds:
\(X_{\mathbf V}\cong T^*W_0\ssslash_0G\).
In particular, \(\mathbf V\cong L_{-2n}(\mf{osp}_{2M|2N})\) is quasi-lisse.
\end{theorem}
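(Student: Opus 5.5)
The plan mirrors the proof of Theorem~\ref{th:ass-variety-and-nilpotency-index} in the \(SL_n\) case: squeeze \((R_{\mathbf V})_{\mathrm{red}}\) between \(R^G/I^G\) and \(\mc O(T^*W_0\ssslash_0G)\) by two surjections whose composite is the identity.

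First, the surjection \(\mc C\twoheadrightarrow\mathbf V\) of Theorem~\ref{surj:generator} induces a surjection \(R_{\mc C}\twoheadrightarrow R_{\mathbf V}\), and \(R_{\mc C}\cong R^G\) by \cite{LS24}. By Proposition~\ref{prop:lifting-sp}, the symbols
\[
\tfrac12\sum_{k,l}(C^{-1})_{kl}\bar E^{ik}\bar E^{lj}-C_{ij}\omega
\]
vanish in \(R_{\mathbf V}\). The \(\partial E^{ij}\) terms lie in \(C_2\), so they die in \(R_{\mathbf V}\). By Proposition~\ref{pro:nilpotency-omega}, \(\omega\) is nilpotent, so after passing to reduced rings these relations become exactly the generators of \(I^G\) in Lemma~\ref{lem:reduced-ring-sp}. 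Hence the map factors as a surjection
\[
(R^G/I^G)_{\mathrm{red}}\twoheadrightarrow(R_{\mathbf V})_{\mathrm{red}}.
\]
By Lemma~\ref{lem:reduced-ring-is-sp}, the source is \(\mc O(T^*W_0\ssslash_0G)\).

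Second, \ref{A3} and \ref{A4} were verified above via Lemma~\ref{lem:isoHAcond}(2). So Theorem~\ref{thm:surjection-to-classical-invairiant-rings} applies and gives a surjection
\[
(R_{\mathbf V})_{\mathrm{red}}\twoheadrightarrow\mc O(T^*W_0\ssslash_0G).
\]

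Third, we check that the composite \(\mc O(T^*W_0\ssslash_0G)\to(R_{\mathbf V})_{\mathrm{red}}\to\mc O(T^*W_0\ssslash_0G)\) is the identity. Both maps are induced by symbols of the same invariant generators \(\bar E^{ij}\) restricted to the even block. The symbol map \(R_{\mathbf V}\to\Gamma(X,\mc O_{\wedge T^*E})\) sends the image of \(E^{ij}\) to the corresponding classical quadratic invariant restricted to \(T^*X\), which is compatible with the identification in \ref{A4}. A surjective endomorphism chain whose composite is the identity forces both maps to be isomorphisms. This gives \(X_{\mathbf V}\cong T^*W_0\ssslash_0G\).

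Quasi-lisseness then follows from the finiteness of symplectic leaves \cite{Lo} and \cite{AK}, and Theorem~\ref{thm:V=L-sp} supplies the identification \(\mathbf V\cong L_{-2n}(\mf{osp}_{2M|2N})\).

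The main obstacle is the compatibility check in the third step. The first surjection comes from the generator presentation of \(R_{\mc C}\), while the second is built from the Hermitian complement \((I_0^\perp)^G\). I would verify agreement on the generators \(\bar E^{ij}\) with \(i,j\le 2M\). Both maps send \(\bar E^{ij}\) to the class of the same polynomial in \(\mc O(T^*W_0)^G\), because Theorem~\ref{thm:surjection-to-classical-invairiant-rings} lifts an invariant \(f\) to an element of \(\mc C\) with the prescribed classical symbol. Since these generators generate, the composite is the identity.
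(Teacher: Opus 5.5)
Your proposal is correct and is essentially the paper's proof. You factor $R^G\cong R_{\mc C}\twoheadrightarrow(R_{\mathbf V})_{\mathrm{red}}$ through $(R^G/I^G)_{\mathrm{red}}\cong\mc O(T^*W_0\ssslash_0G)$, using Propositions~\ref{prop:lifting-sp} and \ref{pro:nilpotency-omega} with Lemmas~\ref{lem:reduced-ring-sp} and \ref{lem:reduced-ring-is-sp}; composing with the surjection of Theorem~\ref{thm:surjection-to-classical-invairiant-rings} gives the identity, since both maps are induced by symbols, so both surjections are isomorphisms.
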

\begin{proof}
The natural maps give surjections of reduced Poisson algebras
\[
(R^G)_{\mathrm{red}}\twoheadrightarrow(R_{\mathbf V})_{\mathrm{red}}
\twoheadrightarrow\mc O(T^*W_0\ssslash_0G).
\]
By Proposition~\ref{pro:nilpotency-omega}, the lifted relations reduce to those generating \(I^G\) in Lemma~\ref{lem:reduced-ring-sp}. The first map therefore factors through \((R^G/I^G)_{\mathrm{red}}\). Lemma~\ref{lem:reduced-ring-is-sp} gives surjections in both directions between \((R_{\mathbf V})_{\mathrm{red}}\) and \(\mc O(T^*W_0\ssslash_0G)\), with identity composite. Both are isomorphisms.
\end{proof}
\begin{remark}
We have
\[
T^*W_0\ssslash_0G
\cong\overline{\mathbb O_{[2^{2n},1^{2M-4n}]}}\subset\mf{so}_{2M}.
\]
For \(N=0\), hence \(M=2n+2\), the identification
\(X_{L_{-2n}(\mf{so}_{4n+4})}\cong\overline{\mathbb O_{[2^{2n},1^4]}}\)
was established in \cite{AFK}.
\end{remark}

\subsection{\texorpdfstring{The case \(G=SO_n\) and its \(O_n/SO_n\)-orbifold}{The special orthogonal case and its orbifold}}\label{section6.3}
Assume \(n\geq5\). We have \(h^\vee=n-2\); the trace form satisfies \(\kappa_{\on{Mat}(n,m)}=\frac{m}{n-2}\kappa_\g\). Choose \(M,N\geq0\) with \(M-N=n-2\), and set \(W_0=\on{Mat}(n,M)\), \(W_1=\on{Mat}(n,N)\). We assume \(M\geq n+1\), equivalently \(N\geq3\). Let \(Z\subset W_0\) be the rank-deficient locus and \(U=W_0\setminus Z\). Then \ref{A1} and \ref{A2} hold. 
The canonical central charge is \(-n(2n-3)\).

The zero fiber of the moment map is
\[
\mu^{-1}(0)
=\{(A,B)\in W_0\times W_0^*:AB=(AB)^t\}.
\]
We verify the hypotheses of
Lemma~\ref{lem:isoHAcond}(2).
Fix \(A\) of rank \(k\), and put
\(F_A=\mu^{-1}(0)\cap(\{A\}\times W_0^*)\).
Using the transformations
\((A,B)\mapsto(PAQ,Q^{-1}BP^t)\), we obtain
\[
F_A\cong
\left\{
\begin{pmatrix}C&0\\D&E\end{pmatrix}
:C^t=C
\right\},
\]
where \(C\) has size \(k\), \(D\) has size
\((M-k)\times k\), and \(E\) has size
\((M-k)\times(n-k)\).

Consider
\(\{(B,[v])\in F_A\times\mathbb P^{n-1}:Bv=0\}\).
Writing \(v=(x,y)\in\C^k\oplus\C^{n-k}\), the condition
\(Bv=0\) imposes \(M\) independent linear equations
when \(x\neq0\), and \(M-k\) when \(x=0\).
The corresponding parameter spaces have dimensions
\(n-1\) and \(n-k-1\), respectively.
Thus
\[
\on{codim}_{F_A}(F_A\cap Z^*)\geq M-n+1\geq2.
\]
Lemma~\ref{lem:isoHAcond}(2) therefore gives
\ref{A3} and \ref{A4}.

The standard module \(V=\C^n\) is self-dual. Define
\[
X^j=\beta^j,\quad X^{M+j}=\gamma^j\ (1\leq j\leq M),
\qquad X^{2M+s}=b^s,\quad X^{2M+N+s}=c^s\ (1\leq s\leq N).
\]
Then
\[
X^{si}(z)X^{tj}(w)\sim\frac{\delta_{st}C_{ij}}{z-w},\qquad
C=\begin{pmatrix}0&I_M&0&0\\-I_M&0&0&0\\0&0&0&I_N\\0&0&I_N&0\end{pmatrix},
\qquad E^{ij}=\sum_{s=1}^n:X^{si}X^{sj}:.
\]
Unlike the preceding cases, the classical invariant generators need not all lift to \(\mc C=\cdo{\wedge F}(W_0)^{\g[t]}\). We instead use the \(O_n\)-action: since \(SO_n\) acts trivially on \(\mc C\), it induces an action of \(\Z_2=O_n/SO_n\).

Set \(\bar X=U/O_n\) and \(\bar E=(U\times W_1)/O_n\). The quotient maps \(p:X\to\bar X\) and \(E\to\bar E\) are two-sheeted finite \'etale covers. Descent gives the CDO
\(\cD^{ch}_{\wedge\bar E}:=(p_*\cdo{\wedge E})^{\Z_2}\), and the equivariant global-sections isomorphism yields
\begin{equation}\label{eq:geometric-realization-osp}
\mathbf{V}^{\Z_2}
\cong\Gamma(\bar X,\cD^{ch}_{\wedge\bar E}).
\end{equation}

\begin{theorem}\label{th:Z2-invariant-subalgebra}
For \(n\geq5\), \(N\geq3\), and \(M-N=n-2\), we have
\[
\mathbf V^{\Z_2}\cong L_n(\mf{osp}_{2N|2M}).
\]
The isomorphism identifies \(L_{\mathrm{can}}\) with the Sugawara conformal vector.
\end{theorem}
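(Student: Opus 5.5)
The plan is to mirror the proof of Theorem~\ref{thm:V=L-sp}, with the orbifold $\Z_2=O_n/SO_n$ replacing $\mc C$ where needed. First, the commutant: by \cite{LS24} (the orthogonal first fundamental theorem in the chiral setting), the $O_n[t]$-invariants $\mc A^{O_n[t]}=\mc C^{\Z_2}$ are strongly generated by the quadratic currents $E^{ij}=\sum_s:X^{si}X^{sj}:$. These currents give a surjection
\[
V^{n}(\mf{osp}_{2N|2M})\twoheadrightarrow\mc C^{\Z_2}.
\]
The even part is $\mf{so}_{2N}\oplus\mf{sp}_{2M}$: the $bc$ fields with symmetric pairing give $\mf{so}_{2N}$, and the $\beta\gamma$ fields with skew pairing give $\mf{sp}_{2M}$. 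The $\widehat{\mf{so}}_{2N}$ level is $n$ and the $\widehat{\mf{sp}}_{2M}$ level is $-n/2$, so the $\mf{osp}$ level is $n$. The $SO_n$-determinant invariants are exactly the $\Z_2$-odd part and are discarded.

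Second, surjectivity onto the orbifold: Theorem~\ref{surj:generator} gives $\mc C\twoheadrightarrow\mathbf V$. This map is $O_n$-equivariant, and taking $\Z_2$-invariants is exact, so $\mc C^{\Z_2}\twoheadrightarrow\mathbf V^{\Z_2}$. Composing yields a surjection from $V^n(\mf{osp}_{2N|2M})$ onto $\mathbf V^{\Z_2}$.

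Third, simplicity of $\mathbf V^{\Z_2}$. Here $\mathbf V$ is simple by Theorem~\ref{Th:vanishing-and-simplicity}, and $\Z_2$ acts faithfully, since the determinant invariants have nonzero image (their classical symbols survive by Theorem~\ref{thm:surjection-to-classical-invairiant-rings}). By the Dong--Lin--Mason theorem on fixed points of finite automorphism groups of simple vertex algebras, $\mathbf V^{\Z_2}$ is simple. Alternatively one can apply the faithfulness argument to the descended CDO in \eqref{eq:geometric-realization-osp}. Since a simple quotient of $V^n(\mf{osp}_{2N|2M})$ is $L_n(\mf{osp}_{2N|2M})$, the isomorphism follows.

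Fourth, the conformal vector: the BRST identity analogous to Proposition~\ref{prop:lifting-sp},
\[
Q_{(0)}\sum\delta_{st}:b^{\xi_a}:(J^{\xi^a}_{(0)}X^{si})X^{tj}::,
\]
expresses $C_{ij}L^{\mathcal C}_{\mathrm{can}}$ as the quadratic Casimir-type expression in the $E^{ij}$ modulo $\partial E^{ij}$. Contracting over $i,j$ with $C^{-1}$ identifies $L_{\mathrm{can}}$ with the Sugawara vector of level $n$, provided $n+h^\vee_{\mf{osp}}\neq0$. The dual Coxeter number of $\mf{osp}_{2N|2M}$ is $2N-2M-2=-2n+2$, so the level shift is $2-n\neq0$ for $n\ge5$. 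A consistency check is the central charges: the Sugawara value $n\cdot\on{sdim}/(2-n)$ with $\on{sdim}=\dim\mf{so}_{2N}+\dim\mf{sp}_{2M}-4NM$ should equal $-n(2n-3)$, which I expect to verify using $M-N=n-2$.

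The main obstacle is the first step: that the quadratic currents strongly generate $\mc C^{\Z_2}$ in the super setting. This needs the chiral orthogonal invariant theory of \cite{LS24}, and I would rely on it there. If it is unavailable, the fallback is Proposition~\ref{prop:generation-criterion}, applied to the image $V$ of the affine algebra inside $\mathbf V^{\Z_2}$. This requires checking that $V$ contains the $O_n$-invariant functions: the quadratic $\gamma\gamma$ and $\beta\beta$ invariants lie in $E^{ij}$, and the determinants are no longer needed after passing to $\Z_2$-invariants. It also requires surjectivity of symbols onto $\Gamma(\bar X,\mc O_{\wedge T^*\bar E})$, which would follow from classical orthogonal invariant theory and Lemma~\ref{lem:restrictions}, with the faithfulness argument run on the $\Z_2$-quotient.
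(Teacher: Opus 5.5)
Your steps 2--4 are essentially fine, but step 1 relies on an input the paper neither has nor uses, and that input carries almost all of the theorem's content. The paper cites \cite{LS24} for strong generation of $\mc C$ only in the $SL_n$ and $Sp_{2n}$ cases. For $SO_n$ it says explicitly that the classical invariant generators need not all lift to $\mc C$. It never asserts that $\mc C^{\Z_2}=\mc A^{O_n[t]}$ is strongly generated by the $E^{ij}$ in this $\beta\gamma bc$ setting. The heart of the statement is exactly that the currents generate $\mathbf V^{\Z_2}$. Assuming a chiral orthogonal first fundamental theorem for $\mc C^{\Z_2}$ essentially assumes this, so your primary route has a gap at step 1.

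Your fallback is the correct route, and it is what the paper does. However, Proposition~\ref{prop:generation-criterion} cannot be quoted as stated. It requires $\mc O(W_0)^{SO_n}\subset V$, which fails for the current subalgebra $\mathcal U\subset\mathbf V^{\Z_2}$ because the determinants are $\Z_2$-odd. Its conclusion is also $V=\mathbf V$, not $\mathcal U=\mathbf V^{\Z_2}$. You need to rerun the argument inside $\mathbf V^{\Z_2}$:
\begin{enumerate}
\item Check that $\mathcal U$ is stable under $L_r,L'_r$ for $r\geq-1$.
\item Localize at an $O_n$-invariant lying in $\mathcal U$ whose nonvanishing locus lies in the free locus. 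The Gram determinant $f=\det(\sum_s\gamma^{sa}\gamma^{sb})_{1\leq a,b\leq n}$ works, together with $g=\mf F(f)$.
\item Use only the \emph{local} symbol computation $R_{(\mathbf V^{\Z_2})_f}\cong((R/I)_f)^{O_n}\cong(R^{O_n}/I^{O_n})_f$. This algebra is generated by the $\bar E^{ij}$ and $f^{-1}$, by the classical super first fundamental theorem for $O_n$ and exactness of invariants.
\item Differential generation and filtered reconstruction then give $\mathcal U_f\twoheadrightarrow(\mathbf V^{\Z_2})_f$, and likewise at $g$.
\item The faithfulness argument, run with the two $O_n$-invariant rings and the full contraction of Lemma~\ref{lem:ne0}, yields $\mathcal U=\mathbf V^{\Z_2}$.
\end{enumerate}
The global surjectivity onto $\Gamma(\bar X,\mc O_{\wedge T^*\bar E})$ that you propose is more than is needed, and it is not established in the paper.

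Two smaller points. Simplicity comes from the Dong--Mason theorem \cite{DM}, not Dong--Lin--Mason, and faithfulness of the $\Z_2$-action plays no role there. Your contraction of the BRST relation would identify $L_{\mathrm{can}}$ with the Sugawara vector once the normalization is checked. The paper avoids that computation: both vectors make the currents primary of weight one, so their difference is central of canonical weight two. It therefore vanishes, by simplicity and $\mathbf V_{\mathrm{can},0}=\C\vac$.
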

\begin{proof}
Let \(\mathcal U\subset\mathbf V^{\Z_2}\) be the vertex subalgebra generated by the images of the currents \(E^{ij}\).
Their OPEs give a surjective homomorphism
\[
V^n(\mf{osp}_{2N|2M})\twoheadrightarrow\mathcal U.
\]
The operators \(L_r,L'_r\), \(r\geq-1\), preserve \(\mathcal U\): on each current their positive modes give only currents or scalar multiples of the vacuum, and covariance then gives stability on the generated subalgebra.

Put \(R=\mc O(T^*W_0\times\Pi T^*W_1)\), and let \(I\subset R\) be the super moment-map ideal.
The first fundamental theorem for \(O_n\), with symmetrization in the even variables and antisymmetrization in the odd variables, says that \(R^{O_n}\) is generated by the quadratic pairings \(\bar E^{ij}\).
In particular, \(\mathcal U\) contains both \(\mc O(W_0)^{O_n}\) and \(\mc O(W_0^*)^{O_n}\).
Choose the Gram determinant
\[
f=\det\left(\sum_{s=1}^n\gamma^{sa}\gamma^{sb}\right)_{1\leq a,b\leq n},
\qquad g=\mf F(f).
\]
Then \(f,g\in\mathcal U\), and their nonvanishing loci lie in the free loci for the two polarizations.
Write \(\bar X_f\) for the corresponding principal open in \(W_0\sslash O_n\).
The local CDO description, finite \'etale descent, and cotangent reduction on the free locus give
\[
R_{(\mathbf V^{\Z_2})_f}
\cong\Gamma(\bar X_f,\mc O_{\wedge T^*\bar E})
\cong\bigl((R/I)_f\bigr)^{O_n}.
\]
Exactness of \(O_n\)-invariants identifies the last algebra with
\((R^{O_n}/I^{O_n})_f\), which is generated by the images of the \(\bar E^{ij}\) and \(f^{-1}\).
Consequently
\[
R_{\mathcal U_f}\twoheadrightarrow R_{(\mathbf V^{\Z_2})_f}.
\]
The argument using differential generation and filtered reconstruction in
 Proposition~\ref{prop:generation-criterion} shows that
\(\mathcal U_f\to(\mathbf V^{\Z_2})_f\) is surjective.
The same argument in the dual polarization gives surjectivity after localization at \(g\).

The proof of Theorem~\ref{th:faithfulness-2} applies here with the two \(O_n\)-invariant rings: they lie in \(\mathcal U\), and the required full contraction is supplied by Lemma~\ref{lem:ne0}.
Applying part~(2) to \(\mathcal U\hookrightarrow\mathbf V^{\Z_2}\) therefore gives
\(\mathcal U=\mathbf V^{\Z_2}\).
The fixed-point algebra is simple by the finite-group simplicity theorem of \cite{DM}, whose argument applies equally to vertex superalgebras. Hence it is the simple affine quotient \(L_n(\mf{osp}_{2N|2M})\).

Finally, \(h^\vee=2N-2M-2\), so \(n+h^\vee=2-n\ne0\) and the Sugawara vector exists.
Both it and \(L_{\mathrm{can}}\) make the generating currents primary of weight one. Their difference is therefore central and has canonical weight two; simplicity and the one-dimensional weight-zero space force this difference to vanish.
\end{proof}
Thus \eqref{eq:geometric-realization-osp} gives a geometric realization of the partially integrable simple affine vertex superalgebra \(L_n(\mf{osp}_{2N|2M})\).

\begin{corollary}
Assume \(n\geq5\), \(N\geq3\), and \(M-N=n-2\).
Then
\[
\operatorname{sch}L_n(\mathfrak{osp}_{2N|2M})
=
\operatorname{sch}L_n(\mathfrak{osp}_{6|2n+2}),
\]
where \(\operatorname{sch}\) denotes the homogeneous
supercharacter with respect to the Sugawara conformal vector.
\end{corollary}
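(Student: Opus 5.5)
Since \(\operatorname{sch}L_n(\mathfrak{osp}_{2N|2M})\) is the supercharacter of \(\mathbf V^{\Z_2}\) graded by \(L_{\mathrm{can}}\) (Theorem~\ref{th:Z2-invariant-subalgebra}), the plan is to compute the \(\Z_2\)-invariant part of the BRST supercharacter with the same Euler--Poincar\'e/Weyl integration used for Theorem~\ref{th:character}. We then check that the answer depends on \(M,N\) only through \(M-N=n-2\). The right-hand side is the case \(N=3\), \(M=n+1\), which is admissible in Theorem~\ref{th:Z2-invariant-subalgebra}; the ambient \(\mf{osp}\) is labelled \(\mf{osp}_{6|2n+2}\), matching the statement.

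\textbf{Step 1 (projector).} Write \(\operatorname{sch}\mathbf V^{\Z_2}=\tfrac12\bigl(\operatorname{sch}\mathbf V+\operatorname{str}_{\mathbf V}(\sigma\,q^{(L_{\mathrm{can}})_0-c/24})\bigr)\), where \(\sigma\) is induced by an element \(\epsilon\in O_n\setminus SO_n\), for instance a reflection. The action of \(\epsilon\) on \(\mathcal A\) commutes with \(Q\) after twisting the ghosts by \(\operatorname{Ad}\epsilon\), preserves the relative subcomplex, and fixes \(L_{\mathrm{can}}\). BRST vanishing (Theorem~\ref{Th:vanishing-and-simplicity}) and the Euler--Poincar\'e principle then express both traces as traces over the invariant relative complex \((\mathcal A\otimes SF^\bullet(\g))^{\g}\), the second with \(\epsilon\) inserted. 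Combining, \(\operatorname{sch}\mathbf V^{\Z_2}\) is a super-trace over \((\mathcal A\otimes SF^\bullet)^{O_n}\). Weyl integration over the disconnected compact group \(O(n)\) writes this as \(\tfrac12\) of an integral over \(SO(n)\) plus \(\tfrac12\) of an integral over the coset \(\epsilon SO(n)\). The coset integral reduces by the twisted Weyl integration formula to a torus \(T^\epsilon\) of the centralizer, with the appropriate twisted Jacobian.

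\textbf{Step 2 (cancellation).} Both integrands contain the matter factor
\[
\prod_{\text{eigenvalues }x\text{ of }h}\ \prod_{k\ge0}\frac{\bigl((1-xq^{k+1/2})(1-x^{-1}q^{k+1/2})\bigr)^N}{\bigl((1-xq^{k+1/2})(1-x^{-1}q^{k+1/2})\bigr)^M},
\]
taken over the eigenvalues on \(\C^n\) of \(h\in T\) or \(h\in\epsilon T^\epsilon\). This holds because \(W_0=V^{\oplus M}\) and \(W_1=V^{\oplus N}\) for the same module \(V=\C^n\). Hence the matter contribution, and in particular the eigenvalues \(\pm1\) coming from \(\epsilon\), cancel in pairs and leave \(\prod(\cdots)^{-(M-N)}\). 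The ghost and Jacobian factors depend only on \(\g=\mf{so}_n\). The central charge \(c_{\mathbf V}=-(M-N)n-2\dim\g\) from \eqref{eq:cc} likewise depends only on \(M-N\).

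\textbf{Step 3 (conclusion).} Both expressions in the statement therefore coincide with the same integral, which gives the equality. The identification of \(L_{\mathrm{can}}\) with the Sugawara vector, needed to match the homogeneous grading, is part of Theorem~\ref{th:Z2-invariant-subalgebra}.

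The main obstacle is the \(\epsilon\)-twisted trace in Step 1. We must check that \(\sigma\) on the relative BRST cohomology is induced by a chain map whose Euler--Poincar\'e trace equals the naive twisted trace, with correct signs on the ghost factor (\(\det\operatorname{Ad}\epsilon\) on \(\bigwedge\g^*\)). The approach is Remark~\ref{rem:BRSTvsRelativeBRST}, which is compatible with automorphisms commuting with the affine action: \(\epsilon\) acts by a scalar on \(H^\bullet_{\mathrm{Lie}}(\g)\) that cancels between the numerator and the \(\g\)-invariants. Only the cancellation of matter factors is really needed, and it holds pointwise in \(h\), so the precise twisted Jacobian never has to be computed.
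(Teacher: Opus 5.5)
Your proposal is correct and is essentially the paper's argument: compute \(\operatorname{sch}\mathbf V^{\Z_2}\) as the \(O_n\)-invariant part of the Euler--Poincar\'e supercharacter of the relative BRST complex, note that for every \(g\in O(n)\) the \(W_0\) and \(W_1\) factors cancel in pairs to \(\prod_{j\geq0}\det(I_n-q^{j+1/2}g)^{-2(M-N)}\), that the ghost part depends only on \(n\) and the central charge only on \(n\) and \(M-N\), and then compare with \((M,N)=(n+1,3)\). The detour in your last paragraph is unnecessary, and its claim that \(\epsilon\) acts on \(H^\bullet_{\mathrm{Lie}}(\g)\) by a scalar is false for even \(n\), where the outer automorphism negates the Pfaffian-type primitive class; since \(\epsilon\) is a degree-preserving chain automorphism of the relative complex, whose \(L_{\mathrm{can}}\)-weight spaces are finite-dimensional, the Euler--Poincar\'e (Lefschetz) identity applies to it directly, without Remark~\ref{rem:BRSTvsRelativeBRST}.
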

\begin{proof}
The identification
\(\mathbf V^{\mathbb Z_2}
\cong L_n(\mathfrak{osp}_{2N|2M})\)
and the vanishing of the higher relative BRST cohomology
allow us to compute the supercharacter by taking
\(O_n\)-invariants in the Euler--Poincar\'e supercharacter
of the BRST complex.
Here taking \(\mathbb Z_2=O_n/SO_n\)-invariants is exact.

For \(g\in O(n)\), the combined contribution of $W_0$  and $W_1$ is 
\[
\prod_{j\geq0}
\det(I_n-q^{j+1/2}g)^{-2(M-N)}.
\]
Indeed, the even and odd matter fields are copies of the
same self-dual \(O_n\)-module \(\mathbb C^n\), so their
contributions cancel in pairs.
The ghost contribution depends only on \(n\), and
the central charge is
\[
c=-n(M-N)-n(n-1)=-n(2n-3).
\]
Consequently, the Euler--Poincar\'e supercharacter,
and its \(O_n\)-invariant part depend only on \(n\)
and \(M-N\).
Comparing \((M,N)\) with \((n+1,3)\) proves the assertion.
\end{proof}

\medskip

Let $I\subset R$ be as in the proof of Theorem \ref{th:Z2-invariant-subalgebra}.
Exactness of invariants gives \((R^G/I^G)^{\Z_2}\cong R^{O_n}/I^{O_n}\).
\begin{lemma}\label{lem:ideal-O_n}
The ideal \(I^{O_n}\subset R^{O_n}\) is generated by
\(\sum_{u,v}(C^{-1})_{uv}\bar E^{iu}\bar E^{vj}\), for \(1\leq i,j\leq2M+2N\).
\end{lemma}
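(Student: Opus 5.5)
We will follow the covariant argument used in Proposition~\ref{prop:classicalinv} and Lemma~\ref{lem:reduced-ring-sp}, adapted to the orthogonal group and to the super setting. Let \(\{\xi_a\},\{\xi^a\}\) be bases of \(\g=\mf{so}_n\) dual for the trace form on \(V=\C^n\). The super comoment map assembles into the \(\on{End}V\)-valued element
\[
Z=\sum_a\mu^*_{\mathrm{sup}}(\xi_a)\otimes\xi^a,
\]
which, in the column notation \(X^{\cdot i}\in R\otimes V\), is the skew part of \(\sum_{u,v}(C^{-1})_{uv}X^{\cdot u}\otimes X^{\cdot v}\). This identification holds up to a normalisation constant and the super signs fixed by \(C\).

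First, by reductivity of \(O_n\), the ideal \(I^{O_n}\) is the image of the surjection
\[
(R\otimes\g)^{O_n}\twoheadrightarrow I^{O_n},\qquad F\mapsto\langle F,Z\rangle,
\]
so ideal generators of \(I^{O_n}\) come from \(R^{O_n}\)-module generators of the covariants \((R\otimes\g)^{O_n}\). Since \(\g\cong\wedge^2V\) is a direct summand of \(V\otimes V\), we may equally use the covariants \((R\otimes V\otimes V)^{O_n}\) and project them to the skew part.

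Second, we compute these covariants. Adjoin two auxiliary vector variables \(u,w\in V\). The covariants then correspond to the part of \(\mc O(V^{\oplus(2M+2N)}\oplus V^{\oplus 2})^{O_n}\) of bidegree \((1,1)\) in \((u,w)\). By the super first fundamental theorem for \(O_n\), already invoked in the proof of Theorem~\ref{th:Z2-invariant-subalgebra}, this invariant ring is generated by quadratic pairings. Unlike the \(SO_n\) case, there are no determinants to account for, since \(O_n\) has no invariant volume form. Hence the bidegree-\((1,1)\) part is spanned over \(R^{O_n}\) by \((u,w)\), which corresponds to the identity, and by \((X^{\cdot i},u)(X^{\cdot j},w)\), which corresponds to \(X^{\cdot i}\otimes X^{\cdot j}\). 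The identity covariant lies in the symmetric part \(S^2V\), so it pairs to zero with the skew element \(Z\).

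Third, we pair the remaining covariants \(X^{\cdot i}\otimes X^{\cdot j}\) with \(Z\). A direct expansion gives
\[
(X^{\cdot i},ZX^{\cdot j})=\pm\tfrac12\Bigl(\sum_{u,v}(C^{-1})_{uv}\bar E^{iu}\bar E^{vj}-(\text{transposed term})\Bigr).
\]
The transposed term is again of the same form after using the symmetry \(\bar E^{ij}=\pm\bar E^{ji}\) and the block structure of \(C\). The pairings therefore span the same \(R^{O_n}\)-ideal as the stated elements \(\sum_{u,v}(C^{-1})_{uv}\bar E^{iu}\bar E^{vj}\). Conversely, each stated element equals such a pairing, because the symmetric part of \(\sum_{u,v}(C^{-1})_{uv}X^{\cdot u}\otimes X^{\cdot v}\) cancels by the (super)symmetry of \(C^{-1}\). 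This proves both inclusions.

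The main obstacle is the sign bookkeeping in the super setting. We must check that the super first fundamental theorem really gives the covariants in the stated form, with odd columns treated by antisymmetrisation. We must also check that \(Z\) corresponds exactly to the contraction with \(C^{-1}\), so that no extra scalar or sign mismatch appears between the even and odd blocks. We will handle this by verifying the identity of the comoment map separately on the four blocks of \(C\), \(\beta\gamma\) and \(bc\), with the sign matching the OPE normalisation \(X^{si}X^{tj}\sim\delta_{st}C_{ij}/(z-w)\).
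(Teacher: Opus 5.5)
Your proposal is correct and takes the paper's route: the covariant argument of Proposition~\ref{prop:classicalinv} combined with the super first fundamental theorem for $O_n$, which the paper's proof simply cites without further detail. The sign bookkeeping you flag is lighter than you expect. The tensor $\sum_{u,v}(C^{-1})_{uv}X^{\cdot u}\otimes X^{\cdot v}$ is already skew in the $V\otimes V$ factor, since $C^{-1}$ is skew on the even columns and symmetric on the odd ones, and with your trace-form normalisation it equals $-2Z$. Pairing $X^{\cdot i}\otimes X^{\cdot j}$ with it therefore gives exactly $\sum_{u,v}(C^{-1})_{uv}\bar E^{iu}\bar E^{vj}$, with no transposed term to reconcile.
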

\begin{proof}
Apply the covariant argument of Proposition~\ref{prop:classicalinv} and the first fundamental theorem for \(O_n\).
\end{proof}
\begin{remark}
We have
\[
\on{Spec}(R^{O_n}/I^{O_n})_{\mathrm{red}}
\cong\{A\in\mf{sp}_{2M}:A^2=0,\ \on{rank}A\leq n\}
\cong\overline{\mathbb O_{[2^n,1^{2M-2n}]}}.
\]
The finite quotient map from \(T^*W_0\ssslash_0SO_n\) to this orbit closure is generically two-to-one.
\end{remark}

\begin{proposition}\label{prop:relations-o_n}
The kernel of \(\mc C^{\Z_2}\twoheadrightarrow\mathbf V^{\Z_2}\) contains
\[
-\frac12\sum_{u,v}(C^{-1})_{uv}:E^{iu}E^{vj}:
-\frac{n-1}{2}\partial E^{ij}+C_{ij}L^{\mathcal C}_{\mathrm{can}},
\qquad1\leq i,j\leq2M+2N.
\]
\end{proposition}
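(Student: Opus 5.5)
The plan follows the proofs of Propositions~\ref{prop:relations-super} and \ref{prop:lifting-sp}. We exhibit the given vector as a BRST coboundary modulo the difference between the coset and BRST conformal vectors. Then we check that the relevant element is $\Z_2$-invariant, so that it lies in the kernel of the map between fixed-point algebras.

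First, with dual bases $\{\xi_a\},\{\xi^a\}$ of $\g=\mf{so}_n$ for the trace form $(\xi,\eta)\mapsto\on{tr}_{\C^n}(\xi\eta)$, consider
\[
\Theta^{ij}:=\sum_{a}\sum_{s=1}^n:b^{\xi_a}:(J^{\xi^a}_{(0)}X^{si})X^{sj}::
\]
in the relative BRST complex. We compute $Q_{(0)}\Theta^{ij}$ by Wick's theorem. The $J$-part of $Q$ contracts with $b^{\xi_a}$ and produces $\sum_a :J^{\xi_a}(J^{\xi^a}_{(0)}X^{si})X^{sj}:$. Rewriting $\sum_a \xi_a\otimes\xi^a$ for $\mf{so}_n$ as $\frac12(\delta_{st}\delta_{uv}$-type permutation terms$)$, this should give $-\frac12\sum_{u,v}(C^{-1})_{uv}:E^{iu}E^{vj}:$. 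The extra terms come from the double contractions, which yield $\partial E^{ij}$ with coefficient determined by $n$ (we expect $-\frac{n-1}{2}$). They also produce a multiple of $C_{ij}$ times the quadratic Casimir term $L_\g$ together with the free field and ghost stress tensors, and these combine into $C_{ij}L_{\mathrm{can}}$. The ghost cubic part of $Q$ contributes terms containing $c$'s, which cancel against the $-\sum:c b:$ correction in $\widehat J$. This is the same calculation as \eqref{eqn:EL3}, adapted to symmetric rather than skew pairing. The canonical vector appears, rather than $L$, because the symmetric treatment of $\beta$ and $\gamma$ in $X$ produces $L-\frac12 TH$.

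Second, we apply \eqref{eq:re-expression-of-onformal-vector} and its Fourier transform, which give $L_{\mathrm{can}}-L^{\mathcal C}_{\mathrm{can}}\in\on{Im}Q_{(0)}$. We also use that $E^{ij}\otimes\mathbf1$ represents the image of $E^{ij}\in\mc C$. It follows that the displayed vector in $\mc C$ is $Q_{(0)}$-exact in the relative complex, so it maps to zero in $\mathbf V$.

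Third, for the $\Z_2$ statement, we note that the displayed vector is built from the $O_n$-invariant currents $E^{ij}$ and from $L^{\mathcal C}_{\mathrm{can}}$. It is therefore $O_n$-invariant and lies in $\mc C^{\Z_2}$. Its image in $\mathbf V^{\Z_2}\subset\mathbf V$ vanishes by the previous step.

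The main obstacle is the exact constant bookkeeping in the first step: the coefficient of $\partial E^{ij}$, the sign conventions of $C$ for the odd block, and the verification that the central terms assemble precisely into $C_{ij}L_{\mathrm{can}}$. Rather than tracking every sign by hand, we would fix the coefficients by applying $L'_1$ and $L_1$, and by pairing with the vacuum via $(E^{kl})_{(3)}$. The kernel is stable under these operators, so any mismatch would appear as a nonzero scalar multiple of $\vac$ or of $E^{ij}$.
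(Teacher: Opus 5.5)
Your proposal is essentially the paper's proof. The paper uses exactly the cochain $\sum_{a,s}:b^{\xi_a}:(J^{\xi^a}_{(0)}X^{si})X^{sj}::$, states that its $Q_{(0)}$-image is the displayed combination with $C_{ij}L_{\mathrm{can}}$ in place of $C_{ij}L^{\mathcal C}_{\mathrm{can}}$, and then passes to $L^{\mathcal C}_{\mathrm{can}}$ using \eqref{eq:re-expression-of-onformal-vector} together with its Fourier transform; it leaves implicit the $O_n$-invariance you check.

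There is one small slip in your optional constant-fixing check, which in any case only calibrates constants in an ansatz whose shape must still come from the Wick computation. The operator $(E^{kl})_{(3)}$ lowers canonical weight by three, so it kills every weight-two vector and gives no information. The vacuum pairing should instead use $(L_{\mathrm{can}})_2$, which sends $L_{\mathrm{can}}$ to $\frac{c_{\mathbf V}}{2}\vac$ and $:E^{iu}E^{vj}:$ to $E^{iu}_{(1)}E^{vj}$.
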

\begin{proof}
With bases dual for \((\xi,\eta)\mapsto\on{tr}_{\C^n}(\xi\eta)\), the corresponding BRST identity is
\[
\begin{aligned}
Q_{(0)}\sum_{a,s}:b^{\xi_a}:(J^{\xi^a}_{(0)}X^{si})X^{sj}::
&=-\frac12\sum_{u,v}(C^{-1})_{uv}:E^{iu}E^{vj}:\\
&\quad-\frac{n-1}{2}\partial E^{ij}+C_{ij}L_{\mathrm{can}}.
\end{aligned}
\]
Use \eqref{eq:re-expression-of-onformal-vector}.
\end{proof}
\begin{proposition}\label{pro;nilpotency-O_n}
The image of \(L_{\mathrm{can}}\) in \(R_{\mathbf V^{\Z_2}}\) is nilpotent.
\end{proposition}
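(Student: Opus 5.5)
We follow the pattern of Proposition~\ref{pro:nilpotency-omega}. Let \(\widetilde E=(\widetilde E^{ij})\) be the matrix of images of the currents \(E^{ij}\) in \(R_{\mathbf V^{\Z_2}}\), and let \(\omega\) be the image of \(L_{\mathrm{can}}\), which equals the image of \(L^{\mathcal C}_{\mathrm{can}}\) by \eqref{eq:re-expression-of-onformal-vector} and Fourier transform. Proposition~\ref{prop:relations-o_n} gives the matrix identity
\[
\widetilde E\,C^{-1}\widetilde E=2\omega\,C
\]
in \(R_{\mathbf V^{\Z_2}}\). The \(\partial E^{ij}\) terms lie in \(C_2\) and so vanish there.

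Write \(C=C_0\oplus C_1\) with \(C_0=\begin{pmatrix}0&I_M\\-I_M&0\end{pmatrix}\) of size \(2M\). Split \(\widetilde E\) into blocks, with \(\widetilde E_{11}\) the even \(2M\times2M\) block. The off-diagonal blocks consist of odd elements; there are finitely many of them, so the ideal \(J\) they generate is nilpotent. Modulo \(J\), the \((1,1)\)-block of the identity reads
\[
\widetilde E_{11}C_0^{-1}\widetilde E_{11}\equiv2\omega C_0\pmod J.
\]
Under \(R_{\mc C}\cong R^{G}\) (from \cite{LS24}), pushed to the \(\Z_2\)-invariants, the even block \(\widetilde E_{11}\) is the image of the Gram matrix \(\bigl(\sum_s\bar X^{si}\bar X^{sj}\bigr)_{i,j\leq2M}\). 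This matrix factors through \(\C^n\), so it has rank at most \(n<2M\), and hence \(\det\widetilde E_{11}=0\) already in \(R^{O_n}\). Taking determinants gives
\[
(2\omega)^{2M}\det C_0\equiv\det(\widetilde E_{11})^2\det(C_0)^{-1}=0\pmod J.
\]
Since \(\det C_0=1\), we get \(\omega^{2M}\in J\), and therefore \(\omega\) is nilpotent.

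The main point to verify is that the determinant relation genuinely holds in \(R_{\mathbf V^{\Z_2}}\). This requires the surjection \(R_{\mc C^{\Z_2}}\twoheadrightarrow R_{\mathbf V^{\Z_2}}\), which comes from Theorem~\ref{surj:generator} together with exactness of \(\Z_2\)-invariants. It also requires the identification of \(R_{\mc C}\) with \(R^G\), under which the \(E^{ij}\) map to the quadratic pairings. The vanishing of \(\det\widetilde E_{11}\) should be checked there: the \((n+1)\)-minors of the Gram matrix vanish, and \(2M\geq n+1\) holds because \(M\geq n+1\). Everything after that is formal, as in the symplectic case.
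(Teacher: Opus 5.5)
Your formal reduction is fine. Modulo the nilpotent ideal \(J\) generated by the odd mixed entries, the \((1,1)\)-block of the relation from Proposition~\ref{prop:relations-o_n} gives \(\det(\widetilde E_{11})^2\equiv(2\omega)^{2M}\), using \(\det C_0=1\). So everything rests on \(\det\widetilde E_{11}\) being zero, or at least nilpotent, in \(R_{\mathbf V^{\Z_2}}\), and that step is not justified.

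You invoke \(R_{\mathcal C}\cong R^G\), but that is the input the paper uses for \(Sp_{2n}\). For \(SO_n\) the paper explicitly warns that the classical invariant generators need not lift to \(\mathcal C\), and it does not use such an identification. More seriously, even granting it, a relation in \(R_{\mathcal C}\), in \((R_{\mathcal C})^{\Z_2}\), or in \(R_{\mathbf V}\) does not descend to \(R_{\mathcal C^{\Z_2}}\) or \(R_{\mathbf V^{\Z_2}}\). One only has \(C_2(\mathcal C^{\Z_2})\subseteq C_2(\mathcal C)\cap\mathcal C^{\Z_2}\). Averaging \(a_{(-2)}b\) produces terms \((a_-)_{(-2)}b_-\) with \(a_-,b_-\) anti-invariant, and these need not lie in \(C_2(\mathcal C^{\Z_2})\). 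Likewise, the map \(R_{\mathbf V^{\Z_2}}\to R_{\mathbf V}\) is only known to surject onto \((R_{\mathbf V})^{\Z_2}\). Its injectivity, or even nilpotency of its kernel, is an orbifold \(C_2\)-type statement you cannot assume. Your phrase ``pushed to the \(\Z_2\)-invariants'' silently replaces \(R_{\mathcal C^{\Z_2}}\) by \((R_{\mathcal C})^{\Z_2}\). To close the gap along your route, you would need an arc-space first fundamental theorem for \(O_n\) identifying \(R_{\mathcal C^{\Z_2}}\) with \(R^{O_n}\), and your proposal does not supply one.

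The paper avoids invariant theory here by using the odd--odd block instead of the even block. The currents \(E^{ij}\) with \(i,j>2M\) come from the \(bc\) system and generate an image of the integrable, hence \(C_2\)-cofinite, algebra \(L_n(\mathfrak{so}_{2N})\), so their images in \(R_{\mathbf V^{\Z_2}}\) are nilpotent. The mixed entries are odd, so the ideal \(J=(e^{ij}\mid i>2M\text{ or }j>2M)\) is nilpotent. Evaluating the relation at \((i,j)=(2M+1,2M+N+1)\), where \(C_{ij}=1\), writes \(2\omega\) as a sum of products of two elements of \(J\), so \(\omega\in J^2\). This argument takes place entirely inside \(R_{\mathbf V^{\Z_2}}\), so it needs no comparison between the \(C_2\)-algebras of \(\mathcal C\), \(\mathcal C^{\Z_2}\), \(\mathbf V\) and \(\mathbf V^{\Z_2}\).
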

\begin{proof}
Write \(e^{ij}\) and \(\omega\) for the images of \(E^{ij}\) and \(L_{\mathrm{can}}\) in $R_{\mathbf V^{\Z_2}}$, respectively.
The currents \(E^{ij}\) with \(i,j>2M\) come from the free fermion system and generate an image of the integrable affine vertex algebra \(L_n(\mf{so}_{2N})\).
This algebra is \(C_2\)-cofinite, so its positive-weight elements have nilpotent images in its \(C_2\)-algebra, and hence in \(R_{\mathbf V^{\Z_2}}\).
In particular, all \(e^{ij}\) with \(i,j>2M\) are nilpotent.
The mixed entries, with exactly one index greater than \(2M\), are odd and therefore have square zero.
Thus the ideal
\[
J=(e^{ij}\mid i>2M\ \text{or}\ j>2M)\subset R_{\mathbf V^{\Z_2}}
\]
is nilpotent.

Modulo \(C_2\), Proposition~\ref{prop:relations-o_n} gives
\[
\sum_{u,v}(C^{-1})_{uv}e^{iu}e^{vj}=2C_{ij}\omega.
\]
Take \(i=2M+1\) and \(j=2M+N+1\), for which \(C_{ij}=1\).
Both factors in every term on the left belong to \(J\), so \(\omega\in J^2\).
Siince $J$ is nilpotnet, so is $\omega$.
\end{proof}
\begin{theorem}\label{th:assp-variety-orbifold}
We have
\[
X_{\mathbf V^{\Z_2}}\cong\mu^{-1}(0)\sslash O_n
\cong(T^*W_0\ssslash_0G)/\Z_2.
\]
In particular, \(L_n(\mf{osp}_{2N|2M})\) is quasi-lisse for \(M-N=n-2\) and \(N\geq3\).
\end{theorem}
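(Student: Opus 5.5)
The plan is to sandwich \((R_{\mathbf V^{\Z_2}})_{\mathrm{red}}\) between two surjections, following the proofs of Theorems~\ref{th:ass-variety-and-nilpotency-index} and \ref{thm:quasi-lisse-sp}.

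\emph{Upper bound.} By Theorem~\ref{th:Z2-invariant-subalgebra}, \(\mathbf V^{\Z_2}\) is strongly generated by the currents \(E^{ij}\). Hence \(R_{\mathbf V^{\Z_2}}\) is generated by their images \(e^{ij}\). Under \(R_{\mc C}\cong R^G\), the \(\Z_2\)-invariant part maps onto \(R^{O_n}\). This gives a surjection
\[
R^{O_n}\twoheadrightarrow R_{\mathbf V^{\Z_2}}.
\]
Proposition~\ref{prop:relations-o_n} lifts the quadratic relations \(\sum_{u,v}(C^{-1})_{uv}\bar E^{iu}\bar E^{vj}-2C_{ij}\omega\) to the kernel. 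By Proposition~\ref{pro;nilpotency-O_n}, \(\omega\) is nilpotent. So after passing to reduced rings, these relations become exactly the generators of \(I^{O_n}\) from Lemma~\ref{lem:ideal-O_n}. We therefore obtain
\[
(R^{O_n}/I^{O_n})_{\mathrm{red}}\twoheadrightarrow(R_{\mathbf V^{\Z_2}})_{\mathrm{red}}.
\]
Next I identify the source. The ideal generated by the \(\bar E^{ij}\) with an index \(>2M\) is nilpotent, as in Lemma~\ref{lem:reduced-ring-is-sp}; these generators involve odd variables. Quotienting by it gives \(\mc O(\mu^{-1}(0))^{O_n}=(\mc O(\mu^{-1}(0))^{G})^{\Z_2}\). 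This ring is reduced, because \(\mu^{-1}(0)\) is normal and irreducible by Lemma~\ref{lem:isoHAcond}, which has been verified in this case. Hence the source is \(\mc O(\mu^{-1}(0)\sslash O_n)\).

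\emph{Lower bound.} I need a surjection \((R_{\mathbf V^{\Z_2}})_{\mathrm{red}}\twoheadrightarrow\mc O(T^*W_0\ssslash_0G)^{\Z_2}\).
\begin{enumerate}
\item Use the symbol map \(R_{\mathbf V}\to\Gamma(X,\mc O_{\wedge T^*E})\) and pass to reduced rings; by \ref{A4} the target becomes \(\mc O(T^*W_0\ssslash_0G)\).
\item Check that this map is \(\Z_2\)-equivariant, since \(O_n\) acts compatibly on everything.
\item Restrict to \(\Z_2\)-invariants.
\end{enumerate}
Surjectivity comes from Theorem~\ref{thm:surjection-to-classical-invairiant-rings}, rerun with \(O_n\) in place of \(G\). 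That argument shows that each \(f\in(I_0^\perp)^{O_n}\) lies in \(\mc C\), is \(\Z_2\)-fixed, and has the prescribed symbol. The Hermitian orthogonal decomposition is stable under a compact form of \(O_n\), so the argument goes through. Alternatively, average the lifts over \(\Z_2\), using that \(\Z_2\)-invariants are exact.

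\emph{Conclusion.} The composite
\[
\mc O(\mu^{-1}(0)\sslash O_n)\twoheadrightarrow(R_{\mathbf V^{\Z_2}})_{\mathrm{red}}\twoheadrightarrow\mc O(\mu^{-1}(0)\sslash O_n)
\]
sends each generator \(\bar E^{ij}\) to itself, so it is the identity. Both maps are therefore isomorphisms, and \(X_{\mathbf V^{\Z_2}}\cong\mu^{-1}(0)\sslash O_n\). Because \(\mathcal O(\mu^{-1}(0)\sslash G)^{\Z_2}=\mathcal O(\mu^{-1}(0)\sslash O_n)\), this equals \((T^*W_0\ssslash_0G)/\Z_2\). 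For quasi-lisseness, note that the quotient of \(T^*W_0\ssslash_0G\) by a finite group of Poisson automorphisms has finitely many symplectic leaves: they are images of the finitely many leaves of \(T^*W_0\ssslash_0G\) \cite{Lo}. Alternatively, use the remark identifying this quotient with a nilpotent orbit closure in \(\mf{sp}_{2M}\). Quasi-lisseness then follows from \cite{AK}.

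\textbf{Main obstacle.} I expect the delicate step to be the lower bound. The issue is making the symbol map \(R_{\mathbf V^{\Z_2}}\to\mc O(T^*X)^{\Z_2}\) rigorous, and checking that the invariant lifts from Theorem~\ref{thm:surjection-to-classical-invairiant-rings} can be chosen \(O_n\)-invariant rather than merely \(SO_n\)-invariant. My first attempt is the compact-form argument with \(O_n\). If that fails, I fall back on \(\Z_2\)-averaging of the lifts.
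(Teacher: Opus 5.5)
Your sandwich is the paper's own route: its proof just says to repeat Theorem~\ref{thm:quasi-lisse-sp} with Lemma~\ref{lem:ideal-O_n} and Propositions~\ref{prop:relations-o_n} and \ref{pro;nilpotency-O_n}. Your lower bound and your identification of $(R^{O_n}/I^{O_n})_{\mathrm{red}}$ with $\mc O(\mu^{-1}(0))^{O_n}$ are fine.

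\textbf{The gap is the first arrow} $R^{O_n}\twoheadrightarrow R_{\mathbf V^{\Z_2}}$.
\begin{itemize}
\item You derive it from ``$R_{\mc C}\cong R^G$'', but for $G=SO_n$ this is not available. The paper explicitly says the classical invariant generators need not lift to $\mc C$, and it invokes no Linshaw--Song-type identification in this section.
\item Even granting it, the orbifold maps point the wrong way. You get $R_{\mc C^{\Z_2}}\twoheadrightarrow(R_{\mc C})^{\Z_2}\cong R^{O_n}$ and $R_{\mc C^{\Z_2}}\twoheadrightarrow R_{\mathbf V^{\Z_2}}$. That is a span, not a map out of $R^{O_n}$, because $C_2(\mc C^{\Z_2})$ can be strictly smaller than $C_2(\mc C)^{\Z_2}$.
\item What you actually control is $\on{Sym}(\mf{osp}_{2N|2M})\twoheadrightarrow R_{\mathbf V^{\Z_2}}$ together with the quadratic relations of Proposition~\ref{prop:relations-o_n}. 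With Proposition~\ref{pro;nilpotency-O_n}, these give only
\[
X_{\mathbf V^{\Z_2}}\subset\{A\in\mf{sp}_{2M}:A^2=0\}=\overline{\mathbb O_{[2^M]}} .
\]
This is strictly larger than $\overline{\mathbb O_{[2^n,1^{2M-2n}]}}$ when $M\geq n+1$: the dimensions are $M(M+1)$ versus $2nM-n(n-1)$.
\end{itemize}
The missing input is the rank condition $\on{rank}A\leq n$, i.e.\ the second-fundamental-theorem relations of $R^{O_n}$. Without it, the ``identity composite'' argument has no upper-bound map to start from. The paper's one-line proof needs the same arrow; in the $Sp_{2n}$ case it came from $R_{\mc C}\cong R^G$, which has no stated analogue here.

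\textbf{A fix using tools already in the paper.}
\begin{enumerate}
\item $(R_{\mathbf V^{\Z_2}})_{\mathrm{red}}$ is a Poisson quotient of $\on{Sym}(\mf{sp}_{2M})$. So $X_{\mathbf V^{\Z_2}}$ is a closed Poisson, hence $Sp_{2M}$-stable, subvariety of $\overline{\mathbb O_{[2^M]}}$. It is therefore a union of orbits $\mathbb O_{[2^k,1^{2M-2k}]}$.
\item The proof of Theorem~\ref{th:Z2-invariant-subalgebra} identifies $(R_{\mathbf V^{\Z_2}})_f$ with $(R^{O_n}/I^{O_n})_f$, compatibly with $e^{ij}\mapsto\bar E^{ij}$. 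Hence $X_{\mathbf V^{\Z_2}}\cap\{f\neq0\}$ lies in the locus $\on{rank}A\leq n$.
\item $f$ does not vanish identically on $\overline{\mathbb O_{[2^n,1^{2M-2n}]}}$: take $\beta=0$ and the first $n$ columns of $\gamma$ orthonormal. So $f$ does not vanish identically on any $\overline{\mathbb O_{[2^k,1^{2M-2k}]}}$ with $k>n$.
\item Consequently, an orbit with $k>n$ inside $X_{\mathbf V^{\Z_2}}$ would meet $\{f\neq0\}$, contradicting step 2.
\end{enumerate}
Thus $X_{\mathbf V^{\Z_2}}\subset\overline{\mathbb O_{[2^n,1^{2M-2n}]}}$. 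By the remark after Lemma~\ref{lem:ideal-O_n}, this gives $(R^{O_n}/I^{O_n})_{\mathrm{red}}\twoheadrightarrow(R_{\mathbf V^{\Z_2}})_{\mathrm{red}}$, and the rest of your argument goes through.

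Alternatively, an arc-space first fundamental theorem giving $R_{\mc C^{\Z_2}}\cong R^{O_n}$ would restore the map you wrote down.
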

\begin{proof}
Repeat the proof of Theorem~\ref{thm:quasi-lisse-sp}, using Lemma~\ref{lem:ideal-O_n} and Propositions~\ref{prop:relations-o_n} and \ref{pro;nilpotency-O_n}.
\end{proof}

\begin{theorem}\label{th:ass-variety-so}
Conjecture~\ref{mainconj} holds: \(X_{\mathbf V}\cong T^*W_0\ssslash_0G\).
\end{theorem}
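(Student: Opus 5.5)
We need to prove \(X_{\mathbf V}\cong T^*W_0\ssslash_0G\) for \(G=SO_n\). The plan is to deduce this from the orbifold result, Theorem~\ref{th:assp-variety-orbifold}, combined with the general surjection of Theorem~\ref{thm:surjection-to-classical-invairiant-rings}. The latter gives a surjection \((R_{\mathbf V})_{\mathrm{red}}\twoheadrightarrow\mc O(T^*W_0\ssslash_0G)\), so \(T^*W_0\ssslash_0G\) embeds as a closed subvariety of \(X_{\mathbf V}\). It remains to show that \(X_{\mathbf V}\) is no bigger, that is, that the kernel of this surjection is nilpotent.

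\textbf{Step 1: finiteness over the orbifold.} Since \(\mathbf V\) is a module over \(\mathbf V^{\Z_2}\) and \(\Z_2\) is finite, the \(\Z_2\)-invariants of \(R_{\mathbf V}\) are controlled by \(R_{\mathbf V^{\Z_2}}\). Concretely, exactness of invariants shows that \(\mathbf V^{\Z_2}\to\mathbf V\) induces a map \(R_{\mathbf V^{\Z_2}}\to R_{\mathbf V}^{\Z_2}\). For \(u\) in the odd eigenspace, \(u_{(-1)}u\) is invariant, and \(R_{\mathbf V}\) is integral over the image of \(R_{\mathbf V^{\Z_2}}\). Hence \(X_{\mathbf V}\to X_{\mathbf V^{\Z_2}}\) is finite, with image controlled by the orbifold. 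In particular, the image \(\omega\) of \(L_{\mathrm{can}}\) in \(R_{\mathbf V}\) is nilpotent, by Proposition~\ref{pro;nilpotency-O_n} pushed forward.

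\textbf{Step 2: relations in \(R_{\mathbf V}\).} The generators \(\bar E^{ij}\) alone do not suffice for \(R^{SO_n}\). By the first fundamental theorem for \(SO_n\), \(R^{SO_n}\) is generated by the \(\bar E^{ij}\) together with the \(n\times n\) super-determinants \(\bar D(I)\). The determinants need not lift to \(\mc C\), so I would avoid lifting them. Instead, I would show directly that \((R_{\mathbf V})_{\mathrm{red}}\) is generated by the images of the \(\Z_2\)-invariant part and of the odd-eigenspace elements \(v\) satisfying \(v^2\in\) (image of \(R_{\mathbf V^{\Z_2}}\)). Then, in the reduced ring, the relations of Lemma~\ref{lem:ideal-O_n} hold (by Theorem~\ref{th:assp-variety-orbifold}). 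The odd part is a rank-one module over the invariant part generically, so it is determined by the double cover.

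The cleanest route is geometric. Both \(X_{\mathbf V}\) and \(T^*W_0\ssslash_0 G\) are finite over \(\mu^{-1}(0)\sslash O_n\), and the closed embedding \(T^*W_0\ssslash_0G\hookrightarrow X_{\mathbf V}\) is compatible with these maps. It therefore suffices to show that \(X_{\mathbf V}\) has no extra irreducible components and no larger fibers.

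\textbf{Step 3: main obstacle.} The hardest step is ruling out extra components of \(X_{\mathbf V}\) lying over the singular locus of the orbifold variety. Fiber counting does not handle this, since points of \(X_{\mathbf V}\) could a priori be fixed by \(\Z_2\) where the cover branches.

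I would first try a Zhu-algebra or localization argument. Over the principal opens \(f\ne0\) and \(\mf F(f)\ne0\), Theorem~\ref{th:no-support} identifies \(R_{\mathbf V_f}\) with \(\mc O(T^*X)_f\). So any extra component lies in \(V(f)\cap V(\mf F(f))\), and for all choices of \(f\) simultaneously, in \(Z\times Z^*\).

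The conic \(\C^*\times\C^*\)-action coming from the two gradings \(L_0,L'_0\) contracts any such component to the origin. I would then try to show that the \(\Z_2\)-anti-invariant symbols (the determinants \(D(I)\), whose symbols are homogeneous of positive degree) are nilpotent on \(Z\times Z^*\). This uses the relation \(\bar D(I)\bar D'(J)=\det(\bar E_{IJ})\) in \(R_{\mathbf V}\), lifted as a normally ordered identity modulo lower filtration. Such a component would then map isomorphically onto its image in the orbifold variety, which by Theorem~\ref{th:assp-variety-orbifold} already lies in the image of \(T^*W_0\ssslash_0G\).

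Checking the lifted determinant identity modulo \(C_2\) is the step I expect to require genuine computation.
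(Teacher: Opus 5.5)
As written, your argument does not reach the conclusion. Step~3, which you identify as the crux, is left open, and the route you sketch for it is unverified: localizing at $f$ and $\mf F(f)$, contracting by the two gradings, and a lifted identity $\bar D(I)\bar D'(J)=\det(\cdots)$ modulo $C_2$.

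More importantly, the obstacle is not real, and you already have most of what is needed. You use Theorem~\ref{th:assp-variety-orbifold} only to get finiteness of $X_{\mathbf V}\to X_{\mathbf V^{\Z_2}}$. Its real content is that it identifies the \emph{quotient} $X_{\mathbf V}/\Z_2$. The map $R_{\mathbf V^{\Z_2}}\to(R_{\mathbf V})^{\Z_2}$ in your Step~1 is in fact surjective: a $\Z_2$-invariant class in $\mathbf V/C_2(\mathbf V)$ lifts, by averaging, to an element of $\mathbf V^{\Z_2}$, and $C_2(\mathbf V^{\Z_2})\subset C_2(\mathbf V)$. Using exactness of invariants once more gives $(R_{\mathbf V^{\Z_2}})_{\mathrm{red}}\twoheadrightarrow((R_{\mathbf V})_{\mathrm{red}})^{\Z_2}$. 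Now compose with the $\Z_2$-equivariant surjection $p\colon(R_{\mathbf V})_{\mathrm{red}}\twoheadrightarrow\mc O(T^*W_0\ssslash_0G)$ restricted to invariants. The composite is the isomorphism $(R_{\mathbf V^{\Z_2}})_{\mathrm{red}}\cong\mc O(T^*W_0\ssslash_0G)^{\Z_2}$ of Theorem~\ref{th:assp-variety-orbifold}. Since the first map is surjective, $p$ is injective on $\Z_2$-invariants.

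This finishes the proof with no further computation. Let $a\in\ker p$ and let $\sigma$ be the involution. The element $a+\sigma(a)$ is an invariant element of the $\sigma$-stable ideal $\ker p$, so it vanishes and $\sigma(a)=-a$. Then $a^2$ is also an invariant element of $\ker p$, so $a^2=0$, and reducedness gives $a=0$.

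Geometrically, $X_{\mathbf V}/\Z_2$ is a closed subvariety of $X_{\mathbf V^{\Z_2}}\cong(T^*W_0\ssslash_0G)/\Z_2$. Fibers of a finite-group quotient map are single orbits. The subvariety $T^*W_0\ssslash_0G\subset X_{\mathbf V}$ is closed, $\Z_2$-stable, and has the same quotient, so it contains every orbit of $X_{\mathbf V}$. Points fixed by $\Z_2$ or lying over the branch locus therefore cause no difficulty. This is why fiber counting, localization away from $Z\times Z^*$, and the determinant identity are unnecessary.
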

\begin{proof}
Exactness of \(\Z_2\)-invariants applied to \(\mathbf V\twoheadrightarrow R_{\mathbf V}\) shows that the inclusion \(\mathbf V^{\Z_2}\hookrightarrow\mathbf V\) induces a surjection \(R_{\mathbf V^{\Z_2}}\twoheadrightarrow(R_{\mathbf V})^{\Z_2}\). Reduction and Theorem~\ref{thm:surjection-to-classical-invairiant-rings} give
\[
(R_{\mathbf V^{\Z_2}})_{\mathrm{red}}
\twoheadrightarrow((R_{\mathbf V})_{\mathrm{red}})^{\Z_2}
\twoheadrightarrow\mc O(T^*W_0\ssslash_0G)^{\Z_2}.
\]
The composite is an isomorphism by Theorem~\ref{th:assp-variety-orbifold}. Thus the equivariant surjection \(p:(R_{\mathbf V})_{\mathrm{red}}\twoheadrightarrow\mc O(T^*W_0\ssslash_0G)\) is injective on invariants. For \(a\in\ker p\), writing \(\sigma\) for the involution, we obtain \(a+\sigma(a)=0\), and then \(a^2=0\), since both expressions are invariant elements of the kernel. Reducedness gives \(a=0\), proving the assertion.
\end{proof}

\end{document}